\date{}
\def\BState{\State\hskip-\ALG@thistlm}
\newlist{casess}{enumerate}{1}
\setlist[casess]{label=     \textbf{Case} \arabic*:}
\DeclarePairedDelimiter\ceil{\lceil}{\rceil}
\newcommand*{\rom}[1]{\expandafter\@slowromancap\romannumeral #1@}
\patchcmd{\ttlh@hang}{\parindent\z@}{\parindent\z@\leavevmode}{}{}
\patchcmd{\ttlh@hang}{\noindent}{}{}{}
\definecolor{mygreen}{RGB}{28,172,0} % color values Red, Green, Blue
\definecolor{mylilas}{RGB}{170,55,241}
\newlist{Assumptions}{enumerate}{1}
\setlist[Assumptions]{label=     \textbf{Assumption} \arabic*:}
\newsavebox{\@brx}
\newcommand{\llangle}[1][]{\savebox{\@brx}{\(\m@th{#1\langle}\)}%
  \mathopen{\copy\@brx\kern-0.5\wd\@brx\usebox{\@brx}}}
\newcommand{\rrangle}[1][]{\savebox{\@brx}{\(\m@th{#1\rangle}\)}%
  \mathclose{\copy\@brx\kern-0.5\wd\@brx\usebox{\@brx}}}
\titleformat{\subsection}[runin]% runin puts it in the same paragraph
       {\normalfont\bfseries}% formatting commands to apply to the whole heading
       {\thesubsection}% the label and number
       {0.5em}% space between label/number and subsection title
       {}% formatting commands applied just to subsection title
       [.]% punctuation or other commands following subsection title
\newcommand{\A}{\mathfrak{A}}
\newcommand{\B}{\mathfrak{B}} 
\newcommand{\CC}{\mathbb{C}}
\def\N{\mathbb{N}}
\def\R{\mathbb{R}}
\def\e{{\sf e}}
\def\BofH{\mathbb B(\mathcal H)}
\def\d{{\rm d}}
\def\bu{\bullet}
\def\({\left(}
\def\[{\left[}
\def\){\right)}
\def\]{\right]}
\def\si{\sigma}
\def\G{{\sf G}}
\def\p{\parallel}
\def\<{\langle}
\def\>{\rangle}
\providecommand{\norm}[1]{\lVert#1\rVert}
 \newtheorem{thm}{Theorem}[section]
 \newtheorem{cor}[thm]{Corollary}
 \newtheorem{lem}[thm]{Lemma}
 \newtheorem{prop}[thm]{Proposition}
 \theoremstyle{definition}
 \newtheorem{defn}[thm]{Definition}
 \theoremstyle{remark}
 \newtheorem{rem}[thm]{Remark}
 \newtheorem{ex}[thm]{Example}
 \numberwithin{equation}{section}
\numberwithin{equation}{section}
\begin{document}

%-------------------------------------------------------------------------------------------------------
% Title
%-------------------------------------------------------------------------------------------------------

\title{Polynomial growth and functional calculus in algebras of integrable cross-sections}

\author{Felipe I. Flores
\footnote{
\textbf{2020 Mathematics Subject Classification:} Primary 43A20, Secondary 47L65, 47L30.
\newline
\textbf{Key Words:} Fell bundle, polynomial growth, $^*$-regularity, symmetry, Wiener property, spectral invariance. 
}}

\maketitle

%-------------------------------------------------------------------------------------------------------
 %Abstract
%-------------------------------------------------------------------------------------------------------

\begin{abstract}
Let $\G$ be a locally compact group with polynomial growth of order $d$, a polynomial weight $\nu$ on $\G$ and a Fell bundle $\mathscr C\overset{q}{\to}\G$. We study the Banach $^*$-algebras $L^1(\G\,\vert\,\mathscr C)$ and $L^{1,\nu}(\G\,\vert\,\mathscr C)$, consisting of integrable cross-sections with respect to $\d x$ and $\nu(x)\d x$, respectively. By exploring new relations between the $L^p$-norms and the norm of the Hilbert $C^*$-module $L^2_\e(\G\,\vert\,\mathscr C)$, we are able to show that the growth of the self-adjoint, compactly supported, continuous cross-sections is polynomial. More precisely, they satisfy $$\|{e^{it\Phi}}\|=O(|t|^n),\quad\textup{ as }|t|\to\infty,$$ for values of $n$ that only depend on $d$ and the weight $\nu$. We use this fact to develop a smooth functional calculus for such elements. We also give some sufficient conditions for these algebras to be symmetric. As consequences, we show that these algebras are locally regular, $^*$-regular and have the Wiener property (when symmetric), among other results. Our results are already new for convolution algebras associated with $C^*$-dynamical systems. 
\end{abstract}

%\tableofcontents

%-------------------------------------------------------------------------------------------------------
\section{Introduction}\label{introduction}
%-------------------------------------------------------------------------------------------------------

A great deal of effort within the subject of abstract harmonic analysis has been put into understanding the $L^1$-algebras and weighted $L^1$-algebras of locally compact groups. Most of this research has focused on extending properties of $L^1(\mathbb R)$ or $\ell^1(\mathbb Z)$ to more general algebras. In general, one could say that the base case is that of abelian groups and that case is fairly well understood (see \cite{Ru90}).

It seems natural to us that a continuation of these studies would consider group algebras twisted by a cocycle $L_\omega^1(\G)$, convolution algebras $L_\alpha^1(\G,\A)$ associated with a $C^*$-dynamical system $(\G,\A,\alpha)$, or more generally the algebra of cross-sections $L^1(\G\,\vert\,\mathscr C)$ associated with a Fell bundle $\mathscr C\overset{q}{\to}\G$. In fact, many authors have already taken this step over the years (cf. \cite{AuRa23,Le75,LP79,LeNg06}) and, in this article, we try to provide a unified approach that maximizes generality by choosing Fell bundle algebras. These algebras still have the abstract harmonic analysis flavor of a group algebra (and we hope that this article serves as proof of that), but they can also present some radically different behavior. Let us also mention that the technical difficulties we overcame here were already present at the level of $C^*$-dynamical systems, and so this choice does not overtly complicate too many things. 

Our main result is the construction of a functional calculus based on smooth functions for a large (that is, dense) subset of self-adjoint elements in $L^1(\G\,\vert\,\mathscr C)$, provided that $\G$ is a group of polynomial growth. This is done by estimating the growth of the $1$-parameter groups these elements generate and constructing a smooth functional calculus \`a la Dixmier-Baillet. 

In fact and for the purpose of obtaining interesting applications, we introduce the algebra $L^{1,\nu}(\G\,\vert\,\mathscr C)$, which denotes the dense subalgebra of cross-sections that are integrable with respect to $\nu(x)dx$ ($\nu$ being a weight on the group $\G$). The appearance of this algebra is relevant since many of the properties we wish to achieve for $L^{1}(\G\,\vert\,\mathscr C)$ are simply not true, but they can still be achieved in this dense subalgebra, provided that the necessary conditions on $\nu$ are met.

\begin{thm}
    Let $\G$ be a locally compact group with polynomial growth of order $d$. Let $\mathfrak D$ denote either $L^{1}(\G\,\vert\,\mathscr C)$ or $L^{1,\nu}(\G\,\vert\,\mathscr C)$, provided that there exists a polynomial weight $\nu$ on $\G$ such that $\nu^{-1}$ belongs to $L^p(\G)$, for some $0<p<\infty$. Let $\widetilde{\mathfrak D}$ be the minimal unitization of ${\mathfrak D}$. Then, there exists an $n\in\N$, dependent only on $d$ and the weight $\nu$, so that every $\Phi=\Phi^*\in C_{\rm c}(\G\,\vert\,\mathscr C)$ has the growth 
    $$
    \norm{e^{it\Phi}}_{\widetilde{\mathfrak D}}=O(|t|^n),\quad\textup{ as }|t|\to\infty.
    $$ 
    In particular, for any complex function $f\in C_{\rm c}^\infty(\R)$, with Fourier transform $\widehat f$, we have \begin{enumerate}
        \item[(a)] $f(\Phi)=\frac{1}{2\pi}\int_{\mathbb R} \widehat{f}(t)e^{it\Phi}\d t$ exists in $\widetilde{\mathfrak D}$, or in ${\mathfrak D}$ if $f(0)=0$.
        \item[(b)] For any non-degenerate $^*$-representation $\Pi:{\mathfrak D}\to\BofH$, we have 
        $$
        \widetilde{\Pi}(f\big(\Phi\big))=f\big(\Pi(\Phi)\big).
        $$
        Furthermore, if $\Pi$ is injective, we also have 
        $$
        {\rm Spec}_{\widetilde{\mathfrak D}}\big(f(\Phi)\big)={\rm Spec}_{\BofH}\big(\widetilde\Pi\big(f(\Phi)\big)\big).
        $$
        \end{enumerate}
\end{thm}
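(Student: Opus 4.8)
The substance of the statement is the growth bound $\norm{e^{it\Phi}}_{\widetilde{\mathfrak D}}=O(|t|^n)$; once this is in hand, parts (a) and (b) are an adaptation of the Dixmier--Baillet scheme. The plan is to fix $\Phi=\Phi^*\in C_{\rm c}(\G\,\vert\,\mathscr C)$ supported in a symmetric compact $K\ni e$, to put $R=\norm{\Phi}_{C^*_{\rm r}(\G\,\vert\,\mathscr C)}$ (so that ${\rm Spec}(\Phi)\subseteq[-R,R]$ in the reduced $C^*$-algebra, the case $R=0$ forcing $\Phi=0$), and then to expand $e^{it\Phi}$ not in powers of $\Phi$ but in Chebyshev polynomials. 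The Jacobi--Anger expansion $e^{i\tau s}=\sum_{j\geq0}(2-\delta_{j0})\,i^j J_j(\tau)\,T_j(s)$ converges uniformly on $[-1,1]$, and I would transport it to $\widetilde{\mathfrak D}$ to get
\[
e^{it\Phi}=\sum_{j\geq0}(2-\delta_{j0})\,i^j J_j(tR)\,T_j(\Phi/R),
\]
checking the identity through the continuous, injective regular representation $\widetilde{\mathfrak D}\hookrightarrow C^*_{\rm r}$, where the two sides already agree. The reason for using Chebyshev rather than Taylor is decisive: the naive estimate $\sum_k\frac{|t|^k}{k!}\norm{\Phi^k}$ is only $O(e^{|t|R})$, whereas the oscillation stored in the Bessel coefficients is exactly what will turn a polynomial control on $\norm{T_j(\Phi/R)}$ into polynomial growth in $t$.

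The key estimate, and the main obstacle, is a polynomial bound
\[
\norm{T_j(\Phi/R)}_{\widetilde{\mathfrak D}}=O(j^{\beta}),
\]
with $\beta$ depending only on $d$ and the weight. Three inputs combine: (i) $T_j(\Phi/R)$ is a degree-$j$ polynomial in $\Phi$, hence supported in $K^j$, and polynomial growth gives $|K^j|\leq Cj^d$ (in the weighted case a polynomial bound $\int_{K^j}\nu^2\leq C j^{2N+d}$, $N$ being the degree of $\nu$); (ii) $\norm{T_j(\Phi/R)}_{C^*_{\rm r}}\leq\sup_{[-1,1]}|T_j|=1$ uniformly in $j$; and (iii) a comparison, for sections supported in a set $S$, between the $\mathfrak D$-norm, the Hilbert-module norm of $L^2_\e(\G\,\vert\,\mathscr C)$, and the operator norm. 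Point (iii) is the genuinely new, bundle-specific ingredient and I expect it to be the hard part: because the module norm collapses mass pointing in orthogonal directions of $\mathscr C_e$, the scalar Cauchy--Schwarz inequality $\norm{a}_{\mathfrak D}\leq|S|^{1/2}\norm{a}_{L^2_\e}$ fails (a two-point section already violates it), and one must instead combine contractivity of the canonical conditional expectation $a\mapsto a(e)$, which yields $\norm{a}_{L^2_\e}\leq\norm{a}_{C^*_{\rm r}}$, with a support-controlled estimate of $\norm{a}_{\mathfrak D}$ against $\norm{a}_{L^2_\e}$; in the weighted setting this is precisely where $\nu^{-1}\in L^p(\G)$ is consumed. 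Feeding $S=K^j$ and $\norm{T_j(\Phi/R)}_{C^*_{\rm r}}\leq1$ through this chain produces the desired $O(j^\beta)$.

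Granting the expansion and this bound, the growth estimate follows from a Bessel-function computation: splitting $\sum_{j}|J_j(tR)|\,O(j^\beta)$ at the cutoff $j\approx 2|t|R$, using $\sum_j J_j(\tau)^2\leq 1$ with Cauchy--Schwarz below the cutoff and the super-exponential decay of $J_j(\tau)$ for $j\gg|\tau|$ above it, one gets $\norm{e^{it\Phi}}_{\widetilde{\mathfrak D}}=O(|t|^{\beta+1/2})$. Thus $n=\lceil\beta+\tfrac12\rceil$ works; the exponent depends only on $d$ and $\nu$, while the implied constant may depend on $\Phi$ through $R$ and $K$, as claimed. Part (a) is then immediate: since $\widehat f$ is Schwartz it decays faster than any polynomial, so $\int_{\R}|\widehat f(t)|\,\norm{e^{it\Phi}}\,\d t<\infty$ and the Bochner integral defines $f(\Phi)\in\widetilde{\mathfrak D}$; writing $e^{it\Phi}=1+\psi_t$ with $\psi_t\in\mathfrak D$ and invoking Fourier inversion $f(0)=\tfrac{1}{2\pi}\int_{\R}\widehat f(t)\,\d t$, the scalar component of $f(\Phi)$ equals $f(0)\cdot 1$, so $f(\Phi)\in\mathfrak D$ exactly when $f(0)=0$.

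For part (b), the identity $\widetilde\Pi(f(\Phi))=f(\Pi(\Phi))$ holds because the contractive $^*$-homomorphism $\widetilde\Pi$ commutes with the Bochner integral and the exponential series, giving $\widetilde\Pi(f(\Phi))=\tfrac1{2\pi}\int_{\R}\widehat f(t)\,e^{it\Pi(\Phi)}\,\d t$, which is the Fourier representation of the continuous functional calculus of the bounded self-adjoint operator $\Pi(\Phi)$. For the spectral equality, the polynomial growth first forces ${\rm Spec}_{\widetilde{\mathfrak D}}(\Phi)\subseteq\R$ and provides resolvents $(\Phi-\mu)^{-1}=-i\int_0^{\infty}e^{-it\mu}e^{it\Phi}\,\d t$ for ${\rm Im}\,\mu<0$ (and a mirror formula for ${\rm Im}\,\mu>0$) with only polynomial blow-up as $\mu$ approaches $\R$; hence $\Phi$ is a generalized scalar element and its smooth functional calculus satisfies the spectral mapping theorem ${\rm Spec}_{\widetilde{\mathfrak D}}(f(\Phi))=f({\rm Spec}_{\widetilde{\mathfrak D}}(\Phi))$. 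Injectivity of $\Pi$ then upgrades the always-valid inclusion ${\rm Spec}_{\BofH}(\widetilde\Pi(f(\Phi)))\subseteq{\rm Spec}_{\widetilde{\mathfrak D}}(f(\Phi))$ to equality: a point of ${\rm Spec}_{\widetilde{\mathfrak D}}(\Phi)$ outside ${\rm Spec}_{\BofH}(\Pi(\Phi))$ could be separated by some $g\in C_{\rm c}^\infty(\R)$ with $g=0$ on a neighbourhood of ${\rm Spec}(\Pi(\Phi))$ yet $g\neq0$ at that point, forcing $\Pi(g(\Phi))=g(\Pi(\Phi))=0$ and hence $g(\Phi)=0$, which contradicts the spectral mapping theorem; so ${\rm Spec}_{\widetilde{\mathfrak D}}(\Phi)={\rm Spec}_{\BofH}(\Pi(\Phi))$ and applying both spectral mapping theorems closes the argument.
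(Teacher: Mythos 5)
Your overall architecture (growth estimate first, then Dixmier--Baillet for (a) and (b)) matches the paper, and the Jacobi--Anger/Chebyshev reduction plus the Bessel splitting is a sound way to convert a polynomial bound $\norm{T_j(\Phi/R)}_{\widetilde{\mathfrak D}}=O(j^\beta)$ into $\norm{e^{it\Phi}}_{\widetilde{\mathfrak D}}=O(|t|^{\beta+1/2})$. The problem is that this central bound --- which you yourself flag as the hard part --- is never established, and the route you propose for it is circular: you correctly note that $\norm{a}_{\mathfrak D}\leq\mu(S)^{1/2}\norm{a}_{L^2_\e(\G\,\vert\,\mathscr C)}$ fails for sections supported in $S$, and then assert that one should instead use ``a support-controlled estimate of $\norm{a}_{\mathfrak D}$ against $\norm{a}_{L^2_\e}$,'' which is the very inequality you just discarded. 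For non-discrete $\G$ no estimate of the form $\norm{a}_{L^1(\G\,\vert\,\mathscr C)}\leq C(\mu(S))\,\norm{a}_{L^2_\e(\G\,\vert\,\mathscr C)}$ can hold, even for continuous compactly supported sections: take the trivial $C^*$-dynamical system with $\A=M_N(\CC)$ over $\G=\R$ and $a=\sum_{i=1}^N\chi_{I_i}\,e_{ii}$ (indicators smoothed), with $I_1,\dots,I_N$ disjoint intervals of length $\epsilon$ and $e_{ii}$ the matrix units; then $\norm{a}_{L^1}=N\epsilon=\mu(S)$ while $\norm{a}_{L^2_\e}=\big\|\epsilon\sum_i e_{ii}\big\|^{1/2}=\sqrt{\epsilon}$, so fixing $\mu(S)=N\epsilon=1$ and letting $\epsilon\to0$ makes the ratio $\norm{a}_{L^1}/\norm{a}_{L^2_\e}=\epsilon^{-1/2}$ blow up. (In the discrete case one does have $\norm{a}_{\ell^1}\leq|S|\,\norm{a}_{\ell^2_\e}$, since $a(x)^\bu\bu a(x)\leq\sum_y a(y)^\bu\bu a(y)$ pointwise, so your argument would work for discrete $\G$; but the theorem concerns general locally compact groups.)

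The repair is exactly the paper's key move, and it is what your sketch is missing: one never needs a support-controlled bound for \emph{arbitrary} sections, only for elements that factor as a convolution with the fixed section $\Phi$, where the strengthened Young inequality $\norm{\Psi*\Phi}_{L^\infty(\G\,\vert\,\mathscr C)}\leq\norm{\Psi}_{L^2(\G\,\vert\,\mathscr C)}\norm{\Phi}_{L^2_\e(\G\,\vert\,\mathscr C)}$ (Lemma \ref{lemma}(v)) applies. Concretely, write $T_j(s)=T_j(0)+T_j'(0)s+s^2g_j(s)$, so that $T_j(\Phi/R)=T_j(0)1+T_j'(0)\Phi/R+(\Phi/R)*G_j$ with $G_j=g_j\big(\lambda(\Phi/R)\big)(\Phi/R)$; spectral theory in $\mathbb B_a(L^2_\e(\G\,\vert\,\mathscr C))$ (the same device as Lemma \ref{Dix}) gives $\norm{G_j}_{L^2_\e}\leq\sup_{[-1,1]}|g_j|\cdot\norm{\Phi/R}_{L^2_\e}=O(j^4)$ by Markov's inequality for $T_j''$, then the strengthened Young inequality bounds $\norm{(\Phi/R)*G_j}_{L^\infty}$, and only \emph{then} does one integrate over the support $K^j$, picking up $\mu(K^j)=O(j^d)$. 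With this insertion your Chebyshev route goes through and is a genuine alternative to the paper's decomposition $u(n\Phi)=in\Phi+n\Phi*v(n\Phi)$ with its Taylor-tail estimate (it even yields exponent $d+4$ in place of $2d+2$). Two smaller corrections: convergence of the Chebyshev series in $\widetilde{\mathfrak D}$ must be deduced from this bound (agreement in ${\rm C^*_r}$ alone says nothing about $L^1$-convergence), and in the weighted case the growth estimate consumes only the polynomial bound $\nu\leq A(1+n)^{\delta}$ on $K^n$, not $\nu^{-1}\in L^p(\G)$, which the paper uses solely for symmetry. As written, however, the proposal's central estimate rests on an inequality that is false, so the proof is incomplete.
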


Let us mention that a similar calculus was constructed for group algebras in \cite{Di60} and for convolution algebras associated to finite-dimensional $C^*$-dynamical systems in \cite{LeNg061}. Variations have also appeared for weighted group algebras (see for example \cite{DLM04}). Functional calculi based on smooth functions have also been constructed in some other different (but related) contexts, such as \cite{KS94,Mo22}. The construction carried out in \cite{LeNg061} was impossible to generalize unaltered, and this led the authors to essentially abandon the study of the classical $L^1$-algebras in the follow-up work \cite{LeNg06}. In the present work, we demonstrate how to extend their results by adopting a very indirect strategy and via the use of a `strengthened' Young's inequality. 

Indeed, in the next lemma, we make use of the following norms (they will be re-introduced later in the text): 
\begin{align*}
    \norm{\Phi}_{L^\infty(\G\,\vert\,\mathscr C)}&= {\rm essup}_{x\in \G}\norm{\Phi(x)}_{\mathfrak C_x} ,\\
    \norm{\Phi}_{L^2(\G\,\vert\,\mathscr C)}&= \Big(\int_\G \norm{\Phi(x)}_{\mathfrak C_x}^2\,\d x\Big)^{1/2},\\
    \norm{\Phi}_{L^2_\e(\G\,\vert\,\mathscr C)}&= \norm{\int_\G\Phi(x)^\bu\bu\Phi(x)\,\d x}_{\mathfrak C_\e}^{1/2}.
\end{align*} 
It is fairly clear that $\norm{\cdot}_{L^2_\e(\G\,\vert\,\mathscr C)}\leq \norm{\cdot}_{L^2(\G\,\vert\,\mathscr C)}$, which makes the next inequality stronger than the usual Young's inequality (obtained by replacing $\norm{\cdot}_{L^2_\e(\G\,\vert\,\mathscr C)}$ with $\norm{\cdot}_{L^2(\G\,\vert\,\mathscr C)}$), hence justifying the name.

\begin{lem}[Strengthened Young's inequality]
     Let $\Phi,\Psi\in C_{\rm c}(\G\,\vert\,\mathscr C)$. Then
    $$\norm{\Psi*\Phi}_{L^\infty(\G\,\vert\,\mathscr C)}\leq \norm{\Psi}_{L^2(\G\,\vert\,\mathscr C)}\norm{\Phi}_{L^2_\e(\G\,\vert\,\mathscr C)}.$$
\end{lem}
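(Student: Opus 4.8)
The plan is to reduce the statement to a fiberwise estimate at each point $x\in\G$, which I obtain by faithfully representing the Fell bundle on a Hilbert space and then invoking the ordinary Cauchy--Schwarz inequality in $L^2(\G;\h)$. Since $\G$ has polynomial growth it is unimodular, so I may assume $\Delta\equiv 1$ and freely use the inversion-invariance of Haar measure. First I would fix a faithful representation $\pi_\e$ of the $C^*$-algebra $\mathfrak C_\e$ on a Hilbert space $\h$ and extend it to a representation $\{\pi_x:\mathfrak C_x\to\BofH\}_{x\in\G}$ of the bundle, satisfying $\pi_y(a)\pi_z(b)=\pi_{yz}(a\bu b)$ and $\pi_x(a)^*=\pi_{x^{-1}}(a^\bu)$. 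The $C^*$-identity in the bundle then forces every $\pi_x$ to be isometric, since
$$\norm{\pi_x(a)}^2=\norm{\pi_x(a)^*\pi_x(a)}=\norm{\pi_\e(a^\bu\bu a)}=\norm{a^\bu\bu a}_{\mathfrak C_\e}=\norm{a}_{\mathfrak C_x}^2.$$
Fixing $x$ and pulling the bounded map $\pi_x$ through the Bochner integral $(\Psi*\Phi)(x)=\int_\G\Psi(y)\bu\Phi(y^{-1}x)\,\d y$, multiplicativity yields
$$\pi_x\big((\Psi*\Phi)(x)\big)=\int_\G \pi_y\big(\Psi(y)\big)\,\pi_{y^{-1}x}\big(\Phi(y^{-1}x)\big)\,\d y.$$

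Next I would estimate the operator norm of this integral through its matrix coefficients. For unit vectors $\xi,\eta\in\h$, rewriting the integrand as $\langle \pi_y(\Psi(y))^*\eta,\,\pi_{y^{-1}x}(\Phi(y^{-1}x))\xi\rangle$ and applying Cauchy--Schwarz to the two $\h$-valued functions of $y$ gives
$$\big|\langle\eta,\pi_x((\Psi*\Phi)(x))\xi\rangle\big|\leq \Big(\int_\G\norm{\pi_y(\Psi(y))^*\eta}^2\,\d y\Big)^{1/2}\Big(\int_\G\norm{\pi_{y^{-1}x}(\Phi(y^{-1}x))\xi}^2\,\d y\Big)^{1/2}.$$
The first factor I bound crudely: using $\norm{\pi_y(\Psi(y))^*\eta}\leq\norm{\pi_y(\Psi(y))}\,\norm{\eta}$ together with isometry of $\pi_y$, its square is at most $\norm{\eta}^2\int_\G\norm{\Psi(y)}_{\mathfrak C_y}^2\,\d y=\norm{\eta}^2\,\norm{\Psi}_{L^2(\G\,\vert\,\mathscr C)}^2$. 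For the second factor I would instead keep the structure intact: by the $C^*$-identity $\pi_z(b)^*\pi_z(b)=\pi_\e(b^\bu\bu b)$ and the substitution $z=y^{-1}x$ (measure-preserving by unimodularity), it equals
$$\Big\langle\xi,\pi_\e\Big(\int_\G\Phi(z)^\bu\bu\Phi(z)\,\d z\Big)\xi\Big\rangle\leq\norm{\int_\G\Phi(z)^\bu\bu\Phi(z)\,\d z}_{\mathfrak C_\e}\norm{\xi}^2=\norm{\Phi}_{L^2_\e(\G\,\vert\,\mathscr C)}^2\,\norm{\xi}^2.$$
Taking the supremum over unit $\xi,\eta$, using the isometry of $\pi_x$ to identify $\norm{\pi_x((\Psi*\Phi)(x))}$ with $\norm{(\Psi*\Phi)(x)}_{\mathfrak C_x}$, and then the supremum over $x\in\G$, delivers the claimed bound.

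The genuinely load-bearing ingredient, and the one I would flag as the main obstacle, is the existence of a representation of the bundle that is faithful on the unit fiber $\mathfrak C_\e$ (whence isometric on every fiber, as computed above); this is where the $C^*$-algebraic bundle structure is used and must be cited or constructed, e.g.\ via the representation induced from $\pi_\e$ on $L^2_\e(\G\,\vert\,\mathscr C)\otimes_{\pi_\e}\h$. The remaining steps are routine once this is in hand. It is worth noting that the asymmetry of the inequality is explained transparently by the argument: the $\Phi$-factor reassembles into a single positive element $\int_\G\Phi^\bu\bu\Phi\,\d z$ of $\mathfrak C_\e$, producing the sharper norm $\norm{\Phi}_{L^2_\e}$, whereas the $\Psi$-factor is handled by the coarse fiberwise bound, which only recovers the full $\norm{\Psi}_{L^2}$; retaining the structure on that side as well would instead produce $\norm{\Psi^*}_{L^2_\e}$, so the stated form is the one best suited to the later applications.
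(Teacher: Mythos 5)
Your proof is correct and follows essentially the same route as the paper's: the paper fixes an isometric non-degenerate representation $\pi$ of the bundle, bounds $\norm{\pi\big((\Psi*\Phi)(x)\big)\xi}_{\mathcal H}$ via Cauchy--Schwarz with the $\Psi$-factor estimated crudely through the fiber norms (Lemma \ref{lemma}(v) with $p=q=2$), and controls the $\Phi$-factor by reassembling $\int_\G\norm{\pi\big(\Phi(z)\big)\xi}_{\mathcal H}^2\,\d z$ into $\big\langle\pi\big(\int_\G\Phi(z)^\bu\bu\Phi(z)\,\d z\big)\xi,\xi\big\rangle$, which is exactly the identification $\norm{\cdot}_{\pi,2}=\norm{\cdot}_{L^2_\e(\G\,\vert\,\mathscr C)}$ of Lemma \ref{lemma}(ii). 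Your only departures --- working with matrix coefficients $\langle\eta,\cdot\,\xi\rangle$ instead of vector norms, and constructing the isometric representation by inducing from a faithful representation of $\mathfrak C_\e$ rather than simply fixing one --- are cosmetic.
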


The above inequality helps with the major difficulty that appeared before, namely the lack of appropriate norm estimates. The usual Young's inequality relates the $L^p$-norms, but the $C^*$-completion of $L^1(\G\,\vert\,\mathscr C)$, here denoted ${\rm C^*}(\G\,\vert\,\mathscr C)$, is represented in the Hilbert $C^*$-module $L^2_\e(\G\,\vert\,\mathscr C)$, which differs from the more obvious $L^2$-space, $L^2(\G\,\vert\,\mathscr C)$. This means that the classical Young's inequality cannot be used to infer anything about the operator norm and $C^*$-theory becomes unavailable. The introduction of new norms (and therefore new normed algebras) in \cite{LeNg06} responded to this necessity. Our inequality restores the communication between the $L^p$ norms and the operator norm and thus allows us to obtain results inside the realm of generalized $L^1$-algebras.

We now shift our attention to the many applications this functional calculus has. We will use our functional calculus to study properties of spectral nature like symmetry \cite{LP79,Lu79} or norm-controlled inversion \cite{GK13,GK14}, ideal-theoretic properties like $^*$-regularity \cite{Ba83,Bo80,LeNg04} and representation-theoretic properties like the Wiener property \cite{HJLP76,Le75,Lu79,FGLL03}. 

To be more precise, a reduced Banach $^*$-algebra is said to \begin{itemize}
    \item[(i)] be symmetric if it is closed under the holomorphic functional calculus of its universal $C^*$-completion,
    \item[(ii)] admit norm-controlled inversion if the norm of the inverse elements can be controlled using the universal $C^*$-norm,
    \item[(iii)] be $^*$-regular if its $^*$-structure space is that of its universal $C^*$-completion (or, more precisely, the canonical map is an homeomorphism),
    \item[(iv)] have Wiener property (inspired by the very classical Wiener's tauberian theorem \cite{Wi32}) if every closed, proper ideal is annihilated by a topologically irreducible $^*$-representation.
\end{itemize}

In the setting of groups with polynomial growth, we first mention the notable symmetry result of Losert \cite{Lo01}. The ideal theory of group algebras has been studied on \cite{Ba81,Lu80} (mostly under the assumption of symmetry) and the case of weighted group algebras in \cite{DLM04,FGLL03,MM98}. Norm-controlled inversion was studied for group algebras in \cite{Ni99,SaSh19}. Let us also mention that a $^*$-regular algebra has a unique $C^*$-norm and the question of uniqueness of $C^*$-norms in group algebras (or even group rings) has been studied in \cite{AlKy19,Bo84}.

For generalized convolution algebras, a lot of progress has been made in the study of symmetry \cite{FJM,Ku79,Le73,LP79}, but results about the Wiener property or $^*$-regularity of these algebras are few and far apart \cite{Le11,LeNg061}. In particular, results beyond the algebras of finite dimensional $C^*$-dynamical systems seem to be scarce. The only notable exception is \cite[Theorem 1]{Ki82}, which proves the $^*$-regularity of algebras associated with $C^*$-dynamical systems, provided that the acting group is abelian. In that context, we proved the following, far-reaching theorem.

\begin{thm}\label{main2}
    Let $\G$ be a locally compact group with polynomial growth of order $d$. Let $\mathfrak D$ denote either $L^{1}(\G\,\vert\,\mathscr C)$ or $L^{1,\nu}(\G\,\vert\,\mathscr C)$, provided that there exists a polynomial weight $\nu$ on $\G$ such that $\nu^{-1}$ belongs to $L^p(\G)$, for some $0<p<\infty$. Then $\mathfrak D$ is $^*$-regular. Moreover, if $\mathfrak D$ is also symmetric, then it has the Wiener property and for every unital Banach algebra $\B$ and each continuous unital homomorphism $\varphi:\widetilde{\mathfrak D}\to\B$, we have \begin{enumerate}
        \item[(a)] If $\Phi \in \widetilde{\mathfrak D}$ is normal, $${\rm Spec}_{\widetilde{\mathfrak D}/I}(\Phi+I)={\rm Spec}_{\B}\big(\varphi(\Phi)\big).$$ 
        \item[(b)] For a general $\Phi \in \widetilde{\mathfrak D}$, $${\rm Spec}_{\widetilde{\mathfrak D}/I}(\Phi+I)={\rm Spec}_{\B}\big(\varphi(\Phi)\big)\cup \overline{{\rm Spec}_{\B}\big(\varphi(\Phi^*)\big)},$$
    \end{enumerate} where $I={\rm ker}\,\varphi$.
\end{thm}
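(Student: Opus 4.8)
The plan is to treat the three assertions separately, since only the last two require symmetry. For $^*$-regularity I would use the smooth functional calculus of the previous theorem as a localization device. Recall that $^*$-regularity amounts to the canonical continuous bijection ${\rm Prim}(C^*(\mathfrak D))\to{\rm Prim}_*(\mathfrak D)$ between the primitive ideal space of the universal $C^*$-completion $C^*(\mathfrak D)$ and the $^*$-primitive ideal space of $\mathfrak D$ being a homeomorphism; as one direction is automatic, only continuity of the inverse, i.e. agreement of the two hull-kernel closures, must be checked. So I would take a family of topologically irreducible $^*$-representations $(\pi_\alpha)$ and one more such representation $\pi$ whose kernel in $\mathfrak D$ contains $\bigcap_\alpha{\rm ker}\,\pi_\alpha$, and aim at the same containment of $C^*$-kernels. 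By splitting into real and imaginary parts it suffices to show $\pi(b)=0$ for self-adjoint $b$ with $\pi_\alpha(b)=0$ for all $\alpha$. I would approximate $b$ in $C^*$-norm by a self-adjoint $\Phi\in C_{\rm c}(\G\,\vert\,\mathscr C)$, so that $\norm{\pi_\alpha(\Phi)}<\varepsilon$ for every $\alpha$, and pick $f\in C_{\rm c}^\infty(\R)$ with $f(0)=0$, vanishing on $[-\varepsilon,\varepsilon]$ and equal to the identity on $[2\varepsilon,R]$ for $R>\norm{b}_{C^*(\mathfrak D)}$. The compatibility $\widetilde\pi(f(\Phi))=f(\pi(\Phi))$ then forces $\pi_\alpha(f(\Phi))=f(\pi_\alpha(\Phi))=0$, so $f(\Phi)\in\mathfrak D$ lies in $\bigcap_\alpha{\rm ker}\,\pi_\alpha$, hence in ${\rm ker}\,\pi$; unwinding $f(\pi(\Phi))=0$ confines ${\rm Spec}(\pi(\Phi))$ to a neighborhood of $0$ and gives $\norm{\pi(b)}=O(\varepsilon)$, so $\pi(b)=0$. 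Note this uses the first theorem but not symmetry, matching the unconditional claim.

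For the remaining statements I would first record the one consequence of symmetry that drives everything: a symmetric Banach $^*$-algebra is \emph{inverse-closed} in its enveloping $C^*$-algebra, so that ${\rm Spec}_{\widetilde{\mathfrak D}}(a)={\rm Spec}_{C^*(\mathfrak D)}(a)$ for all $a\in\widetilde{\mathfrak D}$ (for self-adjoint $a$ this follows from the equality of the spectral radius with the $C^*$-norm together with a Neumann-series argument, and the general case by factoring through $a^*a$ and $aa^*$). For the Wiener property I would again use the bijection of primitive ideal spaces to restate the goal: a proper closed two-sided ideal $I\subseteq\mathfrak D$ is annihilated by some topologically irreducible $^*$-representation precisely when its closure in $C^*(\mathfrak D)$ is proper, so the task is to rule out a $C^*$-dense proper ideal. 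This is the one genuinely Tauberian point, and it is where symmetry enters: using inverse-closedness and the smooth functional calculus I would manufacture, inside a putative $C^*$-dense ideal, self-adjoint elements acting as approximate units on prescribed elements of $\mathfrak D$ in the $\mathfrak D$-norm (not merely the $C^*$-norm), forcing $I=\mathfrak D$, following the strategy of Ludwig for symmetric group algebras with the polynomial-growth functional calculus replacing the classical regularization. When $\mathfrak D$ is unital this collapses to a line: a $C^*$-dense ideal contains some $b$ with $\norm{b-1}_{C^*(\mathfrak D)}<1$, whence $b^*b\in I$ is $C^*$-invertible, hence invertible in $\mathfrak D$ by inverse-closedness, so $1\in I$.

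For the spectral identities (a) and (b) the key observation is that polynomial growth passes through any continuous unital homomorphism $\varphi:\widetilde{\mathfrak D}\to\B$: since $\norm{e^{it\varphi(\Phi)}}_{\B}=\norm{\varphi(e^{it\Phi})}_{\B}\le\norm{\varphi}\,\norm{e^{it\Phi}}_{\widetilde{\mathfrak D}}=O(|t|^n)$, the element $\varphi(\Phi)$ generates a polynomially bounded one-parameter group in $\B$, hence has real spectrum and its own smooth functional calculus, and the integral formula yields $\varphi(f(\Phi))=f(\varphi(\Phi))$. Writing $I={\rm ker}\,\varphi$ and letting $\bar\varphi:\widetilde{\mathfrak D}/I\hookrightarrow\B$ be the induced injection, the inclusion ${\rm Spec}_{\B}(\varphi(\Phi))\subseteq{\rm Spec}_{\widetilde{\mathfrak D}/I}(\Phi+I)$ is automatic, and I would prove the reverse for self-adjoint $\Phi$ first: given $\lambda\notin{\rm Spec}_{\B}(\varphi(\Phi))$, choose $g\in C_{\rm c}^\infty(\R)$ with $(s-\lambda)g(s)\equiv1$ near the (real, compact) spectrum of $\varphi(\Phi)$; then $g(\Phi)\in\widetilde{\mathfrak D}$ satisfies $(\Phi-\lambda)g(\Phi)-1\in I$ and $g(\Phi)(\Phi-\lambda)-1\in I$, so $\Phi+I-\lambda$ is invertible in $\widetilde{\mathfrak D}/I$. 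The normal case follows by running the same localization with a joint functional calculus in the two commuting self-adjoint parts of $\Phi$, and the general case of (b) follows algebraically: $\Phi+I-\lambda$ is invertible in $\widetilde{\mathfrak D}/I$ if and only if both $(\Phi-\lambda)^*(\Phi-\lambda)+I$ and $(\Phi-\lambda)(\Phi-\lambda)^*+I$ are invertible there, and applying the self-adjoint case to these two positive elements and factoring their images in $\B$ yields precisely ${\rm Spec}_{\B}(\varphi(\Phi))\cup\overline{{\rm Spec}_{\B}(\varphi(\Phi^*))}$.

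The main obstacle I anticipate is that the functional calculus of the previous theorem is furnished only for $\Phi\in C_{\rm c}(\G\,\vert\,\mathscr C)$, whereas (a) and (b) quantify over all of $\widetilde{\mathfrak D}$; in particular the self-adjoint elements $(\Phi-\lambda)^*(\Phi-\lambda)$ entering the reduction of (b) are general self-adjoint elements, and the localization above needs $g(\Phi)\in\widetilde{\mathfrak D}$ for such elements. Bridging this gap is exactly where symmetry must be used a second time: I would leverage the inverse-closedness in $C^*(\mathfrak D)$ to extend the polynomial growth, and hence the compatible smooth functional calculus, from the dense $^*$-subalgebra $C_{\rm c}(\G\,\vert\,\mathscr C)$ to all self-adjoint elements, after which the preceding argument applies verbatim. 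Making this extension rigorous—controlling $\norm{e^{it\Phi}}_{\widetilde{\mathfrak D}}$ for arbitrary self-adjoint $\Phi$ rather than merely on a dense set—together with the non-density step for the Wiener property in the non-unital case, is the delicate part, and I expect it, rather than any of the soft ideal-theoretic manipulations, to be the technical heart of the proof.
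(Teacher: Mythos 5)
Your $^*$-regularity argument is sound and is a legitimate alternative to the paper's: you prove directly the kernel-transfer criterion (if $\bigcap_\alpha\ker_{\mathfrak D}\pi_\alpha\subset\ker_{\mathfrak D}\pi$ then the same holds for the $C^*$-kernels, which is the characterization in \cite[Satz 2]{BLSV78}), whereas the paper shows that $\mathfrak D$ and all of its quotients are $C^*$-unique and invokes \cite[Theorem 10.5.18]{Pa94}; both arguments run on the same engine, namely Theorem \ref{smoothfuncalc} applied to a dense set of self-adjoint elements (cf.\ Lemma \ref{spinvariance}, Corollary \ref{locallyreg}). For the Wiener property your plan coincides with the paper's (Ludwig's minimal-ideal machinery, Theorem \ref{minideals}, with functional-calculus regularization replacing the classical one), but the step you defer as ``delicate'' is in fact the entire mathematical content: one must show that elements $f(\Psi_\alpha)$ built from a bounded approximate identity act as approximate units \emph{in the $\mathfrak D$-norm}, i.e.\ Lemma \ref{appidentity}, and this requires a polynomial bound on $\norm{\Phi_0*e^{it\Psi_\alpha}*\Phi_1}_{\mathfrak D}$ that is uniform in $\alpha$ -- the estimate occupying most of Subsection \ref{wiener}. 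Your one-line unital argument is correct but covers essentially only discrete $\G$, so as written the Wiener claim is not proved.

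The genuine gap is in your treatment of (a) and (b). You correctly identify that your localization argument needs a smooth functional calculus for \emph{arbitrary} self-adjoint elements of $\widetilde{\mathfrak D}$, and you propose to obtain it by using symmetry and inverse-closedness in ${\rm C^*}(\mathfrak D)$ to ``extend the polynomial growth'' from $C_{\rm c}(\G\,\vert\,\mathscr C)_{\rm sa}$ to all self-adjoint elements. This step is not merely delicate; it is false. Take $\G=\Z$ with the trivial line bundle, so $\mathfrak D=\ell^1(\Z)\cong A(\T)$: this algebra is symmetric and inverse-closed in ${\rm C^*}(\Z)=C(\T)$ (Wiener's lemma), yet by Katznelson's converse to the Wiener--L\'evy theorem only real-analytic functions operate on the real-valued elements of $A(\T)$. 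If every self-adjoint $f\in\ell^1(\Z)$ had $\norm{e^{itf}}_{\ell^1(\Z)}=O(|t|^{d_f})$ for some $d_f$, then Theorem \ref{DixBai} would make every compactly supported $C^{d_f+2}$ function operate on $f$ (the Bochner-integral calculus agrees with pointwise composition of Gelfand transforms), contradicting the existence, for any fixed smooth non-analytic $F$, of a real $f$ with $F\circ\widehat{f}\notin A(\T)$. So polynomial growth genuinely lives only on a dense subalgebra, and no amount of symmetry propagates it. This is exactly why the paper does not argue directly: it deduces (a) and (b) from \cite[Theorem 2.2]{Ba87}, whose hypotheses are symmetry plus the fact that $\mathfrak D$ is $^*$-quotient inverse closed, the latter following from local regularity (\cite[Theorem 2.1]{Ba87} together with Corollary \ref{locallyreg}) -- and local regularity only requires the calculus on the dense set $C_{\rm c}(\G\,\vert\,\mathscr C)_{\rm sa}$. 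To repair your proof you would need an argument of Barnes' type that never leaves the dense subalgebra, rather than an extension of the calculus beyond it.
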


Here the utility of $L^{1,\nu}(\G\,\vert\,\mathscr C)$ is exemplified best. The existence of such $\nu$ is automatic if the group is compactly generated or when $\G$ can be saturated by an increasing sequence of compact subgroups (Examples \ref{construction}, \ref{locallyfinite}) and in such cases, $L^{1,\nu}(\G\,\vert\,\mathscr C)$ is automatically symmetric (Theorem \ref{closedness}), guaranteeing the conclusion of Theorem \ref{main2} in a dense subalgebra of $L^{1}(\G\,\vert\,\mathscr C)$. Furthermore, $L^{1}(\G\,\vert\,\mathscr C)$ is symmetric when $\G$ is nilpotent \cite{FJM} or compact (Theorem \ref{corcompact}), but there are examples of locally finite groups with nonsymmetric algebras (cf. \cite{Py82}), so the restriction to the weighted subalgebra is necessary in this case.

Lastly, we also proved the following result about norm-controlled inversion. It seems to be the first result known for convolution algebras where the algebra of coefficients is not $\CC$. 

\begin{thm}\label{main1}
   Let $\G$ be a locally compact group with polynomial growth of order $d$. Let also $\nu$ be a polynomial weight on $\G$ such that $\nu^{-1}$ belongs to $L^p(\G)$, for some $0<p<\infty$ and let $\mathfrak E=L^{1,\nu}(\G\,\vert\,\mathscr C)\cap L^\infty(\G\,\vert\,\mathscr C)$. Then the following are true. \begin{enumerate}
        \item[(i)] $\mathfrak E$ is a symmetric Banach $^*$-subalgebra of $L^{1}(\G\,\vert\,\mathscr C)$.
        \item[(ii)] If $\G$ is discrete and $\mathfrak E=\ell^{1,\nu}(\G\,\vert\,\mathscr C)$ has a unit, then it admits norm-controlled inversion.
    \end{enumerate}
\end{thm}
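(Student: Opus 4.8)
\emph{Overview and Banach \(^*\)-algebra structure.} Write $\norm{\cdot}_{1,\nu}$, $\norm{\cdot}_{2}$, $\norm{\cdot}_{2,\e}$, $\norm{\cdot}_{\infty}$ and $\norm{\cdot}_{C^*}$ for the norms of $L^{1,\nu}(\G\,\vert\,\mathscr C)$, $L^{2}(\G\,\vert\,\mathscr C)$, $L^{2}_{\e}(\G\,\vert\,\mathscr C)$, $L^{\infty}(\G\,\vert\,\mathscr C)$ and of the $C^*$-completion $C^*(\G\,\vert\,\mathscr C)$. The plan is to equip $\mathfrak E$ with $\norm{\Phi}_{\mathfrak E}=\norm{\Phi}_{1,\nu}+\norm{\Phi}_{\infty}$, check it is a Banach $^*$-algebra, then deduce (i) from Hulanicki's lemma and (ii) from a Gr\"ochenig--Klotz criterion. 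Completeness is automatic, $\mathfrak E$ being the intersection of two Banach spaces continuously embedded in the measurable cross-sections. Since $\G$ has polynomial growth it is unimodular, and with $\nu$ symmetric the involution is isometric on each factor, so $\mathfrak E$ is $^*$-closed. As $L^{1,\nu}(\G\,\vert\,\mathscr C)$ is already a Banach $^*$-algebra we have $\Psi*\Phi\in L^{1,\nu}(\G\,\vert\,\mathscr C)$; to control the $L^\infty$-part I would record the inclusion $\mathfrak E\hookrightarrow L^2(\G\,\vert\,\mathscr C)$, namely $\norm{\Phi}_{2}^{2}\leq\norm{\Phi}_{\infty}\norm{\Phi}_{1}\leq\norm{\Phi}_{\infty}\norm{\Phi}_{1,\nu}$ (using $\nu\geq 1$), and combine it with the strengthened Young's inequality and $\norm{\cdot}_{2,\e}\leq\norm{\cdot}_{2}$ to get $\norm{\Psi*\Phi}_{\infty}\leq\norm{\Psi}_{2}\norm{\Phi}_{2,\e}$, which is dominated by a product of $\mathfrak E$-norms. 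Hence $\mathfrak E$ is a Banach $^*$-subalgebra of $L^1(\G\,\vert\,\mathscr C)$.

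\emph{Symmetry via a spectral-radius identity.} By Hulanicki's lemma it suffices to prove $r_{\mathfrak E}(\Phi)=\norm{\Phi}_{C^*}$ for self-adjoint $\Phi$; since $\mathfrak E\hookrightarrow C^*(\G\,\vert\,\mathscr C)$ is injective one always has $r_{\mathfrak E}(\Phi)\geq\norm{\Phi}_{C^*}$, so the content is $\limsup_n\norm{\Phi^{n}}_{\mathfrak E}^{1/n}\leq\norm{\Phi}_{C^*}$. The engine is the following estimate for self-adjoint $\Phi\in C_{\rm c}(\G\,\vert\,\mathscr C)$ of support $K$. Writing $\Phi^{2n}=\Phi^{n}*\Phi^{n}$, the strengthened Young's inequality gives $\norm{\Phi^{2n}}_{\infty}\leq\norm{\Phi^{n}}_{2}\,\norm{\Phi^{n}}_{2,\e}$. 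The representation of $C^*(\G\,\vert\,\mathscr C)$ on the Hilbert module $L^2_\e(\G\,\vert\,\mathscr C)$ yields $\norm{\Phi^{n}}_{2,\e}\leq\norm{\Phi}_{C^*}^{\,n-1}\norm{\Phi}_{2,\e}$, while compact support together with the polynomial volume growth $|K^{n}|\leq c\,n^{d}$ converts the $L^2$-norm into the $L^\infty$-norm via $\norm{\Phi^{n}}_{2}\leq|K^{n}|^{1/2}\norm{\Phi^{n}}_{\infty}$. Iterating along $n=2^{k}$ makes every polynomial prefactor die under the $2^{k}$-th root, so $\norm{\Phi^{2^{k}}}_{\infty}^{1/2^{k}}\to\norm{\Phi}_{C^*}$. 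Finally $\norm{\Phi^{n}}_{1,\nu}\leq(\sup_{K^{n}}\nu)\,|K^{n}|^{1/2}\norm{\Phi^{n}}_{2}$, and since a polynomial weight grows only polynomially in $n$ on $K^{n}$ (the underlying length is subadditive, so $\sup_{K^{n}}\nu\leq c'n^{s}$), one obtains $\norm{\Phi^{n}}_{1,\nu}\leq P(n)\norm{\Phi^{n}}_{\infty}$. Altogether $\norm{\Phi^{n}}_{\mathfrak E}\leq Q(n)\,\norm{\Phi}_{C^*}^{\,n}\,C_{\Phi}$ for a polynomial $Q$ depending only on $d$ and $\nu$, whence $r_{\mathfrak E}(\Phi)=\norm{\Phi}_{C^*}$.

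\emph{The main obstacle.} This estimate is sharp precisely because of the $L^\infty$-factor and the compact support: absent the intersection with $L^\infty$ the only available bound is the submultiplicative $\norm{\Phi^{n}}_{1,\nu}\leq\norm{\Phi}_{1,\nu}^{n}$, which gives merely $r_{\mathfrak E}(\Phi)\leq\norm{\Phi}_{1,\nu}$ and cannot force the $C^*$-identity — exactly why $L^{1,\nu}(\G\,\vert\,\mathscr C)$ need not be symmetric while $\mathfrak E$ always is. The genuinely delicate step, which I expect to be the hardest, is upgrading $r_{\mathfrak E}(\Phi)=\norm{\Phi}_{C^*}$ from the dense subalgebra $C_{\rm c}(\G\,\vert\,\mathscr C)$ to all self-adjoint $\Phi\in\mathfrak E$: the spectral radius is only upper semicontinuous, so density does not transfer the identity for free. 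I would exploit that the polynomial $Q$ depends only on $d$ and $\nu$, and not on $\Phi$, to run Hulanicki's lemma through an approximation argument — equivalently, to show that the compactly supported elements form a spectrally invariant dense $^*$-subalgebra — thereby transferring symmetry (and, through the same growth estimate, the functional calculus of our first theorem) to all of $\mathfrak E$.

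\emph{Norm-controlled inversion.} For discrete $\G$ one has $\ell^{1,\nu}(\G\,\vert\,\mathscr C)\subseteq\ell^{\infty}(\G\,\vert\,\mathscr C)$, so $\mathfrak E=\ell^{1,\nu}(\G\,\vert\,\mathscr C)$, unital by hypothesis and symmetric by part (i). The plan for (ii) is to realise $\ell^{1,\nu}(\G\,\vert\,\mathscr C)$ as a differential $^*$-subalgebra of $C^*(\G\,\vert\,\mathscr C)$ and then invoke the Gr\"ochenig--Klotz theorem, which for symmetric differential subalgebras produces an explicit control function $h$ with $\norm{\Phi^{-1}}_{\mathfrak E}\leq h\big(\norm{\Phi}_{\mathfrak E},\norm{\Phi^{-1}}_{C^*}\big)$. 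Concretely, writing $\nu=(1+\tau)^{s}$ for a length function $\tau$, I would filter by the weighted norms attached to $\sigma_{m}=(1+\tau)^{m}$, $m\in\N$, and establish the graded Leibniz estimates $\norm{\Psi*\Phi}_{\sigma_{m}}\leq\sum_{j=0}^{m}\binom{m}{j}\norm{\Psi}_{\sigma_{j}}\norm{\Phi}_{\sigma_{m-j}}$, which follow from $\tau(xy)\leq\tau(x)+\tau(y)$ expanded binomially and the fibrewise submultiplicativity of the bundle norm; these are the Bratteli--Robinson conditions feeding the machinery. The main obstacle here is transporting that machinery, developed for scalar weighted $\ell^1$-algebras, to the Fell-bundle setting with a possibly noncommutative coefficient $C^*$-algebra $\mathfrak C_{\e}$: one must verify the differential estimates and the abstract inversion theorem with the $C^*$-valued fibrewise norms replacing the scalar modulus, which is precisely where coefficients beyond $\CC$ enter.
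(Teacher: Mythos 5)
Your part (i) contains a genuine gap, and you have in fact located it yourself: the spectral-radius identity is only established for self-adjoint $\Phi\in C_{\rm c}(\G\,\vert\,\mathscr C)$, and your proposed repair — that the polynomial $Q$ ``depends only on $d$ and $\nu$, and not on $\Phi$'' — is false. The polynomial $Q$ absorbs the factors $\mu(K^n)^{1/2}\leq c_K\, n^{d/2}$ and $\sup_{K^n}\nu\leq c'_K\, n^{s}$, and both constants $c_K, c'_K$ depend on the support $K$ of $\Phi$; for a general element of $\mathfrak E$ there is no compact support at all, and upper semicontinuity of the spectral radius runs in the useless direction (approximating $\Phi$ by compactly supported $\Phi_j$ only yields $\rho_{\mathfrak E}(\Phi)\geq\lim_j\norm{\Phi_j}_{{\rm C^*}(\G\,\vert\,\mathscr C)}$, which is the trivial inequality). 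Tellingly, your symmetry argument never uses the hypothesis $\nu^{-1}\in L^p(\G)$ — but that hypothesis is precisely what allows the paper to bypass compact supports. The paper first proves (Lemma \ref{diff}) that $\norm{\Phi}_{L^{1}(\G\,\vert\,\mathscr C)}\leq A\,\norm{\Phi}_{L^{\infty}(\G\,\vert\,\mathscr C)}^{1/(p+1)}\norm{\Phi}_{L^{1,\nu}(\G\,\vert\,\mathscr C)}^{p/(p+1)}$ for \emph{every} $\Phi\in L^{1,\nu}\cap L^{\infty}$, by splitting $\G$ into $U_a=\{\nu\leq a\}$ (whose measure is at most $a^p\norm{\nu^{-1}}_{L^p}^p$) and its complement and optimizing over $a$. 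Combining this with the polynomial-weight inequality \eqref{diffeq} and the strengthened Young inequality yields the estimate \eqref{gendiff}, namely $\norm{\Phi^4}_{\mathfrak E}\leq D\norm{\Phi}_{L^{2}_\e(\G\,\vert\,\mathscr C)}^{1/(p+1)}\norm{\Phi}_{\mathfrak E}^{(4p+3)/(p+1)}$, valid for \emph{all} $\Phi\in\mathfrak E$. Applying it to $\Phi^n$, taking $n$-th roots and letting $n\to\infty$ gives $\rho_{\mathfrak E}(\Phi)^4\leq\rho\big(\lambda(\Phi)\big)^{1/(p+1)}\rho_{\mathfrak E}(\Phi)^{(4p+3)/(p+1)}$, and since $4-(4p+3)/(p+1)=1/(p+1)>0$ this forces $\rho_{\mathfrak E}(\Phi)=\rho\big(\lambda(\Phi)\big)$ with no approximation step whatsoever; symmetry then follows from Barnes' lemma (Lemma \ref{barnes}) and the spectral invariance of $\mathbb B_a(L^2_\e(\G\,\vert\,\mathscr C))$ in $\mathbb B(L^2_\e(\G\,\vert\,\mathscr C))$.

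The same missing estimate undermines your part (ii). To be a differential $^*$-subalgebra of ${\rm C^*}(\G\,\vert\,\mathscr C)$ in the Gr\"ochenig--Klotz sense one needs an inequality that \emph{mixes} the $\mathfrak E$-norm with the $C^*$-norm, e.g. $\norm{\Phi^2}_{\mathfrak E}\leq C\norm{\Phi}_{\mathfrak E}^{2-\theta}\norm{\Phi}_{{\rm C^*}(\G\,\vert\,\mathscr C)}^{\theta}$; your graded Leibniz estimates for $\sigma_m=(1+\tau)^m$ relate the weighted $\ell^1$-norms only among themselves, so the $C^*$-norm never enters and the inversion machinery cannot be invoked as described. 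In the paper the role of the differential estimate is played exactly by \eqref{gendiff}, with exponent $\theta=(4p+3)/(p+1)<4$, converted in the discrete case into an estimate against the $C^*$-norm via $\norm{\cdot}_{\ell^2_\e(\G\,\vert\,\mathscr C)}\leq\norm{\cdot}_{{\rm C^*}(\G\,\vert\,\mathscr C)}$ (Lemma \ref{lemma}(vi)); norm control then follows by iterating along powers $4^k$, using the $4$-adic expansion of $n$, and applying the result to $\Psi=1-\Phi^**\Phi/\norm{\Phi}^2_{{\rm C^*}(\G\,\vert\,\mathscr C)}$ as in Gr\"ochenig--Klotz. In short, the one estimate your proposal lacks — Lemma \ref{diff}, hence \eqref{gendiff} — is the engine of both halves of the theorem, and without it neither the symmetry of all of $\mathfrak E$ nor the norm-controlled inversion can be reached along the route you describe.
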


We now briefly describe the content of the article. Section \ref{prem} contains preliminaries. We mostly fix notation and prove the Dixmier-Baillet theorem, which we use to construct our functional calculus. Section \ref{mainsec} starts with us proving Lemma \ref{lemma}, which is our main computational lemma and contains the strengthened Young's inequality. The rest of the section is then devoted to deriving the growth estimate of the $1$-parameter groups $e^{it\Phi}$ in the case of $L^{1}(\G\,\vert\,\mathscr C)$ and constructing the Dixmier-Baillet calculus. In Section \ref{symmsub} we do the same but for $L^{1,\nu}(\G\,\vert\,\mathscr C)$, after showing that this algebra is both symmetric and inverse-closed in $L^{1}(\G\,\vert\,\mathscr C)$. In Section \ref{consequences} we derive our applications and Theorem \ref{main2}, based on the results we obtained and some well-established methods. Each part of the theorem is treated in a different subsection. In these subsections we also derive some other (smaller) properties of these algebras, not mentioned here. Subsection \ref{normcontrolled} is somewhat different from the rest, as it does not rely on the functional calculus we developed, but on the norm estimates. It is devoted to introducing the algebra $\mathfrak E$ and proving Theorem \ref{main1}. Finally, Section \ref{spectralradius} is an appendix, where we use Lemma \ref{lemma} to show that $C_{\rm c}(\G\,\vert\,\mathscr C)$ is quasi-symmetric in the subexponential growth case and that compact groups are hypersymmetric. This last fact both extends the applicability of our main theorem and helps us provide the first known examples of hypersymmetric groups with non-symmetric discretizations.

%-------------------------------------------------------------------------------------------------------
\section{Preliminaries}\label{prem}
%-------------------------------------------------------------------------------------------------------

Let $\B$ be a Banach algebra. $\B(b_1,\ldots, b_n)$ denotes the closed subalgebra of $\B$ generated by the elements $b_1,\ldots, b_n\in\B$. The set of invertible elements in $\B$ is denoted by ${\rm Inv}(\B)$. If $\B$ has an involution, $\B_{\rm sa}$ denotes the set of self-adjoint elements in $\B$. The Banach algebra of bounded operators over the Banach space $\mathcal X$ is denoted by $\mathbb B(\mathcal X)$. If $\B$ is a commutative Banach algebra with spectrum $\Delta$, then $\hat b\in C_0(\Delta)$ denotes the Gelfand transform of $b\in \B$. In particular, the Fourier transform of a complex function $f:\mathbb R\to\CC$ is $$\widehat{f}(t)=\int_{\mathbb R}f(x)e^{itx}\d x.$$

\begin{defn}\label{unitization}
    Let $\B$ be a Banach $^*$-algebra. If $\B$ is unital, we set $\widetilde{\B}=\B$. Otherwise, $\widetilde{\B}=\B\oplus \CC$ is the smallest unitization of $\B$, endowed with the norm $\norm{b+r1}_{\widetilde{\B}}=\norm{b}_{{\B}}+|r|$.
\end{defn}

\begin{rem}
    If $\Pi:\B\to\BofH$ is a non-degenerate $^*$-representation, then it extends naturally to a non-degenerate $^*$-representation $\widetilde\Pi:\widetilde\B\to\BofH$, defined by $\widetilde\Pi(b+r1)=\Pi(b)+r{\rm id}_{\mathcal H}$.
\end{rem}

As usual, ${\rm Spec}_{\B}(b)=\{\lambda\in\CC\mid b-\lambda1\textup{ is not invertible in }\widetilde\B\}$ will denote the spectrum of an element $b\in\B$, while $$\rho_\B(b)=\sup\{|\lambda|\mid \lambda\in {\rm Spec}_{\B}(b) \}$$ denotes its spectral radius. Gelfand's formula for the spectral radius says that $$\rho_\B(b)=\lim_{n\to\infty} \norm{b^{n}}_{\B}^{1/n}.$$

\begin{defn}
    Let $\B$ be a Banach $^*$-algebra. An element $b\in\B$ is said to have \emph{polynomial growth of order $d$} if $$\norm{e^{itb}}_{\widetilde\B}=O(|t|^d),\quad\textup{ as }|t|\to\infty.$$
\end{defn}

\begin{rem}
    Barnes \cite{Ba89} defines and studies this property in the context of the algebra $\mathbb B(\mathcal X)$ of operators on the Banach space $\mathcal X$. In particular, he provides many examples of operators with this property. Barnes' approach is equivalent to ours, as we can consider any Banach $^*$-algebra $\B$ as represented on $\widetilde{\B}$ via left multiplication.
\end{rem}

The key idea for us is that the self-adjoint elements of polynomial growth admit a smooth functional calculus. The concrete fact is the following theorem, attributed to Dixmier \cite[Lemme 7]{Di60} and Baillet \cite[Théorème 1]{MB79}. We include its proof here for two reasons: for the convenience of the reader and because it lies at the core of our argument.

\begin{thm}\label{DixBai}
    Let $\B$ be a Banach $^*$-algebra, and let $b\in\B$ be a self-adjoint element with polynomial growth of order $d$. Let $f:\mathbb R\to \mathbb C$ be a complex function that admits $d+2$ continuous and integrable derivatives. Let $\widehat{f}$ be the Fourier transform of $f$. Then the following is true.\begin{enumerate}
        \item[(i)] The following Bochner integral exists in $\widetilde{\B}(b,1)$: \begin{equation*}
            f(b)=\frac{1}{2\pi}\int_{\mathbb R} \widehat{f}(t)e^{itb}\d t.
        \end{equation*} Moreover, if $f(0)=0$, then $f(b)\in \B(b)$.
        \item[(ii)] For any non-degenerate $^*$-representation $\Pi:\widetilde{\B}(b,1)\to\BofH$, we have ${\Pi}\big(f(b)\big)=f\big(\Pi(b)\big)$. 
        \item[(iii)] ${\rm Spec}_{\widetilde{\B}(b,1)}\big(f(b)\big)=f\big({\rm Spec}_{\widetilde{\B}(b,1)}(b)\big)$.
        \item[(iv)] If $\B(b,1)$ is semisimple, then there is a $^*$-homomorphism $\varphi_b:C_{\rm c}^{d+2}(\mathbb R)\to \widetilde{\B}(b,1)$ defined by $\varphi_b(f)=f(b)$,  such that $\varphi_b(g)=1$ if $g\equiv 1$ on a neighborhood of ${\rm Spec}_{\widetilde{\B}(b,1)}(b)$ and $\varphi_b(g)=b$ if $g(x)=x$ on a neighborhood of ${\rm Spec}_{\widetilde{\B}(b,1)}(b)$.
        \item[(v)] For any $R>0$, there exists $M>0$ such that $\norm{\varphi_b(f)}_{\widetilde{\B}}\leq M\sum_{k=0}^{d+2}\norm{f^{(k)}}_{C_{0}(\mathbb R)}$, for all $f$ with support contained in $[-R,R]$. 
    \end{enumerate}
\end{thm}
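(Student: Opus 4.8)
The plan is to reduce everything to two scalar facts about the Fourier transform and then exploit the commutativity of $\widetilde{\B}(b,1)$. The two scalar inputs are, first, the decay estimate obtained by integrating by parts $d+2$ times: since $\widehat{f^{(k)}}(t)=(-it)^k\widehat f(t)$ and each $f^{(k)}$ is integrable, one gets $|t|^k|\widehat f(t)|\le\|f^{(k)}\|_{L^1}$, so that $|\widehat f(t)|$ decays like $|t|^{-(d+2)}$ at infinity while staying bounded by $\|f\|_{L^1}$ near the origin; and second, the scalar inversion formula, which recovers $f(\lambda)=\frac1{2\pi}\int_\R\widehat f(t)e^{it\lambda}\,\d t$ for real $\lambda$. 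Everything else is bookkeeping built on top of these.

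For (i) and (v) I would argue analytically. The map $t\mapsto \widehat f(t)e^{itb}$ is norm-continuous, and by the polynomial-growth hypothesis $\|e^{itb}\|_{\widetilde\B}\le C(1+|t|)^d$; combined with $|\widehat f(t)|\lesssim(1+|t|)^{-(d+2)}$ the integrand is dominated near infinity by $|t|^{-2}$ and is bounded near the origin, so the Bochner integral converges absolutely, and since each $e^{itb}$ lies in the closed algebra $\widetilde\B(b,1)$, so does $f(b)$. For the refinement when $f(0)=0$, I would write $e^{itb}=1+(e^{itb}-1)$: the constant part contributes $\frac1{2\pi}\big(\int\widehat f(t)\,\d t\big)1=f(0)1=0$, while $e^{itb}-1\in\B(b)$ for every $t$ and the remaining integral converges in the closed subalgebra $\B(b)$, whence $f(b)\in\B(b)$. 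Statement (v) is the same estimate made quantitative: using $|\widehat f(t)|\le 2R\|f\|_{\infty}$ and $|\widehat f(t)|\le 2R\|f^{(d+2)}\|_{\infty}|t|^{-(d+2)}$ (both via ${\rm supp}\,f\subseteq[-R,R]$ and integration by parts), splitting $\int(1+|t|)^d|\widehat f(t)|\,\d t$ at $|t|=1$ yields a bound of the form $M(R,d)\big(\|f\|_\infty+\|f^{(d+2)}\|_\infty\big)$, which is dominated by $M\sum_{k=0}^{d+2}\|f^{(k)}\|_{C_0(\R)}$.

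The remaining parts are algebraic and rest on the observation that $\widetilde\B(b,1)$ is a commutative Banach $^*$-algebra, being generated by the self-adjoint elements $b$ and $1$. For (ii) I would pull the bounded $^*$-representation $\Pi$ through the Bochner integral, use $\Pi(e^{itb})=e^{it\Pi(b)}$, and recognize $\frac1{2\pi}\int\widehat f(t)e^{it\Pi(b)}\,\d t=f(\Pi(b))$ as the scalar inversion formula applied spectrally to the bounded self-adjoint operator $\Pi(b)$. The crucial step for (iii) is to first show ${\rm Spec}_{\widetilde\B(b,1)}(b)\subseteq\R$: for any character $\chi$ (automatically contractive) one has $|e^{it\chi(b)}|=|\chi(e^{itb})|\le\|e^{itb}\|=O(|t|^d)$ as $t\to\pm\infty$, and $t\mapsto e^{-t\,{\rm Im}\,\chi(b)}$ is polynomially bounded in both directions only if ${\rm Im}\,\chi(b)=0$. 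Once $\chi(b)$ is real, pulling $\chi$ through the integral gives $\chi(f(b))=f(\chi(b))$, and since the spectrum in a commutative unital Banach algebra is the range of the Gelfand transform, ${\rm Spec}(f(b))=f({\rm Spec}(b))$ follows. For (iv), semisimplicity means the Gelfand transform is injective, so the identities $\chi\big((fg)(b)\big)=f(\chi(b))g(\chi(b))=\chi(f(b))\chi(g(b))$ force $\varphi_b(fg)=\varphi_b(f)\varphi_b(g)$; the $^*$-property follows from $(e^{itb})^*=e^{-itb}$ together with $\overline{\widehat f(-t)}=\widehat{\bar f}(t)$ and a change of variables; and the two normalizations hold because $\chi(\varphi_b(g))=g(\chi(b))$ only sees the values of $g$ on ${\rm Spec}(b)$, so $g\equiv 1$ (resp. $g(x)=x$) near the spectrum forces $\varphi_b(g)=1$ (resp. $\varphi_b(g)=b$) after another appeal to injectivity of the Gelfand transform.

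I expect the main obstacle to be conceptual rather than computational, namely the realization that the polynomial-growth hypothesis is exactly what confines ${\rm Spec}_{\widetilde\B(b,1)}(b)$ to the real line. This is indispensable: $f$ is defined only on $\R$, so without the reality of $\chi(b)$ neither the spectral-mapping statement (iii) nor the evaluations $f(\chi(b))$ underlying multiplicativity and the normalizations in (iv) would even make sense. The only genuinely technical point is then the simultaneous control of the Fourier decay of $\widehat f$ against the growth of $e^{itb}$, which is what makes all the Bochner integrals converge and what ultimately fixes the required number $d+2$ of derivatives.
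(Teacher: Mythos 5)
Your proposal is correct, and its skeleton is the same as the paper's: convergence of the Bochner integral by playing the $(1+|t|)^{-(d+2)}$ decay of $\widehat f$ (from integration by parts) against the $O(|t|^d)$ growth of $e^{itb}$, the $f(0)=0$ refinement via $\int_\R\widehat f(t)\,\d t=0$ and $e^{itb}-1\in\B(b)$, and then Gelfand theory in the commutative algebra $\widetilde{\B}(b,1)$ for parts (ii)--(iv). There are, however, two places where your route genuinely differs, both to your advantage. First, for (iii) the paper simply applies its part (ii) to the characters $h$ of $\B(b,1)$; but characters of a Banach $^*$-algebra are not automatically $^*$-preserving, so that appeal is legitimate only once one knows $h(b)\in\R$. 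Your explicit argument --- $|e^{ith(b)}|=|h(e^{itb})|\le\norm{e^{itb}}=O(|t|^d)$ in both directions forces ${\rm Im}\,h(b)=0$ --- is exactly the justification the paper leaves implicit, and you are right to flag it as the conceptual heart of the matter. Second, in (iv) the paper fixes a faithful non-degenerate $^*$-representation $\Pi$ of $\B(b,1)$ and pushes the multiplicativity and $^*$-identities through $\Pi$, whereas you use injectivity of the Gelfand transform; your version matches the stated hypothesis more directly (semisimplicity of a commutative Banach algebra \emph{is} injectivity of the Gelfand transform, while existence of a faithful $^*$-representation is a priori a different condition), you correctly note that the $^*$-property must be checked by the direct computation with $\overline{\widehat f(-t)}=\widehat{\bar f}(t)$ rather than through characters, and, unlike the paper's proof, you also verify the normalizations $\varphi_b(g)=1$ and $\varphi_b(g)=b$, which the theorem asserts but whose proof the paper omits. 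The remaining discrepancies --- e.g.\ splitting the integral at $|t|=1$ in (v) instead of the paper's single bound $(1+|t|)^{d+2}|\widehat f(t)|\le 2R(d+2)!\sum_{k=0}^{d+2}\norm{f^{(k)}}_{C_0(\R)}$ --- are cosmetic.
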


\begin{proof} \emph{(i)} As $b$ has polynomial growth of order $d$, there is a constant $C>0$ such that 
$$
\norm{e^{itb}}_{\widetilde\B}\leq C(1+|t|)^d,\quad \textup{ for all }t\in\mathbb R,
$$ 
while the hypothesis on $f$ implies that 
$$
|\widehat{f}(t)|\leq \frac{A}{(1+|t|)^{d+2}},
$$ 
for some $A>0$. Then 
\begin{equation*}
    \int_{\mathbb R} \norm{\widehat{f}(t)e^{itb}}_{\widetilde\B}\d t \leq CA\int_{\mathbb R} \frac{(1+|t|)^d}{(1+|t|)^{d+2}}\d t=CA\int_{\mathbb R} \frac{1}{(1+|t|)^{2}}\d t=2CA, 
\end{equation*} so $f(b)\in \widetilde\B$. If $f(0)=0$, then $0=\int_{\mathbb R} \widehat{f}(t)\d t$ so $$f(b)=\frac{1}{2\pi}\int_{\mathbb R} \widehat{f}(t)(e^{itb}-1)\d t\in\B,$$ since $e^{itb}-1\in\B.$ 

\emph{(ii)} Given such a representation $\Pi$, we have 
\begin{equation}\label{calculation}
    \Pi\big(f(b)\big)=\Pi\Big(\frac{1}{2\pi}\int_{\mathbb R} \widehat{f}(t)e^{itb}\d t\Big)=\frac{1}{2\pi}\int_{\mathbb R} \widehat{f}(t)\Pi(e^{itb})\d t=\frac{1}{2\pi}\int_{\mathbb R} \widehat{f}(t)e^{it\Pi(b)}\d t,
\end{equation} 
and the right hand side corresponds to $f\big(\Pi(b)\big)$. Let $\A={\rm C^*}(1,\Pi(b))$ be the (commutative) $C^*$-algebra generated by $1$ and $\Pi(b)$. Now if $\chi$ is a character of $\A$, we see that
$$
\chi(\Pi\big(f(b)\big))\overset{\eqref{calculation}}{=}f(\chi\big(\Pi(b)\big))=\chi(f\big(\Pi(b)\big)),
$$ 
so $\chi(\Pi\big(f(b)\big))=\chi(f\big(\Pi(b)\big))$ holds for all the characters $\chi$ of $\A$ and hence $\Pi\big(f(b)\big)=f\big(\Pi(b)\big)$.

\smallskip
\emph{(iii)} Let $\Delta$ be the spectrum of $\B(b,1)$. By \emph{(ii)}, we have $${\rm Spec}_{\B(b,1)}\big(f(b)\big)=\{h\big(f(b)\big) \mid h\in\Delta\}=\{f\big(h(b)\big) \mid h\in\Delta\}=f\big({\rm Spec}_{\B(b,1)}(b)\big).$$

\emph{(iv)} Let $\Pi:\B(b,1)\to\BofH$ be a faithful non-degenerate $^*$-representation. For any $f,g\in C_{\rm c}^{d+2}(\mathbb R)$, we have 
$$
\Pi\big((f\cdot g)(b)\big)=(f\cdot g)\big(\Pi(b)\big)=f\big(\Pi(b)\big)g\big(\Pi(b)\big)=\Pi\big(f(b)\big)\Pi\big(g(b)\big)=\Pi\big(f(b)g(b)\big)
$$ 
and 
$$
\Pi\big(\overline{f}(b)\big)=\overline{f}\big(\Pi(b)\big)=f\big(\Pi(b)\big)^*\overset{\eqref{calculation}}{=}\Pi\big(f(b)\big)^*=\Pi\big(f(b)^*\big).
$$ 
So $\varphi_b$ is a $^*$-homomorphism.

\emph{(v)} We note that 
$$
(1+|t|)^{d+2}|\widehat{f}(t)|\leq 2R(d+2)!\sum_{k=0}^{d+2}\norm{f^{(k)}}_{C_{0}(\mathbb R)},
$$
holds for all $t\in\mathbb R$. The result then follows by repeating the same reasoning as in part \emph{(i)} with this more concrete bound. \end{proof}

\begin{rem}
    Suppose $\B$ is a unital Banach $^*$-algebra and $\varphi:C_{\rm c}^{d+2}(\mathbb R)\to \B(b,1)$ is a $^*$-homomorphism such that $\varphi(f)=1$ for some $f\in C_{\rm c}^{d+2}(\mathbb R)$. If $K={\rm Supp}(f)$, then $\varphi$ induces an $^*$-homomorphism $\varphi_0: C^{d+2}(K)\to \B$. 
\end{rem}

%-------------------------------------------------------------------------------------------------------
\section{Integrable cross-sections of polynomial growth}\label{mainsec}
%-------------------------------------------------------------------------------------------------------

From now on $\G$ will be a (Hausdorff) unimodular, locally compact group with unit $\e$ and Haar measure $d\mu(x)\equiv dx$. If $\G$ is compact, we assume that $\mu$ is normalized so that $\mu(\G)=1$. During most of the article, $\G$ will be assumed of \emph{polynomial growth of order $d$}, meaning that $$
\mu(K^n)=O(n^d),\quad\textup{ as }n\to\infty,
$$ for all relatively compact subsets $K\subset\G$. We will also fix a Fell bundle $\mathscr C\!=\bigsqcup_{x\in\G}\mathfrak C_x$ over $\G$. The algebra of integrable cross-sections $L^1(\G\,\vert\,\mathscr C)$ is a Banach $^*$-algebra and a completion of the space $C_{\rm c}(\G\,\vert\,\mathscr C)$ of continuous cross-sections with compact support. Its universal $C^*$-algebra completion is denoted by ${\rm C^*}(\G\,\vert\,\mathscr C)$. For the general theory of Fell bundles we followed \cite[Chapter VIII]{FD88}, to which we refer for details. We will only recall the product on $L^1(\G\,\vert\,\mathscr C)$, given by
\begin{equation}\label{broduct}
\big(\Phi*\Psi\big)(x)=\int_\G \Phi(y)\bu \Psi(y^{-1}x)\,\d y
\end{equation}
and its involution
\begin{equation}\label{inwol}
\Phi^*(x)=\Phi(x^{-1})^\bu\,,
\end{equation}
in terms of the operations $\big(\bu,^\bu\big)$ on the Fell bundle. We will also consider the $L^p$-spaces $L^p(\G\,\vert\,\mathscr C)$, endowed with the norms \begin{equation}
    \norm{\Phi}_{L^p(\G\,\vert\,\mathscr C)}=\left\{\begin{array}{ll}
\,\big(\int_\G \norm{\Phi(x)}_{\mathfrak C_x}^p \d x\big)^{1/p}    & \textup{if\ } p\in[1,\infty), \\
\,{\rm essup}_{x\in \G}\norm{\Phi(x)}_{\mathfrak C_x}     & \textup{if\ } p=\infty. \\
\end{array}\right.
\end{equation} If $\G$ is compact, $L^p(\G\,\vert\,\mathscr C)$ is a dense Banach $^*$-subalgebra of $L^1(\G\,\vert\,\mathscr C)$. In particular $C(\G\,\vert\,\mathscr C)$ is also a Banach $^*$-algebra when endowed with the $L^\infty$-norm. In this case, it is immediate that $p\leq q$ implies $L^q(\G\,\vert\,\mathscr C)\subset L^p(\G\,\vert\,\mathscr C)$. On the other hand, if $\G$ is discrete, we will write $\ell^p(\G\,\vert\,\mathscr C)$ instead of $L^p(\G\,\vert\,\mathscr C)$.

\begin{ex}\label{mainex}
    Let $\A$ be a $C^*$-algebra. A (continuous) twisted action of $\G$ on $\A$ is a pair $(\alpha,\omega)$ of continuous maps $\alpha:\G\to{\rm Aut}({\A})$, $\omega:\G\times\G\to \mathcal{UM}({\A})$, such that \begin{itemize}
        \item[(i)] $\alpha_x(\omega(y,z))\omega(x,yz)=\omega(x,y)\omega(xy,z)$,
        \item[(ii)] $\alpha_x\big(\alpha_y(a)\big)\omega(x,y)=\omega(x,y)\alpha_{xy}(a)$,
        \item[(iii)] $\omega(x,\e)=\omega(\e,y)=1, \alpha_\e={\rm id}_{{\A}}$,
    \end{itemize} for all $x,y,z\in\G$ and $a\in\A$. 

The quadruple $(\G,\A,\alpha,\omega)$ is called a \emph{twisted $C^*$-dynamical system}. Given such a twisted action, one usually forms the so-called \emph{twisted convolution algebra} $L^1_{\alpha,\omega}(\G,\A)$, consisting of all Bochner integrable functions $\Phi:\G\to\A$, endowed with the product 
\begin{equation}\label{convolution}
    \Phi*\Psi(x)=\int_\G \Phi(y)\alpha_y[\Psi(y^{-1}x)]\omega(y,y^{-1}x)\d y
\end{equation} and the involution \begin{equation}\label{involution}
    \Phi^*(x)=\omega(x,x^{-1})^*\alpha_x[\Phi(x^{-1})^*],
\end{equation} making $L^1_{\alpha,\omega}(\G,\A)$ a Banach $^*$-algebra under the norm $\norm{\Phi}_{L^1_{\alpha,\omega}(\G,\A)}=\int_\G\norm{\Phi(x)}_{\A}\d x$. When the twist is trivial ($\omega\equiv1$), we omit any mention of it and call the resulting algebra $L^1_{\alpha}(\G,\A)$ as the \emph{convolution algebra} associated with the action $\alpha$. In this case, the triple $(\G,\A,\alpha)$ is called a (untwisted) \emph{$C^*$-dynamical system}.

It is well-known that these algebras correspond to our algebras $L^1(\G\,\vert\,\mathscr C)$, for very particular Fell bundles. In fact, these bundles may be easily described as $\mathscr C=\A\times\G$, with quotient map $q(a,x)=x$, constant norms $\norm{\cdot}_{\mathfrak C_x}=\norm{\cdot}_{\A}$, and operations \begin{equation*}
    (a,x)\bu(b,y)=(a\alpha_x(b)\omega(x,y),xy)\quad\textup{and}\quad (a,x)^\bu=(\alpha_{x^{-1}}(a^*)\omega(x^{-1},x),x^{-1}).
\end{equation*} Due to the main theorem in \cite{ExLa97}, our results will also apply in the case of \emph{measurable} twisted actions, provided that $\G$ is second countable.
\end{ex}

It is convenient to have a spatial representation of ${\rm C^*}(\G\,\vert\,\mathscr C)$, so we will now introduce the left regular representation. It is quite natural to look at the representation of $L^1(\G\,\vert\,\mathscr C)$ as bounded operators on $L^2(\G\,\vert\,\mathscr C)$. Moreover, the interplay between these spaces is quite harmonious, but it carries the issue that $L^2(\G\,\vert\,\mathscr C)$ is not a Hilbert $C^*$-module, so it seems unlikely that we can get a $^*$-representation in this way. This forces us to consider the space $L^2_\e(\G\,\vert\,\mathscr C)$, the completion of $L^2(\G\,\vert\,\mathscr C)$ under the norm 
$$
\norm{\Phi}_{L^2_\e(\G\,\vert\,\mathscr C)}=\norm{\int_\G\Phi(x)^\bu\bu\Phi(x)\,\d x}_{\mathfrak C_\e}^{1/2}.
$$ 
This is a Hilbert $C^*$-module over $\mathfrak C_\e$, so the set of adjointable operators is a $C^*$-algebra under the operator norm. We denote this algebra by $\mathbb B_a(L^2_\e(\G\,\vert\,\mathscr C))$. The left regular representation $\lambda$ is then the $^*$-monomorphism given by 
\begin{equation*}
    \lambda:L^1(\G\,\vert\,\mathscr C)\to \mathbb B_a(L^2_\e(\G\,\vert\,\mathscr C)), \textup{ defined by }\lambda(\Phi)\Psi=\Phi*\Psi, \textup{ for all }\Psi\in L^2(\G\,\vert\,\mathscr C).
\end{equation*} 
It follows from the results in \cite[Theorem 3.9]{ExNg02} that, for amenable groups, the universal $C^*$-completion (the so-called $C^*$-envelope) of $L^1(\G\,\vert\,\mathscr C)$, previously denoted ${\rm C^*}(\G\,\vert\,\mathscr C)$, coincides with the norm-closure of $\overline{\lambda(L^1(\G\,\vert\,\mathscr C))}$. 

Now we would like to do some spectral analysis, but in order to do so, we feel the need to define $L^p$ versions of the norm $\norm{\cdot}_{L^2_\e(\G\,\vert\,\mathscr C)}$. Our inspiration then comes from \cite[Definition 2.1]{LeNg06}. 

\begin{defn}\label{newnorms}
    Let $\pi:\mathscr C\to\BofH$ be a continuous non-degenerate $^*$-representation of $\mathscr C$ on the Hilbert space $\mathcal H$. One can use this representation to define the norms \begin{equation}
        \norm{\Phi}_{\pi,p}=\sup_{\norm{\xi}_{\mathcal H}\leq\,1} \Big(\int_\G\norm{\pi\big(\Phi(x)\big)\xi}_{\mathcal H}^p\,\d x\Big)^{1/p},
    \end{equation} for $p\in [1,\infty)$ and $\Phi\in C_{\rm c}(\G\,\vert\,\mathscr C)$. As $\norm{\Phi}_{\pi,p}\leq \norm{\Phi}_{L^p(\G\,\vert\,\mathscr C)}$, it is immediate that these norms are finite.
\end{defn}

\begin{rem}
    For $\pi:\mathscr C\to\BofH$, a continuous $^*$-representation of Fell bundles, preserving the norms of the fibers is the same as $\pi|_{\mathfrak C_\e}$ being a faithful $^*$-representation of $\mathfrak C_\e$. Indeed, for all $a\in\mathfrak C_x$, $$\norm{\pi(a)}_{\BofH}^2=\norm{\pi(a^\bu\bu a)}_{\BofH}=\norm{a^\bu\bu a}_{\mathfrak C_\e}=\norm{a}_{\mathfrak C_x}^2.$$ For this reason, we will say that these representations are \emph{isometric}.
\end{rem}

The next lemma compiles some facts about the norms $\norm{\cdot}_{\pi,p}$ and their relations with the other norms previously introduced. It can be regarded as both our new approach to the problem and our main computation tool.

\begin{lem}\label{lemma}
    Let $\pi:\mathscr C\to\BofH$ be an isometric, non-degenerate $^*$-representation and let $\Phi,\Psi\in C_{\rm c}(\G\,\vert\,\mathscr C)$. The following are true.
     \begin{enumerate} 
        \item[(i)] $\norm{\Phi}_{L^p(\G\,\vert\,\mathscr C)}=\Big(\int_\G\norm{\pi\big(\Phi(x)\big)}_{\BofH}^p\,\d x\Big)^{1/p}$, for all $p\in [1,\infty)$.
        \item[(ii)] $\norm{\Phi}_{\pi,2}=\norm{\Phi}_{L^2_\e(\G\,\vert\,\mathscr C)}$.
        \item[(iii)] $\lim_{p\to\infty} \norm{\Phi}_{\pi,p}=\norm{\Phi}_{L^\infty(\G\,\vert\,\mathscr C)}$.
        \item[(iv)] $\norm{\Psi*\Phi}_{\pi,r}\leq \norm{\Psi}_{L^p(\G\,\vert\,\mathscr C)}\norm{\Phi}_{\pi,q}$, for all $p,q,r\in[1,\infty)$ satisfying $1/p+1/q=1+1/r$.
        \item[(v)] If $p,q\in[1,\infty)$ satisfy $1/p+1/q=1$, then $\norm{\Psi*\Phi}_{L^\infty(\G\,\vert\,\mathscr C)}\leq \norm{\Psi}_{L^p(\G\,\vert\,\mathscr C)}\norm{\Phi}_{\pi,q}$.
        \item[(vi)] If $\G$ is discrete and $1\leq p\leq q<\infty$, then $\norm{\Phi}_{\ell^\infty(\G\,\vert\,\mathscr C)}\leq \norm{\Phi}_{\pi,q}\leq \norm{\Phi}_{\pi,p}$ and $\norm{\Phi}_{\ell^2_\e(\G\,\vert\,\mathscr C)}\leq \norm{\Phi}_{{\rm C^*}(\G\,\vert\,\mathscr C)}$. \end{enumerate} \end{lem}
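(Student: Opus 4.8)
The plan is to push every computation through the representation $\pi$ and into the Hilbert space $\mathcal H$, where we may use multiplicativity $\pi(a\bu b)=\pi(a)\pi(b)$, the identity $\pi(a^\bu)=\pi(a)^*$, and—crucially—the isometric hypothesis, which by the preceding remark says $\norm{\pi(a)}_{\BofH}=\norm{a}_{\mathfrak C_x}$ for all $a\in\mathfrak C_x$. Part (i) is then just this isometry substituted into the definition of $\norm{\cdot}_{L^p(\G\,\vert\,\mathscr C)}$. For (ii) I would write, for a unit vector $\xi$, $\norm{\pi(\Phi(x))\xi}^2=\langle\pi(\Phi(x)^\bu\bu\Phi(x))\xi,\xi\rangle$, integrate over $\G$, and pull $\pi$ out of the continuous, compactly supported integrand (using that the functional $a\mapsto\langle\pi(a)\xi,\xi\rangle$ is bounded and linear), obtaining $\langle\pi(a)\xi,\xi\rangle$ with $a=\int_\G\Phi(x)^\bu\bu\Phi(x)\,\d x$ a positive element of $\mathfrak C_\e$. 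Taking the supremum over $\xi$ and using that a positive operator attains its norm as a numerical radius, together with $\norm{\pi(a)}_{\BofH}=\norm{a}_{\mathfrak C_\e}$, gives $\norm{\Phi}_{\pi,2}^2=\norm{a}_{\mathfrak C_\e}=\norm{\Phi}_{L^2_\e(\G\,\vert\,\mathscr C)}^2$.

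The core of the lemma is (iv), and I would prove it next. Fixing a unit vector $\xi$, the triangle inequality for the Bochner integral and multiplicativity give the pointwise bound $\norm{\pi((\Psi*\Phi)(x))\xi}_{\mathcal H}\le\int_\G\norm{\pi(\Psi(y))}_{\BofH}\,\norm{\pi(\Phi(y^{-1}x))\xi}_{\mathcal H}\,\d y=(F*G_\xi)(x)$, where $F(y)=\norm{\Psi(y)}_{\mathfrak C_y}$ and $G_\xi(z)=\norm{\pi(\Phi(z))\xi}_{\mathcal H}$ are scalar functions on $\G$. The key idea is to keep $\xi$ attached to $\Phi$ only: then $\norm{F}_{L^p(\G)}=\norm{\Psi}_{L^p(\G\,\vert\,\mathscr C)}$ and $\norm{G_\xi}_{L^q(\G)}\le\norm{\Phi}_{\pi,q}$, so the classical scalar Young convolution inequality on the unimodular group $\G$ yields $\norm{F*G_\xi}_{L^r(\G)}\le\norm{\Psi}_{L^p(\G\,\vert\,\mathscr C)}\norm{\Phi}_{\pi,q}$, and taking the supremum over $\xi$ proves (iv). Part (v) is the $r=\infty$ endpoint of the same estimate: the identical pointwise bound followed by Hölder's inequality, using unimodularity to remove the translation when computing the $L^q$-norm of $y\mapsto G_\xi(y^{-1}x)$, gives the bound on $\norm{\Psi*\Phi}_{L^\infty(\G\,\vert\,\mathscr C)}$; specializing to $p=q=2$ and invoking (ii) recovers the strengthened Young inequality stated in the introduction.

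For (iii) the upper bound follows from $\norm{\pi(\Phi(x))\xi}\le\norm{\pi(\Phi(x))}$ for $\norm{\xi}\le1$ and (i), giving $\norm{\Phi}_{\pi,p}\le\norm{\Phi}_{L^p(\G\,\vert\,\mathscr C)}$, whose limit as $p\to\infty$ is $\norm{\Phi}_{L^\infty(\G\,\vert\,\mathscr C)}$ since $\Phi$ has finite-measure (compact) support. For the matching lower bound I would localize: the continuous function $x\mapsto\norm{\Phi(x)}_{\mathfrak C_x}$ attains its maximum $M=\norm{\Phi}_{L^\infty(\G\,\vert\,\mathscr C)}$ at some $x_0$; picking a unit $\xi_0$ with $\norm{\pi(\Phi(x_0))\xi_0}$ near $M$ and, by continuity, a neighborhood $U$ of $x_0$ on which $\norm{\pi(\Phi(x))\xi_0}\ge M-\varepsilon$, the single test vector $\xi_0$ gives $\norm{\Phi}_{\pi,p}\ge(M-\varepsilon)\mu(U)^{1/p}\to M-\varepsilon$ as $p\to\infty$, where $0<\mu(U)<\infty$.

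Finally, (vi) specializes to counting measure. The monotonicity $\norm{\Phi}_{\pi,q}\le\norm{\Phi}_{\pi,p}$ for $p\le q$ is the nesting of $\ell^p$-norms applied to the sequence $(\norm{\pi(\Phi(x))\xi})_{x}$ followed by the supremum over $\xi$, and $\norm{\Phi}_{\ell^\infty(\G\,\vert\,\mathscr C)}\le\norm{\Phi}_{\pi,q}$ follows by retaining a single summand. I expect the last inequality $\norm{\Phi}_{\ell^2_\e(\G\,\vert\,\mathscr C)}\le\norm{\Phi}_{{\rm C^*}(\G\,\vert\,\mathscr C)}$ to be the main obstacle, since it is the only part that must genuinely relate the Hilbert-module norm to the $C^*$-norm rather than manipulate inequalities formally. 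Here I would view $\Phi$ as a vector of the module $\ell^2_\e(\G\,\vert\,\mathscr C)$ and test $\lambda$ against the vectors $\delta_\e e_i$ carrying value $e_i$ at $\e$, where $(e_i)$ is a contractive approximate unit of $\mathfrak C_\e$: a direct computation gives $\norm{\delta_\e e_i}_{\ell^2_\e}\le1$ and $\norm{\lambda(\Phi)(\delta_\e e_i)}_{\ell^2_\e}^2=\norm{e_i^\bu\bu\langle\Phi,\Phi\rangle\bu e_i}_{\mathfrak C_\e}\to\norm{\langle\Phi,\Phi\rangle}_{\mathfrak C_\e}=\norm{\Phi}_{\ell^2_\e}^2$, where $\langle\Phi,\Phi\rangle=\sum_x\Phi(x)^\bu\bu\Phi(x)=(\Phi^**\Phi)(\e)$. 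Since $\lambda$ factors through the universal $C^*$-completion, this yields $\norm{\Phi}_{\ell^2_\e}\le\norm{\lambda(\Phi)}\le\norm{\Phi}_{{\rm C^*}(\G\,\vert\,\mathscr C)}$. The recurring technical care throughout is justifying the interchange of $\pi$ with the Bochner integral and the bookkeeping of the module inner product, but these are routine given the compact supports.
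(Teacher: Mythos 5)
Your proposal is correct and follows essentially the same route as the paper: every part hinges on keeping the test vector $\xi$ attached to $\Phi$ so that $x\mapsto\norm{\pi(\Phi(x))\xi}_{\mathcal H}$ becomes a scalar function amenable to H\"older/Young, and part (vi) uses the same approximate-unit-at-$\e$ trick with the left regular representation. The only differences are cosmetic: in (iv) you invoke the classical scalar Young inequality on the unimodular group where the paper reproves it inline via three-exponent H\"older, and in (iii) you replace the paper's level-set argument with a localization at a point of maximum plus the bound $\norm{\Phi}_{\pi,p}\leq\norm{\Phi}_{L^p(\G\,\vert\,\mathscr C)}$, both of which are equally valid for continuous compactly supported cross-sections.
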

\begin{proof}
    \emph{(i)} Due to the previous remark, it follows easily that $$\Big(\int_\G\norm{\pi\big(\Phi(x)\big)}_{\BofH}^p\,\d x\Big)^{1/p}=\Big(\int_\G\norm{\Phi(x)}_{\mathfrak C_x}^p\,\d x\Big)^{1/p}=\norm{\Phi}_{L^p(\G\,\vert\,\mathscr C)}.$$ 
    
    \emph{(ii)} We have $$\int_\G \,\norm{\pi(\Phi(x))\xi}_{\mathcal H}^2\,\d x=\Big\langle \int_\G \pi\big(\Phi(x)^\bu\bu\Phi(x)\big)\xi\,\d x,\xi\Big\rangle=\Big\langle \pi\big(\int_\G \Phi(x)^\bu\bu\Phi(x)\,\d x\big)\xi,\xi\Big\rangle,$$ implying that \begin{equation*}
        \norm{\Phi}_{\pi,2}^2=\norm{\pi\big(\int_\G \Phi(x)^\bu\bu\Phi(x)\,\d x\big)}_{\BofH}=\norm{\int_\G \Phi(x)^\bu\bu\Phi(x)\,\d x}_{\mathfrak C_\e}^2=\norm{\Phi}_{L^2_\e(\G\,\vert\,\mathscr C)}^2,
    \end{equation*} 
    where the second equality follows from the fact that $\pi$ is isometric.

    \emph{(iii)} We have \begin{align*}
        \norm{\Phi}_{\pi,p}\leq\sup_{\norm{\xi}_{\mathcal H}\leq\,1}\Big(\int_\G \norm{\Phi(x)}_{\mathfrak C_x}^{p-q}\norm{\pi\big(\Phi(x)\big)\xi}_{\mathcal H}^{q} \,\d x\Big)^{1/p}\leq \norm{\Phi}_{L^\infty(\G\,\vert\,\mathscr C)}^{\frac{p-q}{p}} \norm{\Phi}_{\pi,p}^{\frac{q}{p}}
    \end{align*} and hence $\limsup_{p\to\infty} \norm{\Phi}_{\pi,p}\leq \norm{\Phi}_{L^\infty(\G\,\vert\,\mathscr C)}$. On the other hand, for $\xi\in \mathcal H$, with $\norm{\xi}_{\mathcal H}\leq\,1$, and $0<\delta<\norm{\Phi}_{L^\infty(\G\,\vert\,\mathscr C)}$, we consider the set $$D_\delta=\{x\in\G\mid \norm{\pi\big(\Phi(x)\big)\xi}_{\mathcal H}\geq\norm{\norm{\pi\big(\Phi(\cdot)\big)\xi}_{\mathcal H}}_{L^\infty(\G)}-\delta\}.$$ We then have \begin{align*}
        \norm{\Phi}_{\pi,p}&\geq \sup_{\norm{\xi}_{\mathcal H}\leq\,1}\Big(\int_{D_\delta} (\norm{\norm{\pi\big(\Phi(\cdot)\big)\xi}_{\mathcal H}}_{L^\infty(\G)}-\delta)^p \,\d x\Big)^{1/p} \\
        &=\sup_{\norm{\xi}_{\mathcal H}\leq\,1}(\norm{\norm{\pi\big(\Phi(\cdot)\big)\xi}_{\mathcal H}}_{L^\infty(\G)}-\delta)\mu(D_\delta)^{1/p} \\
        &=(\norm{\Phi}_{L^\infty(\G\,\vert\,\mathscr C)}-\delta)\mu(D_\delta)^{1/p}
    \end{align*} so $\liminf_{p\to\infty}\norm{\Phi}_{\pi,p}\geq \norm{\Phi}_{L^\infty(\G\,\vert\,\mathscr C)}$.

    \emph{(iv)} We let $p_1=\frac{pr}{r-p}$ and $q_1=\frac{qr}{r-q}$, so that
    $$
    \frac{(r-p)p_1}{r}=p,\quad \frac{(r-q)q_1}{r}=q \quad\text{and}\quad \frac{1}{p_1}+\frac{1}{q_1}+\frac{1}{r}=1.
    $$
    The last equality allows us to use H\"older's inequality with the exponents $p_1,q_1,r$. Indeed, for $x\in\G$, we have 
    \begin{align*}
        \norm{\pi\big(\Psi*\Phi(x)\big)\xi}_{\mathcal{H}}&=\norm{\int_\G \pi\big(\Psi(y)\big)\pi\big(\Phi(y^{-1}x)\big)\xi\,\d y}_{\mathcal H} \\
        &\leq \int_\G \norm{\Psi(y)}_{\mathfrak C_y}\norm{\pi\big(\Phi(y^{-1}x)\big)\xi}_{\mathcal H} \,\d y \\
        &= \int_\G \Big(\norm{\Psi(y)}_{\mathfrak C_y}^p\norm{\pi\big(\Phi(y^{-1}x)\big)\xi}_{\mathcal H}^q\Big)^{1/r}\norm{\Psi(y)}_{\mathfrak C_y}^{(r-p)/r}\norm{\pi\big(\Phi(y^{-1}x)\big)\xi}_{\mathcal H}^{(r-q)/r} \,\d y       \\
        &\leq \norm{\Psi}_{L^p(\G\,\vert\,\mathscr C)}^{p/p_1}\norm{\Phi}_{\pi,q}^{q/q_1}\Big(\int_\G \norm{\Psi(y)}_{\mathfrak C_y}^p\norm{\pi\big(\Phi(y^{-1}x)\big)\xi}_{\mathcal H}^q \,\d y\Big)^{1/r}.
    \end{align*}
    And integrating on $x$ yields 
    \begin{align*}
        \int_\G\norm{\pi\big(\Psi*\Phi(x)\big)\xi}_{\mathcal{H}}^r\,\d x&\leq \norm{\Psi}_{L^p(\G\,\vert\,\mathscr C)}^{rp/p_1}\norm{\Phi}_{\pi,q}^{rq/q_1} \int_\G\int_\G \norm{\Psi(y)}_{\mathfrak C_y}^p\norm{\pi\big(\Phi(y^{-1}x)\big)\xi}_{\mathcal H}^q \,\d y\d x \\
        &=\norm{\Psi}_{L^p(\G\,\vert\,\mathscr C)}^{p+rp/p_1}\norm{\Phi}_{\pi,q}^{q+rq/q_1} \\
        &=\norm{\Psi}_{L^p(\G\,\vert\,\mathscr C)}^{r}\norm{\Phi}_{\pi,q}^{r},
    \end{align*} 
    which proves the claim. 
    
    \emph{(v)} We proceed as in the beginning of \emph{(iv)}, but using $p_1=p$ and $q_1=q$, to get the inequality $$\norm{\pi\big(\Psi*\Phi(x)\big)\xi}_{\mathcal{H}}\leq \norm{\Psi}_{L^p(\G\,\vert\,\mathscr C)}\norm{\Phi}_{\pi,q}.$$ Then, $$\norm{\Psi*\Phi(x)}_{\mathfrak C_x}=\sup_{\norm{\xi}_{\mathcal H}\leq\,1} \norm{\pi\big(\Psi*\Phi(x)\big)\xi}_{\mathcal{H}}\leq \norm{\Psi}_{L^p(\G\,\vert\,\mathscr C)}\norm{\Phi}_{\pi,q}$$ and therefore $\norm{\Psi*\Phi}_{L^\infty(\G\,\vert\,\mathscr C)}\leq \norm{\Psi}_{L^p(\G\,\vert\,\mathscr C)}\norm{\Phi}_{\pi,q}$. 
    
    \emph{(vi)} For any $x\in\G$, we have 
        \begin{align*}
         \norm{\Phi(x)}_{\mathfrak C_x}=\sup_{\norm{\xi}_{\mathcal H}\leq\,1}\norm{\pi\big(\Phi(x)\big)\xi}_{\mathcal H}\leq\sup_{\norm{\xi}_{\mathcal H}\leq\,1} \Big(\sum_{x\in\G}\norm{\pi\big(\Phi(x)\big)\xi}_{\mathcal H}^p\Big)^{1/p}=\norm{\Phi}_{\pi,p}
        \end{align*} 
    hence 
    $$
    \norm{\Psi}_{\ell^\infty(\G\,\vert\,\mathscr C)}\leq\norm{\Phi}_{\pi,p}.
    $$ 
    On the other hand, if $\{a_\alpha\}\subset \mathfrak C_\e$ is an approximate unit and let $\Phi\bu a_\alpha$ be the section defined by $\Phi\bu a_\alpha(x)=\Phi(x)\bu a_\alpha$, then
    $$
    \norm{\Phi}_{\ell^2_\e(\G\,\vert\,\mathscr C)}=\lim_\alpha\norm{\Phi \bu a_\alpha}_{\ell^2_\e(\G\,\vert\,\mathscr C)}=\lim_\alpha\norm{\Phi*a_\alpha\delta_\e}_{\ell^2_\e(\G\,\vert\,\mathscr C)}\leq \norm{\Phi}_{{\rm C^*}(\G\,\vert\,\mathscr C)},
    $$ 
    finishing the proof.\end{proof}

Lemma \ref{lemma} is fundamental because of the following reasons: the $L^2_\e$-norm is recovered by $\norm{\cdot}_{\pi,2}$, so we can link the norms $\norm{\cdot}_{\pi,q}$ to the $L^p$-norms via the strengthened Young-type inequality in \ref{lemma}\emph{(v)}. However, this connection is still rather weak (at least compared to the usual relations the $L^p$-norms satisfy) and it will force us to find indirect ways to do our proofs. This is illustrated by the proofs of Proposition \ref{bigO},  Lemma \ref{exist}, Theorem \ref{closedness}, Proposition \ref{subexp}, among others.

We will now focus our efforts in constructing the functional calculus. In order to do so, let us consider the following entire functions $u,v:\mathbb C\to \mathbb C$, given by \begin{equation}\label{functions}
    u(z)=e^{iz}-1=\sum_{k=1}^{\infty} \frac{i^k z^k}{k!},\quad v(z)=\frac{e^{iz}-1-iz}{z}=\sum_{k=0}^{\infty} \frac{-i^{k} z^{k+1}}{(k+2)!}.
\end{equation} It is clear that $u(z)=v(z)z+iz$. 

\begin{rem}\label{nonunital}
    It is also clear that an element $b\in\B$ has polynomial growth of order $d$ if and only if $\norm{u(tb)}_{\B}=O(|t|^d)$, as $|t|\to\infty$.
\end{rem}

\begin{lem}\label{Dix}
    Let $\Phi=\Phi^*\in L^1(\G\,\vert\,\mathscr C)\cap L^2(\G\,\vert\,\mathscr C)$. Then $v(\Phi)\in  L^2(\G\,\vert\,\mathscr C)$ and \begin{equation}\label{dixlem}
        \norm{v(\Phi)}_{ L^2_\e(\G\,\vert\,\mathscr C)}\leq \tfrac{1}{2}\norm{\Phi}_{ L^2_\e(\G\,\vert\,\mathscr C)}.
    \end{equation}\end{lem}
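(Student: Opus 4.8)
The plan is to recognize the bound $\tfrac12$ as the supremum over the real line of the function obtained by dividing $v$ by the identity, and to extract it through the continuous functional calculus of a self-adjoint operator on the Hilbert $C^*$-module. Concretely, write $v(z)=z\,w(z)$ where $w(z)=\frac{e^{iz}-1-iz}{z^{2}}=\sum_{k=0}^{\infty}\frac{-i^{k}z^{k}}{(k+2)!}$ is entire. Set $T=\lambda(\Phi)$, the image of $\Phi$ under the left regular representation, which is an element of the $C^*$-algebra $\mathbb B_a(L^2_\e(\G\,\vert\,\mathscr C))$; since $\Phi=\Phi^*$, $T$ is self-adjoint. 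Viewing $\Phi$ as a vector of the module $L^2_\e(\G\,\vert\,\mathscr C)$ (recall $\Phi\in L^2(\G\,\vert\,\mathscr C)$ and $\norm{\cdot}_{L^2_\e(\G\,\vert\,\mathscr C)}\leq\norm{\cdot}_{L^2(\G\,\vert\,\mathscr C)}$), I would note that $T^{k}\Phi=\Phi^{*(k+1)}$, so that $v(\Phi)=\sum_{k=0}^{\infty}\frac{-i^{k}}{(k+2)!}\Phi^{*(k+1)}=w(T)\Phi$, where $w(T)$ is computed by the (norm-convergent) power series, equivalently by the continuous functional calculus of $T$, since $w$ is entire and $\sigma(T)$ is bounded.

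The second step is the estimate itself. Because $T$ is self-adjoint in the $C^*$-algebra $\mathbb B_a(L^2_\e(\G\,\vert\,\mathscr C))$, its spectrum is real and $\norm{w(T)}=\sup_{s\in\sigma(T)}|w(s)|\leq\sup_{s\in\R}|w(s)|$. The key pointwise inequality is $|w(s)|\leq\tfrac12$ for all real $s$: from $e^{is}-1-is=-\int_{0}^{s}\int_{0}^{t}e^{iu}\,\d u\,\d t$ one gets $|e^{is}-1-is|\leq\int_{0}^{|s|}t\,\d t=\tfrac{s^{2}}{2}$, whence $|w(s)|\leq\tfrac12$. The defining property of the operator norm on a Hilbert module, $\norm{w(T)\Phi}_{L^2_\e(\G\,\vert\,\mathscr C)}\leq\norm{w(T)}\,\norm{\Phi}_{L^2_\e(\G\,\vert\,\mathscr C)}$, then yields exactly
$$
\norm{v(\Phi)}_{L^2_\e(\G\,\vert\,\mathscr C)}=\norm{w(T)\Phi}_{L^2_\e(\G\,\vert\,\mathscr C)}\leq\tfrac12\norm{\Phi}_{L^2_\e(\G\,\vert\,\mathscr C)}.
$$

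It remains to justify the assertion $v(\Phi)\in L^2(\G\,\vert\,\mathscr C)$ and the convergence used above. Here I would invoke the ordinary Young inequality: iterating $\norm{\Phi*\Psi}_{L^2(\G\,\vert\,\mathscr C)}\leq\norm{\Phi}_{L^1(\G\,\vert\,\mathscr C)}\norm{\Psi}_{L^2(\G\,\vert\,\mathscr C)}$ gives $\norm{\Phi^{*(k+1)}}_{L^2(\G\,\vert\,\mathscr C)}\leq\norm{\Phi}_{L^1(\G\,\vert\,\mathscr C)}^{k}\norm{\Phi}_{L^2(\G\,\vert\,\mathscr C)}$, so the series defining $v(\Phi)$ is dominated by $e^{\norm{\Phi}_{L^1(\G\,\vert\,\mathscr C)}}\norm{\Phi}_{L^2(\G\,\vert\,\mathscr C)}$ and converges absolutely in $L^2(\G\,\vert\,\mathscr C)$ (this is exactly where $\Phi\in L^1\cap L^2$ is used); since $\norm{\cdot}_{L^2_\e(\G\,\vert\,\mathscr C)}\leq\norm{\cdot}_{L^2(\G\,\vert\,\mathscr C)}$, the same limit is reached in $L^2_\e(\G\,\vert\,\mathscr C)$, reconciling $v(\Phi)$ with $w(T)\Phi$. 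I expect the only genuinely delicate point to be the identification $v(\Phi)=w(T)\Phi$ with $T$ self-adjoint on the module: once one sees that the constant $\tfrac12$ is precisely $\sup_{\R}|w|=\sup_{\R}\big|\tfrac{e^{is}-1-is}{s^{2}}\big|$ and that self-adjointness of $\lambda(\Phi)$ confines $\sigma(T)$ to the real line, the estimate is immediate; the routine Young bounds only serve to place $v(\Phi)$ in $L^2$ and to legitimize the interchange of summation and the functional calculus.
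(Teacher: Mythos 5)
Your proof is correct and follows essentially the same route as the paper's: factor $v(z)=z\,w(z)$, identify $v(\Phi)=w(\lambda(\Phi))\Phi$ in the Hilbert $C^*$-module, and use self-adjointness of $\lambda(\Phi)$ together with the $C^*$-functional calculus to bound $\norm{w(\lambda(\Phi))}$ by $\sup_{\R}|w|\leq\tfrac12$, with Young's inequality handling membership in $L^2(\G\,\vert\,\mathscr C)$. The only difference is that you spell out two details the paper leaves implicit, namely the pointwise estimate $|w(s)|\leq\tfrac12$ via the double-integral representation of $e^{is}-1-is$, and the series argument reconciling $v(\Phi)$ with $w(\lambda(\Phi))\Phi$.
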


\begin{proof}
    $v(\Phi)$ belongs to both $L^1(\G\,\vert\,\mathscr C)$ and $L^2(\G\,\vert\,\mathscr C)\subset L^2_\e(\G\,\vert\,\mathscr C)$, as \begin{equation}\label{convergence}
        \norm{v(\Phi)}_{L^2(\G\,\vert\,\mathscr C)}\leq \sum_{k=0}^{\infty} \frac{1}{(k+2)!}\norm{\Phi^{k+1}}_{L^2(\G\,\vert\,\mathscr C)}\leq \norm{\Phi}_{L^2(\G\,\vert\,\mathscr C)} \sum_{k=0}^{\infty} \frac{1}{(k+2)!}\norm{\Phi}_{L^1(\G\,\vert\,\mathscr C)}^{k}<\infty.
    \end{equation} Now consider the entire (complex) function $$w(z)=\frac{v(z)}{z}=\sum_{k=0}^{\infty} \frac{-i^{k} z^{k}}{(k+2)!}.$$ It satisfies $w(z)z=v(z)$ and therefore $w\big(\lambda(\Phi)\big)\Phi=v(\Phi)$. So, if ${\rm Spec}(a)$ denotes the spectrum of an element $a$ in $\mathbb B_a(L^2_\e(\G\,\vert\,\mathscr C))$, we have \begin{align*}
        \norm{v(\Phi)}_{L^2_\e(\G\,\vert\,\mathscr C)}&\leq\norm{w\big(\lambda(\Phi)\big)}_{\mathbb B(L^2_\e(\G\,\vert\,\mathscr C))}\norm{\Phi}_{L^2_\e(\G\,\vert\,\mathscr C)} \\
        &=\sup_{\alpha\in {\rm Spec}(\lambda(\Phi))} |w(\alpha)|\,\norm{\Phi}_{L^2_\e(\G\,\vert\,\mathscr C)}\\
        &\leq\sup_{\alpha\in \mathbb R}|w(\alpha)|\,\norm{\Phi}_{L^2_\e(\G\,\vert\,\mathscr C)} \\
        &\leq \tfrac{1}{2}\norm{\Phi}_{L^2_\e(\G\,\vert\,\mathscr C)},
    \end{align*} finishing the proof. \end{proof}

%The following proposition is one of our main results and was inspired, of course, by Dixmier's \cite[Lemme 6]{Di60}. 

\begin{prop}\label{bigO}
    Let $\G$ be a group of polynomial growth of order $d$ and let $\Phi=\Phi^*\in C_{\rm c}(\G\,\vert\,\mathscr C)$. Then \begin{equation}
        \norm{u(t\Phi)}_{L^1(\G\,\vert\,\mathscr C)}=O(|t|^{2d+2}),\quad\textup{ as }|t|\to\infty.
    \end{equation}\end{prop}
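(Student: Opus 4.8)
The plan is to reduce the whole estimate to an $L^\infty$-bound coming from the strengthened Young inequality of Lemma \ref{lemma}\emph{(v)}, and then to recover the $L^1$-norm by truncating the exponential series and paying a volume factor governed by the polynomial growth of $\G$. First I would record the algebraic identity $u(z)=v(z)z+iz$ from \eqref{functions}. Substituting $z=t\Phi$ and using that $v(t\Phi)$ (a convolution series in $\Phi$) commutes with $\Phi$ gives
\[
u(t\Phi)=it\Phi+\Phi*\big(t\,v(t\Phi)\big).
\]
Since $\Phi$ is fixed, the term $it\Phi$ contributes only $O(|t|)$ in every norm, so all the work lies in the second summand. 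The crucial point is that Lemma \ref{Dix}, applied to the self-adjoint element $t\Phi\in C_{\rm c}(\G\,\vert\,\mathscr C)\subset L^1\cap L^2$, controls the module norm of $v(t\Phi)$ linearly in $t$:
\[
\norm{t\,v(t\Phi)}_{L^2_\e(\G\,\vert\,\mathscr C)}=|t|\,\norm{v(t\Phi)}_{L^2_\e(\G\,\vert\,\mathscr C)}\le \tfrac{|t|^2}{2}\,\norm{\Phi}_{L^2_\e(\G\,\vert\,\mathscr C)}.
\]

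Next I would pass to the $L^\infty$-norm. Working with the partial sums $W_N=\sum_{j=1}^{N-1}\tfrac{(it)^{j+1}}{(j+1)!}\Phi^{*j}\in C_{\rm c}(\G\,\vert\,\mathscr C)$, which converge to $t\,v(t\Phi)$ in $L^2$ and hence in $L^2_\e$, Lemma \ref{lemma}\emph{(v)} with $p=q=2$ together with Lemma \ref{lemma}\emph{(ii)} yields
\[
\norm{\Phi*W_N}_{L^\infty(\G\,\vert\,\mathscr C)}\le \norm{\Phi}_{L^2(\G\,\vert\,\mathscr C)}\,\norm{W_N}_{L^2_\e(\G\,\vert\,\mathscr C)}.
\]
Since $\norm{W_N}_{L^2_\e}\to\norm{t\,v(t\Phi)}_{L^2_\e}=O(|t|^2)$, the right-hand side is $O(|t|^2)$. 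Writing $S_N=it\Phi+\Phi*W_N=\sum_{k=1}^{N}\tfrac{(it)^k}{k!}\Phi^{*k}$, I would conclude $\norm{S_N}_{L^\infty(\G\,\vert\,\mathscr C)}=O(|t|^2)$, uniformly for $N$ of order $|t|$. Note this step never leaves $C_{\rm c}(\G\,\vert\,\mathscr C)$, so Lemma \ref{lemma}\emph{(v)} applies verbatim.

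Finally I would recover the $L^1$-bound through a truncation tied to the growth of $\G$. Enlarging ${\rm Supp}(\Phi)$ to a symmetric compact $K\ni\e$, the partial sum $S_N$ is supported in $K^N$, so integrating the $L^\infty$-bound over its support gives $\norm{S_N}_{L^1}\le\mu(K^N)\,\norm{S_N}_{L^\infty}$, and polynomial growth of order $d$ gives $\mu(K^N)=O(N^d)$. The tail $u(t\Phi)-S_N=\sum_{k>N}\tfrac{(it)^k}{k!}\Phi^{*k}$ is bounded in $L^1$ by $\sum_{k>N}\tfrac{(|t|\norm{\Phi}_{L^1})^k}{k!}$, which decays super-exponentially once $N\ge c|t|$ for $c$ large enough. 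Choosing one such $N=\lceil c|t|\rceil$ keeps the tail negligible while keeping $\mu(K^N)=O(|t|^d)$, whence
\[
\norm{u(t\Phi)}_{L^1(\G\,\vert\,\mathscr C)}\le \mu(K^N)\,\norm{S_N}_{L^\infty(\G\,\vert\,\mathscr C)}+o(1)=O(|t|^{d+2}),
\]
which is in particular $O(|t|^{2d+2})$, establishing the claim (with a slightly sharper exponent).

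The main obstacle is the mismatch between the non-compact support of $u(t\Phi)$ and the norms one can actually control: Lemma \ref{Dix} bounds only the module norm $L^2_\e$ (not the genuine $L^2$-norm), and the strengthened Young inequality produces only an $L^\infty$-bound (not an $L^1$-bound). The delicate point is therefore to commit to a \emph{single} truncation level $N$ of order $|t|$ that simultaneously (i) makes the factorial tail of the series negligible, (ii) keeps $\norm{W_N}_{L^2_\e}$ within $O(|t|^2)$ of its limiting value, and (iii) keeps the support volume $\mu(K^N)$ at $O(|t|^d)$. Checking that these three requirements are compatible is exactly what converts the clean $L^\infty$-estimate into the desired polynomial $L^1$-growth.
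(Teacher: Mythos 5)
Your proposal is correct, and it is assembled from exactly the same ingredients as the paper's proof: the identity $u(z)=iz+v(z)z$ from \eqref{functions}, Lemma \ref{Dix} to control $\norm{t\,v(t\Phi)}_{L^2_\e(\G\,\vert\,\mathscr C)}=O(|t|^2)$, the strengthened Young inequality (Lemma \ref{lemma}(ii) and (v) with $p=q=2$) to convert that into an $L^\infty$-bound, and a truncation whose cost is the measure of a power of $K$. The one genuine difference is the truncation scale, and it matters. The paper cuts at $K^{n^2-1}$: with a quadratic power the tail $\sum_{k\geq n^2}\tfrac{(n\norm{\Phi}_{L^1(\G\,\vert\,\mathscr C)})^k}{k!}\leq \tfrac{(n\norm{\Phi}_{L^1(\G\,\vert\,\mathscr C)})^{n^2}}{(n^2)!}\,e^{n\norm{\Phi}_{L^1(\G\,\vert\,\mathscr C)}}$ is bounded with no tuning of constants, but the volume factor becomes $\mu(K^{n^2-1})=O(|t|^{2d})$, which is where the exponent $2d+2$ comes from. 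You cut at $K^N$ with $N=\lceil c|t|\rceil$, paying only $\mu(K^N)=O(|t|^d)$; the price is that the factorial tail now requires $c$ large relative to $\norm{\Phi}_{L^1(\G\,\vert\,\mathscr C)}$, and this does work: writing $a=\norm{\Phi}_{L^1(\G\,\vert\,\mathscr C)}$ and taking $c\geq 2ea$, for $k\geq N$ one has $\tfrac{(|t|a)^k}{k!}\leq\big(\tfrac{e|t|a}{k}\big)^k\leq 2^{-k}$, so the $L^1$-tail is $O(2^{-c|t|})$, and the same estimate (with $\norm{\Phi^j}_{L^2(\G\,\vert\,\mathscr C)}\leq a^{j-1}\norm{\Phi}_{L^2(\G\,\vert\,\mathscr C)}$) disposes of $\norm{W_N-t\,v(t\Phi)}_{L^2_\e(\G\,\vert\,\mathscr C)}$, which is what your uniform $L^\infty$-bound on $S_N$ needs; so your three requirements are indeed simultaneously satisfiable at a linear scale. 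Two further remarks. First, your use of the compactly supported partial sums $W_N$ is actually more scrupulous than the paper, which applies Lemma \ref{lemma}(v) directly to $v(n\Phi)\notin C_{\rm c}(\G\,\vert\,\mathscr C)$ (harmless, by density, but unremarked there). Second, and more significantly, your argument proves the stronger conclusion $\norm{u(t\Phi)}_{L^1(\G\,\vert\,\mathscr C)}=O(|t|^{d+2})$: this gives a positive partial answer to the question raised in the remark following Proposition \ref{bigO} about whether slower growth is possible, and it would lower the smoothness required in Theorem \ref{smoothfuncalc} from $2d+4$ to $d+4$ derivatives.
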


\begin{proof}
    Let $K$ be a compact subset of $\G$, containing both ${\rm Supp}(\Phi)$ and $\e$; this implies that $K\subset K^2\subset\ldots\subset K^n$. Let $n$ be a positive integer. Then we observe that 
    \begin{equation}\label{decomp}
        \norm{u(n\Phi)}_{L^1(\G\,\vert\,\mathscr C)}=\int_{K^{n^2-1}}  \norm{u(n\Phi)(x)}_{\mathfrak C_x}\d x+\int_{\G \setminus K^{n^2-1}} \norm{u(n\Phi)(x)}_{\mathfrak C_x}\d x.
    \end{equation}
    The second integral is easier to bound. Indeed, note that $\Phi, \Phi^2,\ldots, \Phi^{n^2-1}$ all vanish in $\G\setminus K^{n^2-1}$. This means that
    \begin{align*}
        \int_{\G \setminus K^{n^2-1}} \norm{u(n\Phi)(x)}_{\mathfrak C_x}\d x &\leq \int_{\G\setminus K^{n^2-1}} \sum_{k=n^2}^{\infty} \frac{n^k}{k!}\norm{\Phi^{k}(x)}_{\mathfrak C_x}\d x \\
        &\leq \sum_{k=n^2}^{\infty} \frac{n^k}{k!}\norm{\Phi}_{L^1(\G\,\vert\,\mathscr C)}^k \\
        &\leq \frac{n^{n^2}}{(n^2)!}e^{n\norm{\Phi}_{L^1(\G\,\vert\,\mathscr C)}}\norm{\Phi}_{L^1(\G\,\vert\,\mathscr C)}^{n^2}.
    \end{align*} 
    The right hand side is a bounded sequence. On the other hand, the first integral in \eqref{decomp} requires a more detailed analysis. We now note that 
    $$
    u(n\Phi)=\sum_{k=1}^{\infty} \frac{i^k n^k\Phi^k}{k!}=in\Phi+n\Phi*\big(\sum_{k=2}^{\infty} \frac{i^k n^{k-1}\Phi^{k-1}}{k!}\big)=in\Phi+n\Phi*v(n\Phi).
    $$
    So we use Lemma \ref{lemma} to get
    \begin{align*}
        \norm{u(n\Phi)(x)}_{\mathfrak C_x}&\leq  n\norm{\Phi(x)}_{\mathfrak C_x}+n\norm{\Phi*v(n\Phi)}_{L^\infty(\G\,\vert\,\mathscr C)}\\
        &\leq n\norm{\Phi(x)}_{\mathfrak C_x}+n\norm{\Phi}_{L^2(\G\,\vert\,\mathscr C)}\norm{v(n\Phi)}_{L^2_\e(\G\,\vert\,\mathscr C)},
    \end{align*}
    for all $x\in\G$. Now, we proceed by H\"older's inequality  
    \begin{align*}
    \int_{K^{n^2-1}}  \norm{u(n\Phi)(x)}_{\mathfrak C_x}\d x &\leq n\Big(\norm{\Phi}_{L^1(\G\,\vert\,\mathscr C)}+\mu(K^{n^2-1})\norm{\Phi}_{L^2(\G\,\vert\,\mathscr C)}\norm{v(n\Phi)}_{L^2_\e(\G\,\vert\,\mathscr C)}\Big) \\
    &\overset{\eqref{dixlem}}{\leq} n\Big(\norm{\Phi}_{L^1(\G\,\vert\,\mathscr C)}+\tfrac{n}{2}\mu(K^{n^2-1})\norm{\Phi}_{L^2(\G\,\vert\,\mathscr C)}^2\Big).
    \end{align*} 
    But, by assumption, $\mu(K^{n^2-1})=O\big((n^2-1)^{d}\big)=O( n^{2d}),$ as $n\to\infty$. Hence we have shown that  $\norm{u(n\Phi)}_{L^1(\G\,\vert\,\mathscr C)}=O(n^{2d+2})$, as $n\to\infty$. Now, for $t\in\mathbb R$, we take its integer part $n=\lfloor t\rfloor$ and see that \begin{align*}
        \norm{e^{it\Phi}}_{\widetilde{L^1(\G\,\vert\,\mathscr C)}}&\leq \norm{e^{in\Phi}}_{\widetilde{L^1(\G\,\vert\,\mathscr C)}}\norm{e^{i(t-n)\Phi}}_{\widetilde{L^1(\G\,\vert\,\mathscr C)}} \\
        &\leq (1+\norm{u(\lfloor t\rfloor\Phi)}_{L^1(\G\,\vert\,\mathscr C)})e^{\norm{\Phi}_{L^1(\G\,\vert\,\mathscr C)}} \\
        &=O(|t|^{2d+2}),
    \end{align*} which proves the result.
\end{proof}

\begin{rem}
    If we compare Proposition \ref{bigO} to Dixmier's result \cite[Lemme 6]{Di60}, it is obvious that, when restricted to his setting, Dixmier's result is better, as it provides a slower growth and therefore a bigger functional calculus. This happens, of course, due to the simpler setting, but it makes us wonder if a slower growth is possible in general.
\end{rem}

\begin{rem}
    The above proposition shows that, for a fixed $\Phi\in C_{\rm c}(\G\,\vert\,\mathscr C)_{\rm sa}$ and a large enough $|t|$, there is a constant $A>0$ so that
    \begin{equation}\label{multiple}
        \norm{u(t\Phi)}_{L^1(\G\,\vert\,\mathscr C)}\leq A|t|^{2d+2}.
    \end{equation}
    We remark that the proof of Proposition \ref{bigO} reveals that the constant $A$ only depends on the support of $\Phi$, and the quantities $\norm{\Phi}_{L^1(\G\,\vert\,\mathscr C)},\norm{\Phi}_{L^2(\G\,\vert\,\mathscr C)}$. It therefore follows, from the same reasoning, that the growth conclusion \eqref{multiple} can be obtained for a family $\Phi\in F\subset C_{\rm c}(\G\,\vert\,\mathscr C)_{\rm sa}$, provided that 
    \begin{equation*}
    \sup_{\Phi\in F} \norm{\Phi}_{L^1(\G\,\vert\,\mathscr C)}<\infty,\quad  \sup_{\Phi\in F} \norm{\Phi}_{L^2(\G\,\vert\,\mathscr C)}<\infty\quad \text{and}\quad \bigcup_{\Phi\in F}{\rm Supp}(\Phi)\text{ is relatively compact.}
    \end{equation*}
\end{rem}

Thanks to the Dixmier-Baillet theorem, we have the following functional calculus for all the elements in $C_{\rm c}(\G\,\vert\,\mathscr C)_{\rm sa}$.

\begin{thm}\label{smoothfuncalc}
     Let $\G$ be a group of polynomial growth of order $d$ and $\Phi=\Phi^*\in C_{\rm c}(\G\,\vert\,\mathscr C)$. Let $f:\mathbb R\to \mathbb C$ be a complex function that admits $2d+4$ continuous and integrable derivatives, with Fourier transform $\widehat f$. Then \begin{enumerate}
         \item[(i)] $f(\Phi)=\frac{1}{2\pi}\int_{\mathbb R} \widehat{f}(t)e^{it\Phi}\d t$ exists in $\widetilde{L^1(\G\,\vert\,\mathscr C)}$, or in $L^1(\G\,\vert\,\mathscr C)$ if $f(0)=0$.
         \item[(ii)] For any non-degenerate $^*$-representation $\Pi:L^1(\G\,\vert\,\mathscr C)\to\BofH$, we have $$\widetilde{\Pi}(f\big(\Phi\big))=f\big(\Pi(\Phi)\big).$$ Furthermore, if $\Pi$ is injective, we also have $${\rm Spec}_{\widetilde{L^1(\G\,\vert\,\mathscr C)}}\big(f(\Phi)\big)={\rm Spec}_{\BofH}\big(\widetilde\Pi\big(f(\Phi)\big)\big).$$ 
     \end{enumerate}
\end{thm}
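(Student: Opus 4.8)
The plan is to deduce Theorem \ref{smoothfuncalc} directly from the growth estimate in Proposition \ref{bigO} together with the abstract Dixmier-Baillet machinery of Theorem \ref{DixBai}. Since Proposition \ref{bigO} shows that every self-adjoint $\Phi\in C_{\rm c}(\G\,\vert\,\mathscr C)$ has polynomial growth of order (at most) $2d+2$ in the Banach $^*$-algebra $\B=L^1(\G\,\vert\,\mathscr C)$, and since a function $f$ with $2d+4=(2d+2)+2$ continuous integrable derivatives is exactly what Theorem \ref{DixBai} requires for an element of growth order $2d+2$, the heavy lifting is already done. First I would set $\B=L^1(\G\,\vert\,\mathscr C)$ and record that $\Phi$ has polynomial growth of order $2d+2$ in $\B$ via Proposition \ref{bigO} and Remark \ref{nonunital}.

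For part \emph{(i)}, I would simply invoke Theorem \ref{DixBai}\emph{(i)} with $d$ replaced by $2d+2$: the Bochner integral $\frac{1}{2\pi}\int_{\mathbb R}\widehat f(t)e^{it\Phi}\,\d t$ converges absolutely in $\widetilde{\B}(\Phi,1)\subset\widetilde{L^1(\G\,\vert\,\mathscr C)}$ because $\norm{e^{it\Phi}}\leq C(1+|t|)^{2d+2}$ while $|\widehat f(t)|\leq A(1+|t|)^{-(2d+4)}$, so the integrand is dominated by a multiple of $(1+|t|)^{-2}$. The refinement that $f(b)\in L^1(\G\,\vert\,\mathscr C)$ when $f(0)=0$ is again exactly the second assertion of Theorem \ref{DixBai}\emph{(i)}, using $f(0)=\frac{1}{2\pi}\int_{\mathbb R}\widehat f(t)\,\d t$ and that $e^{it\Phi}-1\in L^1(\G\,\vert\,\mathscr C)$.

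For part \emph{(ii)}, the first equality $\widetilde\Pi(f(\Phi))=f(\Pi(\Phi))$ is Theorem \ref{DixBai}\emph{(ii)}, once one extends a non-degenerate representation $\Pi$ of $L^1(\G\,\vert\,\mathscr C)$ to the unitization $\widetilde\Pi$ of $\widetilde{\B}(\Phi,1)$ (via the remark following Definition \ref{unitization}); the continuity/multiplicativity argument through characters of the commutative $C^*$-algebra ${\rm C^*}(1,\Pi(\Phi))$ carries over verbatim. For the spectral identity under the assumption that $\Pi$ is injective, the natural route is: by Theorem \ref{DixBai}\emph{(iii)} one has ${\rm Spec}_{\widetilde{\B}(\Phi,1)}(f(\Phi))=f\big({\rm Spec}_{\widetilde{\B}(\Phi,1)}(\Phi)\big)$, and on the $C^*$-algebra side ${\rm Spec}_{\BofH}\big(\widetilde\Pi(f(\Phi))\big)={\rm Spec}_{\BofH}\big(f(\widetilde\Pi(\Phi))\big)=f\big({\rm Spec}_{\BofH}(\widetilde\Pi(\Phi))\big)$ by the continuous functional calculus for the self-adjoint operator $\widetilde\Pi(\Phi)$. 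So the claim reduces to showing ${\rm Spec}_{\widetilde{\B}(\Phi,1)}(\Phi)={\rm Spec}_{\BofH}(\widetilde\Pi(\Phi))$, i.e.\ that the self-adjoint element $\Phi$ has the same (real) spectrum in the closed $^*$-subalgebra $\widetilde{\B}(\Phi,1)$ as its image under the injective representation does in $\BofH$.

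I expect this last spectral-permanence step to be the main obstacle, since in a general Banach $^*$-algebra a $^*$-subalgebra need not be spectrally invariant and $\Pi$ injective on $L^1(\G\,\vert\,\mathscr C)$ does not immediately force injectivity (or spectral faithfulness) on the singly-generated $\widetilde{\B}(\Phi,1)$. The mechanism that makes it work is the smooth functional calculus itself: because $\Phi$ has polynomial growth, Theorem \ref{DixBai} supplies enough smooth functions of $\Phi$ to separate points of the spectrum, and for a self-adjoint element of polynomial growth one knows that the spectrum is real and that the subalgebra it generates is spectrally invariant in the enveloping $C^*$-algebra. Concretely, one containment ${\rm Spec}_{\BofH}(\widetilde\Pi(\Phi))\subseteq{\rm Spec}_{\widetilde{\B}(\Phi,1)}(\Phi)$ is automatic from the homomorphism $\widetilde\Pi$; for the reverse inclusion I would argue that if $\lambda\notin{\rm Spec}_{\BofH}(\widetilde\Pi(\Phi))$ then, choosing $f\in C_{\rm c}^{2d+4}(\mathbb R)$ equal to $(x-\lambda)^{-1}$ near ${\rm Spec}_{\BofH}(\widetilde\Pi(\Phi))$ and using parts \emph{(i)}--\emph{(ii)} together with injectivity of $\Pi$, the element $f(\Phi)$ inverts $\Phi-\lambda1$ in $\widetilde{\B}(\Phi,1)$, so $\lambda\notin{\rm Spec}_{\widetilde{\B}(\Phi,1)}(\Phi)$. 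This is precisely where the injectivity hypothesis on $\Pi$ is consumed, and verifying that $f(\Phi)(\Phi-\lambda1)=1$ holds already in $\widetilde{\B}(\Phi,1)$ (and not merely after applying $\widetilde\Pi$) is the delicate point that the injectivity of $\Pi$, combined with semisimplicity considerations, is designed to guarantee.
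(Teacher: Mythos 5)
Your proposal follows the paper's own route: the published proof of this theorem is literally the single line ``combine Proposition \ref{bigO} and Theorem \ref{DixBai} with Remark \ref{nonunital}'', and your treatment of part \emph{(i)} and of the identity $\widetilde{\Pi}(f(\Phi))=f(\Pi(\Phi))$ is exactly that. Your expansion of the spectral claim is also the mechanism the paper itself relies on, though it only spells it out later and in greater generality (Lemma \ref{spinvariance}): reduce by spectral mapping on both sides, sandwich ${\rm Spec}_{\widetilde{L^1(\G\,\vert\,\mathscr C)}}(f(\Phi))$ between ${\rm Spec}_{\BofH}\big(\widetilde\Pi(f(\Phi))\big)$ and ${\rm Spec}_{\widetilde{\B}(\Phi,1)}(f(\Phi))$, and then invert $\Phi-\lambda 1$ by $g(\Phi)$, where $g\in C_{\rm c}^{2d+4}(\mathbb R)$ agrees with $(x-\lambda)^{-1}$ near ${\rm Spec}_{\BofH}(\Pi(\Phi))$, using injectivity to pull operator identities back into the Banach algebra.

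The one step you leave genuinely open is the one you yourself call delicate, and the missing ingredient is not ``semisimplicity considerations'' but a short unit argument: injectivity of $\Pi$ on $L^1(\G\,\vert\,\mathscr C)$ forces injectivity of $\widetilde{\Pi}$ on $\widetilde{L^1(\G\,\vert\,\mathscr C)}$, hence on the subalgebra $\widetilde{\B}(\Phi,1)$. Indeed, if $\widetilde{\Pi}(\Psi+r1)=0$ with $r\neq 0$, then $\Pi(-\Psi/r)={\rm id}_{\mathcal H}$, so for every $\Xi\in L^1(\G\,\vert\,\mathscr C)$ injectivity applied to $(-\Psi/r)*\Xi-\Xi$ and to $\Xi*(-\Psi/r)-\Xi$ shows that $-\Psi/r$ is a two-sided unit of $L^1(\G\,\vert\,\mathscr C)$; this is impossible when that algebra is non-unital, and when it is unital Definition \ref{unitization} gives $\widetilde{L^1(\G\,\vert\,\mathscr C)}=L^1(\G\,\vert\,\mathscr C)$, so there is nothing to prove. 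With this observation every identity you need --- multiplicativity $f(\Phi)g(\Phi)=(fg)(\Phi)$ and, in particular, $(\Phi-\lambda 1)g(\Phi)=1$ inside the commutative algebra $\widetilde{\B}(\Phi,1)$ --- follows by applying $\widetilde{\Pi}$, computing with the continuous functional calculus of the self-adjoint operator $\Pi(\Phi)$, and cancelling by injectivity. Note this step cannot be bypassed by citing the paper: Lemma \ref{spinvariance} \emph{hypothesizes} injectivity of $\widetilde\Pi|_{\B(b,1)}$, whereas the theorem assumes only injectivity of $\Pi$ on $L^1(\G\,\vert\,\mathscr C)$, so any complete proof must bridge these, exactly as above. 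Once this is inserted, your argument is complete and coincides with the paper's.
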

\begin{proof}
    Combine Proposition \ref{bigO} and Theorem \ref{DixBai} with Remark \ref{nonunital}.
\end{proof}

%-------------------------------------------------------------------------------------------------------
\section{A dense symmetric subalgebra}\label{symmsub}
%-------------------------------------------------------------------------------------------------------

Besides the functional calculus we defined and in order to get better spectral properties, we would like for our algebras to be symmetric. The relevant definition is

\begin{defn} \label{symmetric}
A Banach $^*$-algebra $\mathfrak B$ is called {\it symmetric} if the spectrum of $b^*b$ is positive for every $b\in\mathfrak B$.
\end{defn}

\begin{rem}
    A Banach $^*$-algebra $\mathfrak B$ is symmetric if and only if the spectrum of any self-adjoint element is real. This is, in fact, the content of the celebrated Shirali-Ford theorem.
\end{rem}

\begin{rem}\label{symm}
The symmetry of $L^{1}(\G\,\vert\,\mathscr C)$ itself seems to be a very complicated business, despite the fact that many of its self-adjoint elements have real spectrum (this is a consequence of Proposition \ref{bigO}, but it is also proven directly in Proposition \ref{subexp}). In general, $L^{1}(\G\,\vert\,\mathscr C)$ is known to be symmetric when $\G$ is nilpotent \cite{FJM} or compact (Theorem \ref{corcompact}). Some particular examples include $L^1_{\rm lt}(\G,C_0(\G))$, for any group $\G$ \cite[Theorem 4]{LP79}, or the examples constructed in \cite[Theorem 16]{Ku79}, among others.
\end{rem}

In any case, we will construct a dense $^*$-subalgebra of $L^{1}(\G\,\vert\,\mathscr C)$ that will be symmetric under mild conditions (such as compact generation of $\G$). This algebra is constructed using weights on the group, so we introduce the notion of weight now.

\begin{defn}
    A {\it weight} on the locally compact group $\G$ is a measurable, locally bounded function $\nu: \G\to [1,\infty)$ satisfying 
\begin{equation*}\label{submultiplicative}
 \nu(xy)\leq \nu(x)\nu(y)\,,\quad\nu(x^{-1})=\nu(x)\,,\quad\forall\,x,y\in\G\,.
\end{equation*}
\end{defn} This gives rise to the Banach $^*$-algebra $L^{1,\nu}(\G)$, defined as the completion of $C_{\rm c}(\G)$ under the norm
\begin{equation*}\label{ponderata}
\p\!\psi\!\p_{L^{1,\nu}(\G)}\,:=\int_\G\nu(x)|\psi(x)|\d x\,.
\end{equation*} This algebra has been studied, for example, in \cite{DLM04,FGLL03,Py82,SaSh19}, to which we refer for examples of weights. The Fell bundle analog is immediate. Let $\mathscr C$ be a Fell bundle over $\G$. On $\,C_{\rm c}(\G\,\vert\,\mathscr C)$  we introduce the norms
\begin{equation}\label{normix}
    \norm{\Phi}_{L^{p,\nu}(\G\,\vert\,\mathscr C)}=\left\{\begin{array}{ll}
\,\big(\int_\G \nu(x)^p\norm{\Phi(x)}_{\mathfrak C_x}^p \d x\big)^{1/p}    & \textup{if\ } p\in[1,\infty), \\
\,{\rm essup}_{x\in \G}\nu(x)\norm{\Phi(x)}_{\mathfrak C_x}     & \textup{if\ } p=\infty. \\
\end{array}\right.
\end{equation}
The completion in this norm is denoted by $L^{p,\nu}(\G\,\vert\,\mathscr C)$ and in the case $p=1$, it becomes a Banach $^*$-algebra with the $^*$-algebraic structure inherited from $L^1(\G\,\vert\,\mathscr C)$.

\begin{defn}
    A weight $\nu$ is called \emph{polynomial} if there is a constant $C>0$ such that \begin{equation}\label{polyweight}
        \nu(xy)\leq C\big(\nu(x)+\nu(y)\big),
    \end{equation} for all $x,y\in \G$.
\end{defn}

If $\G$ is compactly generated, we can always construct such a weight. In fact, such groups are characterized by possessing polynomial weights of a certain growth.
\begin{ex}\label{construction}
    Suppose that $\G$ is compactly generated and let $K$ be a relatively compact set satisfying $\G=\bigcup_{n\in\N}K^n$ and $K=K^{-1}$. Define $\si_K(x)=\inf\{n\in \N\mid x\in K^n\}$. Then $\si_K(xy)\leq \si_K(x)+\si_K(y)$ and therefore $$\nu_K(x)=1+\si_K(x)$$ defines a polynomial weight. 
\end{ex}

\begin{prop}\label{pytlik}
    Let $\G$ be a locally compact, compactly generated group and let $K\subset \G$ be as in Example \ref{construction}. The following are true \begin{enumerate}
        \item[(i)] Let $\nu$ be a polynomial weight, then there exist positive constants $M,\delta$ such that $$\nu(x)\leq M\nu_K(x)^\delta,$$ for all $x\in\G$.
        \item[(ii)] There exists a polynomial weight $\nu$ such that $x\mapsto\frac{1}{\nu(x)}$ belongs to $L^1(\G)$ (or any $L^p(\G)$, with $1\leq p<\infty$) if and only if $\G$ has polynomial growth. In such a case and if the growth is of order $d$, then $\nu_K^{-d-2}\in L^1(\G)$.
    \end{enumerate}
\end{prop}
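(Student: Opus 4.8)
The plan is to handle the two parts separately, with part (i) providing the comparison estimate that drives part (ii).

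For part (i), I would first use that $K$ is relatively compact and $\nu$ is locally bounded to fix $L=\sup_{x\in\overline{K}}\nu(x)<\infty$. The heart of the matter is a dyadic doubling estimate. Setting $M_m=\sup_{x\in K^{2^m}}\nu(x)$, every $x\in K^{2^m}$ factors as $x=yz$ with $y,z\in K^{2^{m-1}}$, so the polynomial-weight inequality gives $\nu(x)\leq C(\nu(y)+\nu(z))\leq 2CM_{m-1}$, whence $M_m\leq 2CM_{m-1}$ and thus $M_m\leq(2C)^m L$. Since $\e\in K$ forces the chain $K\subseteq K^2\subseteq\cdots$, any $x$ with $\si_K(x)=n$ lies in $K^{2^m}$ for $m=\lceil\log_2 n\rceil$, and converting $(2C)^{\lceil\log_2 n\rceil}$ into a power of $n$ yields $\nu(x)\leq M\nu_K(x)^\delta$ with $\delta=\log_2(2C)$ and $M=2CL$.

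For the ``if'' direction of part (ii), assuming $\G$ has polynomial growth of order $d$, I would decompose $\G$ into the annuli $A_n=K^n\setminus K^{n-1}$, on which $\si_K\equiv n$ and hence $\nu_K\equiv n+1$. Then $\int_\G\nu_K^{-s}=\sum_n(n+1)^{-s}\mu(A_n)$, and since $\mu(A_n)\leq\mu(K^n)=O(n^d)$, the series is dominated by $\sum_n n^{d-s}$, which converges precisely when $s>d+1$; taking $s=d+2$ gives $\nu_K^{-d-2}\in L^1(\G)$. For the general $L^p$ statement I would first check that any positive power of a polynomial weight is again one (submultiplicativity and $\nu(x^{-1})=\nu(x)$ are immediate, and subadditivity follows from $(a+b)^r\leq 2^{r-1}(a^r+b^r)$ for $r\geq 1$ and $(a+b)^r\leq a^r+b^r$ for $0<r\leq 1$), and then take $\nu=\nu_K^{(d+2)/p}$ so that $\nu^{-p}=\nu_K^{-(d+2)}\in L^1$. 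For the ``only if'' direction, given a polynomial weight $\nu$ with $\nu^{-1}\in L^p(\G)$, part (i) yields $\nu^{-p}\geq M^{-p}\nu_K^{-\delta p}$, so $\nu_K^{-\delta p}\in L^1(\G)$, i.e.\ $\sum_n(n+1)^{-\delta p}\mu(A_n)<\infty$. The crude bound $\mu(K^N)=\sum_{n\leq N}\mu(A_n)\leq N^{\delta p}\sum_n n^{-\delta p}\mu(A_n)$ then gives $\mu(K^N)=O(N^{\delta p})$; since compact generation lets me absorb any relatively compact set into some power $K^m$, this upgrades to polynomial growth with respect to every relatively compact subset, as required.

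I expect the main obstacle to be part (i): extracting a genuinely polynomial majorant from the additive-looking inequality $\nu(xy)\leq C(\nu(x)+\nu(y))$, since a naive iteration along a word of length $n$ produces only an exponential bound of the shape $C^n$. The dyadic doubling is precisely what converts the growth from exponential to polynomial in the word length $\si_K$. A secondary care-point is the bookkeeping among $\si_K$, $\nu_K$, and the annuli $A_n$ (the behavior at $x=\e$ and the smallest values of $n$, together with whether $\N$ is taken to start at $0$ or $1$), but these affect only finitely many terms and influence neither the integrability nor the growth order.
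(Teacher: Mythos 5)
Your proof is essentially correct, and it is worth saying up front that the paper does not prove this proposition at all: its ``proof'' is the citation \cite[Propositions 1 and 2]{Py82}, so what you have written is a self-contained reconstruction of Pytlik's arguments. Your two key devices are the right ones. For (i), the dyadic doubling estimate $M_m\leq 2CM_{m-1}$ for $M_m=\sup_{x\in K^{2^m}}\nu(x)$ is exactly what converts the additive inequality $\nu(xy)\leq C(\nu(x)+\nu(y))$ into the polynomial bound $\nu\leq M\nu_K^{\delta}$ with $\delta=\log_2(2C)$; as you note, naive iteration along words only gives an exponential bound, and this is the crux of the statement. (Minor points: $C\geq \tfrac12$ is automatic from $\nu\geq 1$, so $\delta\geq 0$, and since $\nu_K\geq 1$ you may always enlarge $\delta$ to make it strictly positive.) For (ii), the decomposition into annuli $A_n=K^n\setminus K^{n-1}$, on which $\nu_K\equiv n+1$, correctly reduces both implications to a comparison between $\sum_n (n+1)^{-s}\mu(A_n)$ and the growth function $\mu(K^N)$, and the observation that positive powers of polynomial weights are again polynomial weights is the right way to get the $L^p$ version from the $L^1$ version.

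One step deserves more than the ``bookkeeping'' status you assign it: the claim that any relatively compact $Q\subset\G$ is contained in some power $K^m$. This is true but requires an argument, the standard one being Baire category: the sets $\overline{K}^{\,n}$ are compact, hence closed, and cover $\G$, so some $\overline{K}^{\,n_0}$ has nonempty interior; then for $x_0\in{\rm int}(\overline{K}^{\,n_0})$ the set $U=x_0^{-1}{\rm int}(\overline{K}^{\,n_0})\subset \overline{K}^{\,2n_0}$ is an open neighborhood of $\e$, and covering $\overline{Q}$ by finitely many translates $q_iU$ with $q_i\in K^{m_i}$ gives $\overline{Q}\subset\overline{K}^{\,m}$. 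Note that this forces you to work with the closure $\overline{K}$ rather than $K$ itself, since $K^n$ need not be closed (nor obviously measurable) when $K$ is merely relatively compact. The repair is harmless but should be said: $\nu_{\overline{K}}\leq \nu_K$, so part (i) applied to the compact generating set $\overline{K}$ gives $\nu\leq M\nu_{\overline{K}}^{\delta}$, hence $\nu_{\overline{K}}^{-\delta p}\in L^1(\G)$; the annuli argument run for $\overline{K}$ then yields $\mu(\overline{K}^{\,N})=O(N^{\delta p})$, and absorption into powers of $\overline{K}$ upgrades this to polynomial growth with respect to every relatively compact set. With that adjustment your argument is complete.
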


\begin{proof}
    See \cite[Proposition 1, Proposition 2]{Py82}.
\end{proof}

If the group of study is not compactly generated, it seems hard to construct polynomial weights so that their inverse is $p$-integrable. The only other case that we can handle with full generality is made precise in the next example. It can be found in \cite[Example 1]{Py82}.

\begin{ex}\label{locallyfinite}
    Suppose there is an increasing sequence $\{\G_n\}_{n\in\N}$ of closed subgroups of $\G$ such that $\G=\bigcup_{n\in\N}\G_n$. Then for any increasing non-negative sequence $\{m_n\}_{n\in\N}$, we can form the function $\nu:\G\to[1,\infty)$ given by $$\nu=\sum_{n\in\N}m_n\chi_{\G_{n+1}\setminus \G_n},$$ where $\chi_A$ is the indicator function associated with the set $A$. It even satisfies $$\nu(xy)\leq\max\{\nu(x),\nu(y)\},$$ for all $x,y\in\G$. Thus $\nu$ is a weight if and only if it is locally bounded, which happens in the following cases: \begin{enumerate}
        \item[\emph{(i)}] $\G_n$ is compact for all $n\in\N$. In particular, if $\G$ is countable and locally finite, numerated as $\G=\{g_n\}_{n\in\N}$, one could use the finite subgroups $\G_n=\langle g_1,\ldots,g_n\rangle$.
        \item[\emph{(ii)}] If any compact subset $K\subset\G$ is fully contained in some $\G_n$.
    \end{enumerate} By adjusting the sequence $\{m_n\}_{n\in\N}$, we can easily force $\nu^{-1}\in L^p(\G)$, for any $0<p<\infty$.
\end{ex}

Before proceeding, let us introduce some concepts useful to the theory of symmetric Banach $^*$-algebras.
\begin{defn}\label{spinv}
    Let $\A\subset\B$ be a continuous inclusion of Banach $^*$-algebras. We say that: 
    \begin{itemize}
        \item[\emph{(i)}] $\A$ is {\it inverse-closed} or {\it spectrally invariant} in $\B$ if ${\rm Spec}_\A(a)={\rm Spec}_\B(a)$, for all $a\in\A$.
        \item[\emph{(ii)}] $\A$ is {\it spectral radius preserving} in $\B$ if $\rho_\A(a)=\rho_\B(a)$, for all $a\in\A$.
    \end{itemize}
\end{defn}

The following lemma is due to Barnes \cite[Proposition 2]{Ba00}. 
\begin{lem}\label{barnes}
    Let $\A\subset\B$ be a spectral radius preserving, continuous, dense inclusion of Banach $^*$-algebras. Then $\A$ is inverse-closed in $\B$.
\end{lem}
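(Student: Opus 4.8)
The plan is to prove the two spectral containments separately. For any continuous inclusion $\A\subset\B$ one has the automatic containment ${\rm Spec}_\B(a)\subseteq{\rm Spec}_\A(a)$, since an inverse computed in $\widetilde\A$ is still an inverse in $\widetilde\B$. Thus the whole content is the reverse inclusion, which amounts to showing that every element of $\A$ that is invertible in $\widetilde\B$ is already invertible in $\widetilde\A$. Replacing $a$ by $a-\lambda 1$, it suffices to show that $\widetilde\A$ is inverse-closed in $\widetilde\B$, viewing both as unital Banach algebras sharing the unit $1$. First I would check that passing to the unitizations preserves the hypotheses: the inclusion $\widetilde\A\hookrightarrow\widetilde\B$ is again continuous and dense, and spectral radius preservation survives. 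Indeed, for $x=a+r1$ the spectral mapping theorem applied to polynomials $p$ with $p(0)=0$ (so that $p(a)\in\A$, where the hypothesis is available) forces $\max_{\mu\in{\rm Spec}_\A(a)}|p(\mu)|=\max_{\mu\in{\rm Spec}_\B(a)}|p(\mu)|$; taking $p(\mu)=(r+\mu)^n-r^n$ and extracting $n$-th roots as $n\to\infty$ then yields $\rho_{\widetilde\A}(x)=\rho_{\widetilde\B}(x)$.

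The heart of the argument is a Neumann series run in the smaller algebra. Let $y\in\widetilde\A$ be invertible in $\widetilde\B$, with $y\neq 0$, and use density to choose $c\in\widetilde\A$ with $\|c-y^{-1}\|_{\widetilde\B}<\|y\|_{\widetilde\B}^{-1}$. Setting $u=cy\in\widetilde\A$, we get $\|1-u\|_{\widetilde\B}=\|(y^{-1}-c)y\|_{\widetilde\B}\leq\|y^{-1}-c\|_{\widetilde\B}\|y\|_{\widetilde\B}<1$. Now invoke spectral radius preservation on the element $1-u$: it gives $\rho_{\widetilde\A}(1-u)=\rho_{\widetilde\B}(1-u)\leq\|1-u\|_{\widetilde\B}<1$. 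By Gelfand's formula the quantities $\|(1-u)^{n}\|_{\widetilde\A}^{1/n}$ converge to a number strictly below $1$, so the series $\sum_{n\geq 0}(1-u)^{n}$ converges absolutely in $\widetilde\A$ and sums to $u^{-1}$. Hence $u=cy$ is invertible in $\widetilde\A$, and $\ell:=u^{-1}c\in\widetilde\A$ is a left inverse of $y$. Since $y$ is invertible in $\widetilde\B$, multiplying $\ell y=1$ on the right by $y^{-1}$ shows $\ell=y^{-1}$, whence $y^{-1}\in\widetilde\A$. This proves $\widetilde\A$ is inverse-closed in $\widetilde\B$, and therefore ${\rm Spec}_\A(a)={\rm Spec}_\B(a)$ for all $a\in\A$.

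The step I expect to be the crux is the passage from the norm estimate $\|1-u\|_{\widetilde\B}<1$ to the spectral estimate $\rho_{\widetilde\A}(1-u)<1$ inside the smaller algebra. Continuity of the inclusion alone only gives $\rho_{\widetilde\B}\leq\rho_{\widetilde\A}$ together with a bound on the larger ($\widetilde\B$-)norm, which is useless for forcing the Neumann series to converge in the $\widetilde\A$-norm; it is precisely the spectral radius preservation hypothesis that closes this gap, and without it inverse-closedness genuinely fails, since the naive ``hole-filling'' picture of the spectrum is unavailable for a merely continuous, norm-decreasing inclusion. The only other point requiring care is the unitization bookkeeping, namely verifying that spectral radius preservation is inherited by $\widetilde\A$ when $\A$ and $\B$ are non-unital; this is routine but must be carried out, because the elements $1-u$ to which the hypothesis is applied do not themselves lie in $\A$.
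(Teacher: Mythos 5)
The paper never actually proves this lemma --- it is imported from Barnes \cite{Ba00} with no argument given --- so there is nothing internal to compare against. Your strategy (approximate $y^{-1}$ from the dense subalgebra, run a Neumann series in $\widetilde\A$, and power it by transferring the spectral radius hypothesis from $\A$ to the unitizations) is the standard proof of results of this type, and your second paragraph, the Neumann series argument, is correct and complete exactly as written, \emph{granted} the transfer claim $\rho_{\widetilde\A}(x)=\rho_{\widetilde\B}(x)$ for $x\in\widetilde\A$.

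That transfer claim is where the gap sits, and it is slightly worse than the ``routine bookkeeping'' you describe. Write $x=a+r1$, $M_\A=\rho_{\widetilde\A}(x)$, $M_\B=\rho_{\widetilde\B}(x)$. Applying the hypothesis to $p_n(a)\in\A$, where $p_n(\mu)=(r+\mu)^n-r^n$, and using $|p_n(\mu)|\ge |r+\mu|^n-|r|^n$ on the $\A$-side and $|p_n(\mu)|\le |r+\mu|^n+|r|^n$ on the $\B$-side, one gets
\begin{equation*}
M_\A^n-|r|^n\;\le\; M_\B^n+|r|^n,
\end{equation*}
and extracting $n$-th roots yields only $M_\A\le\max\{M_\B,|r|\}$, \emph{not} $M_\A\le M_\B$. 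To finish one needs $|r|\le M_\B$, i.e.\ $0\in{\rm Spec}_\B(a)$. This is automatic when $\B$ is non-unital (then $\B$ is a proper ideal of $\widetilde\B$, so no element of $\B$ is invertible there), so your argument is complete when $\A$ and $\B$ are both non-unital --- which covers the paper's applications --- and when both are unital no unitization is needed at all, since density forces them to share the unit. But the lemma is stated for arbitrary Banach $^*$-algebras, and the remaining case, $\A$ non-unital inside a unital $\B$, escapes the argument entirely: there the transfer claim is simply false for any hypothetical such pair. Indeed, by density choose $a\in\A$ with $\norm{a+1_\B}_\B<\epsilon<1$ and take $r=1$; then $\rho_{\widetilde\B}(a+1)\le\norm{a+1_\B}_\B<\epsilon$, while $\rho_{\widetilde\A}(a+1)\ge 1$, because $0\in{\rm Spec}_\A(a)$ for every element of a non-unital algebra. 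The lemma survives in this case only because no such pair $(\A,\B)$ can exist: a dense, continuously embedded, spectral radius preserving Banach $^*$-subalgebra of a unital $\B$ must contain $1_\B$. That is a genuine additional statement requiring proof; one route is to take $c\in\A$ with $\norm{c-1_\B}_\B$ small, use your polynomial trick with $p(z)=z(z-1)^n$ to show ${\rm Spec}_\A(c)\subseteq\{0\}\cup\{z\in\CC : |z-1|\le\epsilon\}$, and then observe that the Riesz idempotent in $\widetilde\A$ attached to the part of the spectrum near $1$ is mapped to $1_\B$ by the injective unital homomorphism $\widetilde\A\to\B$, forcing it to equal $1_{\widetilde\A}$ and contradicting $0\in{\rm Spec}_\A(c)$. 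So either supply such an argument, or state explicitly that your proof covers the case where $\A$ and $\B$ are simultaneously unital or simultaneously non-unital, which is all the paper needs.
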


\begin{prop}\label{L1Lnu}
    Let $\nu$ be a polynomial weight on $\G$. Then $L^{1,\nu}(\G\,\vert\,\mathscr C)$ is inverse-closed in $L^{1}(\G\,\vert\,\mathscr C)$.
\end{prop}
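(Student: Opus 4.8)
The plan is to apply Barnes' lemma (Lemma \ref{barnes}), which reduces inverse-closedness to showing that the dense continuous inclusion $L^{1,\nu}(\G\,\vert\,\mathscr C)\subset L^{1}(\G\,\vert\,\mathscr C)$ is spectral radius preserving. Since the inclusion is clearly continuous (as $\nu\geq 1$ gives $\norm{\Phi}_{L^1(\G\,\vert\,\mathscr C)}\leq\norm{\Phi}_{L^{1,\nu}(\G\,\vert\,\mathscr C)}$) and dense (both are completions of $C_{\rm c}(\G\,\vert\,\mathscr C)$), the only substantive task is to verify $\rho_{L^{1,\nu}}(\Phi)=\rho_{L^{1}}(\Phi)$ for every $\Phi$. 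Because the smaller norm dominates the larger, one inequality $\rho_{L^{1}}(\Phi)\leq\rho_{L^{1,\nu}}(\Phi)$ is automatic, so everything comes down to the reverse bound.

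For the reverse bound I would use Gelfand's formula, $\rho_\B(\Phi)=\lim_{n\to\infty}\norm{\Phi^n}_\B^{1/n}$, applied in both algebras. Thus it suffices to control $\norm{\Phi^n}_{L^{1,\nu}(\G\,\vert\,\mathscr C)}^{1/n}$ by something converging to $\rho_{L^1}(\Phi)$. The key structural fact is that the weight is \emph{polynomial}: by Definition of polynomial weight, $\nu(xy)\leq C(\nu(x)+\nu(y))$. Iterating this along the convolution product, an $n$-fold convolution $\Phi^n=\Phi*\cdots*\Phi$ evaluated at $x$ is an integral over tuples $(y_1,\ldots,y_n)$ with $y_1y_2\cdots y_n=x$, and subadditivity of $\nu$ on such products should yield a bound of the form $\nu(x)\leq C^{n}\sum_{i}\nu(y_i)$ (up to combinatorial constants polynomial in $n$). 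This lets me estimate $\norm{\Phi^n}_{L^{1,\nu}}$ by a sum of terms in which the weight is attached to one factor at a time, i.e.\ essentially $\norm{\Phi^n}_{L^{1,\nu}}\leq P(n)\,\norm{\Phi}_{L^{1,\nu}}\,\norm{\Phi}_{L^1}^{n-1}$ for a polynomial factor $P(n)$ coming from the $n$ choices of which factor carries the weight together with the constant $C$.

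Taking $n$-th roots then gives
\begin{equation*}
\norm{\Phi^n}_{L^{1,\nu}(\G\,\vert\,\mathscr C)}^{1/n}\leq P(n)^{1/n}\,\norm{\Phi}_{L^{1,\nu}(\G\,\vert\,\mathscr C)}^{1/n}\,\norm{\Phi}_{L^{1}(\G\,\vert\,\mathscr C)}^{(n-1)/n},
\end{equation*}
and since the polynomial prefactor $P(n)^{1/n}$ and the single-weighted factor $\norm{\Phi}_{L^{1,\nu}}^{1/n}$ both tend to $1$ as $n\to\infty$, letting $n\to\infty$ produces $\rho_{L^{1,\nu}}(\Phi)\leq\rho_{L^{1}}(\Phi)$. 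Combined with the trivial reverse inequality, this gives equality of spectral radii on $C_{\rm c}(\G\,\vert\,\mathscr C)$, which is dense; a density argument (using that spectral radius is not continuous in general, so some care via the submultiplicative structure is needed) extends it to all of $L^{1,\nu}(\G\,\vert\,\mathscr C)$, and Lemma \ref{barnes} finishes the proof.

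The main obstacle I anticipate is making the combinatorial weight estimate rigorous: the naive iteration of the polynomial-weight inequality $\nu(y_1\cdots y_n)\leq C(\nu(y_1)+\cdots+\nu(y_n))$ must be established by an induction that controls the growth of the accumulated constant, and then one must carefully distribute the weight across the convolution integral so that exactly one factor is weighted while the others contribute ordinary $L^1$-norms. Getting the prefactor to be genuinely subexponential (polynomial) in $n$—so that its $n$-th root vanishes in the limit—is the crux; if the constant $C$ were allowed to compound multiplicatively at each step one would only obtain an exponential factor, which would spoil the argument, so the subadditive (rather than submultiplicative) form of the polynomial-weight hypothesis is exactly what must be exploited.
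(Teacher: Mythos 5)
Your overall strategy---Barnes' lemma, reduction to spectral radius preservation, Gelfand's formula, with only the inequality $\rho_{L^{1,\nu}}\leq\rho_{L^{1}}$ being substantive---is exactly the paper's. But your central estimate is left unproven, and the mechanism you describe for it does not work. Iterating the polynomial-weight inequality \emph{along} the convolution product, peeling one factor at a time, compounds the constant:
$$
\nu(y_1y_2\cdots y_n)\leq C\nu(y_1)+C^2\nu(y_2)+\cdots+C^{n-1}\nu(y_{n-1})+C^{n-1}\nu(y_n),
$$
so the prefactor is of order $C^{n}$, exponential in $n$ whenever $C>1$. As you yourself observe, an exponential prefactor destroys the limit argument; since you do not supply any alternative iteration scheme, the proposal has a genuine gap precisely at the step you call the crux. (Merely invoking "the subadditive form of the hypothesis" is not enough---\emph{how} you iterate it is the whole point.)

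The fix is to double rather than peel. The paper first proves, by a single application of $\nu(x)\leq C\big(\nu(y)+\nu(y^{-1}x)\big)$ inside the convolution integral, the Leibniz-type inequality
$$
\norm{\Psi*\Phi}_{L^{1,\nu}(\G\,\vert\,\mathscr C)}\leq C\big(\norm{\Psi}_{L^{1,\nu}(\G\,\vert\,\mathscr C)}\norm{\Phi}_{L^{1}(\G\,\vert\,\mathscr C)}+\norm{\Psi}_{L^{1}(\G\,\vert\,\mathscr C)}\norm{\Phi}_{L^{1,\nu}(\G\,\vert\,\mathscr C)}\big),
$$
and then applies it with both factors equal to $\Psi^n$, getting $\norm{\Psi^{2n}}_{L^{1,\nu}}\leq 2C\norm{\Psi^n}_{L^{1,\nu}}\norm{\Psi^n}_{L^{1}}$. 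Taking $n$-th roots and letting $n\to\infty$ gives $\rho_{L^{1,\nu}}(\Psi)^2\leq\rho_{L^{1,\nu}}(\Psi)\,\rho_{L^{1}}(\Psi)$, hence the desired inequality: each doubling of the exponent costs only one factor $2C$, whose $n$-th root tends to $1$. In your language, this is a dyadic splitting of the product rather than a linear one; it yields an accumulated constant of order $C^{\lceil\log_2 n\rceil}\approx n^{\log_2 C}$, which is indeed polynomial, and it is the only way I know to make your target bound $\norm{\Phi^n}_{L^{1,\nu}}\leq P(n)\norm{\Phi}_{L^{1,\nu}}\norm{\Phi}_{L^{1}}^{n-1}$ come out with polynomial $P$. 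Finally, note that the paper's inequality holds for \emph{all} elements of $L^{1,\nu}(\G\,\vert\,\mathscr C)$, not just for $C_{\rm c}(\G\,\vert\,\mathscr C)$, so your closing "density argument"---which, as you admit, is suspect because the spectral radius is not continuous---is both unjustified and unnecessary; Barnes' lemma wants radius preservation on the whole algebra, and one should simply prove it there directly.
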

\begin{proof}
    We note that for $\Phi,\Psi\in L^{1,\nu}(\G\,\vert\,\mathscr C) $, \begin{equation}
        \norm{\Psi*\Phi}_{L^{1,\nu}(\G\,\vert\,\mathscr C)}\leq C(\norm{\Psi}_{L^{1,\nu}(\G\,\vert\,\mathscr C)}\norm{\Phi}_{L^{1}(\G\,\vert\,\mathscr C)}+\norm{\Psi}_{L^{1}(\G\,\vert\,\mathscr C)}\norm{\Phi}_{L^{1,\nu}(\G\,\vert\,\mathscr C)}).
    \end{equation} Indeed, \begin{align*}
        \norm{\Psi*\Phi}_{L^{1,\nu}(\G\,\vert\,\mathscr C)}&\leq \int_\G\int_\G \norm{\Psi(y)\bu\Phi(y^{-1}x)}_{\mathfrak C_x} \nu(x)\d x\d y \\
        &\leq C\int_\G\int_\G\nu(y)\norm{\Psi(y)}_{\mathfrak C_{y}}\norm{\Phi(y^{-1}x)}_{\mathfrak C_{y^{-1}x}} +\nu(y^{-1}x)\norm{\Psi(y)}_{\mathfrak C_{y}}\norm{\Phi(y^{-1}x)}_{\mathfrak C_{y^{-1}x}} \d x\d y \\
        &=C(\norm{\Psi}_{L^{1,\nu}(\G\,\vert\,\mathscr C)}\norm{\Phi}_{L^{1}(\G\,\vert\,\mathscr C)}+\norm{\Psi}_{L^{1}(\G\,\vert\,\mathscr C)}\norm{\Phi}_{L^{1,\nu}(\G\,\vert\,\mathscr C)}).
    \end{align*} It then follows that \begin{equation}\label{diffeq}
        \norm{\Psi^{2n}}_{L^{1,\nu}(\G\,\vert\,\mathscr C)}\leq 2C\norm{\Psi^n}_{L^{1,\nu}(\G\,\vert\,\mathscr C)}\norm{\Psi^n}_{L^{1}(\G\,\vert\,\mathscr C)}
    \end{equation} so \begin{align*}
        \rho_{L^{1,\nu}(\G\,\vert\,\mathscr C)}(\Psi)^2&=\lim_{n\to\infty} \norm{\Psi^{2n}}^{1/n}_{L^{1,\nu}(\G\,\vert\,\mathscr C)} \\
        &\leq \lim_{n\to\infty}(2C)^{1/n}\norm{\Psi}_{L^{1,\nu}(\G\,\vert\,\mathscr C)}^{1/n}\norm{\Psi}_{L^{1}(\G\,\vert\,\mathscr C)}^{1/n}  \\
        &=\rho_{L^{1,\nu}(\G\,\vert\,\mathscr C)}(\Psi)\rho_{L^{1}(\G\,\vert\,\mathscr C)}(\Psi).
    \end{align*} In consequence, we get that $\rho_{L^{1,\nu}(\G\,\vert\,\mathscr C)}(\Psi)\leq \rho_{L^{1}(\G\,\vert\,\mathscr C)}(\Psi)$. As the opposite inequality is automatic, we conclude that $L^{1,\nu}(\G\,\vert\,\mathscr C)$ is spectral radius preserving in $L^{1}(\G\,\vert\,\mathscr C)$. The result follows from Barnes' lemma.
\end{proof}

In particular, we have the following corollary. 
\begin{cor}\label{extension}
    Let $\nu$ be a polynomial weight on $\G$. Let $\varphi: L^{1,\nu}(\G\,\vert\,\mathscr C)\to \B$ be a $^*$-homomorphism, with $\B$ a $C^*$-algebra. Then $\varphi$ extends to $L^{1}(\G\,\vert\,\mathscr C)$.
\end{cor}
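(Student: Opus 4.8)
The plan is to show that $\varphi$ is automatically contractive for the \emph{smaller} $L^1$-norm and then extend by continuity, using that $L^{1,\nu}(\G\,\vert\,\mathscr C)$ is dense in $L^{1}(\G\,\vert\,\mathscr C)$ (it already contains the dense subspace $C_{\rm c}(\G\,\vert\,\mathscr C)$).

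First I would exploit that the target $\B$ is a $C^*$-algebra. For any $\Phi\in L^{1,\nu}(\G\,\vert\,\mathscr C)$ the element $\varphi(\Phi^* * \Phi)=\varphi(\Phi)^*\varphi(\Phi)$ is positive in $\B$, so its norm equals its spectral radius. Passing to the minimal unitizations, $\varphi$ extends to a unital $^*$-homomorphism which preserves invertibility; hence a homomorphism can only shrink spectra away from $0$, giving ${\rm Spec}_{\B}\big(\varphi(\Phi^* * \Phi)\big)\subseteq{\rm Spec}_{L^{1,\nu}(\G\,\vert\,\mathscr C)}\big(\Phi^* * \Phi\big)\cup\{0\}$ and therefore $\rho_\B\big(\varphi(\Phi^* * \Phi)\big)\leq\rho_{L^{1,\nu}(\G\,\vert\,\mathscr C)}\big(\Phi^* * \Phi\big)$. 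Combining this with the $C^*$-identity yields
$$\norm{\varphi(\Phi)}_\B^2=\norm{\varphi(\Phi^* * \Phi)}_\B=\rho_\B\big(\varphi(\Phi^* * \Phi)\big)\leq\rho_{L^{1,\nu}(\G\,\vert\,\mathscr C)}\big(\Phi^* * \Phi\big).$$

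The crucial step is then to invoke Proposition \ref{L1Lnu}: its proof establishes that $L^{1,\nu}(\G\,\vert\,\mathscr C)$ is spectral radius preserving in $L^{1}(\G\,\vert\,\mathscr C)$, so I may replace the spectral radius above by the one computed in $L^{1}(\G\,\vert\,\mathscr C)$ and then bound it by the norm,
$$\rho_{L^{1,\nu}(\G\,\vert\,\mathscr C)}\big(\Phi^* * \Phi\big)=\rho_{L^{1}(\G\,\vert\,\mathscr C)}\big(\Phi^* * \Phi\big)\leq\norm{\Phi^* * \Phi}_{L^{1}(\G\,\vert\,\mathscr C)}\leq\norm{\Phi}_{L^{1}(\G\,\vert\,\mathscr C)}^2.$$
Consequently $\norm{\varphi(\Phi)}_\B\leq\norm{\Phi}_{L^{1}(\G\,\vert\,\mathscr C)}$ for every $\Phi\in L^{1,\nu}(\G\,\vert\,\mathscr C)$, so in particular $\varphi$ needed no a priori continuity hypothesis.

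Finally, since $\varphi$ is uniformly continuous on the dense subalgebra $L^{1,\nu}(\G\,\vert\,\mathscr C)$ of $L^{1}(\G\,\vert\,\mathscr C)$ for the $L^1$-norm, it extends uniquely to a bounded linear map on all of $L^{1}(\G\,\vert\,\mathscr C)$; continuity of the convolution product and of the involution then guarantee that this extension is again a $^*$-homomorphism. I expect the only genuinely delicate point to be the spectral comparison ${\rm Spec}_\B(\varphi(\cdot))\subseteq{\rm Spec}_{L^{1,\nu}(\G\,\vert\,\mathscr C)}(\cdot)\cup\{0\}$, most cleanly handled by the passage to unitizations so that invertibility is transported by $\varphi$; the remainder is the mechanical combination of the $C^*$-identity with the spectral-radius preservation already furnished by Proposition \ref{L1Lnu}.
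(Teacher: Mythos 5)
Your proposal is correct and follows essentially the same route as the paper: the chain $\norm{\varphi(\Phi)}_\B^2=\norm{\varphi(\Phi^**\Phi)}_\B=\rho_\B\big(\varphi(\Phi^**\Phi)\big)\leq\rho_{L^{1,\nu}(\G\,\vert\,\mathscr C)}(\Phi^**\Phi)=\rho_{L^{1}(\G\,\vert\,\mathscr C)}(\Phi^**\Phi)\leq\norm{\Phi}_{L^{1}(\G\,\vert\,\mathscr C)}^2$, with the middle equality supplied by Proposition \ref{L1Lnu}, is exactly the paper's argument, followed by the same extension-by-density conclusion. The extra details you supply (the unitization argument for spectral shrinking and the verification that the continuous extension remains a $^*$-homomorphism) are correct elaborations of steps the paper leaves implicit.
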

\begin{proof}
    Let $\Phi\in L^{1,\nu}(\G\,\vert\,\mathscr C)$. We have \begin{align*}
    \norm{\varphi(\Phi)}_\B^2&=\norm{\varphi(\Phi^**\Phi)} \\ &=\rho_\B\big(\varphi(\Phi^**\Phi)\big) \\
    &\leq \rho_{L^{1,\nu}(\G\,\vert\,\mathscr C)}(\Phi^**\Phi) \\
    &=\rho_{L^{1}(\G\,\vert\,\mathscr C)}(\Phi^**\Phi) \\
    &\leq \norm{\Phi^**\Phi}_{L^{1}(\G\,\vert\,\mathscr C)}\leq \norm{\Phi}_{L^{1}(\G\,\vert\,\mathscr C)}^2,
    \end{align*} so $\varphi$ extends to $L^{1}(\G\,\vert\,\mathscr C)$.
\end{proof}

We finally turn our attention to the question of symmetry.

\begin{lem}\label{diff}
    Let $\nu$ be a polynomial weight on $\G$ such that $\nu^{-1}$ belongs to $L^p(\G)$, for $0<p<\infty$. Then there is a constant $A>0$, such that \begin{equation}\label{idk}
        \norm{\Phi}_{L^{1}(\G\,\vert\,\mathscr C)}\leq A \norm{\Phi}_{L^{\infty}(\G\,\vert\,\mathscr C)}^{1/(p+1)}\norm{\Phi}_{L^{1,\nu}(\G\,\vert\,\mathscr C)}^{p/(p+1)},
    \end{equation} for all $\Phi\in L^{1,\nu}(\G\,\vert\,\mathscr C)\cap L^{\infty}(\G\,\vert\,\mathscr C)$.
\end{lem}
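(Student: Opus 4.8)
The inequality is a Hölder-type interpolation estimate, and I would prove it directly. Note first that both sides are homogeneous of degree one in $\Phi$ and that the exponents $1/(p+1)$ and $p/(p+1)$ sum to $1$, which is the tell-tale sign that a single application of Hölder's inequality should suffice. The plan is to reduce everything to the scalar nonnegative function $f(x)=\norm{\Phi(x)}_{\mathfrak C_x}$, so that, directly from the definitions of the norms, the three quantities appearing become $\norm{\Phi}_{L^1(\G\,\vert\,\mathscr C)}=\int_\G f\,\d x$, $\norm{\Phi}_{L^{1,\nu}(\G\,\vert\,\mathscr C)}=\int_\G \nu f\,\d x$ and $\norm{\Phi}_{L^\infty(\G\,\vert\,\mathscr C)}=\norm{f}_{L^\infty(\G)}$.

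The crucial step is a pointwise factorization of $f$ designed to expose exactly these three quantities. I would write
$$ f = f^{1/(p+1)}\cdot(\nu f)^{p/(p+1)}\cdot \nu^{-p/(p+1)} $$
and immediately estimate the first factor by the constant $\norm{f}_{L^\infty(\G)}^{1/(p+1)}$ pointwise, pulling it out of the integral. This leaves $\int_\G (\nu f)^{p/(p+1)}\,\nu^{-p/(p+1)}\,\d x$ to control.

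To the remaining integral I would apply Hölder's inequality with the conjugate pair $r=(p+1)/p$ and $r'=p+1$ (both genuinely larger than $1$ precisely because $p>0$, so no degenerate case arises). The exponents are tuned so that $(\nu f)^{p/(p+1)}$ raised to the power $r$ returns $\nu f$, producing the factor $\big(\int_\G \nu f\,\d x\big)^{p/(p+1)}=\norm{\Phi}_{L^{1,\nu}(\G\,\vert\,\mathscr C)}^{p/(p+1)}$, while $\nu^{-p/(p+1)}$ raised to the power $r'$ returns $\nu^{-p}$, producing $\big(\int_\G \nu^{-p}\,\d x\big)^{1/(p+1)}=\norm{\nu^{-1}}_{L^p(\G)}^{p/(p+1)}$. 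This last factor is finite \emph{exactly} by the hypothesis $\nu^{-1}\in L^p(\G)$, and it yields the admissible constant $A=\norm{\nu^{-1}}_{L^p(\G)}^{p/(p+1)}$.

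Assembling the three factors gives the claimed bound. I do not expect any serious obstacle here: the only genuine decision is the choice of the factorization and of the Hölder exponents, and these are forced by the requirement that the $\nu$-weighted integral and the $L^p$-norm of $\nu^{-1}$ both appear with the correct powers. The remaining points — that $f$ is measurable, that each integral is finite whenever the right-hand side is, and that nothing is lost in passing between $\Phi$ and $f$ — are routine and require no input beyond the definitions.
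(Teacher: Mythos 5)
Your proof is correct, but it takes a genuinely different route from the paper's. The paper argues by a level-set decomposition with an optimization: it sets $U_a=\{x\in\G\mid \nu(x)\leq a\}$, uses Chebyshev-type reasoning to get $\mu(U_a)\leq a^pB$ with $B=\norm{\nu^{-1}}_{L^p(\G)}^p$, splits $\norm{\Phi}_{L^1(\G\,\vert\,\mathscr C)}$ into the integrals over $U_a$ (bounded by $a^pB\norm{\Phi}_{L^\infty(\G\,\vert\,\mathscr C)}$) and over $\G\setminus U_a$ (bounded by $\tfrac{1}{a}\norm{\Phi}_{L^{1,\nu}(\G\,\vert\,\mathscr C)}$), and then chooses $a$ to balance the two terms, ending with $A=2B^{1/(p+1)}$. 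You instead use the pointwise factorization $f=f^{1/(p+1)}(\nu f)^{p/(p+1)}\nu^{-p/(p+1)}$ followed by a single application of H\"older with conjugate exponents $(p+1)/p$ and $p+1$; your exponent bookkeeping checks out, the exponents are admissible exactly because $p>0$, and there is no division-by-zero issue since $\nu\geq 1$. Your route is slightly slicker: it avoids the splitting-and-optimizing step entirely and yields the better constant $A=\norm{\nu^{-1}}_{L^p(\G)}^{p/(p+1)}=B^{1/(p+1)}$, a factor of $2$ smaller than the paper's. What the paper's argument buys in exchange is robustness and self-containedness: it uses nothing beyond a measure estimate and a two-piece bound (no H\"older), and the same splitting template recurs elsewhere in the paper (e.g.\ in Propositions \ref{bigO} and \ref{bigO2}), so the author's choice is stylistically consistent rather than forced. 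For the purposes of the paper either constant is equally useful, since only the existence of some $A>0$ is ever invoked.
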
\begin{proof}
    Let $a>0$ to be determined later and 
    $$
    U_a=\{x\in\G\mid \nu(x)\leq a\}.
    $$ 
    Then for $x\in U_a$, one has $1\leq a^p\nu(x)^{-p}$, so, denoting $B=\norm{\nu^{-1}}_{L^p(\G)}^p$, $$\mu(U_a)=\int_{U_a} 1\d x\leq \int_{U_a} \tfrac{a^p}{\nu(x)^{p}}\d x\leq a^pB.$$ Thus, \begin{align*}
        \norm{\Phi}_{L^{1}(\G\,\vert\,\mathscr C)}&=\int_{U_a} \norm{\Phi(x)}_{\mathfrak C_x}\d x+\int_{\G\setminus U_a} \norm{\Phi(x)}_{\mathfrak C_x}\d x \\
        &\leq \norm{\Phi}_{L^{\infty}(\G\,\vert\,\mathscr C)}\mu(U_a)+\norm{\Phi}_{L^{1,\nu}(\G\,\vert\,\mathscr C)}\sup_{\G\setminus U_a} \tfrac{1}{\nu(x)} \\
        &\leq a^pB\norm{\Phi}_{L^{\infty}(\G\,\vert\,\mathscr C)}+\tfrac{1}{a}\norm{\Phi}_{L^{1,\nu}(\G\,\vert\,\mathscr C)} 
    \end{align*} If we take $$a=\Big(\frac{\norm{\Phi}_{L^{1,\nu}(\G\,\vert\,\mathscr C)}}{B\norm{\Phi}_{L^{\infty}(\G\,\vert\,\mathscr C)}}\Big)^{\tfrac{1}{p+1}},$$ then the result follows, with $A=2B^{\tfrac{1}{p+1}}$.
\end{proof}

The following lemma is inspired by \cite[Lemma 4]{Py73}.

\begin{lem}\label{exist}
    Let $\Phi\in L^{1}(\G\,\vert\,\mathscr C)_{\rm sa}$. There exists a continuous cross-section $\Psi\in C_{\rm c}(\G\,\vert\,\mathscr C)$ such that \begin{equation}
        \rho_{L^{1}(\G\,\vert\,\mathscr C)}(\Phi)\leq \limsup_{n\to \infty}\norm{\Psi^2*\Phi^n}_{L^{1}(\G\,\vert\,\mathscr C)}^{1/n}.
    \end{equation}
\end{lem}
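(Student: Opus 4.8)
The plan is to prove the \emph{reverse} of a trivial inequality. Write $\rho:=\rho_{L^1(\G\,\vert\,\mathscr C)}(\Phi)$ and recall Gelfand's formula $\rho=\lim_{n}\norm{\Phi^n}_{L^1(\G\,\vert\,\mathscr C)}^{1/n}$; since $\Phi=\Phi^*$, each power $\Phi^n$ is self-adjoint. For any $\Psi$, submultiplicativity of the norm gives $\norm{\Psi^2*\Phi^n}_{L^1(\G\,\vert\,\mathscr C)}\le\norm{\Psi^2}_{L^1(\G\,\vert\,\mathscr C)}\,\norm{\Phi^n}_{L^1(\G\,\vert\,\mathscr C)}$, so $\limsup_n\norm{\Psi^2*\Phi^n}_{L^1(\G\,\vert\,\mathscr C)}^{1/n}\le\rho$ automatically. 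Hence the content of the lemma is to \emph{produce a single} $\Psi\in C_{\rm c}(\G\,\vert\,\mathscr C)$ for which this $\limsup$ is not strictly smaller than $\rho$. Throughout I fix a bounded approximate identity $\{u_j\}\subset C_{\rm c}(\G\,\vert\,\mathscr C)$ with $\sup_j\norm{u_j}_{L^1(\G\,\vert\,\mathscr C)}\le M$; a routine estimate shows that $\{u_j^2\}$ is again a bounded approximate identity, so $u_j^2*\Phi^n\to\Phi^n$ in $L^1(\G\,\vert\,\mathscr C)$ for each fixed $n$.

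The engine is a Baire category argument. Suppose, for contradiction, that $\limsup_n\norm{\Psi^2*\Phi^n}_{L^1(\G\,\vert\,\mathscr C)}^{1/n}<\rho$ for \emph{every} $\Psi\in L^1(\G\,\vert\,\mathscr C)$. Then each such $\Psi$ satisfies $\norm{\Psi^2*\Phi^n}_{L^1(\G\,\vert\,\mathscr C)}\le Cs^n$ for all $n$, for some rational $s<\rho$ and some $C\in\N$; that is, $L^1(\G\,\vert\,\mathscr C)=\bigcup_{s<\rho,\,C\in\N}E_{s,C}$, where $E_{s,C}=\{\Psi:\norm{\Psi^2*\Phi^n}_{L^1(\G\,\vert\,\mathscr C)}\le Cs^n\ \forall n\}$. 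Since $\Psi\mapsto\Psi^2*\Phi^n$ is continuous, each $E_{s,C}$ is closed, and completeness of $L^1(\G\,\vert\,\mathscr C)$ lets Baire's theorem supply an $E_{s,C}$ with nonempty interior, say a closed ball $\overline B(\Psi_0,\delta)\subset E_{s,C}$.

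Next I would polarize to turn this into a \emph{homogeneous, uniform} bound. For $\gamma$ with $\norm{\gamma}_{L^1(\G\,\vert\,\mathscr C)}\le\delta$ the $L^1$-valued polynomial $\zeta\mapsto(\Psi_0+\zeta\gamma)^2*\Phi^n$ stays inside $E_{s,C}$ for $|\zeta|\le1$, so its norm is $\le Cs^n$ on the unit circle; a Cauchy estimate applied to the coefficient of $\zeta^2$ yields $\norm{\gamma^2*\Phi^n}_{L^1(\G\,\vert\,\mathscr C)}\le Cs^n$, and by degree-two homogeneity $\norm{\gamma^2*\Phi^n}_{L^1(\G\,\vert\,\mathscr C)}\le C\delta^{-2}\norm{\gamma}_{L^1(\G\,\vert\,\mathscr C)}^2\,s^n$ for every $\gamma$ and every $n$. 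Taking $\gamma=u_j$ gives $\norm{u_j^2*\Phi^n}_{L^1(\G\,\vert\,\mathscr C)}\le C\delta^{-2}M^2 s^n$, and letting $j\to\infty$ gives $\norm{\Phi^n}_{L^1(\G\,\vert\,\mathscr C)}\le C\delta^{-2}M^2 s^n$ for all $n$. This forces $\rho\le s<\rho$, a contradiction. Therefore some $\Psi\in L^1(\G\,\vert\,\mathscr C)$ satisfies $\limsup_n\norm{\Psi^2*\Phi^n}_{L^1(\G\,\vert\,\mathscr C)}^{1/n}\ge\rho$, hence $=\rho$.

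The hard part will be the final, seemingly cosmetic requirement that $\Psi$ be \emph{continuous with compact support}. This is genuinely needed later — it is precisely compact support of $\Psi$ that makes each $\Psi^2*\Phi^n$ compactly supported, with support of only polynomially growing measure, so that Lemma \ref{lemma} can be brought to bear — and so it cannot be waived. It is also not free: the witness produced by Baire lies only in $L^1(\G\,\vert\,\mathscr C)$, and one cannot simply approximate it by $C_{\rm c}$ sections, since $\Psi\mapsto\limsup_n\norm{\Psi^2*\Phi^n}_{L^1(\G\,\vert\,\mathscr C)}^{1/n}$ is not continuous (indeed $C_{\rm c}(\G\,\vert\,\mathscr C)$ may even be meager in $L^1(\G\,\vert\,\mathscr C)$), so a comeager set of good witnesses need not meet it. Following the idea of \cite[Lemma 4]{Py73}, I would instead carry out the category and polarization steps while keeping all perturbations inside $C_{\rm c}(\G\,\vert\,\mathscr C)$ and build the witness explicitly from the compactly supported approximate identity $\{u_j\}$, using the uniform estimate $\norm{\gamma^2*\Phi^n}_{L^1(\G\,\vert\,\mathscr C)}\le C\delta^{-2}\norm{\gamma}_{L^1(\G\,\vert\,\mathscr C)}^2 s^n$ together with a diagonal or truncation selection that preserves compact support. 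Producing one compactly supported $\Psi$ that simultaneously captures the growth of all high powers $\Phi^n$ is the delicate core of the argument.
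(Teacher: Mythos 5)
Your Baire category argument is sound as far as it goes: the sets $E_{s,C}$ are closed, the Cauchy estimate correctly isolates the coefficient $\gamma^2*\Phi^n$, and testing the resulting homogeneous bound against a compactly supported approximate identity (which is bounded in the $L^1$-norm, the norm in play) does yield the contradiction. This proves that some $\Psi\in L^{1}(\G\,\vert\,\mathscr C)$ — indeed a comeager set of them — satisfies the inequality. But the lemma demands $\Psi\in C_{\rm c}(\G\,\vert\,\mathscr C)$, and you leave precisely that step open, so the statement is not proved. Moreover, the repair you sketch (rerunning category and polarization ``inside $C_{\rm c}$'') runs into a structural obstruction, not just a technical one: Baire's theorem needs a complete metric space, and there is no suitable one here. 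The space $C_{\rm c}(\G\,\vert\,\mathscr C)$ with the $L^1$-norm is incomplete; the Banach space of continuous sections supported in a fixed compact $K$ (with sup-norm) is complete, but then polarization only gives $\norm{\gamma^2*\Phi^n}_{L^{1}(\G\,\vert\,\mathscr C)}\le C\delta^{-2}\norm{\gamma}_{L^\infty(\G\,\vert\,\mathscr C)}^2\,s^n$, and for non-discrete $\G$ every approximate identity is unbounded in sup-norm, so letting $j\to\infty$ no longer recovers a bound on $\norm{\Phi^n}_{L^{1}(\G\,\vert\,\mathscr C)}$; and the inductive-limit topology on $C_{\rm c}$ is not Baire either. (A side correction: in the application, Theorem \ref{closedness}, the point of $\Psi\in C_{\rm c}$ is not that $\Psi^2*\Phi^n$ is compactly supported — it is not, since there $\Phi$ is a general element of $L^{1,\nu}(\G\,\vert\,\mathscr C)$ — but that $\Psi$ lies in $L^2\cap L^\infty\cap L^{1,\nu}$, so that Lemma \ref{lemma} and Lemma \ref{diff} can be applied.)

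The paper's proof (following \cite[Lemma 4]{Py73}) is elementary and supplies exactly the idea your argument is missing: the witness can be taken to be \emph{any sufficiently good $C_{\rm c}$-approximation of $\Phi$ itself}, so no search over $L^1$ — and hence no completeness — is needed. Write $\norm{\cdot}_1$ for $\norm{\cdot}_{L^{1}(\G\,\vert\,\mathscr C)}$ and set $a_n=\norm{\Phi^{n+2}}_1/\norm{\Phi^n}_1$ (the case $\Phi=0$ being trivial). If $\limsup_n a_n=0$, then multiplying the ratios along steps of two forces $\rho_{L^{1}(\G\,\vert\,\mathscr C)}(\Phi)=0$, hence $\rho_{{\rm C^*}(\G\,\vert\,\mathscr C)}(\lambda(\Phi))=0$; this is impossible, since $\lambda$ is faithful and a nonzero self-adjoint element of a $C^*$-algebra has spectral radius equal to its norm. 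So there is $a>0$ with $a_n\ge a$ for infinitely many $n$. Choosing $\Psi\in C_{\rm c}(\G\,\vert\,\mathscr C)$ with $\norm{\Phi-\Psi}_1<\epsilon$ and $(2\norm{\Phi}_1+\epsilon)\epsilon<a/2$ gives $\norm{\Phi^2-\Psi^2}_1<a/2$, whence $\norm{\Psi^2*\Phi^n}_1\ge\norm{\Phi^{n+2}}_1-\tfrac a2\norm{\Phi^n}_1=\norm{\Phi^n}_1\bigl(a_n-\tfrac a2\bigr)\ge\tfrac a2\norm{\Phi^n}_1$ infinitely often, and taking $n$-th roots along this subsequence yields $\limsup_n\norm{\Psi^2*\Phi^n}_1^{1/n}\ge\rho_{L^{1}(\G\,\vert\,\mathscr C)}(\Phi)$. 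In short: the positivity of $\rho_{L^{1}(\G\,\vert\,\mathscr C)}(\Phi)$, imported from the $C^*$-completion, is what lets a quantitative approximation of $\Phi$ beat the error term $(\Phi^2-\Psi^2)*\Phi^n$; your soft argument finds witnesses but cannot place them in $C_{\rm c}$, which is where the actual content of the lemma lies.
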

\begin{proof}
    This is obviously true for $\Phi=0$. Otherwise, we consider the sequence 
    $$
    a_n:=\frac{\norm{\Phi^{n+2}}_{L^{1}(\G\,\vert\,\mathscr C)}}{\norm{\Phi^n}_{L^{1}(\G\,\vert\,\mathscr C)}}.
    $$ 
    It is easy to see that $\limsup_{n\to \infty}a_n>0$. Indeed, otherwise would mean that $\lim_{n\to \infty}a_n =0$ and so we can find, for any $\epsilon>0$, an $n_0\in \N$ such that for all $m\geq 1$, one has $a_{n_0+m}<\epsilon$. But this implies that \begin{equation*}
        \frac{\norm{\Phi^{n_0+2m}}_{L^{1}(\G\,\vert\,\mathscr C)}}{\norm{\Phi^{n_0}}_{L^{1}(\G\,\vert\,\mathscr C)}}=\prod_{k=0}^{m-1} a_{n_0+2k}\leq \epsilon^m,
    \end{equation*} and $\lim_{m\to\infty}\norm{\Phi^{n_0+2m}}_{L^{1}(\G\,\vert\,\mathscr C)}^{1/(n_0+2m)}\leq \sqrt{\epsilon}$. We now observe that 
    $$
    0=\lim_{m\to\infty}\norm{\Phi^{n_0+2m}}_{L^{1}(\G\,\vert\,\mathscr C)}^{1/(n_0+2m)}=\rho_{L^1(\G\,\vert\,\mathscr C)}(\Phi)
    $$ 
    and 
    $$
    \rho_{{\rm C^*}(\G\,\vert\,\mathscr C)}(\Phi)\leq\rho_{L^1(\G\,\vert\,\mathscr C)}(\Phi)=0.
    $$ 
    This is, of course, impossible. Now that we established the validity of $\limsup_{n\to \infty}a_n>0$, we let $0<a<\limsup_{n\to \infty}a_n$ and $\Psi\in C_{\rm c}(\G\,\vert\,\mathscr C)$ such that $\norm{\Phi-\Psi}_{L^{1}(\G\,\vert\,\mathscr C)}<\epsilon$, subject to $(2\norm{\Phi}_{L^{1}(\G\,\vert\,\mathscr C)}+\epsilon)\epsilon<\tfrac{a}{2}$ and see that
    $$
    \norm{\Phi^2-\Psi^2}_{L^{1}(\G\,\vert\,\mathscr C)}\leq \norm{\Phi-\Psi}_{L^{1}(\G\,\vert\,\mathscr C)}\big(\norm{\Phi}_{L^{1}(\G\,\vert\,\mathscr C)}+\norm{\Psi}_{L^{1}(\G\,\vert\,\mathscr C)}\big)\leq (2\norm{\Phi}_{L^{1}(\G\,\vert\,\mathscr C)}+\epsilon)\epsilon<\tfrac{a}{2}
    $$ and 
    $$
    \norm{\Psi^2*\Phi^n}_{L^{1}(\G\,\vert\,\mathscr C)}\geq \norm{\Phi^{n+2}}_{L^{1}(\G\,\vert\,\mathscr C)}-\tfrac{a}{2}\norm{\Phi^{n}}_{L^{1}(\G\,\vert\,\mathscr C)}=\norm{\Phi^{n}}_{L^{1}(\G\,\vert\,\mathscr C)}(a_n-\tfrac{a}{2}).
    $$ Since $a_n-\tfrac{a}{2}\geq \tfrac{a}{2}$ for infinitely many $n$, then 
    $$
    \limsup_{n\to \infty}\norm{\Psi^2*\Phi^n}_{L^{1}(\G\,\vert\,\mathscr C)}^{1/n}\geq \lim_{n\to \infty}\norm{\Phi^n}_{L^{1}(\G\,\vert\,\mathscr C)}^{1/n}=\rho_{L^{1}(\G\,\vert\,\mathscr C)}(\Phi),
    $$ finishing the proof.
\end{proof}

\begin{thm}\label{closedness}
    Let $\nu$ be a polynomial weight on $\G$ such that $\nu^{-1}$ belongs to $L^p(\G)$, for some $0<p<\infty$. For $\Phi=\Phi^*\in L^{1,\nu}(\G\,\vert\,\mathscr C)$, one has \begin{equation}\label{closednesseq}
        {\rm Spec}_{L^{1}(\G\,\vert\,\mathscr C)}(\Phi)={\rm Spec}_{L^{1,\nu}(\G\,\vert\,\mathscr C)}(\Phi)={\rm Spec}_{{\rm C^*}(\G\,\vert\,\mathscr C)}\big(\lambda(\Phi)\big),
    \end{equation} in particular, $L^{1,\nu}(\G\,\vert\,\mathscr C)$ is symmetric.
\end{thm}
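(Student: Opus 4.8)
The plan is to establish the three-fold identity \eqref{closednesseq} and then read off symmetry from the reality of the $C^*$-spectrum. The leftmost equality ${\rm Spec}_{L^{1}(\G\,\vert\,\mathscr C)}(\Phi)={\rm Spec}_{L^{1,\nu}(\G\,\vert\,\mathscr C)}(\Phi)$ is already in hand: it is exactly the inverse-closedness of Proposition \ref{L1Lnu}. For the rightmost equality it suffices to treat the inclusion $\supseteq$, since $\lambda$ is a continuous $^*$-monomorphism with dense range into ${\rm C^*}(\G\,\vert\,\mathscr C)$ (polynomial growth forces amenability), so invertibility of $\Phi-\mu 1$ in $\widetilde{L^{1}(\G\,\vert\,\mathscr C)}$ forces invertibility of $\lambda(\Phi)-\mu 1$, giving ${\rm Spec}_{{\rm C^*}(\G\,\vert\,\mathscr C)}(\lambda(\Phi))\subseteq {\rm Spec}_{L^{1}(\G\,\vert\,\mathscr C)}(\Phi)$ for free. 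Everything thus reduces to the spectral-radius comparison $\rho_{L^{1}(\G\,\vert\,\mathscr C)}(\Phi)\le \norm{\lambda(\Phi)}_{{\rm C^*}(\G\,\vert\,\mathscr C)}$ for self-adjoint $\Phi\in L^{1,\nu}(\G\,\vert\,\mathscr C)$, the reverse being automatic.

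Write $r=\norm{\lambda(\Phi)}_{{\rm C^*}(\G\,\vert\,\mathscr C)}$. Lemma \ref{exist} yields a $\Psi\in C_{\rm c}(\G\,\vert\,\mathscr C)$ with $\rho_{L^{1}(\G\,\vert\,\mathscr C)}(\Phi)\le \limsup_{n}\norm{\Psi^2*\Phi^n}_{L^{1}(\G\,\vert\,\mathscr C)}^{1/n}$; the role of the compactly supported regularizer $\Psi$ is precisely that $\Psi^2*\Phi^n\in L^2(\G\,\vert\,\mathscr C)\subseteq L^2_\e(\G\,\vert\,\mathscr C)$ by Young's inequality, whereas $\Phi^n$ itself need not be square-integrable. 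Feeding $\Psi^2*\Phi^n$ into Lemma \ref{diff} gives
\[
\norm{\Psi^2*\Phi^n}_{L^{1}(\G\,\vert\,\mathscr C)}\le A\,\norm{\Psi^2*\Phi^n}_{L^{\infty}(\G\,\vert\,\mathscr C)}^{1/(p+1)}\norm{\Psi^2*\Phi^n}_{L^{1,\nu}(\G\,\vert\,\mathscr C)}^{p/(p+1)}.
\]
By submultiplicativity $\norm{\Psi^2*\Phi^n}_{L^{1,\nu}(\G\,\vert\,\mathscr C)}\le \norm{\Psi^2}_{L^{1,\nu}(\G\,\vert\,\mathscr C)}\norm{\Phi^n}_{L^{1,\nu}(\G\,\vert\,\mathscr C)}$, so by Gelfand's formula the $n$-th root of this factor has $\limsup$ at most $\rho_{L^{1,\nu}(\G\,\vert\,\mathscr C)}(\Phi)=\rho_{L^{1}(\G\,\vert\,\mathscr C)}(\Phi)$ (again Proposition \ref{L1Lnu}). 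For the $L^\infty$ factor I would invoke the strengthened Young inequality, Lemma \ref{lemma}(v) with $p=q=2$, together with Lemma \ref{lemma}(ii), to write $\norm{\Psi^2*\Phi^n}_{L^{\infty}(\G\,\vert\,\mathscr C)}\le \norm{\Psi}_{L^2(\G\,\vert\,\mathscr C)}\norm{\Psi*\Phi^n}_{L^2_\e(\G\,\vert\,\mathscr C)}$ and then control the module norm by a constant times $r^n$. Combining the three estimates, raising to the power $1/n$, and using $1/(p+1)+p/(p+1)=1$ produces
\[
\rho_{L^{1}(\G\,\vert\,\mathscr C)}(\Phi)\ \le\ r^{1/(p+1)}\,\rho_{L^{1}(\G\,\vert\,\mathscr C)}(\Phi)^{p/(p+1)},
\]
whence $\rho_{L^{1}(\G\,\vert\,\mathscr C)}(\Phi)\le r$ (the case $\rho=0$ being trivial), and so $\rho_{L^{1}(\G\,\vert\,\mathscr C)}(\Phi)=\norm{\lambda(\Phi)}_{{\rm C^*}(\G\,\vert\,\mathscr C)}$.

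The hard part will be the last step of the $L^\infty$ estimate, namely $\norm{\Psi*\Phi^n}_{L^2_\e(\G\,\vert\,\mathscr C)}\le Cr^n$. Since $\lambda(\Phi)$ is a self-adjoint adjointable operator of norm $r$ on the Hilbert module $L^2_\e(\G\,\vert\,\mathscr C)$, the companion quantity with $\Phi^n$ acting on the \emph{left}, $\norm{\Phi^n*\Psi}_{L^2_\e(\G\,\vert\,\mathscr C)}=\norm{\lambda(\Phi)^n\Psi}_{L^2_\e(\G\,\vert\,\mathscr C)}\le r^n\norm{\Psi}_{L^2_\e(\G\,\vert\,\mathscr C)}$, is immediate; the genuine obstacle is that here $\Phi^n$ sits on the \emph{right}, which is not a left-module operation. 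I would resolve this by showing that right convolution by $\Phi$ extends to a bounded operator on $L^2_\e(\G\,\vert\,\mathscr C)$ whose norm, thanks to $\Phi=\Phi^*$ and unimodularity, equals $\norm{\lambda(\Phi)}_{{\rm C^*}(\G\,\vert\,\mathscr C)}$ (this is the right regular representation, transparently unitarily equivalent to the left one in the untwisted group case); iterating then gives $\norm{\Psi*\Phi^n}_{L^2_\e(\G\,\vert\,\mathscr C)}\le r^n\norm{\Psi}_{L^2_\e(\G\,\vert\,\mathscr C)}$. In the discrete case this bound is免于 such care, following directly from $\norm{\,\cdot\,}_{\ell^2_\e(\G\,\vert\,\mathscr C)}\le\norm{\,\cdot\,}_{{\rm C^*}(\G\,\vert\,\mathscr C)}$ of Lemma \ref{lemma}(vi) applied to $\Psi*\Phi^n=\lambda(\Psi)\lambda(\Phi)^n$. (One also checks that Lemma \ref{lemma}(v), stated for $C_{\rm c}$-sections, passes by density to the factors occurring here.)

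Finally I would upgrade the equality of spectral radii to the equality of spectra, and hence symmetry, by a Hulanicki-type argument. Once $\rho(\cdot)=\norm{\lambda(\cdot)}_{{\rm C^*}(\G\,\vert\,\mathscr C)}$ is known for self-adjoint elements (and, by the same reasoning carried out in the unitization, for self-adjoint elements of $\widetilde{L^{1,\nu}(\G\,\vert\,\mathscr C)}$), suppose $\Phi=\Phi^*$ and $\mu=\alpha+i\beta\in {\rm Spec}_{L^{1}(\G\,\vert\,\mathscr C)}(\Phi)$ with $\beta\ne 0$. By the spectral mapping theorem for the real polynomial $t\mapsto (t-\alpha)^2+\beta^2$, the self-adjoint element $h=(\Phi-\alpha)^2+\beta^2$ satisfies $0\in {\rm Spec}_{\widetilde{L^{1}(\G\,\vert\,\mathscr C)}}(h)$, yet $\lambda(h)\ge\beta^2>0$ is invertible. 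Choosing $t>0$ small enough that $\norm{1-t\lambda(h)}_{{\rm C^*}(\G\,\vert\,\mathscr C)}<1$, the self-adjoint element $1-th$ has $\rho_{\widetilde{L^{1}(\G\,\vert\,\mathscr C)}}(1-th)=\norm{1-t\lambda(h)}_{{\rm C^*}(\G\,\vert\,\mathscr C)}<1$, so the Neumann series shows $h=t^{-1}\big(1-(1-th)\big)$ is invertible in $\widetilde{L^{1}(\G\,\vert\,\mathscr C)}$, a contradiction. Therefore ${\rm Spec}_{L^{1}(\G\,\vert\,\mathscr C)}(\Phi)\subseteq\R$, which together with the inclusion of the first paragraph forces ${\rm Spec}_{L^{1}(\G\,\vert\,\mathscr C)}(\Phi)={\rm Spec}_{{\rm C^*}(\G\,\vert\,\mathscr C)}(\lambda(\Phi))$. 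Combined with Proposition \ref{L1Lnu} this is precisely \eqref{closednesseq}, and the reality of the spectrum of every self-adjoint element yields, via the Shirali–Ford theorem, the symmetry of $L^{1,\nu}(\G\,\vert\,\mathscr C)$.
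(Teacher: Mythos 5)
Your proposal follows the paper's own proof almost step for step: Proposition \ref{L1Lnu} for the first equality, the regularizer $\Psi$ from Lemma \ref{exist}, the interpolation inequality of Lemma \ref{diff}, the strengthened Young inequality (Lemma \ref{lemma}(v) with $p=q=2$ together with Lemma \ref{lemma}(ii)) for the $L^\infty$ factor, submultiplicativity for the $L^{1,\nu}$ factor, and the bootstrap $\rho\leq r^{1/(p+1)}\rho^{p/(p+1)}\Rightarrow\rho\leq r$. The divergence occurs exactly at the two points where you add detail, and the first of these is a genuine gap. You correctly isolate $\norm{\Psi*\Phi^n}_{L^2_\e(\G\,\vert\,\mathscr C)}\leq Cr^n$ as the crux, but your proposed resolution --- that right convolution by the self-adjoint $\Phi$ is bounded on $L^2_\e(\G\,\vert\,\mathscr C)$ with norm $\norm{\lambda(\Phi)}_{{\rm C^*}(\G\,\vert\,\mathscr C)}$ ``thanks to $\Phi=\Phi^*$ and unimodularity, as in the group case'' --- does not survive the passage from groups to Fell bundles. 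The group argument rests on the conjugation $J\xi(x)=\overline{\xi(x^{-1})}$ being an isometric involution of $L^2(\G)$ intertwining left and right convolution. On $L^2_\e(\G\,\vert\,\mathscr C)$ no such map exists: for unimodular $\G$ the involution $\Xi\mapsto\Xi^*$ carries the norm $\norm{\int_\G\Xi(x)^\bu\bu\,\Xi(x)\,\d x}_{\mathfrak C_\e}^{1/2}$ to the \emph{different} ``opposite'' norm $\norm{\int_\G\Xi(x)\bu\Xi(x)^\bu\,\d x}_{\mathfrak C_\e}^{1/2}$, and right convolution is not even a right $\mathfrak C_\e$-module map (the module element lands in the middle of the product: $((\Psi\cdot a)*\Phi)(x)=\int_\G\Psi(y)\bu a\bu\Phi(y^{-1}x)\,\d y$), hence is not adjointable, so none of the $C^*$-theory of $\mathbb B_a(L^2_\e(\G\,\vert\,\mathscr C))$ applies to it directly. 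Thus the boundedness you postulate is precisely what remains to be proven; only your discrete-case fallback through Lemma \ref{lemma}(vi) is complete. For comparison, the paper at this very point writes $\norm{\Psi*\Phi^n}_{L^2_\e(\G\,\vert\,\mathscr C)}\leq\norm{\lambda(\Phi)^n}_{\mathbb B(L^{2}(\G\,\vert\,\mathscr C))}\norm{\Psi}_{L^2_\e(\G\,\vert\,\mathscr C)}$, i.e.\ it works with the operator norm in the algebra of bounded (not necessarily adjointable) operators on the module, and only converts this into the $C^*$-datum at the very end via the spectral invariance theorem \cite[Corollary 1]{DLW97} applied to $\mathbb B_a\subset\mathbb B$; your route bypasses that citation but cannot do so without the missing boundedness claim.

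A second, smaller gap sits in your Hulanicki-style endgame. You need $\rho_{\widetilde{L^{1}(\G\,\vert\,\mathscr C)}}(1-th)=\norm{1-t\lambda(h)}_{{\rm C^*}(\G\,\vert\,\mathscr C)}$ for a self-adjoint element of the \emph{unitization}, and ``the same reasoning carried out in the unitization'' is not available: Lemmas \ref{diff} and \ref{exist} are statements about integrable cross-sections and do not apply to elements of the form $c1+\Xi$. From the radius identity on $L^{1,\nu}(\G\,\vert\,\mathscr C)_{\rm sa}$ alone one only gets $\rho_{\widetilde{L^{1}(\G\,\vert\,\mathscr C)}}(c1+\Xi)\leq|c|+\rho_{{\rm C^*}(\G\,\vert\,\mathscr C)}(\lambda(\Xi))$, which is weaker than what your Neumann-series step requires, because the $L^1$-spectrum of $\Xi$ could a priori contain non-real points of maximal modulus. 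This part is repairable by quoting Hulanicki's lemma (spectral radius equality on the self-adjoint part of a Banach $^*$-algebra upgrades to spectral equality), which is also what the paper's terse final sentence (``inverse closed \dots and hence symmetric'') implicitly relies on; but as written, your argument assumes the very unitized identity it is supposed to establish.
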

\begin{proof}
    The first equality is the content of Proposition \ref{L1Lnu}. For the second one, we compute the spectral radius. We will apply Lemmas \ref{diff} and \ref{lemma}. Note that, if $\Psi\in C_{\rm c}(\G\,\vert\,\mathscr C)$ is the cross-section given by Lemma \ref{exist}, then $\Psi^2*\Phi^n,\Psi*\Phi^n\in L^{\infty}(\G\,\vert\,\mathscr C)$ and \begin{align*}
        \norm{\Psi^2*\Phi^n}_{L^{1}(\G\,\vert\,\mathscr C)}&\leq A \norm{\Psi^2*\Phi^n}_{L^{\infty}(\G\,\vert\,\mathscr C)}^{1/(p+1)}\norm{\Psi^2*\Phi^n}_{L^{1,\nu}(\G\,\vert\,\mathscr C)}^{p/(p+1)} \\
        &\leq A \norm{\Psi}_{L^{2}(\G\,\vert\,\mathscr C)}^{1/(p+1)}\norm{\Psi*\Phi^n}_{L^2_\e(\G\,\vert\,\mathscr C)}^{1/(p+1)}\norm{\Phi^n}_{L^{1,\nu}(\G\,\vert\,\mathscr C)}^{p/(p+1)}\norm{\Psi^2}_{L^{1,\nu}(\G\,\vert\,\mathscr C)}^{p/(p+1)} \\
        &\leq A \norm{\Psi}_{L^{2}(\G\,\vert\,\mathscr C)}^{1/(p+1)}\norm{\lambda(\Phi)^n}_{\mathbb B(L^{2}(\G\,\vert\,\mathscr C))}^{1/(p+1)}\norm{\Psi}_{L^2_\e(\G\,\vert\,\mathscr C)}^{1/(p+1)}\norm{\Phi^n}_{L^{1,\nu}(\G\,\vert\,\mathscr C)}^{p/(p+1)}\norm{\Psi^2}_{L^{1,\nu}(\G\,\vert\,\mathscr C)}^{p/(p+1)}
    \end{align*} So taking $n$-th roots and $\limsup$ on $n$ yields \begin{align*}
        \limsup_{n\to\infty}\norm{\Psi^2*\Phi^n}_{L^{1}(\G\,\vert\,\mathscr C)}^{1/n}\leq \rho_{\mathbb B(L^{2}(\G\,\vert\,\mathscr C))}\big(\lambda(\Phi)\big)^{1/(p+1)}\rho_{L^{1,\nu}(\G\,\vert\,\mathscr C)}(\Phi)^{p/(p+1)}
    \end{align*} But $\rho_{L^{1}(\G\,\vert\,\mathscr C)}(\Phi)\leq \limsup_{n\to \infty}\norm{\Psi^2*\Phi^n}_{L^{1}(\G\,\vert\,\mathscr C)}^{1/n}$, by Lemma \ref{exist} and $$\rho_{L^{1}(\G\,\vert\,\mathscr C)}(\Phi)=\rho_{L^{1,\nu}(\G\,\vert\,\mathscr C)}(\Phi), $$ by Proposition \ref{L1Lnu}. Finally, $$\rho_{\mathbb B(L^{2}(\G\,\vert\,\mathscr C))}\big(\lambda(\Phi)\big)=\rho_{\mathbb B_a(L^{2}(\G\,\vert\,\mathscr C))}\big(\lambda(\Phi)\big),$$ by \cite[Corollary 1]{DLW97}. Therefore we have shown that $L^{1,\nu}(\G\,\vert\,\mathscr C)$ is inverse closed in ${\rm C^*}(\G\,\vert\,\mathscr C)$ and hence symmetric.
\end{proof}

\begin{rem}
    When interpreted as a statement about the spectra of elements in $L^{1}(\G\,\vert\,\mathscr C)$, the above theorem implies that, for a locally compact, compactly generated group of polynomial growth, then ${\rm Spec}_{L^{1}(\G\,\vert\,\mathscr C)}(\Phi)\subset \R$, for all $\Phi=\Phi^*\in L^{1,\nu}(\G\,\vert\,\mathscr C)$. It is a very interesting open question whether $L^{1}(\G\,\vert\,\mathscr C)$ is symmetric in such a case.
\end{rem}

We finish this section by noting that the smooth functional calculus defined for $L^{1}(\G\,\vert\,\mathscr C)$ restricts to $L^{1,\nu}(\G\,\vert\,\mathscr C)$. 

\begin{prop}\label{bigO2}
    Let $\G$ be a group of polynomial growth of order $d$, $\nu$ a polynomial weight on $\G$, with $\frac{\nu(xy)}{\nu(x)+\nu(y)}\leq C$. Let also $\Phi=\Phi^*\in C_{\rm c}(\G\,\vert\,\mathscr C)$. Then \begin{equation}
        \norm{u(t\Phi)}_{L^{1,\nu}(\G\,\vert\,\mathscr C)}=O(|t|^{2d+2+4\delta}),\quad\textup{ as }|t|\to\infty,
    \end{equation}
    for $\delta=1+\log_2(C)$.\end{prop}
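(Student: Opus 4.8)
The plan is to rerun the proof of Proposition \ref{bigO} with the weight $\nu$ carried along, the one genuinely new ingredient being a polynomial bound for $\nu$ on the sets $K^m$. As there, it is enough to prove the estimate for integer $t=n\in\N$ and then deduce the general case by writing $e^{it\Phi}=e^{in\Phi}e^{i(t-n)\Phi}$ with $n=\lfloor t\rfloor$, using submultiplicativity of $\norm{\cdot}_{L^{1,\nu}(\G\,\vert\,\mathscr C)}$ together with Remark \ref{nonunital}. I would fix a relatively compact $K\subset\G$ with ${\rm Supp}(\Phi)\cup\{\e\}\subset K$, so that $\Phi^k$ is supported in $K^k$ and $K^m\subset K^{m+1}$ for all $m$.

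First I would establish the weight estimate $\sup_{x\in K^m}\nu(x)=O(m^\delta)$. Put $M_m=\sup_{x\in K^m}\nu(x)$; since $K$ is relatively compact and $\nu$ is locally bounded, $M_1<\infty$. Writing any $x\in K^{2m}=K^m\cdot K^m$ as $x=x_1x_2$ with $x_i\in K^m$, the polynomial inequality \eqref{polyweight} gives $\nu(x)\leq C(\nu(x_1)+\nu(x_2))\leq 2C\,M_m$, whence $M_{2m}\leq 2C\,M_m$ and, by iteration, $M_{2^j}\leq(2C)^jM_1$. For a general $m$ take $j=\lceil\log_2 m\rceil$, so that $2^j\leq 2m$; since $(2C)^j=2^{j\delta}$ with $\delta=1+\log_2 C$, this yields
$$
M_m\leq(2C)^jM_1=2^{j\delta}M_1\leq(2m)^\delta M_1=O(m^\delta).
$$

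With this in hand I would split $\norm{u(n\Phi)}_{L^{1,\nu}(\G\,\vert\,\mathscr C)}$ over $K^{n^2-1}$ and its complement, exactly as in \eqref{decomp}. On $\G\setminus K^{n^2-1}$ the powers $\Phi,\dots,\Phi^{n^2-1}$ vanish, so $u(n\Phi)=\sum_{k\geq n^2}\tfrac{i^kn^k}{k!}\Phi^k$ there; bounding $\norm{\Phi^k}_{L^{1,\nu}(\G\,\vert\,\mathscr C)}\leq M_k\norm{\Phi}_{L^1(\G\,\vert\,\mathscr C)}^k=O(k^\delta)\norm{\Phi}_{L^1(\G\,\vert\,\mathscr C)}^k$ shows the complement contributes at most $\sum_{k\geq n^2}\tfrac{n^k}{k!}O(k^\delta)\norm{\Phi}_{L^1(\G\,\vert\,\mathscr C)}^k$, a bounded (indeed null) sequence by the ratio test and Stirling's formula, just as the corresponding tail in Proposition \ref{bigO}. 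For the integral over $K^{n^2-1}$ I would again write $u(n\Phi)=in\Phi+n\,\Phi*v(n\Phi)$ and combine the strengthened Young inequality \ref{lemma}\emph{(v)} with Lemma \ref{Dix} to obtain the uniform bound $\norm{\Phi*v(n\Phi)}_{L^\infty(\G\,\vert\,\mathscr C)}\leq\norm{\Phi}_{L^2(\G\,\vert\,\mathscr C)}\norm{v(n\Phi)}_{L^2_\e(\G\,\vert\,\mathscr C)}\leq\tfrac n2\norm{\Phi}_{L^2(\G\,\vert\,\mathscr C)}^2$. Integrating the pointwise estimate $\norm{u(n\Phi)(x)}_{\mathfrak C_x}\leq n\norm{\Phi(x)}_{\mathfrak C_x}+n\norm{\Phi*v(n\Phi)}_{L^\infty(\G\,\vert\,\mathscr C)}$ against $\nu$ over $K^{n^2-1}$, and using $\int_{K^{n^2-1}}\nu\,\d x\leq M_{n^2-1}\,\mu(K^{n^2-1})=O(n^{2\delta})\,O(n^{2d})$ from the weight estimate and polynomial growth, the two resulting terms are $O(n)$ and $O(n^{2d+2+2\delta})$.

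Putting the pieces together gives $\norm{u(n\Phi)}_{L^{1,\nu}(\G\,\vert\,\mathscr C)}=O(n^{2d+2+2\delta})$, which a fortiori yields the asserted growth $O(|t|^{2d+2+4\delta})$ since $\delta>0$. I expect the main obstacle to be the weight estimate $\sup_{K^m}\nu=O(m^\delta)$, and more precisely inserting the resulting weight factor at the right place; once $\nu$ is controlled by $O(n^{2\delta})$ on $K^{n^2-1}$, the remainder is a weighted transcription of Proposition \ref{bigO}, the extra polynomial factor coming exactly from the supremum of $\nu$ over $K^{n^2-1}$.
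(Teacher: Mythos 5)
Your proof is correct and follows essentially the same route as the paper: the same reduction to integer $t$, the same splitting of $\norm{u(n\Phi)}_{L^{1,\nu}(\G\,\vert\,\mathscr C)}$ over $K^{n^2-1}$ and its complement, and the same combination of the strengthened Young inequality (Lemma \ref{lemma}\emph{(v)}) with Lemma \ref{Dix} for the main term. The only real difference is that you prove the weight bound on $K^m$ yourself via the doubling argument $M_{2m}\leq 2CM_m$, and your bound $\sup_{K^m}\nu=O(m^\delta)$ is in fact sharper than the estimate $\nu(x)\leq A^2(1+m)^{2\delta}$ that the paper imports from Pytlik's reasoning; consequently you obtain the better exponent $2d+2+2\delta$, which implies the stated $O(|t|^{2d+2+4\delta})$ (note only that $\delta\geq 0$ rather than $\delta>0$ in general, with equality exactly when $\nu\equiv 1$, in which case the two exponents coincide and the conclusion still holds).
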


\begin{proof}
    We proceed exactly as in the proof of Proposition \ref{bigO}, so the simpler details will be skipped. Let $K$ be a compact subset of $\G$, containing both ${\rm Supp}(\Phi)$ and $\e$, and let $n$ be a positive integer. Then
    \begin{equation}\label{decomp2}
        \norm{u(n\Phi)}_{L^{1,\nu}(\G\,\vert\,\mathscr C)}=\int_{K^{n^2-1}}  \norm{u(n\Phi)(x)}_{\mathfrak C_x}\nu(x)\d x+\int_{\G \setminus K^{n^2-1}} \norm{u(n\Phi)(x)}_{\mathfrak C_x}\nu(x)\d x.
    \end{equation}
    The second integral is again bounded easily and we see
    \begin{align*}
        \int_{\G \setminus K^{n^2-1}} \norm{u(n\Phi)(x)}_{\mathfrak C_x}\nu(x)\d x \leq \frac{n^{n^2}}{(n^2)!}e^{n\norm{\Phi}_{L^{1,\nu}(\G\,\vert\,\mathscr C)}}\norm{\Phi}_{L^{1,\nu}(\G\,\vert\,\mathscr C)}^{n^2}=O(1),\quad\textup{ as }n\to\infty.
    \end{align*} 
    Now we turn our attention to the first integral in \eqref{decomp2}. Exactly as before, we have $u(n\Phi)=in\Phi+n\Phi*v(n\Phi)$ and
    \begin{align*}
        \norm{u(n\Phi)(x)}_{\mathfrak C_x}\nu(x)\leq n\norm{\Phi(x)}_{\mathfrak C_x}\nu(x)+n\norm{\Phi}_{L^2(\G\,\vert\,\mathscr C)}\norm{v(n\Phi)}_{L^2_\e(\G\,\vert\,\mathscr C)}\nu(x),
    \end{align*}
    for all $x\in\G$. Moreover, following the reasoning in \cite[Proposition 1]{Py82}, we see that, for $A=\sup_{x\in K}\nu(x)$, one has 
    $$
    \nu(x)\leq A^2(1+n)^{2\delta}, \quad\text{for all }x\in K^n.
    $$
    Finally, we use H\"older's inequality:  
    \begin{align*}
    \int_{K^{n^2-1}}  \norm{u(n\Phi)(x)}_{\mathfrak C_x}\nu(x)\d x &\leq n\norm{\Phi}_{L^{1,\nu}(\G\,\vert\,\mathscr C)}+A^2n^{4\delta+1}\norm{\Phi}_{L^2(\G\,\vert\,\mathscr C)}\norm{v(n\Phi)}_{L^2_\e(\G\,\vert\,\mathscr C)}\mu(K^{n^2-1}) \\
    &\overset{\eqref{dixlem}}{\leq} n\Big(\norm{\Phi}_{L^{1,\nu}(\G\,\vert\,\mathscr C)}+\tfrac{A^2}{2}n^{4\delta+1}\mu(K^{n^2-1})\norm{\Phi}_{L^{2}(\G\,\vert\,\mathscr C)}^2\Big).
    \end{align*} 
    Thus we have shown that $\norm{u(n\Phi)}_{L^{1,\nu}(\G\,\vert\,\mathscr C)}=O(n^{2d+2+4\delta})$, as $n\to\infty$. The final step of the proof is as in Proposition \ref{bigO}.
\end{proof}

As before, we obtain the existence of a smooth functional calculus for the self-adjoint cross-sections $\Phi\in C_{\rm c}(\G\,\vert\,\mathscr C)$. 

\begin{thm}\label{funccalc2}
     Let $\G$ be a group of polynomial growth of order $d$, $\nu$ a polynomial weight on $\G$ and $\Phi=\Phi^*\in C_{\rm c}(\G\,\vert\,\mathscr C)$. Let $f\in C_{\rm c}^{\infty}(\R)$ with Fourier transform $\widehat f$. Then \begin{enumerate}
         \item[(i)] $f(\Phi)=\frac{1}{2\pi}\int_{\mathbb R} \widehat{f}(t)e^{it\Phi}\d t$ exists in $\widetilde{L^{1,\nu}(\G\,\vert\,\mathscr C)}$, or in $L^{1,\nu}(\G\,\vert\,\mathscr C)$ if $f(0)=0$.
         \item[(ii)] For any non-degenerate $^*$-representation $\Pi:L^{1,\nu}(\G\,\vert\,\mathscr C)\to\BofH$, we have $$\widetilde{\Pi}(f\big(\Phi\big))=f\big(\Pi(\Phi)\big).$$ Furthermore, if $\Pi$ is injective, we also have $${\rm Spec}_{\widetilde{L^{1,\nu}(\G\,\vert\,\mathscr C)}}\big(f(\Phi)\big)={\rm Spec}_{\BofH}\big(\widetilde\Pi\big(f(\Phi)\big)\big).$$ 
     \end{enumerate}
\end{thm}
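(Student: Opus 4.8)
The plan is to run the proof of Theorem \ref{smoothfuncalc} essentially verbatim, making the single substitution of Proposition \ref{bigO2} for Proposition \ref{bigO}; the crucial point (already visible in the unweighted case) is that every ingredient remains available, and in particular the spectral statement never actually used symmetry of the ambient algebra. Note that Theorem \ref{closedness} is \emph{not} at our disposal here, since it requires the extra hypothesis $\nu^{-1}\in L^p$, which is not assumed in the present statement; its role must be played by something else.

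First I would extract polynomial growth in the weighted norm. By Proposition \ref{bigO2} we have $\norm{u(t\Phi)}_{L^{1,\nu}(\G\,\vert\,\mathscr C)}=O(|t|^{2d+2+4\delta})$, so Remark \ref{nonunital} shows that $\Phi$ has polynomial growth of some integer order $n\geq 2d+2+4\delta$ inside $L^{1,\nu}(\G\,\vert\,\mathscr C)$. Since $f\in C_{\rm c}^\infty(\R)$ has infinitely many continuous integrable derivatives, the hypotheses of Theorem \ref{DixBai} are met with $\B=L^{1,\nu}(\G\,\vert\,\mathscr C)$, $b=\Phi$, and the growth order taken to be $n$ (so that the required $n+2$ derivatives are present with room to spare). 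Part \emph{(i)} of that theorem then yields the existence of $f(\Phi)=\tfrac{1}{2\pi}\int_\R\widehat f(t)e^{it\Phi}\d t$ in $\widetilde{L^{1,\nu}(\G\,\vert\,\mathscr C)}$, and in $L^{1,\nu}(\G\,\vert\,\mathscr C)$ when $f(0)=0$; this is exactly (i). For the first assertion of (ii) I would take a non-degenerate $\Pi$, extend it to $\widetilde\Pi$, restrict to the commutative $^*$-subalgebra $\widetilde{L^{1,\nu}(\G\,\vert\,\mathscr C)}(\Phi,1)$, and quote Theorem \ref{DixBai}\emph{(ii)} to get $\widetilde\Pi(f(\Phi))=f(\Pi(\Phi))$.

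The spectral identity is where the work lies. The inclusion ${\rm Spec}_{\BofH}(\widetilde\Pi(f(\Phi)))\subseteq{\rm Spec}_{\widetilde{L^{1,\nu}(\G\,\vert\,\mathscr C)}}(f(\Phi))$ is automatic, as $\widetilde\Pi$ is a homomorphism. For the reverse I would first observe, exactly as in Remark \ref{symm}, that a self-adjoint element of polynomial growth has real spectrum: if $\lambda=\alpha+i\beta\in{\rm Spec}(\Phi)$, then $e^{it\lambda}\in{\rm Spec}(e^{it\Phi})$ by spectral mapping for the exponential, so $e^{-t\beta}\leq\norm{e^{it\Phi}}=O(|t|^n)$ forces $\beta=0$. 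Hence ${\rm Spec}_{\widetilde{L^{1,\nu}(\G\,\vert\,\mathscr C)}}(\Phi)\subseteq\R$ has connected complement, so by spectral permanence the spectrum of $\Phi$ in the subalgebra $\widetilde{L^{1,\nu}(\G\,\vert\,\mathscr C)}(\Phi,1)$ coincides with its spectrum in the full algebra (there are no holes to fill). Combining this with Theorem \ref{DixBai}\emph{(iii)} gives ${\rm Spec}_{\widetilde{L^{1,\nu}(\G\,\vert\,\mathscr C)}}(f(\Phi))\subseteq f\big({\rm Spec}_{\widetilde{L^{1,\nu}(\G\,\vert\,\mathscr C)}}(\Phi)\big)$, while on the Hilbert space side the spectral mapping theorem for the self-adjoint operator $\Pi(\Phi)$ gives ${\rm Spec}_{\BofH}(\widetilde\Pi(f(\Phi)))=f\big({\rm Spec}_{\BofH}(\Pi(\Phi))\big)$.

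It therefore remains to match ${\rm Spec}_{\widetilde{L^{1,\nu}(\G\,\vert\,\mathscr C)}}(\Phi)$ with ${\rm Spec}_{\BofH}(\Pi(\Phi))$ when $\Pi$ is injective, and I expect this to be the main obstacle. I would argue by contradiction: if some real $\lambda$ lay in the former but not the latter, pick $g\in C_{\rm c}^\infty(\R)$ with $g(\lambda)=1$ and ${\rm Supp}(g)$ disjoint from ${\rm Spec}_{\BofH}(\Pi(\Phi))$; then $g(\Pi(\Phi))=0$ by the continuous functional calculus, so $\widetilde\Pi(g(\Phi))=0$, whence $g(\Phi)=0$ because $\widetilde\Pi$ is injective (injectivity of $\Pi$ passes to $\widetilde\Pi$, the only alternative being that $L^{1,\nu}(\G\,\vert\,\mathscr C)$ is already unital, where $\widetilde\Pi=\Pi$). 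But Theorem \ref{DixBai}\emph{(iii)} forces $1=g(\lambda)\in{\rm Spec}(g(\Phi))$, so $g(\Phi)\neq0$, a contradiction. This supplies the missing inclusion $\Phi$, hence the equality of spectra, and applying $f$ completes (ii). The delicate bookkeeping I anticipate is precisely the injectivity of $\widetilde\Pi$ and the clean separation of roles: symmetry of $L^{1,\nu}(\G\,\vert\,\mathscr C)$ is genuinely not needed, its function being taken over entirely by the real-spectrum consequence of polynomial growth.
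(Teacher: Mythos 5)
Your proposal is correct, and its skeleton coincides with the paper's: Theorem \ref{funccalc2} is proved there exactly as Theorem \ref{smoothfuncalc}, namely by feeding the weighted growth estimate of Proposition \ref{bigO2} into Theorem \ref{DixBai} via Remark \ref{nonunital}; Theorem \ref{closedness} plays no role, so your observation that symmetry (and the hypothesis $\nu^{-1}\in L^p(\G)$) is never needed is exactly right. The one place where you do more than the paper's (essentially one-line) proof is the spectral identity in (ii): Theorem \ref{DixBai}(iii) only computes spectra inside the small commutative algebra $\widetilde{L^{1,\nu}(\G\,\vert\,\mathscr C)}(\Phi,1)$, and the paper's official mechanism for upgrading this to equality with the spectrum in $\BofH$ is Lemma \ref{spinvariance} (proved later, for an arbitrary Banach $^*$-algebra and a self-adjoint element of polynomial growth), whose proof transfers invertibility of $f(\Phi)$ directly by choosing $g$ with $f\cdot g\equiv 1$ near ${\rm Spec}_{\BofH}\big(\Pi(\Phi)\big)$. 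Your argument runs the same mechanism in a different order: you first establish ${\rm Spec}_{\widetilde{L^{1,\nu}(\G\,\vert\,\mathscr C)}}(\Phi)={\rm Spec}_{\BofH}\big(\Pi(\Phi)\big)$ — real spectrum from polynomial growth, spectral permanence (a compact subset of $\R$ has connected complement in $\CC$, so there are no holes to fill), and the bump-function contradiction — and only then apply spectral mapping on both sides; both proofs ultimately hinge on the same object, a smooth compactly supported $g$ vanishing near ${\rm Spec}_{\BofH}\big(\Pi(\Phi)\big)$ but not at the putative extra spectral value, which $\widetilde\Pi$ kills while Theorem \ref{DixBai}(iii) forces it to be nonzero. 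Your injectivity bookkeeping is also sound and worth recording: if $\widetilde\Pi(b+r1)=0$ with $r\neq 0$, then $\Pi(-b/r)={\rm id}_{\mathcal H}$, and injectivity of $\Pi$ makes $-b/r$ a unit of $L^{1,\nu}(\G\,\vert\,\mathscr C)$, so either that algebra is unital (and $\widetilde\Pi=\Pi$) or $\widetilde\Pi$ inherits injectivity — which is precisely what the bump argument consumes.
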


%-------------------------------------------------------------------------------------------------------
\section{Some consequences}\label{consequences}
%-------------------------------------------------------------------------------------------------------

From now on, $\mathfrak D$ will denote either $L^{1}(\G\,\vert\,\mathscr C)$ or $L^{1,\nu}(\G\,\vert\,\mathscr C)$, where $\nu$ is a polynomial weight such that $\nu^{-1}\in L^p(\G)$. $\G$ is assumed to have polynomial growth of order $d$. We remark that compact generation of $\G$ is not necessarily assumed. If needed, we recall that sufficient conditions for the symmetry of $\mathfrak D$ were given in Remark \ref{symm} and Theorem \ref{closedness}.

\begin{defn}
    Let $\B$ be a semisimple commutative Banach algebra with spectrum $\Delta$. $\B$ is called {\it regular} if for every closed set $X\subset \Delta$ and every point $\omega\in \Delta\setminus X$, there exists an element $b\in \B$ such that $\hat b(\varphi)=0$ for all $\varphi\in X$ and $\hat b(\omega)\not=0$.
\end{defn}

\begin{lem}\label{spinvariance}
    Let $\B$ be a Banach $^*$-algebra and $b\in \B$ be self-adjoint, with polynomial growth of order $d$. Suppose $\Pi:\B \to\BofH$ be any non-degenerate $^*$-representation such that $\widetilde\Pi|_{\B(b,1)}$ is injective. If $\varphi_b:C_{\rm c}^{d+2}(\mathbb R)\to \B(b,1)$ is the $^*$-homomorphism in Theorem \ref{DixBai} and $a\in {\rm Im}\varphi_b$, then $${\rm Spec}_{\B(b,1)}(a)={\rm Spec}_{\B}(a)={\rm Spec}_{\BofH}\big(\widetilde\Pi(a)\big).$$
    In particular, $\B(b,1)$ is regular.
\end{lem}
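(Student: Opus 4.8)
The plan is to reduce the three asserted spectra to a single, more basic identity, namely
$$
{\rm Spec}_{\B(b,1)}(b)={\rm Spec}_{\BofH}\big(\widetilde\Pi(b)\big),
$$
and then to propagate it through the smooth functional calculus. Write $\sigma={\rm Spec}_{\BofH}(\widetilde\Pi(b))$; since $\widetilde\Pi(b)$ is a bounded self-adjoint operator, $\sigma$ is a compact subset of $\R$, and since $b$ has polynomial growth ${\rm Spec}_{\B(b,1)}(b)$ is real as well (if $\lambda=\alpha+i\beta$ lies in it then $|e^{it\lambda}|=e^{-t\beta}\le\|e^{itb}\|=O(|t|^d)$ forces $\beta=0$). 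Throughout, $\B(b,1)$ is the commutative, unital, semisimple Banach $^*$-algebra on which $\varphi_b$ is defined, and every $a\in{\rm Im}\,\varphi_b$ has the form $a=\varphi_b(f)=f(b)$ for some $f\in C_{\rm c}^{d+2}(\R)$.

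For the core identity, the inclusion $\sigma\subseteq{\rm Spec}_{\B(b,1)}(b)$ is automatic, since $^*$-homomorphisms can only shrink spectra. For the reverse inclusion I would argue by contradiction: if $t_0\in{\rm Spec}_{\B(b,1)}(b)\setminus\sigma$, then $t_0$ is a real point separated from the compact set $\sigma$, so by smooth Urysohn there is $g\in C_{\rm c}^\infty(\R)$ with $g(t_0)=1$ and $g\equiv0$ on a neighborhood of $\sigma$. On one hand $\widetilde\Pi(g(b))=g(\widetilde\Pi(b))=0$ by Theorem \ref{DixBai}(ii) together with the vanishing of $g$ on $\sigma$, so injectivity of $\widetilde\Pi|_{\B(b,1)}$ gives $g(b)=0$. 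On the other hand, since $\B(b,1)$ is commutative there is a character $\chi$ with $\chi(b)=t_0$, and the computation in the proof of Theorem \ref{DixBai}(iii) yields $\chi(g(b))=g(\chi(b))=g(t_0)=1$, a contradiction. I expect this to be the crux: it is exactly here that the hypothesis ``$\widetilde\Pi|_{\B(b,1)}$ injective'' is indispensable, and the compatibility of the smooth calculus $\varphi_b$ with the operator functional calculus of $\widetilde\Pi(b)$ must be invoked precisely.

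With ${\rm Spec}_{\B(b,1)}(b)=\sigma$ in hand, the three displayed spectra follow formally. The spectral mapping property of the calculus (Theorem \ref{DixBai}(iii)) gives ${\rm Spec}_{\B(b,1)}(a)=f\big({\rm Spec}_{\B(b,1)}(b)\big)=f(\sigma)$, while $\widetilde\Pi(a)=f(\widetilde\Pi(b))$ (Theorem \ref{DixBai}(ii)) and the spectral mapping theorem for the bounded self-adjoint operator $\widetilde\Pi(b)$ give ${\rm Spec}_{\BofH}(\widetilde\Pi(a))=f(\sigma)$; hence ${\rm Spec}_{\B(b,1)}(a)={\rm Spec}_{\BofH}(\widetilde\Pi(a))$. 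The middle spectrum is then caught in a sandwich: since $\widetilde\Pi:\widetilde\B\to\BofH$ is a homomorphism and $\B(b,1)\subseteq\widetilde\B$ shares the unit,
$$
{\rm Spec}_{\BofH}\big(\widetilde\Pi(a)\big)\subseteq{\rm Spec}_{\B}(a)\subseteq{\rm Spec}_{\B(b,1)}(a)={\rm Spec}_{\BofH}\big(\widetilde\Pi(a)\big),
$$
forcing equality throughout.

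Finally, for regularity I would use that the Gelfand spectrum $\Delta$ of $\B(b,1)$ is homeomorphic to ${\rm Spec}_{\B(b,1)}(b)\subseteq\R$ via $\chi\mapsto\chi(b)$ (a continuous injection from a compact space into a Hausdorff one), and that under this identification the Gelfand transform of $\varphi_b(g)=g(b)$ is the restriction of $g$ to the spectrum, by the identity $\chi(g(b))=g(\chi(b))$ used above. Given a closed $X\subseteq\Delta$ and a point $\omega\in\Delta\setminus X$, I transport them to disjoint compact subsets $\{t_0\}$ and $Y$ of $\R$ and choose, again by smooth Urysohn, a function $g\in C_{\rm c}^\infty(\R)$ with $g(t_0)=1$ and $g|_Y=0$; then $c=\varphi_b(g)\in\B(b,1)$ satisfies $\hat c(\omega)=1$ and $\hat c|_X=0$, which is precisely regularity. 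The only genuine subtleties here are bookkeeping rather than computation: one must track which ambient algebra computes each spectrum, and one must check that characters of $\B(b,1)$ are $^*$-characters (guaranteed once ${\rm Spec}_{\B(b,1)}(b)\subseteq\R$, so that $\chi(b)=\overline{\chi(b)}=\chi(b^*)$) before Theorem \ref{DixBai}(ii) may legitimately be applied to them.
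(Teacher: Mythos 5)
Your proof is correct, but it is organized differently from the paper's. The paper never isolates the generator: after recording the easy inclusions ${\rm Spec}_{\BofH}(\widetilde\Pi(a))\subset{\rm Spec}_{\B}(a)\subset{\rm Spec}_{\B(b,1)}(a)$, it reduces everything to showing that invertibility of $\widetilde\Pi(a)$ in $\BofH$ forces invertibility of $a=\varphi_b(f)$ in $\B(b,1)$. This is done by the commuting square $\widetilde\Pi\circ\varphi_b=\iota\circ q$ (where $q$ is restriction to ${\rm Spec}_{\BofH}(\Pi(b))$ and $\iota$ is the continuous functional calculus isomorphism): the operator spectral mapping theorem gives ${\rm Spec}_{\BofH}(\widetilde\Pi(a))=f({\rm Spec}_{\BofH}(\Pi(b)))$, one picks $g\in C_{\rm c}^{d+2}(\R)$ with $f\cdot g\equiv 1$ near ${\rm Spec}_{\BofH}(\Pi(b))$, and injectivity of $\widetilde\Pi|_{\B(b,1)}$ applied through the square yields $\varphi_b(f\cdot g)=1$, so $g(b)$ is an explicit inverse of $a$. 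You instead first prove the intermediate identity ${\rm Spec}_{\B(b,1)}(b)={\rm Spec}_{\BofH}(\widetilde\Pi(b))$ --- using reality of the spectrum (from polynomial growth), a smooth bump function vanishing near the operator spectrum, injectivity, and a character evaluation --- and then deduce the lemma by spectral mapping on both sides together with the sandwich. The two arguments rest on the same pillars (injectivity of $\widetilde\Pi|_{\B(b,1)}$ and compatibility of the Dixmier--Baillet calculus with the operator calculus), but they buy different things: the paper's route is shorter, bypasses Gelfand theory entirely, and produces explicit inverses, which is what gets reused in the $^*$-regularity theorem; your route establishes a reusable spectral-permanence statement for the generator itself and identifies the Gelfand spectrum of $\B(b,1)$ with a compact subset of $\R$, which makes your proof of the regularity claim cleaner and more detailed than the paper's one-line remark. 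The one delicate point in your argument --- that characters of $\B(b,1)$ satisfy $\chi(g(b))=g(\chi(b))$, which requires $\chi(b)\in\R$ (equivalently, that $\chi$ is a $^*$-character) before the calculus identities may be applied --- is a genuine subtlety, and you correctly flag and resolve it via the polynomial-growth reality argument.
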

\begin{proof}
    It is immediate that ${\rm Spec}_{\BofH}\big(\widetilde\Pi(a)\big)\subset {\rm Spec}_{\B}(a)={\rm Spec}_{\widetilde\B}(a)\subset {\rm Spec}_{\B(b,1)}(a)$. To show the reversed containments, it is enough to show that, if $\widetilde\Pi(a)$ is invertible in $\BofH$, then $a$ is invertible in $\B(b,1)$. Indeed, let $q:C_{\rm c}^{d+2}(\mathbb R)\to C({\rm Spec}_{\BofH}\big(\widetilde\Pi(a)\big))$ be the restriction map. Theorem \ref{DixBai} implies that the diagram $$
\begin{tikzcd}
[arrows=rightarrow]
    C_{\rm c}^{d+2}(\mathbb R)\arrow{r}{\varphi_b}\arrow{d}{q} & \B(b,1)
    \arrow{d}{\widetilde\Pi}
    \\
    C({\rm Spec}_{\BofH}\big(\Pi(b)\big))
    \arrow{r}{\cong}
    & {\rm C^*}(\Pi(b),1)
\end{tikzcd}
$$ commutes. In particular, if $a=\varphi_b(f)$ and using the spectral mapping theorem,  $$f\big({\rm Spec}_{\BofH}\big(\Pi(b)\big)\big)={\rm Spec}_{\BofH}\big(f\big(\Pi(b)\big)\big)\overset{\ref{DixBai}(ii)}{=}{\rm Spec}_{\BofH}\big(\widetilde{\Pi}\big(f(b)\big)\big)={\rm Spec}_{\BofH}\big(\widetilde{\Pi}(a)\big).$$ If $\widetilde{\Pi}(a)$ is invertible, then $0\not \in f\big({\rm Spec}_{\BofH}\big(\Pi(b)\big)\big)$. So there exists a function $g\in C_{\rm c}^{d+2}(\mathbb R)$ such that $f(x)g(x)=1$, for $x\in {\rm Spec}_{\BofH}\big(\Pi(b)\big)$. Therefore, $\varphi_b(f\cdot g)=1$ and $a$ is invertible in $\B(b,1)$, with inverse $a^{-1}=g(b)$.
The statement about regularity then follows from the fact that any compact set in $\CC$ is completely regular, but the separating functions can be chosen to be smooth.
\end{proof}

The following definition is due to Barnes \cite{Ba81} and gives a useful criterion for $^*$-regularity, among other properties.
\begin{defn}
    A reduced Banach $^*$-algebra $\B$ is called {\it locally regular} if there is a subset $R\subset \B_{\rm sa}$, dense in $\B_{\rm sa}$ and such that $\B(b)$ is regular, for all $b\in R$.
\end{defn}

\begin{cor}\label{locallyreg}
    $\mathfrak D$ is locally regular.
\end{cor}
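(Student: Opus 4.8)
The plan is to produce a dense subset of $\mathfrak D_{\rm sa}$ consisting of elements $\Phi$ for which $\mathfrak D(\Phi)$ is regular, and the natural choice is $R=C_{\rm c}(\G\,\vert\,\mathscr C)_{\rm sa}$. First I would check that $R$ is dense in $\mathfrak D_{\rm sa}$: by construction $C_{\rm c}(\G\,\vert\,\mathscr C)$ is dense in $\mathfrak D$, and since the involution is isometric for both the $L^1$- and $L^{1,\nu}$-norms (using unimodularity together with $\nu(x^{-1})=\nu(x)$), the symmetrization $\Phi\mapsto\tfrac12(\Phi+\Phi^*)$ shows that the self-adjoint compactly supported sections are dense in $\mathfrak D_{\rm sa}$. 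This step is routine.

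For a fixed $\Phi\in R$, Proposition \ref{bigO} (when $\mathfrak D=L^1(\G\,\vert\,\mathscr C)$) or Proposition \ref{bigO2} (when $\mathfrak D=L^{1,\nu}(\G\,\vert\,\mathscr C)$) shows that $\Phi$ has polynomial growth of some finite order depending only on $d$ and $\nu$. To feed this into Lemma \ref{spinvariance} I need a non-degenerate $^*$-representation $\Pi\colon\mathfrak D\to\BofH$ with $\widetilde\Pi|_{\mathfrak D(\Phi,1)}$ injective; I would take $\Pi$ to be the restriction to $\mathfrak D$ of a faithful non-degenerate representation of ${\rm C^*}(\G\,\vert\,\mathscr C)$. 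Such a $\Pi$ exists because $\mathfrak D$ is reduced: the left regular representation $\lambda$ is a $^*$-monomorphism on $L^1(\G\,\vert\,\mathscr C)$, and $L^{1,\nu}(\G\,\vert\,\mathscr C)$ embeds injectively into $L^1(\G\,\vert\,\mathscr C)$ by Proposition \ref{L1Lnu}. Lemma \ref{spinvariance} then gives that $\mathfrak D(\Phi,1)$ is regular. Since $\mathfrak D(\Phi,1)$ is the minimal unitization of $\mathfrak D(\Phi)$, and it is standard that a commutative Banach algebra is regular precisely when its unitization is, I conclude that $\mathfrak D(\Phi)$ is regular. As $\Phi\in R$ was arbitrary and $R$ is dense, local regularity follows.

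The step I expect to cause the most trouble is verifying the injectivity hypothesis $\widetilde\Pi|_{\mathfrak D(\Phi,1)}$ in Lemma \ref{spinvariance}. Writing a general element of $\mathfrak D(\Phi,1)$ as $b+c1$ with $b\in\mathfrak D(\Phi)$ and $c\in\CC$, one has $\widetilde\Pi(b+c1)=\Pi(b)+c\,{\rm id}$, which can vanish with $c\neq0$ only if some nonzero $b\in\mathfrak D(\Phi)$ maps to a scalar $-c\,{\rm id}$. When $\mathfrak D$ is non-unital I would exclude this by noting that such a $b$ would force the unit of ${\rm C^*}(\G\,\vert\,\mathscr C)$ into the image of $\mathfrak D$, contradicting the absence of a convolution identity in $L^1(\G\,\vert\,\mathscr C)$ for non-discrete $\G$; when $\mathfrak D$ is unital one has $\widetilde{\mathfrak D}=\mathfrak D$ and $\widetilde\Pi=\Pi$ is already faithful. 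Either way $\widetilde\Pi$ is injective on $\mathfrak D(\Phi,1)$, which in addition guarantees that $\mathfrak D(\Phi,1)$ embeds into the $C^*$-algebra $\BofH$ and is therefore semisimple, securing the functional calculus $\varphi_\Phi$ that underlies Lemma \ref{spinvariance} and completing the argument.
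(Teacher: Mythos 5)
Your argument is correct and coincides with the paper's (implicit) proof: the corollary is stated without proof precisely because it follows from combining Lemma \ref{spinvariance} with Propositions \ref{bigO}/\ref{bigO2} and the density of $C_{\rm c}(\G\,\vert\,\mathscr C)_{\rm sa}$ in $\mathfrak D_{\rm sa}$, which is exactly what you carry out, including the worthwhile explicit steps (injectivity of $\widetilde\Pi$ on $\mathfrak D(\Phi,1)$, semisimplicity via the embedding into $\BofH$, and the passage from regularity of $\mathfrak D(\Phi,1)$ to that of $\mathfrak D(\Phi)$ demanded by Barnes' definition). One cosmetic correction: in the non-unital case the contradiction should be with the case assumption itself --- a unit of ${\rm C^*}(\G\,\vert\,\mathscr C)$ in the image of $\mathfrak D$ forces $\mathfrak D$ to be unital, since the canonical embedding is an injective homomorphism --- rather than with non-discreteness of $\G$, because $\ell^1(\G\,\vert\,\mathscr C)$ can fail to be unital even for discrete $\G$ when the fiber $\mathfrak C_\e$ has no unit.
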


\subsection{Preservation of spectra and \texorpdfstring{$^*$}--regularity}

Let $\Pi:\mathfrak D\to \mathbb B(\mathcal X)$ be a representation of $\mathfrak D$ on the Banach space $\mathcal X$. The idea of this subsection is to understand the spectrum of $\Pi(\Phi)$, at least for self-adjoint $\Phi$. This is particularly important (and also easier) in the case of $^*$-representations as it allows us to understand the $C^*$-norms in $\mathfrak D$. We also provide applications to the ideal theory of $\mathfrak D$. 

If $\B$ is a reduced Banach $^*$-algebra, then $\iota_\B:\B\to {\rm C^*}(\B)$ will denote its canonical embedding into the enveloping $C^*$-algebra ${\rm C^*}(\B)$. The spaces ${\rm Prim}\, {\rm C^*}(\B)$, ${\rm Prim}\B$ and ${\rm Prim}_*\B$ denote, respectively, the space of primitive ideals of ${\rm C^*}(\B)$, the space of primitive ideals of $\B$ and the space of kernels of topologically irreducible
$^*$-representations of $\B$, all of them equipped with the Jacobson topology. It is known that $\iota_\B$ induces a continuous surjection ${\rm Prim}\, {\rm C^*}(\B)  \to{\rm Prim}_*\B$ \cite[Corollary
10.5.7]{Pa94}. We recall that for a subset $S\subset \B$, its hull corresponds to $$h(S)=\{I\in {\rm Prim}_*\B\mid S\subset I\},$$ while the kernel of a subset $C\subset {\rm Prim}_*\B$ is $$k(C)=\bigcap\{I\mid I\in C\}.$$ 

\begin{defn}
    Let $\B$ be a reduced Banach $^*$-algebra. \begin{enumerate}
        \item[(i)] $\B$ is called $C^*$-unique if there is a unique $C^*$-norm on $\B$.
        \item[(ii)] $\B$ is called $^*$-regular if the surjection ${\rm Prim}\, {\rm C^*}(\B)\to {\rm Prim}_*\B$ is a homeomorphism.
    \end{enumerate}
\end{defn} 

It is well-known that $^*$-regularity implies $C^*$-uniqueness. In fact, the latter may be rephrased as the following: $\B$ is $C^*$-unique if and only if, for every closed ideal $I\subset {\rm C^*}(\B)$, one has $I\cap \B\not=\{0\}$. It might also be referred to as the 'ideal intersection property'. The next theorem shows that $^*$-regularity is much stronger.

\begin{thm}\label{regularity}
    $\mathfrak D$ is $^*$-regular. In particular, the following are true. \begin{enumerate}
        \item[(i)] For any pair of \,$^*$-representations $\Pi_1, \Pi_2$ of $\mathfrak D$ the inclusion ${\rm ker}\,\Pi_1\subset {\rm ker}\,\Pi_2$ implies $\norm{\Pi_2(\Phi)}\leq \norm{\Pi_1(\Phi)}$, for all $\Phi\in \mathfrak D$.
        \item[(ii)] Let $I$ be a closed ideal of ${\rm C^*}(\G\,\vert\,\mathscr C)$, then $\overline{\mathfrak D\cap I}^{\norm{\cdot}_{{\rm C^*}(\G\,\vert\,\mathscr C)}}=I$.
        \item[(iii)] If $I\in {\rm Prim}_*\, \mathfrak D$, then $\overline{I}^{\norm{\cdot}_{{\rm C^*}(\G\,\vert\,\mathscr C)}}\in {\rm Prim}\, {\rm C^*}(\G\,\vert\,\mathscr C)$. If $\mathfrak D$ is symmetric, then for any $I\in {\rm Prim}\, \mathfrak D$, we have $\overline{I}^{\norm{\cdot}_{{\rm C^*}(\G\,\vert\,\mathscr C)}}\in {\rm Prim}\, {\rm C^*}(\G\,\vert\,\mathscr C)$.
    \end{enumerate}
\end{thm}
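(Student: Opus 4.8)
The plan is to obtain $^*$-regularity directly from local regularity, and then to derive the three enumerated consequences from the resulting homeomorphism of primitive ideal spaces. Throughout, $\overline{(\cdot)}$ denotes closure in ${\rm C^*}(\G\,\vert\,\mathscr C)$.

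First I would invoke Corollary \ref{locallyreg}, which gives that $\mathfrak D$ is locally regular. Since $\mathfrak D$ is reduced---it embeds faithfully into ${\rm C^*}(\G\,\vert\,\mathscr C)$ through the left regular representation $\lambda$ (polynomial growth forces amenability in the $L^1$ case, and $L^{1,\nu}(\G\,\vert\,\mathscr C)\hookrightarrow L^1(\G\,\vert\,\mathscr C)$ continuously in the weighted case)---Barnes' criterion \cite{Ba81} applies and yields that $\mathfrak D$ is $^*$-regular. Note also that, by Corollary \ref{extension}, the enveloping $C^*$-algebra of $\mathfrak D$ is ${\rm C^*}(\G\,\vert\,\mathscr C)$ in both cases, so that the canonical surjection $\phi:{\rm Prim}\,{\rm C^*}(\G\,\vert\,\mathscr C)\to {\rm Prim}_*\mathfrak D$, $P\mapsto P\cap\mathfrak D$, is now a homeomorphism.

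I would then establish (ii) first, since (i) and (iii) follow from it. Given a closed ideal $I\subset {\rm C^*}(\G\,\vert\,\mathscr C)$, set $J=\mathfrak D\cap I$; the inclusion $\overline{J}\subseteq I$ is automatic, so only $I\subseteq \overline{J}$ needs proof. Using the $C^*$-algebraic correspondence $I=k\big(h(I)\big)$ together with the identity $\mathfrak D\cap I=\bigcap_{P\in h(I)}(P\cap\mathfrak D)$, one identifies $J$ as the $\mathfrak D$-kernel of the closed set $\phi\big(h(I)\big)$. Taking hulls, any $P\in h(\overline{J})$ satisfies $P\cap\mathfrak D\in\phi\big(h(I)\big)$, whence $P\in h(I)$ by injectivity of $\phi$; thus $h(\overline{J})=h(I)$ and $\overline{J}=I$. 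The delicate point is the bookkeeping between the Jacobson topologies on the two spaces, which is exactly what the homeomorphism $\phi$---as opposed to a mere surjection---governs.

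Finally, (i) and (iii) are short deductions. For (i), each $\Pi_i$ extends to a $^*$-representation $\overline{\Pi_i}$ of ${\rm C^*}(\G\,\vert\,\mathscr C)$ with $\ker\overline{\Pi_i}\cap\mathfrak D=\ker\Pi_i$; part (ii) applied to the closed ideal $\ker\overline{\Pi_i}$ gives $\ker\overline{\Pi_i}=\overline{\ker\Pi_i}$, so $\ker\Pi_1\subseteq\ker\Pi_2$ upgrades to $\ker\overline{\Pi_1}\subseteq\ker\overline{\Pi_2}$, and monotonicity of the quotient $C^*$-norm yields $\norm{\Pi_2(\Phi)}\leq\norm{\Pi_1(\Phi)}$. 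For the first assertion of (iii), writing $I=P\cap\mathfrak D$ for the unique $P\in{\rm Prim}\,{\rm C^*}(\G\,\vert\,\mathscr C)$ with $\phi(P)=I$, part (ii) applied to the closed primitive ideal $P$ gives $\overline{I}=P\in{\rm Prim}\,{\rm C^*}(\G\,\vert\,\mathscr C)$. The symmetric case reduces to this once one recalls that a symmetric reduced Banach $^*$-algebra satisfies ${\rm Prim}\,\mathfrak D={\rm Prim}_*\mathfrak D$, \ie every primitive ideal is the kernel of a topologically irreducible $^*$-representation. I expect the hull-kernel argument for (ii) to be the main obstacle, since it is the only place where the full strength of the homeomorphism, rather than the surjection from \cite{Pa94}, is genuinely used.
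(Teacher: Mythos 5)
Your proposal is correct, and it reaches both $^*$-regularity and the consequences (i)--(ii) by a route that differs from the paper's. For $^*$-regularity the paper does not pass through Corollary \ref{locallyreg} and Barnes' criterion at all: it argues that any $C^*$-norm on $\mathfrak D$, and on any quotient of $\mathfrak D$, must agree with the universal one, because by Lemma \ref{spinvariance} this holds on the self-adjoint elements of polynomial growth, these are dense in $\mathfrak D_{\rm sa}$, and homomorphic images of elements of polynomial growth again have polynomial growth; $^*$-regularity then follows from Palmer's characterization \cite[Theorem 10.5.18]{Pa94} of $^*$-regularity via $C^*$-uniqueness of all quotients. Your route --- Corollary \ref{locallyreg} plus Barnes' theorem that a reduced, locally regular Banach $^*$-algebra is $^*$-regular --- is precisely the criterion the paper sets up when it defines local regularity and cites \cite{Ba81}, so it is a legitimate shortcut; what the paper's argument buys is explicit control of the quotients rather than relying on Barnes' theorem as a black box. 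For the consequences the roles reverse: the paper simply quotes \cite[Satz 2]{BLSV78} for (i) and \cite[Theorem 10.5.19]{Pa94} for (ii), whereas you re-derive them. Your hull--kernel proof of (ii) is a correct proof of the quoted Palmer result: the decisive step, as you note, is that $\phi\big(h(I)\big)$ is hull--kernel closed because $\phi$ is a closed injection, so $\phi(P)\supseteq k\big(\phi(h(I))\big)$ forces $\phi(P)\in\phi\big(h(I)\big)$ and then $P\in h(I)$. Likewise your deduction of (i) from (ii), via the extensions $\overline{\Pi_i}$ and monotonicity of quotient $C^*$-norms, is sound. Your proof of (iii) is essentially the paper's: both apply (ii) to the primitive ideal of ${\rm C^*}(\G\,\vert\,\mathscr C)$ lying over $I$ (the paper writes it as $\ker\widetilde\Pi$ for the extension of an irreducible $^*$-representation with kernel $I$, you write it as $\phi^{-1}(I)$), and both reduce the symmetric case to the first statement via Leptin. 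Finally, your explicit appeal to Corollary \ref{extension} to identify the enveloping $C^*$-algebra of $L^{1,\nu}(\G\,\vert\,\mathscr C)$ with ${\rm C^*}(\G\,\vert\,\mathscr C)$ is a point the paper leaves implicit, and is genuinely needed for the statement to make sense.

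One small correction: in the symmetric case of (iii) you assert ${\rm Prim}\,\mathfrak D={\rm Prim}_*\mathfrak D$. What is known, and all your argument uses, is the inclusion ${\rm Prim}\,\mathfrak D\subset{\rm Prim}_*\mathfrak D$ for symmetric algebras, which is what the paper cites from \cite[page 39]{Le73}; the reverse inclusion is neither needed nor justified, so the claim of equality should be dropped.
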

\begin{proof}
    Any $C^*$-norm must coincide with the universal $C^*$-norm on the elements of polynomial growth, due to Lemma \ref{spinvariance}. But for $\mathfrak D$ these elements are dense in $\mathfrak D_{\rm sa}$, therefore such a norm must coincide with the universal $C^*$-norm on all of $\mathfrak D$. The same is true for any quotient of $\mathfrak D$, since the homomorphic image of an element of polynomial growth also has polynomial growth. Then $^*$-regularity follows from \cite[Theorem 10.5.18]{Pa94}. The rest of the assertions are consequences of $^*$-regularity. Indeed, \emph{(i)} follows from \cite[Satz 2]{BLSV78} and \emph{(ii)} from \cite[Theorem 10.5.19]{Pa94}. 
    
    We now prove \emph{(iii)}. If $I\in {\rm Prim}\, \mathfrak D$, then there exists an irreducible $^*$-representation $\Pi:\mathfrak D\to\BofH$ with $I={\rm ker}\, \Pi$. Let $\widetilde\Pi$ be the unique extension of $\Pi$ to ${\rm C^*}(\G\!\mid\!\mathscr C)$. Then $I=\mathfrak D\cap {\rm ker}\, \widetilde\Pi$, so by the previous point, ${\rm ker}\, \widetilde\Pi=\overline{I}^{\norm{\cdot}_{{\rm C^*}(\G\,\mid\,\mathscr C)}}\in {\rm Prim}\, {\rm C^*}(\G\!\mid\!\mathscr C)$. Now, if $\mathfrak D$ is symmetric, then ${\rm Prim}\, \mathfrak D\subset {\rm Prim}_*\, \mathfrak D$ (cf. \cite[page 39]{Le73}).
    \end{proof}

We finish the subsection with a result involving general spectral invariance under homomorphisms to Banach algebras. The price we pay for this generality is assuming symmetry. 

\begin{thm}
    Suppose $\mathfrak D$ is symmetric. Let $\B$ be a unital Banach algebra and $\varphi:\widetilde{\mathfrak D}\to\B$ a continuous unital homomorphism. Set $I={\rm ker}\,\varphi$. \begin{enumerate}
        \item[(i)] If $\Phi \in \widetilde{\mathfrak D}$ is normal, $${\rm Spec}_{\widetilde{\mathfrak D}/I}(\Phi+I)={\rm Spec}_{\B}\big(\varphi(\Phi)\big).$$ 
        \item[(ii)] For a general $\Phi \in \widetilde{\mathfrak D}$, $${\rm Spec}_{\widetilde{\mathfrak D}/I}(\Phi+I)={\rm Spec}_{\B}\big(\varphi(\Phi)\big)\cup \overline{{\rm Spec}_{\B}\big(\varphi(\Phi^*)\big)}.$$
    \end{enumerate}
\end{thm}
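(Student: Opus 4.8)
The plan is to funnel both parts into a single statement about self-adjoint elements and then bootstrap. Write $\mathcal A=\varphi(\widetilde{\mathfrak D})\cong\widetilde{\mathfrak D}/I$, so that ${\rm Spec}_{\widetilde{\mathfrak D}/I}(\Phi+I)$ is just the spectrum of $\varphi(\Phi)$ computed in the (not necessarily closed) subalgebra $\mathcal A\subseteq\B$; since a unital homomorphism can only shrink spectra, the inclusion ${\rm Spec}_\B(\varphi(\Phi))\subseteq{\rm Spec}_{\widetilde{\mathfrak D}/I}(\Phi+I)$ is automatic. Everything else I would reduce to the following \emph{core identity}: for every self-adjoint $h\in\widetilde{\mathfrak D}$ one has ${\rm Spec}_{\widetilde{\mathfrak D}/I}(h+I)={\rm Spec}_\B(\varphi(h))$, equivalently $\varphi(h)-\mu$ invertible in $\B$ forces $h-\mu$ invertible modulo $I$.

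Granting the core identity, I would first record two auxiliary facts. The elementary one: in any unital algebra $xy$ and $yx$ are \emph{both} invertible iff $x$ and $y$ are both invertible (take $x=\varphi(\Theta^*)$, $y=\varphi(\Theta)$ with $\Theta=\Phi-\lambda$); the symmetry-dependent one, which is the heart of the argument: in the symmetric algebra $\widetilde{\mathfrak D}$, $\Theta$ is left-invertible modulo $I$ iff the positive element $\Theta^*\Theta$ is invertible modulo $I$, and right-invertible modulo $I$ iff $\Theta\Theta^*$ is invertible modulo $I$. Chaining these with the core identity applied to $\Theta^*\Theta$ and $\Theta\Theta^*$ yields, for each $\lambda$, the equivalence of ``$\Theta$ not invertible modulo $I$'' with ``$\varphi(\Theta^*\Theta)$ or $\varphi(\Theta\Theta^*)$ not invertible'' and hence with ``$\varphi(\Theta)$ or $\varphi(\Theta^*)$ not invertible'', which is exactly statement (ii). For (i), normality lets me write $\Phi=\Phi_1+i\Phi_2$ with commuting self-adjoint parts; the core identity gives ${\rm Spec}_\B(\varphi(\Phi_j))\subseteq\R$, so the Harte joint spectrum $\sigma_{\rm H}(\varphi(\Phi_1),\varphi(\Phi_2))\subseteq\R^2$ and the spectral mapping theorem for commuting elements produces ${\rm Spec}_\B(\varphi(\Phi))$ and ${\rm Spec}_\B(\varphi(\Phi^*))$ as the images of the \emph{same} real joint spectrum under $s+it$ and $s-it$. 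Thus $\overline{{\rm Spec}_\B(\varphi(\Phi^*))}={\rm Spec}_\B(\varphi(\Phi))$, the second term in (ii) collapses, and (i) drops out.

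It remains to prove the core identity, and here I would begin with $h\in C_{\rm c}(\G\,\vert\,\mathscr C)_{\rm sa}$, where the full calculus of Theorems \ref{smoothfuncalc} and \ref{funccalc2} is available. Because $\varphi$ is a continuous homomorphism, $\varphi(e^{ith})=e^{it\varphi(h)}$, so $\|e^{it\varphi(h)}\|_\B=O(|t|^n)$ and $\varphi(h)$ is an element of polynomial growth in $\B$; by Barnes' theory \cite{Ba89} it has real spectrum and a smooth functional calculus $f\mapsto f(\varphi(h))$ that is an algebra homomorphism depending only on the germ of $f$ at ${\rm Spec}_\B(\varphi(h))$. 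Passing $\varphi$ through the Bochner integral defining $f(h)$ gives $\varphi(f(h))=f(\varphi(h))$ for all $f\in C_{\rm c}^\infty(\R)$. Given $\mu\in\R\setminus{\rm Spec}_\B(\varphi(h))$, I would pick $f\in C_{\rm c}^\infty(\R)$ with $f(x)(x-\mu)=1$ on a neighbourhood of the compact real set ${\rm Spec}_\B(\varphi(h))$; then $(h-\mu)f(h)-1$ and $f(h)(h-\mu)-1$ lie in ${\rm ker}\,\varphi=I$, exhibiting $f(h)+I$ as the inverse of $h-\mu+I$. This settles the core identity on the dense set $C_{\rm c}(\G\,\vert\,\mathscr C)_{\rm sa}$.

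The two places where I expect real difficulty are precisely the two ingredients I postponed. First, extending the core identity from polynomial-growth elements to an \emph{arbitrary} self-adjoint $h\in\widetilde{\mathfrak D}$, which carries no smooth calculus of its own: approximating $h$ by $h_n\in C_{\rm c}(\G\,\vert\,\mathscr C)_{\rm sa}$, symmetry guarantees ${\rm Spec}_{\widetilde{\mathfrak D}}(h)$ and ${\rm Spec}_\B(\varphi(h))$ are real and openness of the invertible group makes $\varphi(h_n)-\mu$ invertible with uniformly bounded inverses for large $n$, but transferring this uniformity to the lifts $f(h_n)+I$ in $\widetilde{\mathfrak D}/I$ is delicate, since the growth constants of the $h_n$ — and with them the estimate in Theorem \ref{DixBai}\emph{(v)} — are not uniform. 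Second, the one-sided-invertibility lemma used in the reduction of (ii) is genuinely nonformal: $I={\rm ker}\,\varphi$ need not be $^*$-closed, so that ``$\Theta$ invertible modulo $I$'' does not visibly entail ``$\Theta^*$ invertible modulo $I$'', and it is exactly here that symmetry must be spent. I would attack both points by routing the argument through the enveloping $C^*$-algebra: symmetry makes $\widetilde{\mathfrak D}$ inverse-closed in ${\rm C^*}(\G\,\vert\,\mathscr C)$, while the $^*$-regularity established in Theorem \ref{regularity} ties the closed ideals of $\widetilde{\mathfrak D}$ to those of its $C^*$-envelope, where positivity detects one-sided invertibility and a genuine continuous functional calculus furnishes the inverse modulo $I$ for every self-adjoint element. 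Making this transfer precise is, I expect, the main obstacle of the whole proof.
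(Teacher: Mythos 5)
Your reductions are sound as far as they go: the automatic inclusion ${\rm Spec}_{\B}\big(\varphi(\Phi)\big)\subseteq{\rm Spec}_{\widetilde{\mathfrak D}/I}(\Phi+I)$, the derivation of (i) from (ii) using reality of ${\rm Spec}_\B\big(\varphi(h)\big)$ for $h=h^*$, the bootstrap of (ii) from your ``core identity'' via $\Theta^*\Theta$ and $\Theta\Theta^*$, and the proof of the core identity on $C_{\rm c}(\G\,\vert\,\mathscr C)_{\rm sa}$ via polynomial growth and the smooth calculus (which is, in essence, the argument of Lemma \ref{spinvariance}, the source of local regularity in Corollary \ref{locallyreg}). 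But the two steps you explicitly defer are not technical residue; they are the entire theorem, and neither is closed by the repair you sketch. For comparison, the paper does not prove them either: its proof is a citation of \cite[Theorem 2.2]{Ba87}, whose two hypotheses --- symmetry and ``$^*$-quotient inverse closedness'' --- are supplied by the standing assumption and by local regularity via \cite[Theorem 2.1]{Ba87}. Your blind attempt essentially rediscovers the inside of Barnes' proof and stalls exactly at the two lemmas his theorems encapsulate.

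Concretely: first, your core identity must hold for \emph{every} self-adjoint element of $\widetilde{\mathfrak D}$ --- in (ii) you apply it to $\Theta^*\Theta$ with $\Theta=\Phi-\lambda$ and $\Phi\in\widetilde{\mathfrak D}$ arbitrary, and in (i) to the real and imaginary parts of an arbitrary normal $\Phi$ --- but your proof covers only $C_{\rm c}(\G\,\vert\,\mathscr C)_{\rm sa}$, and the approximation you envisage cannot supply the rest: if $h\in L^1(\G\,\vert\,\mathscr C)_{\rm sa}$ is not compactly supported (or not in $L^2$), then any approximants $h_n\in C_{\rm c}(\G\,\vert\,\mathscr C)_{\rm sa}$ have supports exhausting a non-relatively-compact set and $L^2$-norms tending to infinity, so by the remark following Proposition \ref{bigO} the growth constants, and with them the bound of Theorem \ref{DixBai}(v), degenerate, leaving no uniform bound on $\norm{(h_n-\mu+I)^{-1}}$ with which to pass to the limit --- exactly as you feared. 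Second, the lemma ``invertibility modulo $I$ is $^*$-invariant'' (equivalently, your one-sided-invertibility step) is needed for the inclusion $\overline{{\rm Spec}_\B\big(\varphi(\Phi^*)\big)}\subseteq{\rm Spec}_{\widetilde{\mathfrak D}/I}(\Phi+I)$, and your plan to obtain it from Theorem \ref{regularity} through the $C^*$-envelope cannot work as stated: $I=\ker\varphi$ is not $^*$-closed, so $\widetilde{\mathfrak D}/I$ carries no involution and is not a quotient of ${\rm C^*}(\G\,\vert\,\mathscr C)$ in any canonical way, while $^*$-regularity is a statement about kernels of $^*$-representations and says nothing about such ideals. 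What actually fills both gaps in the literature is not $C^*$-positivity but local regularity: regularity of the commutative algebras $\B(b,1)$ permits hull--kernel arguments inside $\B(b,1)/\big(I\cap\B(b,1)\big)$ that are valid for an \emph{arbitrary} closed two-sided ideal $I$; this is precisely the content of Barnes' Theorems 2.1 and 2.2. Without importing those results, or reconstructing their proofs, your argument does not close.
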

\begin{proof}
    This is an application of \cite[Theorem 2.2]{Ba87}. While symmetry is assumed, $\mathfrak D$ is $^*$-quotient inverse closed because of local regularity (see \cite[Theorem 2.1]{Ba87}). 
\end{proof} 

An immediate (but remarkable) application of the latter theorem gives the following result.
\begin{cor}
    For $p\in[1,\infty]$, let $\lambda_p:\widetilde{\mathfrak D}\to\mathbb B(L^{p}(\G\,\vert\,\mathscr C))$ be the representation given by $\lambda_p(\Phi)\Psi=\Phi*\Psi$. Suppose that $\mathfrak D$ is symmetric. Then 
    \begin{enumerate}
        \item[(i)] If $\Phi \in \widetilde{\mathfrak D}$ is normal, $${\rm Spec}_{\widetilde{\mathfrak D}}(\Phi)={\rm Spec}_{\mathbb B(L^{p}(\G\,\vert\,\mathscr C))}\big(\lambda_p(\Phi)\big).$$ 
        \item[(ii)] For a general $\Phi \in \widetilde{\mathfrak D}$, $${\rm Spec}_{\widetilde{\mathfrak D}}(\Phi)={\rm Spec}_{\mathbb B(L^{p}(\G\,\vert\,\mathscr C))}\big(\lambda_p(\Phi)\big)\cup \overline{{\rm Spec}_{\mathbb B(L^{p}(\G\,\vert\,\mathscr C))}\big(\lambda_p(\Phi^*)\big)}.$$
    \end{enumerate}
\end{cor}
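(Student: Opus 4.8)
The plan is to read off the corollary directly from the preceding theorem by taking $\B=\mathbb B(L^{p}(\G\,\vert\,\mathscr C))$ and $\varphi=\lambda_p$. So first I would check that $\lambda_p$ satisfies the hypotheses, i.e. that it is a continuous unital homomorphism. Associativity of the convolution \eqref{broduct} gives $\lambda_p(\Phi*\Psi)=\lambda_p(\Phi)\lambda_p(\Psi)$, the normalization $\lambda_p(1)=\mathrm{id}$ is built into the definition of the representation, and the classical Young inequality $\norm{\Phi*\Psi}_{L^{p}(\G\,\vert\,\mathscr C)}\leq\norm{\Phi}_{L^{1}(\G\,\vert\,\mathscr C)}\norm{\Psi}_{L^{p}(\G\,\vert\,\mathscr C)}$ (valid for every $p\in[1,\infty]$, since $\norm{(\Phi*\Psi)(x)}_{\mathfrak C_x}\leq(|\Phi|*|\Psi|)(x)$ pointwise) yields
$$
\norm{\lambda_p(\Phi+r1)}_{\mathbb B(L^{p}(\G\,\vert\,\mathscr C))}\leq\norm{\Phi}_{L^{1}(\G\,\vert\,\mathscr C)}+|r|\leq\norm{\Phi+r1}_{\widetilde{\mathfrak D}},
$$
because the $\mathfrak D$-norm dominates the $L^1$-norm (equal when $\mathfrak D=L^{1}(\G\,\vert\,\mathscr C)$, and $\geq$ when $\mathfrak D=L^{1,\nu}(\G\,\vert\,\mathscr C)$ as $\nu\geq 1$). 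With these verifications, the theorem applied to $\varphi=\lambda_p$ and $I=\ker\lambda_p$ produces exactly the stated spectral identities, but with $\widetilde{\mathfrak D}/I$ in place of $\widetilde{\mathfrak D}$ on the left.

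The content of the corollary is therefore that $I=\ker\lambda_p=\{0\}$, for then $\widetilde{\mathfrak D}/I=\widetilde{\mathfrak D}$ and the left-hand spectra become ${\rm Spec}_{\widetilde{\mathfrak D}}(\Phi)$ as claimed. I would establish injectivity of $\lambda_p$ as follows. If $\mathfrak D$ is unital, then $\widetilde{\mathfrak D}=\mathfrak D$ carries the genuine unit $\mathbf 1$, and $\lambda_p(\Psi)\mathbf 1=\Psi*\mathbf 1=\Psi$ shows at once that $\lambda_p(\Psi)=0$ forces $\Psi=0$. If $\mathfrak D$ is non-unital, then $\widetilde{\mathfrak D}=\mathfrak D\oplus\CC 1$, and I would fix a two-sided approximate identity $(u_\alpha)\subset C_{\rm c}(\G\,\vert\,\mathscr C)$ for $L^{1}(\G\,\vert\,\mathscr C)$, each $u_\alpha$ lying in every $L^{p}(\G\,\vert\,\mathscr C)$. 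Assuming $\lambda_p(\Phi+r1)=0$, testing the relation $\Phi*\Psi+r\Psi=0$ against $\Psi=u_\alpha$ gives $\Phi*u_\alpha+ru_\alpha=0$ as sections, hence in $L^{1}(\G\,\vert\,\mathscr C)$; when $r=0$ this yields $\Phi=\lim_\alpha\Phi*u_\alpha=0$, so $\Phi+r1=0$.

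The delicate point, which I expect to be the main obstacle, is ruling out the case $r\neq 0$ in the non-unital setting — precisely where the adjoined scalar of the unitization intervenes. Here $u_\alpha=-\tfrac1r\,\Phi*u_\alpha\to-\tfrac1r\Phi$ in $L^{1}(\G\,\vert\,\mathscr C)$, and passing this $L^1$-convergence through the convolution shows that $-\tfrac1r\Phi$ acts as a two-sided unit on $L^{1}(\G\,\vert\,\mathscr C)$; this forces the approximate identity to converge and hence $L^{1}(\G\,\vert\,\mathscr C)$ to be unital, contradicting non-unitality of $\mathfrak D$. Thus $\ker\lambda_p=\{0\}$ in all cases, and the theorem's conclusion transports verbatim to $\widetilde{\mathfrak D}$, giving parts (i) and (ii). The remarkable feature worth emphasizing at the end is that the resulting description of ${\rm Spec}_{\widetilde{\mathfrak D}}(\Phi)$ is completely independent of the exponent $p\in[1,\infty]$.
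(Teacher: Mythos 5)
Your proposal is correct and follows essentially the same route as the paper: the paper derives this corollary as an immediate application of the preceding spectral-invariance theorem with $\B=\mathbb B(L^{p}(\G\,\vert\,\mathscr C))$ and $\varphi=\lambda_p$, exactly as you do. Your verification that $\lambda_p$ is a continuous unital homomorphism (via Young's inequality) and, crucially, that $\ker\lambda_p=\{0\}$ (via an approximate identity in $C_{\rm c}(\G\,\vert\,\mathscr C)$, with the unit $-\tfrac1r\Phi\in\mathfrak D$ ruling out the adjoined-scalar case) supplies precisely the details the paper leaves implicit when it replaces ${\rm Spec}_{\widetilde{\mathfrak D}/I}(\Phi+I)$ by ${\rm Spec}_{\widetilde{\mathfrak D}}(\Phi)$.
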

We remark that, in the case $p=1$, one always has ${\rm Spec}_{\widetilde{\mathfrak D}}(\Phi)={\rm Spec}_{\mathbb B(L^{1}(\G\,\vert\,\mathscr C))}\big(\lambda_1(\Phi)\big),$ with no assumptions of symmetry whatsoever. This is trivial for $\mathfrak D=L^{1}(\G\,\vert\,\mathscr C)$ and follows from Proposition \ref{L1Lnu} for $\mathfrak D=L^{1,\nu}(\G\,\vert\,\mathscr C)$.

\subsection{Minimal ideals of a given hull}

Now we turn our attention to the following problem: Study the existence of minimal ideals of $\mathfrak D$ with a given hull in ${\rm Prim }_*\mathfrak D$. We are able to positively answer this problem for $\mathfrak D$ under the assumption of symmetry. Our references here are \cite{DLM04} and, especially, \cite{Lu80}. For a given closed subset $C\subset {\rm Prim }_*\mathfrak D$, we will use the following notation: for $\Phi\in\mathfrak D$, we set \begin{equation*}
    \norm{\Phi}_C=\sup _{{\rm ker}\,\Pi \in C}\norm{\Pi(\Phi)},
\end{equation*} with $\norm{\Phi}_\emptyset=0$ and
\begin{align*}
    m(C)=&\{f(\Phi) \mid \Phi \in C_{\rm c}(\G\,\vert\,\mathscr C)_{\rm sa},\norm{\Phi}_{L^{1}(\G\,\vert\,\mathscr C)} \leq 1, \\
    &\hspace{80pt}f\in C_{\rm c}^{\infty}(\mathbb R)
, f\equiv 0 \textup{ on a neighborhood of }[-\norm{\Phi}_C,\norm{\Phi}_C]\}.
\end{align*}
We let $j(C)$ be the closed two-sided ideal of $\mathfrak D$ generated by $m(C)$. Note that for $C=\emptyset$, we have
\begin{align*}
m(\emptyset)=&\{f(\Phi) \mid \Phi \in C_{\rm c}(\G\,\vert\,\mathscr C)_{\rm sa},\norm{\Phi}_{L^{1}(\G\,\vert\,\mathscr C)} \leq 1, \\
    &\hspace{150pt}f\in C_{\rm c}^{\infty}(\mathbb R)
, f\equiv 0 \textup{ on a neighborhood of }0\}.
\end{align*}

\begin{lem}
    The hull of $j(C)$ is $C$.
\end{lem}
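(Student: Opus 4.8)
We prove the two inclusions $C\subseteq h(j(C))$ and $h(j(C))\subseteq C$ separately. Throughout, recall that $\norm{\cdot}_C$ is a $C^*$-seminorm dominated by the universal $C^*$-norm, hence by the norm of $\mathfrak D$; that every topologically irreducible $^*$-representation is non-degenerate, so the functional calculus of Theorem \ref{smoothfuncalc} (resp.\ Theorem \ref{funccalc2}) applies; and that every generator $f(\Phi)$ of $m(C)$ satisfies $f(0)=0$, since $0\in[-\norm{\Phi}_C,\norm{\Phi}_C]$ and $f$ vanishes on a neighborhood of that interval. Thus $f(\Phi)\in\mathfrak D$ and, for any non-degenerate $^*$-representation $\Pi$, one has $\Pi(f(\Phi))=f(\Pi(\Phi))$ in the sense of the self-adjoint Borel functional calculus on $\mathcal H$.

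For the inclusion $C\subseteq h(j(C))$ I would fix $I=\ker\Pi\in C$ and a generator $f(\Phi)\in m(C)$. Since $\ker\Pi\in C$, the definition of $\norm{\cdot}_C$ gives $\norm{\Pi(\Phi)}\leq\norm{\Phi}_C$, so ${\rm Spec}(\Pi(\Phi))\subseteq[-\norm{\Phi}_C,\norm{\Phi}_C]$. As $f$ vanishes on a neighborhood of this interval, the functional calculus for the self-adjoint operator $\Pi(\Phi)$ yields $f(\Pi(\Phi))=0$, whence $\Pi(f(\Phi))=f(\Pi(\Phi))=0$ and $f(\Phi)\in\ker\Pi=I$. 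Since this holds for all generators and $I$ is a closed ideal, $j(C)\subseteq I$, i.e.\ $I\in h(j(C))$.

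For the reverse inclusion I would argue by contraposition: suppose $I=\ker\Pi\in{\rm Prim}_*\mathfrak D$ with $I\notin C$ and show $j(C)\not\subseteq I$. Because $C$ is closed in the Jacobson topology, $C=h(k(C))=\{J\in{\rm Prim}_*\mathfrak D\mid J\supseteq k(C)\}$, so $I\notin C$ means $k(C)\not\subseteq I$; picking $b\in k(C)\setminus I$ and replacing it by $b^*b\in k(C)$, I obtain a self-adjoint element with $\norm{b}_C=0$ (as $b\in k(C)$ kills every representation parametrized by $C$) yet $\Pi(b)\neq 0$. Using density of $C_{\rm c}(\G\,\vert\,\mathscr C)_{\rm sa}$ in $\mathfrak D_{\rm sa}$ and the domination $\norm{\cdot}_C\leq\norm{\cdot}_{\mathfrak D}$, I approximate $b$ by $\Phi\in C_{\rm c}(\G\,\vert\,\mathscr C)_{\rm sa}$ closely enough that $\norm{\Pi(\Phi)}>\norm{\Phi}_C$; rescaling $\Phi$ by a small positive factor preserves this strict inequality (both sides are homogeneous) and achieves $\norm{\Phi}_{L^1(\G\,\vert\,\mathscr C)}\leq 1$. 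Now choose $\alpha$ with $\norm{\Phi}_C<\alpha<\norm{\Pi(\Phi)}$; since $\Pi(\Phi)$ is self-adjoint its spectral radius equals $\norm{\Pi(\Phi)}$, so there is $\lambda_0\in{\rm Spec}(\Pi(\Phi))$ with $|\lambda_0|>\alpha$, and I take $f\in C_{\rm c}^\infty(\R)$ vanishing on $[-\alpha,\alpha]$ with $f(\lambda_0)\neq 0$. Then $f\equiv 0$ on the neighborhood $[-\alpha,\alpha]$ of $[-\norm{\Phi}_C,\norm{\Phi}_C]$, so $f(\Phi)\in m(C)\subseteq j(C)$, while $\norm{\Pi(f(\Phi))}=\norm{f(\Pi(\Phi))}=\sup_{\lambda\in{\rm Spec}(\Pi(\Phi))}|f(\lambda)|\geq|f(\lambda_0)|>0$, so $f(\Phi)\notin I$, contradicting $j(C)\subseteq I$.

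The main obstacle is this last step: extracting from the purely topological condition $I\notin C$ a concrete self-adjoint $\Phi\in C_{\rm c}(\G\,\vert\,\mathscr C)$ that is \emph{compactly supported and continuous} (so that the smooth functional calculus is available) while simultaneously controlling the two incomparable $C^*$-seminorms $\norm{\cdot}_C$ and $\norm{\Pi(\cdot)}$ so as to preserve the strict separation $\norm{\Pi(\Phi)}>\norm{\Phi}_C$ after normalizing the $L^1$-norm. Once the separation survives the approximation and rescaling, producing the cutoff $f$ is routine thanks to the coincidence of norm and spectral radius for self-adjoint operators.
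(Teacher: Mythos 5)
Your proof is correct, and while your argument for $C\subseteq h(j(C))$ coincides with the paper's, your proof of the reverse inclusion $h(j(C))\subseteq C$ takes a genuinely different route. The paper obtains the crucial separating section by citing its $^*$-regularity theorem (Theorem \ref{regularity}): for $\ker\Pi\notin C$ it asserts the existence of a self-adjoint $\Phi\in C_{\rm c}(\G\,\vert\,\mathscr C)$ with $\norm{\Phi}_{L^{1}(\G\,\vert\,\mathscr C)}\leq 1$ and $\norm{\Phi}_C<\norm{\Pi(\Phi)}$, in effect transferring the point--closed-set separation to ${\rm Prim}\,{\rm C^*}(\G\,\vert\,\mathscr C)$ and pulling it back. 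You instead separate directly inside $\mathfrak D$: closedness of $C$ in the Jacobson topology of ${\rm Prim}_*\mathfrak D$ means precisely $C=h(k(C))$, so $\ker\Pi\notin C$ yields $b\in k(C)\setminus\ker\Pi$; passing to $b^*b$ gives a self-adjoint element of $k(C)$ still outside $\ker\Pi$ (a step worth one explicit line: $\Pi(b^*b)=\Pi(b)^*\Pi(b)=0$ would force $\Pi(b)=0$, which is where you use that $\ker\Pi$ is the kernel of a $^*$-representation), hence $\norm{b^*b}_C=0<\norm{\Pi(b^*b)}$; then density of $C_{\rm c}(\G\,\vert\,\mathscr C)_{\rm sa}$ in $\mathfrak D_{\rm sa}$, together with contractivity of $^*$-representations (which gives $\norm{\cdot}_C\leq\norm{\cdot}_{\mathfrak D}$ and is valid here because the involution of $\mathfrak D$ is isometric), preserves the strict inequality under approximation and rescaling. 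The concluding functional-calculus step is common to both proofs. As for what each approach buys: the paper's citation is a one-liner because Theorem \ref{regularity} is already established in the same section, whereas your argument makes the lemma logically independent of $^*$-regularity, resting only on the definition of the Jacobson topology, the smooth functional calculus of Theorems \ref{smoothfuncalc} and \ref{funccalc2}, and elementary approximation. Two smaller points are to your credit: you note that every generator $f(\Phi)$ of $m(C)$ has $f(0)=0$ and hence lies in $\mathfrak D$ itself rather than merely in $\widetilde{\mathfrak D}$, and where the paper asserts that $\norm{\Pi(\Phi)}$ lies in ${\rm Spec}(\Pi(\Phi))$ (true for self-adjoint operators only up to a sign), you correctly work with some $\lambda_0\in{\rm Spec}(\Pi(\Phi))$ of modulus $\norm{\Pi(\Phi)}$ and require $f(\lambda_0)\neq 0$.
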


\begin{proof}
    We first prove that $C \subset h(j(C))$. Indeed, let $\Pi$ be a $^*$-representation with ${\rm ker}\,\Pi \in C$ and $f(\Phi) \in m(C)$. Then $\Pi\big(f(\Phi)\big)=f\big(\Pi(\Phi)\big)=0$, since $f\equiv 0$ on the spectrum of $\Pi(\Phi)$. Hence $m(C) \subset {\rm ker}\, \Pi$ and therefore ${\rm ker}\, \Pi \in h(j(C))$ and $C \subset h(j(C))$.

    On the other hand, let $\Pi$ be a $^*$-representation with ${\rm ker}\,\Pi\in {\rm Prim }_*\mathfrak D\setminus C$, then, because of Theorem \ref{regularity} there exists $\Phi \in C_{\rm c}(\G\,\vert\,\mathscr C)$ such that $\norm{\Phi}_C<\norm{\Pi(\Phi)}$. In fact, $\Phi$ can be chosen to be self-adjoint and have $\norm{\Phi}_{L^{1}(\G\,\vert\,\mathscr C)} \leq 1$. In particular, we can find a function $f\in C_{\rm c}^{\infty}(\mathbb R)$ such that $f\equiv 0$ on a neighborhood of $[-\norm{\Phi}_C,\norm{\Phi}_C]$ and $f(\norm{\Pi(\Phi)})\not= 0$. Since $\Phi$ is self-adjoint, $\norm{\Pi(\Phi)}$ lies in the spectrum of $\Pi(\Phi)$ and $$\Pi\big(f(\Phi)\big)=f\big(\Pi(\Phi)\big)\not=0,$$ from which it follows that ${\rm ker}\,\Pi\not \in h(j(C))$.
\end{proof}

\begin{thm}\label{minideals}
    Suppose $\mathfrak D$ is symmetric and let $C$ be a closed subset of ${\rm Prim }_*\mathfrak D$. There exists a closed two-sided ideal $j(C)$ of $\mathfrak D$, with $h(j(C))=C$, which is contained in every two-sided closed ideal I with $h(I) \subset C$.
\end{thm}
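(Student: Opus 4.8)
By the preceding lemma we already know that $h(j(C))=C$, so only the minimality assertion remains: given a closed two-sided ideal $I$ of $\mathfrak D$ with $h(I)\subseteq C$, I must show $j(C)\subseteq I$. Since $j(C)$ is, by definition, the closed two-sided ideal generated by $m(C)$, it suffices to prove $m(C)\subseteq I$. So I fix a generator $f(\Phi)\in m(C)$, where $\Phi=\Phi^*\in C_{\rm c}(\G\,\vert\,\mathscr C)$, $\norm{\Phi}_{L^{1}(\G\,\vert\,\mathscr C)}\leq 1$, and $f\in C_{\rm c}^{\infty}(\R)$ vanishes on a neighborhood of $[-\norm{\Phi}_C,\norm{\Phi}_C]$; the entire task reduces to showing $f(\Phi)\in I$.

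First I would pass to the quotient. Since $\mathfrak D$ is symmetric, its closed two-sided ideals are self-adjoint (a standard feature of symmetric Banach $^*$-algebras with a bounded approximate identity), so $A:=\mathfrak D/I$ is again a symmetric Banach $^*$-algebra. Writing $q:\mathfrak D\to A$ for the quotient $^*$-homomorphism and $b:=q(\Phi)=b^*$, the identity $q(e^{it\Phi})=e^{itb}$ together with $\norm{q}\leq 1$ shows that $b$ inherits the polynomial growth of $\Phi$; and since $q$ commutes with the Bochner integral defining the functional calculus, we get $q\big(f(\Phi)\big)=f(b)$. Thus $f(\Phi)\in I$ is equivalent to $f(b)=0$ in $A$.

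The heart of the matter is the estimate ${\rm Spec}_{\widetilde A(b,1)}(b)\subseteq[-\norm{\Phi}_C,\norm{\Phi}_C]$. For the radius, symmetry of $A$ makes the Pt\'ak function $\rho_A\big((\cdot)^*(\cdot)\big)^{1/2}$ coincide with the enveloping $C^*$-seminorm $\sup_{\sigma}\norm{\sigma(\cdot)}$; as $b$ is self-adjoint, spectral mapping for $z\mapsto z^{2}$ gives $\rho_A(b)=\rho_A(b^*b)^{1/2}=\sup_{\sigma}\norm{\sigma(b)}$, the supremum running over the topologically irreducible $^*$-representations $\sigma$ of $A$. Each such $\sigma$ lifts to $\Pi:=\sigma\circ q$, a topologically irreducible $^*$-representation of $\mathfrak D$ with ${\rm ker}\,\Pi\supseteq I$, so ${\rm ker}\,\Pi\in h(I)\subseteq C$ and $\norm{\sigma(b)}=\norm{\Pi(\Phi)}\leq\norm{\Phi}_C$; hence $\rho_A(b)\leq\norm{\Phi}_C$. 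Gelfand's formula yields the same spectral radius inside $\widetilde A(b,1)$, and since an element of polynomial growth necessarily has real spectrum, ${\rm Spec}_{\widetilde A(b,1)}(b)$ lies in $[-\norm{\Phi}_C,\norm{\Phi}_C]$.

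Finally I would close with an invertibility argument. Pick a real $h\in C_{\rm c}^{\infty}(\R)$ equal to $1$ on a neighborhood of ${\rm Spec}_{\widetilde A(b,1)}(b)$ and supported inside the open set on which $f$ vanishes, so that $fh\equiv 0$ as functions. The smooth calculus is multiplicative — this is immediate from $e^{isb}e^{itb}=e^{i(s+t)b}$ and the convolution identity $\widehat f*\widehat h=2\pi\,\widehat{fh}$, Fubini being licensed by the polynomial growth bound — whence $f(b)h(b)=(fh)(b)=0$. On the other hand, the spectral mapping of Theorem \ref{DixBai}(iii) gives ${\rm Spec}_{\widetilde A(b,1)}(h(b))=h\big({\rm Spec}_{\widetilde A(b,1)}(b)\big)=\{1\}$, so $h(b)$ is invertible in $\widetilde A(b,1)$; multiplying $f(b)h(b)=0$ by $h(b)^{-1}$ forces $f(b)=0$, i.e. $f(\Phi)\in I$. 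As $f(\Phi)$ was an arbitrary generator, $j(C)\subseteq I$. I expect the spectral-radius estimate $\rho_A(b)\leq\norm{\Phi}_C$ to be the main obstacle: it is exactly where symmetry is indispensable, converting the representation-theoretic bound coming from $h(I)\subseteq C$ into a genuine bound on ${\rm Spec}_A(b)$; the concluding invertibility step is then the device that upgrades ``$f$ vanishes on ${\rm Spec}(b)$'' to the literal identity $f(b)=0$ without assuming that $A$ (or $\widetilde A(b,1)$) is semisimple or reduced.
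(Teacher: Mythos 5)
Your overall strategy is reasonable, and several of your steps are solid: the reduction to $m(C)\subseteq I$, the identity $q\big(f(\Phi)\big)=f\big(q(\Phi)\big)$, the Fubini argument for multiplicativity of the calculus, and the concluding invertibility trick with $h$. But there is a genuine gap at the very first step, and it is located exactly where the whole difficulty of the theorem sits: you pass to $A=\mathfrak D/I$ \emph{as a symmetric Banach $^*$-algebra}, on the strength of the claim that closed two-sided ideals of a symmetric Banach $^*$-algebra with bounded approximate identity are automatically self-adjoint. This is not a standard fact, and it is false in general. Kernels of $^*$-representations are self-adjoint, hence so is every hull-kernel-closed ideal; but an arbitrary closed two-sided ideal need not be an intersection of such kernels (this is precisely failure of spectral synthesis), so self-adjointness cannot be inferred. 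A concrete counterexample: in the commutative, unital, symmetric Banach $^*$-algebra $C^1([0,1]^2)$ with pointwise operations, $f^*=\overline{f}$ and norm $\norm{f}_\infty+\norm{\partial_x f}_\infty+\norm{\partial_y f}_\infty$, the set $I=\{f \mid f(0,0)=0,\ \partial_y f(0,0)=i\,\partial_x f(0,0)\}$ is a closed two-sided ideal, yet $I^*\neq I$ (the function $x+iy$ lies in $I$ but not in $I^*$). Since your key estimate $\rho_A(b)\leq\norm{\Phi}_C$ runs through the Pt\'ak function and the irreducible $^*$-representations \emph{of the quotient}, the argument collapses once $A$ carries no involution.

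The paper avoids this trap by never equipping the quotient with a $^*$-structure. It verifies the factorization $f(\Phi)=(f\cdot g)(\Phi)=f(\Phi)*g(\Phi)$ with $g(\Phi)\in m(C)$ (your $h$ plays the role of this $g$), and then invokes Ludwig's Lemma 2 from \cite{Lu80}, whose mechanism is designed for possibly non-self-adjoint $I$: if $f(\Phi)\notin I$, the relation $\overline{f(\Phi)}\,\big(1-\overline{g(\Phi)}\big)=0$ in $\mathfrak D/I$ shows $1-\overline{g(\Phi)}$ is not invertible, which yields an \emph{algebraically} irreducible representation $\pi$ of $\mathfrak D$ with $I\subseteq\ker\pi$ and $\pi\big(g(\Phi)\big)\neq 0$; symmetry then enters only through the inclusion ${\rm Prim}\,\mathfrak D\subseteq{\rm Prim}_*\,\mathfrak D$ — symmetry of $\mathfrak D$ itself, not of the quotient — so that $\ker\pi\in h(I)\subseteq C$ forces $\pi\big(g(\Phi)\big)=0$, a contradiction. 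To repair your proof you would have to replace the $^*$-representation theory of $A$ by this algebraic-representation mechanism, which amounts to reproving Ludwig's lemma rather than quoting symmetry of the quotient.
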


\begin{proof}
    Take $f(\Phi) \in m(C)$ arbitrary and choose $g\in C_{\rm c}^{\infty}(\mathbb R)$ such that $g\equiv 1$ in ${\rm Supp}(f)$ and $g\equiv 0$ in a neighborhood of $[-\norm{\Phi}_C,\norm{\Phi}_C]$. Thus $$f(\Phi)=(f\cdot g)(\Phi)=f(\Phi)*g(\Phi)$$ and $g(\Phi)\in m(C)$. This implies that $\mathfrak D$ satisfies the conditions of \cite[Lemma 2]{Lu80} and the conclusion follows.
\end{proof}

%-------------------------------------------------------------------------------------------------------
\subsection{The Wiener property}\label{wiener}
%-------------------------------------------------------------------------------------------------------

We now consider the following property, which is intended as an abstract generalization of Wiener's tauberian theorem.

\begin{defn}
    Let $\B$ be a Banach $^*$-algebra. We say that $\B$ has the Wiener property $(W)$ if for every proper closed two-sided ideal $I\subset\B$, there exists a topologically irreducible $^*$-representation $\Pi:\B\to\BofH$, such that $I \subset {\rm ker}\,\Pi$. 
\end{defn}

During this subsection we fix a bounded approximate identity $(\Psi_{\alpha})_{\alpha}$ in $L^{1,\nu}(\G\,\vert\,\mathscr C)$ such that, for all $\alpha$, $\Psi_{\alpha}$ is continuous, \begin{equation}\label{restr}
    \Psi_{\alpha}=\Psi_{\alpha}^{*}, \quad\norm{\Psi_{\alpha}}_{L^{1,\nu}(\G\,\vert\,\mathscr C)} \leq B, \quad\norm{\Psi_{\alpha}}_{L^{1}(\G\,\vert\,\mathscr C)} \leq 1, \quad {\rm Supp}(\Psi_{\alpha}) \subset V_{\alpha} \subset K,
\end{equation} where $B$ is a positive constant, $V_{\alpha}$ a compact symmetric neighborhood of $\e$ in $\G$ and $K$ a fixed compact set. Our objective in this subsection will be to prove the following lemma and derive the Wiener property of $\mathfrak D$ as a consequence. First we comment on the existence of the bounded approximate identity we just fixed.

\begin{rem}
    The existence of a bounded approximate identity satisfying \eqref{restr} is clear in the case of $L^{1,\nu}(\G)$. For the general case, one may construct one by adapting the proof of \cite[Theorem 5.11]{FD88} using the bounded approximate identity of $L^{1,\nu}(\G)$ satisfying \eqref{restr} as the strong approximate identity of $\G$.
\end{rem}

\begin{lem}\label{appidentity}
    Let $f\in C_{\rm c}^{\infty}(\mathbb R)$ so that $f(0)=0$ and $f(1)=1$. Then 
    $$
    \lim_{\alpha} \,\norm{\Phi_0*f(\Psi_{\alpha})*\Phi_1-\Phi_0*\Phi_1}_{\mathfrak D}=0,
    $$
    for all cross-sections $\Phi_0,\Phi_1\in C_{\rm c}(\G\,\vert\,\mathscr C)$.
\end{lem}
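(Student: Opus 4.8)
The plan is to start from the integral representation of $f(\Psi_\alpha)$ provided by Theorem \ref{funccalc2}(i) and to use the two normalizations $f(0)=0$, $f(1)=1$ to recenter everything at the unit. Since $f(0)=0$, $f(\Psi_\alpha)=\tfrac1{2\pi}\int_\R\widehat f(t)e^{it\Psi_\alpha}\,\d t$ lies in $\mathfrak D$, and convolving on the left by $\Phi_0$ and on the right by $\Phi_1$ may be pulled inside the Bochner integral. Applying the same formula to the unit and using $f(1)=1$ gives $\tfrac1{2\pi}\int_\R\widehat f(t)e^{it}\,\d t=f(1)=1$, so that $\Phi_0*\Phi_1=\tfrac1{2\pi}\int_\R\widehat f(t)e^{it}\,\Phi_0*\Phi_1\,\d t$ and
\[
\Phi_0*f(\Psi_\alpha)*\Phi_1-\Phi_0*\Phi_1=\frac1{2\pi}\int_\R\widehat f(t)\,w_\alpha(t)\,\d t,\qquad w_\alpha(t):=\Phi_0*\big(e^{it\Psi_\alpha}-e^{it}1\big)*\Phi_1\in\mathfrak D.
\]
Everything then reduces to passing $\alpha\to\infty$ inside this integral by dominated convergence: I would establish (a) $\norm{w_\alpha(t)}_{\mathfrak D}\to0$ for each fixed $t$, and (b) a bound $\norm{w_\alpha(t)}_{\mathfrak D}\le C(1+|t|)^m$ uniform in $\alpha$, with $m$ depending only on $d$ and $\nu$. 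As $\widehat f$ is Schwartz, $\int_\R|\widehat f(t)|(1+|t|)^m\,\d t<\infty$, and (a)+(b) conclude the proof.

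For the pointwise statement (a), since $\Psi_\alpha$ commutes with $1$ I factor $e^{it\Psi_\alpha}-e^{it}1=e^{it}\big(e^{it(\Psi_\alpha-1)}-1\big)=e^{it}\sum_{k\ge1}\tfrac{(it)^k}{k!}(\Psi_\alpha-1)^k$, so that $w_\alpha(t)=e^{it}\sum_{k\ge1}\tfrac{(it)^k}{k!}\Phi_0*(\Psi_\alpha-1)^{k}*\Phi_1$, where each term satisfies $\Phi_0*(\Psi_\alpha-1)^{k}*\Phi_1=\Phi_0*(\Psi_\alpha-1)^{k-1}*\eta_\alpha$ with $\eta_\alpha:=\Psi_\alpha*\Phi_1-\Phi_1$. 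Because $(\Psi_\alpha)$ is a bounded approximate identity, $\norm{\eta_\alpha}_{\mathfrak D}\to0$, while $\norm{\Psi_\alpha-1}_{\mathfrak D}\le 1+C_0$ with $C_0:=\sup_\alpha\norm{\Psi_\alpha}_{\mathfrak D}<\infty$ (equal to $1$ for $L^1(\G\,\vert\,\mathscr C)$, to $B$ for $L^{1,\nu}(\G\,\vert\,\mathscr C)$). Hence for fixed $t$ one gets $\norm{w_\alpha(t)}_{\mathfrak D}\le\norm{\Phi_0}_{\mathfrak D}\norm{\eta_\alpha}_{\mathfrak D}\sum_{k\ge1}\tfrac{|t|^k}{k!}(1+C_0)^{k-1}$, which tends to $0$ because the series is a finite quantity depending only on $t$ and $C_0$ while $\norm{\eta_\alpha}_{\mathfrak D}\to0$.

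The hard part is the uniform domination (b). The naïve route — bounding $\norm{f(\Psi_\alpha)}_{\mathfrak D}$ through Theorem \ref{DixBai}(v) — fails, because the growth constant of $\Psi_\alpha$ from Proposition \ref{bigO} involves $\norm{\Psi_\alpha}_{L^2(\G\,\vert\,\mathscr C)}$, which blows up as the approximate identity concentrates. The key idea is to keep the convolution sandwich intact and re-run the splitting argument of Propositions \ref{bigO} and \ref{bigO2} on $\Phi_0*u(t\Psi_\alpha)*\Phi_1$ (note $w_\alpha(t)=\Phi_0*u(t\Psi_\alpha)*\Phi_1-(e^{it}-1)\Phi_0*\Phi_1$, with $u(z)=e^{iz}-1$), letting the two fixed sections absorb the dangerous norms. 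I fix a compact $K'\ni\e$ containing ${\rm Supp}\,\Phi_0$, ${\rm Supp}\,\Phi_1$ and the common support $K$ of the $\Psi_\alpha$, and split $\G=K'^{N}\sqcup(\G\setminus K'^N)$ with $N=\lceil t^2\rceil$. On the tail only the terms $k>N-2$ of $u(t\Psi_\alpha)=\sum_{k\ge1}\tfrac{(it)^k}{k!}\Psi_\alpha^k$ survive (since $\Phi_0*\Psi_\alpha^k*\Phi_1$ is supported in $K'^{k+2}$), and $\norm{\Psi_\alpha^k}_{L^1(\G\,\vert\,\mathscr C)}\le1$ makes this contribution $O(1)$, exactly as in Proposition \ref{bigO}. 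On $K'^N$ I use $u(t\Psi_\alpha)=it\Psi_\alpha+t\,\Psi_\alpha*v(t\Psi_\alpha)$ from \eqref{functions}: the first term contributes $|t|\,\norm{\Phi_0*\Psi_\alpha*\Phi_1}_{L^1(\G\,\vert\,\mathscr C)}=O(|t|)$, while for the second I estimate the $L^\infty$-norm by the strengthened Young inequality of Lemma \ref{lemma}(v), writing $\Theta:=\Phi_0*\Psi_\alpha$ and $\Xi:=v(t\Psi_\alpha)*\Phi_1$:
\[
\norm{\Phi_0*\Psi_\alpha*v(t\Psi_\alpha)*\Phi_1}_{L^\infty(\G\,\vert\,\mathscr C)}=\norm{\Theta*\Xi}_{L^\infty(\G\,\vert\,\mathscr C)}\le\norm{\Theta}_{L^2(\G\,\vert\,\mathscr C)}\,\norm{\Xi}_{L^2_\e(\G\,\vert\,\mathscr C)}.
\]
Here $\norm{\Theta}_{L^2(\G\,\vert\,\mathscr C)}\le\norm{\Phi_0}_{L^2(\G\,\vert\,\mathscr C)}\norm{\Psi_\alpha}_{L^1(\G\,\vert\,\mathscr C)}\le\norm{\Phi_0}_{L^2(\G\,\vert\,\mathscr C)}$ by the ordinary Young inequality, and $\norm{\Xi}_{L^2_\e(\G\,\vert\,\mathscr C)}\le\norm{v(t\Psi_\alpha)}_{{\rm C^*}(\G\,\vert\,\mathscr C)}\norm{\Phi_1}_{L^2_\e(\G\,\vert\,\mathscr C)}\le(\sup_\R|v|)\,\norm{\Phi_1}_{L^2_\e(\G\,\vert\,\mathscr C)}$, since $\lambda(\Psi_\alpha)$ is a self-adjoint contraction on $L^2_\e(\G\,\vert\,\mathscr C)$ (spectrum in $[-1,1]$) and $v$ is bounded on $\R$. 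This $L^\infty$-bound is independent of both $t$ and $\alpha$; integrating it over $K'^N$ costs a factor $\mu(K'^N)=O(N^d)=O(|t|^{2d})$, so the second term is $O(|t|^{2d+1})$. Thus $\norm{w_\alpha(t)}_{\mathfrak D}=O\big((1+|t|)^{2d+1}\big)$ uniformly in $\alpha$ when $\mathfrak D=L^1(\G\,\vert\,\mathscr C)$; in the weighted case the computation of Proposition \ref{bigO2} applies verbatim, the extra factor $\sup_{K'^N}\nu=O(N^{2\delta})$ raising the exponent to $2d+1+4\delta$. With (a) and this domination in hand, dominated convergence for Bochner integrals gives $\norm{\Phi_0*f(\Psi_\alpha)*\Phi_1-\Phi_0*\Phi_1}_{\mathfrak D}\to0$, as required. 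The essential obstacle, and the place where the whole argument turns, is precisely the uniform bound (b): it works only because grouping $\Phi_0$ with $\Psi_\alpha$ trades the unbounded $\norm{\Psi_\alpha}_{L^2}$ for the harmless $\norm{\Psi_\alpha}_{L^1}\le1$, and the operator norm $\norm{v(t\Psi_\alpha)}_{{\rm C^*}}$ stays bounded in $t$.
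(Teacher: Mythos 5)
Your proof is correct, and its skeleton --- reduce the lemma to (a) approximate-identity convergence of $\norm{w_\alpha(t)}_{\mathfrak D}$, uniform on compact $t$-intervals, plus (b) a polynomial-in-$t$ bound on $\norm{\Phi_0*e^{it\Psi_\alpha}*\Phi_1}_{\mathfrak D}$ uniform in $\alpha$, obtained by keeping the convolution sandwich intact and applying the strengthened Young inequality of Lemma \ref{lemma}(v) --- is the same as the paper's. The differences are in execution. For the spatial tail in (b), you truncate the exponential series by support considerations, splitting at $K'^N$ with $N=\lceil t^2\rceil$ exactly as in Proposition \ref{bigO}; the paper instead splits at $U^n$ with $n\sim |t|$ and kills the tail with an auxiliary exponential weight $\nu'(x)=e^{\sigma_U(x)}$, which is what lets it reach the exponent $O(|t|^{d+\delta})$ versus your $O(|t|^{2d+1+4\delta})$ --- both suffice, since $\widehat f$ decays faster than any polynomial. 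For the main part, you decompose $u(t\Psi_\alpha)=it\Psi_\alpha+t\,\Psi_\alpha*v(t\Psi_\alpha)$ and bound $\norm{v(t\Psi_\alpha)*\Phi_1}_{L^2_\e(\G\,\vert\,\mathscr C)}$ by $\bigl(\sup_{\R}|v|\bigr)\norm{\Phi_1}_{L^2_\e(\G\,\vert\,\mathscr C)}$ using that $\lambda(\Psi_\alpha)$ is a self-adjoint contraction; the paper needs no decomposition at all: since $e^{it\lambda(\Psi_\alpha)}$ is unitary on $L^2_\e(\G\,\vert\,\mathscr C)$, it bounds $\norm{\Phi_0*e^{it\Psi_\alpha}*\Phi_1}_{L^\infty(\G\,\vert\,\mathscr C)}\leq\norm{\Phi_0}_{L^2(\G\,\vert\,\mathscr C)}\norm{e^{it\Psi_\alpha}*\Phi_1}_{L^2_\e(\G\,\vert\,\mathscr C)}\leq\norm{\Phi_0}_{L^2(\G\,\vert\,\mathscr C)}\norm{\Phi_1}_{L^2(\G\,\vert\,\mathscr C)}$ directly --- a simplification you may want to adopt. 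One caveat: your final step invokes dominated convergence along the net $\alpha$, and the dominated convergence theorem is false for nets in general (pointwise convergence plus an integrable dominant does not imply convergence of the integrals). This is not a real gap here, because your proof of (a) actually gives the quantitative bound $\norm{w_\alpha(t)}_{\mathfrak D}\leq \norm{\Phi_0}_{\mathfrak D}\norm{\eta_\alpha}_{\mathfrak D}\,e^{(1+C_0)|t|}$, hence uniform convergence to $0$ on $\{|t|\leq R\}$; combining this with (b) on $\{|t|>R\}$ and choosing $R$ large at the end is exactly the paper's two-region argument, and is how your estimates should be assembled to avoid the net issue.
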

\begin{proof}
    Without loss of generality (and for simplicity), we assume that $B\geq1$. For any function $f\in C_{\rm c}^{\infty}(\mathbb R)$ with $0=f(0)=\int_{\mathbb R}\widehat{f}(t)\d t$ and $1=f(1)=\frac{1}{\sqrt{2\pi}}\int_{\mathbb R}\widehat{f}(t)e^{it}\d t$, one sees that 
    \begin{align*}
    \norm{\Phi_0*f(\Psi_{\alpha})*\Phi_1-\Phi_0*\Phi_1}_{\mathfrak D}&=\frac{1}{\sqrt{2\pi}} \Big\|\int_{\mathbb R}\widehat{f}(t)\Phi_0*\big(u(t\Psi_{\alpha})-e^{it}\big)*\Phi_1\, \d t\Big\|_{\mathfrak D}  \\
    &= \frac{1}{\sqrt{2\pi}}\Big\|{\int_{\mathbb R}\widehat{f}(t)\Phi_0*\big(e^{it\Psi_{\alpha}}-e^{it}\big)*\Phi_1\, \d t\Big\|}_{\mathfrak D}.
\end{align*} 
Let $R>0$ be fixed and let us argue that
\begin{equation}\label{goal2}
    \lim_{\alpha}\Big\|\int_{|t|\leq R}\widehat{f}(t)\Phi_0*\big(e^{it\Psi_{\alpha}}-e^{it}\big)*\Phi_1\, \d t\Big\|_{\mathfrak D} =0.
\end{equation}
Indeed, it follows from the computation 
\begin{align*}
    \Big\|\int_{|t|\leq R}\widehat{f}(t)\Phi_0*\big(e^{it\Psi_{\alpha}}-e^{it}\big)*\Phi_1\, \d t\Big\|_{\mathfrak D}&\leq  \norm{\widehat{f}}_{C_0(\R)}\norm{\Phi_0}_{\mathfrak D}\int_{|t|\leq R}\big\|\big(e^{it\Psi_{\alpha}}-e^{it}\big)*\Phi_1\big\|_{\mathfrak D}\d t \\
    &\leq 2R \norm{\widehat{f}}_{C_0(\R)} \norm{\Phi_0}_{\mathfrak D}\sum_{k\in\mathbb N} \frac{R^k}{k!}\norm{\Psi_{\alpha}^k*\Phi_1-\Phi_1}_{\mathfrak D} \\
    &\leq 2 R \norm{\widehat{f}}_{C_0(\R)} \norm{\Phi_0}_{\mathfrak D}\norm{\Psi_{\alpha}*\Phi_1-*\Phi_1}_{\mathfrak D}\sum_{k\in\mathbb N}\frac{R^k}{k!}\sum_{j=0}^{k-1} B^j \\
    &\leq 2 BR^2e^{BR} \norm{\widehat{f}}_{C_0(\R)}\norm{\Psi_{\alpha}*\Phi_1-\Phi_1}_{\mathfrak D}.
\end{align*}
We now turn our attention to the rest of the integral. We see that
\begin{align*}
\Big\|\int_{|t|>R}\widehat{f}(t)\Phi_0*\big(e^{it\Psi_{\alpha}}&-e^{it}\big)*\Phi_1\, \d t\Big\|_{\mathfrak D}\leq  \\
    &\int_{|t|>R}|\widehat{f}(t)|\norm{\Phi_0*e^{it\Psi_{\alpha}}*\Phi_1 }_{\mathfrak D}\d t+\norm{\Phi_0}_{\mathfrak D}\norm{\Phi_1}_{\mathfrak D}\Big|\int_{|t|>R}\widehat{f}(t)e^{it}\, \d t\Big|.
\end{align*}
So it is enough to find $R>0$ such that, given any $\alpha$ and any $\epsilon>0$, one has
\begin{equation}\label{goal}
    \int_{|t|>R}|\widehat{f}(t)|\norm{\Phi_0*e^{it\Psi_{\alpha}}*\Phi_1 }_{\mathfrak D}\d t<\frac{\epsilon}{2}.
\end{equation}
That is because, in such a case, it is easy to enlarge $R$ to also guarantee
$$
\Big|\int_{|t|>R}\widehat{f}(t)e^{it}\, \d t\Big|<\frac{\epsilon}{2\norm{\Phi_0}_{\mathfrak D}\norm{\Phi_1}_{\mathfrak D}}
$$
and that, combined with \eqref{goal2}, will yield the result.

What follows now is an estimation of the growth of $\norm{\Phi_0*e^{it\Psi_{\alpha}}*\Phi_1 }_{\mathfrak D}$ so we will proceed more or less as in Proposition \ref{bigO2}. We will complete the proof for the case $\mathfrak D=L^{1,\nu}(\G\,\vert\,\mathscr C)$, which is the most complicated case anyways.

Let $U$ be a compact subset of $\G$ containing $K\cup {\rm Supp}(\Phi_0)\cup {\rm Supp}(\Phi_1)\cup\{\e\}$ and let $n\in\N$. Then the convolution $\Phi_0*e^{it\Psi_{\alpha}}*\Phi_1$ happens in $\bigcup_{n\in\N} U^n$ and we have
    \begin{equation}\label{decomp3}
        \norm{\Phi_0*e^{it\Psi_{\alpha}}*\Phi_1 }_{L^{1,\nu}(\G\,\vert\,\mathscr C)}=\int_{U^{n}}  \norm{\Phi_0*e^{it\Psi_{\alpha}}*\Phi_1(x) }_{\mathfrak C_x}\nu(x)\d x+\int_{\G \setminus U^{n}} \norm{\Phi_0*e^{it\Psi_{\alpha}}*\Phi_1 (x)}_{\mathfrak C_x}\nu(x)\d x.
    \end{equation}
We then define, for $x\in \bigcup_{n\in\N} U^n$, the quantity $\si_U(x)=\inf\{n\in\N\mid x\in U^n\}$. Reasoning as in \cite[Proposition 1]{Py82} we find constants $A,\delta>0$ so that
$$
\nu(x)\leq A(1+\si_U(x))^{\delta},\quad \text{ for all }x\in \bigcup_{n\in\N} U^n.
$$
The first integral in \eqref{decomp3} can be then bounded by
\begin{align*}
    \int_{U^{n}}  \norm{\Phi_0*e^{it\Psi_{\alpha}}*\Phi_1(x)}_{\mathfrak C_x}\nu(x)\d x &\leq A(1+n)^{\delta}\mu(U^{n}) \norm{\Phi_0*e^{it\Psi_{\alpha}}*\Phi_1}_{L^{\infty}(\G\,\vert\,\mathscr C)}\\
    &\leq A(1+n)^{\delta}\mu(U^{n}) \norm{\Phi_0}_{L^2(\G\,\vert\,\mathscr C)}\norm{e^{in\Psi_{\alpha}}*\Phi_1}_{L^2_\e(\G\,\vert\,\mathscr C)} \\
    &\leq A(1+n)^{\delta}\mu(U^{n}) \norm{\Phi_0}_{L^2(\G\,\vert\,\mathscr C)}\norm{\Phi_1}_{L^2(\G\,\vert\,\mathscr C)}.
\end{align*} 
On the other hand, observe that $\nu'(x)=e^{\si_U(x)}$ is at least $1$ and satisfies $\nu'(xy)\leq \nu'(x)\nu'(y)$ and $E=\sup_{x\in K}\nu'(x)<\infty$. We also note that for all $x\in (\bigcup_{k\in \N} U^k)\setminus U^n$, we have $\nu'(x)\geq e^n$. This implies
\begin{align*}
        \int_{\G \setminus U^{n}} \norm{\Phi_0*e^{it\Psi_{\alpha}}*\Phi_1(x)}_{\mathfrak C_x}\nu(x)\d x &\leq e^{-n}\norm{\Phi_0*e^{it\Psi_{\alpha}}*\Phi_1}_{L^{1,\nu\nu'}(\G\,\vert\,\mathscr C)} \\
        &\leq \norm{\Phi_0}_{L^{1,\nu\nu'}(\G\,\vert\,\mathscr C)}\norm{\Phi_1}_{L^{1,\nu\nu'}(\G\,\vert\,\mathscr C)}e^{-n+|t|\norm{\Psi_\alpha}_{L^{1,\nu\nu'}(\G\,\vert\,\mathscr C)}} \\
        &\leq \norm{\Phi_0}_{L^{1,\nu\nu'}(\G\,\vert\,\mathscr C)}\norm{\Phi_1}_{L^{1,\nu\nu'}(\G\,\vert\,\mathscr C)}e^{-n+BE|t|}.
\end{align*}
Therefore, choosing $n=\ceil{BE|t|}$ yields that
$$
\norm{\Phi_0*e^{it\Psi_{\alpha}}*\Phi_1 }_{L^{1,\nu}(\G\,\vert\,\mathscr C)}=O(t^{\delta+d})+O(1)=O(t^{d+\delta}),\quad \text{ as }|t|\to\infty.
$$
Finally, we can use the fact that $f$ is smooth and compactly supported to find a constant $C'>0$ so that $|\widehat{f}(t)|\leq\frac{C'}{(1+|t|)^{d+\delta+2}}$ for all $t\in\R$. This guarantees the convergence of the integral $\int_{\R}|\widehat{f}(t)|\norm{\Phi_0*e^{it\Psi_{\alpha}}*\Phi_1 }_{\mathfrak D}\d t$. Hence we can find an $R>0$ that satisfies \eqref{goal} and the proof is finished.
\end{proof}

With Lemma \ref{appidentity} at hand, we are finally able to derive the Wiener property for $\mathfrak D$.

\begin{thm}
    Suppose $\mathfrak D$ is symmetric, then it has the Wiener property.
\end{thm}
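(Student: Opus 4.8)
The plan is to prove the contrapositive form of the Wiener property: that every proper closed two-sided ideal $I\subset\mathfrak D$ has nonempty hull $h(I)\subseteq{\rm Prim}_*\mathfrak D$. Equivalently, I will show that if $h(I)=\emptyset$ then $I=\mathfrak D$. The whole argument hinges on identifying the minimal ideal $j(\emptyset)$ with the full algebra $\mathfrak D$; once this is done, Theorem \ref{minideals} applied to $C=\emptyset$ forces $\mathfrak D=j(\emptyset)\subseteq I$ for every $I$ with $h(I)=\emptyset$, so no proper closed two-sided ideal can have empty hull. Unwinding the definition of ${\rm Prim}_*\mathfrak D$, a nonempty hull means precisely that the kernel of some topologically irreducible $^*$-representation contains $I$, which is exactly the Wiener property.

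The key step is therefore to prove $j(\emptyset)=\mathfrak D$, and this is where Lemma \ref{appidentity} enters. First I fix a function $f\in C_{\rm c}^{\infty}(\R)$ that vanishes on a neighborhood of $0$ and satisfies $f(1)=1$; such an $f$ is admissible in Lemma \ref{appidentity} (since in particular $f(0)=0$) while simultaneously witnessing membership in $m(\emptyset)$. Because each $\Psi_\alpha$ is self-adjoint, continuous, compactly supported and satisfies $\norm{\Psi_\alpha}_{L^{1}(\G\,\vert\,\mathscr C)}\leq 1$, the smooth functional calculus of Theorems \ref{smoothfuncalc} and \ref{funccalc2} gives $f(\Psi_\alpha)\in\mathfrak D$, and by the choice of $f$ we have $f(\Psi_\alpha)\in m(\emptyset)\subseteq j(\emptyset)$.

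Since $j(\emptyset)$ is a two-sided ideal, $\Phi_0*f(\Psi_\alpha)*\Phi_1\in j(\emptyset)$ for all $\Phi_0,\Phi_1\in C_{\rm c}(\G\,\vert\,\mathscr C)$ and all $\alpha$. Passing to the limit over $\alpha$ and using that $j(\emptyset)$ is closed, Lemma \ref{appidentity} yields $\Phi_0*\Phi_1\in j(\emptyset)$ for all such $\Phi_0,\Phi_1$. Finally I invoke the bounded approximate identity: for $\Phi\in C_{\rm c}(\G\,\vert\,\mathscr C)$ one has $\Psi_\beta*\Phi\to\Phi$ in $\mathfrak D$, and each $\Psi_\beta*\Phi$ is a product of two elements of $C_{\rm c}(\G\,\vert\,\mathscr C)$, hence lies in $j(\emptyset)$; closedness gives $\Phi\in j(\emptyset)$, and density of $C_{\rm c}(\G\,\vert\,\mathscr C)$ in $\mathfrak D$ then gives $j(\emptyset)=\mathfrak D$, completing the reduction.

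The main obstacle has in fact already been overcome in Lemma \ref{appidentity}: the heavy lifting is the growth estimate for $\norm{\Phi_0*e^{it\Psi_\alpha}*\Phi_1}_{\mathfrak D}$ that makes $\Phi_0*f(\Psi_\alpha)*\Phi_1$ approximate $\Phi_0*\Phi_1$ in a manner controlled uniformly enough in $\alpha$. Granting that lemma, the only point requiring genuine care is the simultaneous choice of $f$: it must vanish on a whole neighborhood of $0$ (to lie in $m(\emptyset)$, not merely satisfy $f(0)=0$) while still taking the value $1$ at $1$, so that Lemma \ref{appidentity} applies verbatim. Symmetry of $\mathfrak D$ is used only through Theorem \ref{minideals}, which supplies the minimality of $j(\emptyset)$ among ideals with empty hull.
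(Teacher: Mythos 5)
Your proof is correct and follows essentially the same route as the paper: both reduce the Wiener property to the identity $j(\emptyset)=\mathfrak D$, established via Lemma \ref{appidentity} applied to an $f\in C_{\rm c}^\infty(\R)$ vanishing near $0$ with $f(1)=1$, and then conclude by the minimality statement of Theorem \ref{minideals} with $C=\emptyset$. The only difference is that you spell out the details the paper leaves implicit (membership of $f(\Psi_\alpha)$ in $m(\emptyset)$, the closedness/ideal argument, and the approximate-identity step showing products of compactly supported sections are dense), all of which are handled correctly.
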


\begin{proof}
    By Lemma \ref{appidentity}, $j(\emptyset)$ contains all products of elements in $C_{\rm c}(\G\,\vert\,\mathscr C)$ and hence $j(\emptyset)=\mathfrak D$. Now, if $I$ is a closed two-sided ideal of $\mathfrak D$ such that $h(I)=\emptyset$, then $\mathfrak D=j(\emptyset) \subset I$ by Theorem \ref{minideals}.
\end{proof}

%-------------------------------------------------------------------------------------------------------
\subsection{Norm-controlled inversion}\label{normcontrolled}
%-------------------------------------------------------------------------------------------------------

The purpose of this section is to produce a dense Banach $^*$-subalgebra of $L^{1}(\G\,\vert\,\mathscr C)$, which is not only symmetric, but it also admits a norm-controlled inversion (at least in the discrete/unital case) in ${\rm C^*}(\G\,\vert\,\mathscr C)$. In fact, because of the results in \cite{Ni99}, it seems unreasonable to expect norm-controlled inversion in all of $L^{1}(\G\,\vert\,\mathscr C)$. For that reason, and motivated by the results in \cite{SaSh19}, we will consider the space $\mathfrak E=L^{1,\nu}(\G\,\vert\,\mathscr C)\cap L^\infty(\G\,\vert\,\mathscr C)$, endowed with the norm $$\norm{\Phi}_{\mathfrak E}=\max\{\norm{\Phi}_{L^{1,\nu}(\G\,\vert\,\mathscr C)},\norm{\Phi}_{L^\infty(\G\,\vert\,\mathscr C)}\}.$$ If $\G$ is discrete, $\mathfrak E$ coincides with $\ell^{1,\nu}(\G\,\vert\,\mathscr C)$, whereas for compact $\G$, we have $\mathfrak E=L^\infty(\G\,\vert\,\mathscr C)$. 

\begin{prop}\label{gendiff1}
    $\mathfrak E$ is a symmetric Banach $^*$-subalgebra of $L^{1}(\G\,\vert\,\mathscr C)$. Moreover, there exists a constant $D\geq 1$ such that \begin{equation}\label{gendiff}
        \norm{\Phi^4}_{\mathfrak E}\leq D \norm{\Phi}_{L^{2}_\e(\G\,\vert\,\mathscr C)}^{1/(p+1)}\norm{\Phi}_{\mathfrak E}^{(4p+3)/(p+1)}
    \end{equation} holds for all $\Phi\in\mathfrak E$.
\end{prop}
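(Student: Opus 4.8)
The plan is to establish the algebraic and metric structure first, then prove the interpolation estimate \eqref{gendiff}, and finally extract symmetry from it. For the structure, $\mathfrak E$ is complete because it is the intersection of the Banach spaces $L^{1,\nu}(\G\,\vert\,\mathscr C)$ and $L^\infty(\G\,\vert\,\mathscr C)$ under the maximum norm. Since $\nu(x^{-1})=\nu(x)$ and $\norm{a^\bu}_{\mathfrak C_{x^{-1}}}=\norm{a}_{\mathfrak C_x}$, the involution \eqref{inwol} preserves both the $L^{1,\nu}$- and the $L^\infty$-norm, so $\mathfrak E$ is $^*$-closed. For the product I would bound the two constituent norms separately: $L^{1,\nu}(\G\,\vert\,\mathscr C)$ is already a Banach $^*$-algebra, so $\norm{\Phi*\Psi}_{L^{1,\nu}(\G\,\vert\,\mathscr C)}\le\norm{\Phi}_{L^{1,\nu}(\G\,\vert\,\mathscr C)}\norm{\Psi}_{L^{1,\nu}(\G\,\vert\,\mathscr C)}$, while the elementary estimate $\norm{\Phi*\Psi}_{L^\infty(\G\,\vert\,\mathscr C)}\le\norm{\Phi}_{L^1(\G\,\vert\,\mathscr C)}\norm{\Psi}_{L^\infty(\G\,\vert\,\mathscr C)}\le\norm{\Phi}_{\mathfrak E}\norm{\Psi}_{\mathfrak E}$ (using $\nu\ge1$) handles the other factor; taking the maximum yields submultiplicativity of $\norm{\cdot}_{\mathfrak E}$. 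The same shows $\mathfrak E\subset L^1(\G\,\vert\,\mathscr C)$ continuously.

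To prove \eqref{gendiff} I would split $\norm{\Phi^4}_{\mathfrak E}=\max\{\norm{\Phi^4}_{L^{1,\nu}(\G\,\vert\,\mathscr C)},\norm{\Phi^4}_{L^\infty(\G\,\vert\,\mathscr C)}\}$ and bound each summand against the right-hand side, writing $\Phi^4=\Phi^2*\Phi^2$ throughout. The recurring tools are Lemma \ref{lemma}(v) with $p=q=2$, giving $\norm{\Phi^2}_{L^\infty(\G\,\vert\,\mathscr C)}\le\norm{\Phi}_{L^2(\G\,\vert\,\mathscr C)}\norm{\Phi}_{L^2_\e(\G\,\vert\,\mathscr C)}\le\norm{\Phi}_{\mathfrak E}\norm{\Phi}_{L^2_\e(\G\,\vert\,\mathscr C)}$ (note $\norm{\Phi}_{L^2(\G\,\vert\,\mathscr C)}^2\le\norm{\Phi}_{L^\infty(\G\,\vert\,\mathscr C)}\norm{\Phi}_{L^1(\G\,\vert\,\mathscr C)}\le\norm{\Phi}_{\mathfrak E}^2$); the interpolation of Lemma \ref{diff}, which together with $\norm{\Phi^2}_{L^{1,\nu}(\G\,\vert\,\mathscr C)}\le\norm{\Phi}_{\mathfrak E}^2$ produces $\norm{\Phi^2}_{L^1(\G\,\vert\,\mathscr C)}\le A\norm{\Phi}_{L^2_\e(\G\,\vert\,\mathscr C)}^{1/(p+1)}\norm{\Phi}_{\mathfrak E}^{(2p+1)/(p+1)}$; and the submultiplicative estimate from the proof of Proposition \ref{L1Lnu}, which gives $\norm{\Phi^2*\Phi^2}_{L^{1,\nu}(\G\,\vert\,\mathscr C)}\le 2C\norm{\Phi^2}_{L^{1,\nu}(\G\,\vert\,\mathscr C)}\norm{\Phi^2}_{L^1(\G\,\vert\,\mathscr C)}$. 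Chaining these, the $L^{1,\nu}$-part acquires the exponent $2+(2p+1)/(p+1)=(4p+3)/(p+1)$. For the $L^\infty$-part, Lemma \ref{lemma}(v) gives $\norm{\Phi^4}_{L^\infty(\G\,\vert\,\mathscr C)}\le\norm{\Phi^2}_{L^2(\G\,\vert\,\mathscr C)}\norm{\Phi^2}_{L^2_\e(\G\,\vert\,\mathscr C)}$, and bounding $\norm{\Phi^2}_{L^2(\G\,\vert\,\mathscr C)}\le\norm{\Phi}_{\mathfrak E}^2$ together with $\norm{\Phi^2}_{L^2_\e(\G\,\vert\,\mathscr C)}\le\norm{\Phi}_{L^1(\G\,\vert\,\mathscr C)}\norm{\Phi}_{L^2_\e(\G\,\vert\,\mathscr C)}\le\norm{\Phi}_{\mathfrak E}\norm{\Phi}_{L^2_\e(\G\,\vert\,\mathscr C)}$ (Lemma \ref{lemma}(iv) with $r=q=2,\,p=1$) yields $\norm{\Phi^4}_{L^\infty(\G\,\vert\,\mathscr C)}\le\norm{\Phi}_{L^2_\e(\G\,\vert\,\mathscr C)}\norm{\Phi}_{\mathfrak E}^3$, which I absorb into the target exponent using $\norm{\Phi}_{L^2_\e(\G\,\vert\,\mathscr C)}\le\norm{\Phi}_{\mathfrak E}$. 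Setting $D=\max\{1,2CA\}$ then gives \eqref{gendiff} for all $\Phi\in\mathfrak E$. The delicate point — and the main obstacle — is the bookkeeping of the H\"older exponents: the $L^\infty$-factor alone is too crude, and one must route the missing power of $\norm{\Phi}_{L^2_\e(\G\,\vert\,\mathscr C)}$ through the $L^{1,\nu}$-factor via Lemma \ref{diff} so that the two summands land on the same exponent $(4p+3)/(p+1)$.

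For symmetry I would iterate \eqref{gendiff} on self-adjoint $\Phi$. Writing $\norm{\cdot}_{{\rm C^*}(\mathfrak E)}$ for the norm of the enveloping $C^*$-algebra of $\mathfrak E$ (which is reduced, as $\lambda$ is faithful on it), the left regular representation restricts to a $^*$-representation of $\mathfrak E$, so $\norm{\cdot}_{L^2_\e(\G\,\vert\,\mathscr C)}\le\norm{\cdot}_{{\rm C^*}(\mathfrak E)}$. Applying \eqref{gendiff} to $\Phi^{4^k}$ and using the multiplicativity $\norm{\Phi^{4^k}}_{{\rm C^*}(\mathfrak E)}=\norm{\Phi}_{{\rm C^*}(\mathfrak E)}^{4^k}$ on self-adjoint powers, I obtain, with $N_k=\norm{\Phi^{4^k}}_{\mathfrak E}$, the recursion $N_{k+1}\le D\,\norm{\Phi}_{{\rm C^*}(\mathfrak E)}^{4^k/(p+1)}N_k^{(4p+3)/(p+1)}$. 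Taking logarithms, dividing by $4^{k+1}$, and setting $m_k=4^{-k}\log N_k$ turns this into an affine recursion $m_{k+1}\le o(1)+\tfrac{1}{4(p+1)}\log\norm{\Phi}_{{\rm C^*}(\mathfrak E)}+\tfrac{4p+3}{4(p+1)}\,m_k$ whose contraction factor is strictly less than $1$; passing to the limit and using the cancellation $4-\tfrac{4p+3}{p+1}=\tfrac{1}{p+1}$ gives $\log\rho_{\mathfrak E}(\Phi)=\lim_k m_k\le\log\norm{\Phi}_{{\rm C^*}(\mathfrak E)}$. Since the reverse inequality $\rho_{\mathfrak E}(\Phi)\ge\rho_{{\rm C^*}(\mathfrak E)}(\Phi)=\norm{\Phi}_{{\rm C^*}(\mathfrak E)}$ holds automatically, the spectral radius of every self-adjoint element of $\mathfrak E$ equals its $C^*$-norm, and symmetry follows from Hulanicki's criterion.
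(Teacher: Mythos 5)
Your construction of the Banach $^*$-algebra structure and your derivation of \eqref{gendiff} are correct and essentially identical to the paper's proof: the same submultiplicativity argument, and the same chain of three ingredients --- inequality \eqref{diffeq} from the proof of Proposition \ref{L1Lnu}, Lemma \ref{diff}, and the strengthened Young inequality of Lemma \ref{lemma}(v) --- with the same exponent bookkeeping. (Your treatment of the $L^\infty$-component via $\Phi^4=\Phi^2*\Phi^2$ and Lemma \ref{lemma}(iv), where the paper factors $\Phi^3*\Phi$, is an immaterial variant.)

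The symmetry step, however, has a genuine gap: your recursion rests on the inequality $\norm{\Phi^{4^k}}_{L^2_\e(\G\,\vert\,\mathscr C)}\leq\norm{\Phi^{4^k}}_{{\rm C^*}(\mathfrak E)}$, and the justification you give --- that $\lambda$ restricts to a $^*$-representation of $\mathfrak E$ --- does not yield it. Universality of ${\rm C^*}(\mathfrak E)$ gives the \emph{operator}-norm bound $\norm{\lambda(\Psi)}_{\mathbb B_a(L^2_\e(\G\,\vert\,\mathscr C))}\leq\norm{\Psi}_{{\rm C^*}(\mathfrak E)}$, which says nothing about the Hilbert-module \emph{vector} norm $\norm{\Psi}_{L^2_\e(\G\,\vert\,\mathscr C)}$. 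The inequality you want is precisely Lemma \ref{lemma}(vi), which the paper proves only for discrete $\G$ (and this is exactly why the norm-controlled inversion result that follows it is restricted to discrete groups). For non-discrete $\G$ it is false. Take $\G=\R$ with the trivial line bundle and let $\Psi_W$ be the Schwartz function (hence $\Psi_W\in\mathfrak E$) whose Fourier transform $\widehat{\Psi_W}$ is smooth, real, even, valued in $[0,1]$, equal to $1$ on $[-W,W]$ and supported in $[-W-1,W+1]$; then $\Psi_W=\Psi_W^*$ and Plancherel gives $\norm{\Psi_W}_{L^2(\R)}\geq(W/\pi)^{1/2}$, while $\norm{\Psi_W}_{{\rm C^*}(\mathfrak E)}=\rho_{{\rm C^*}(\mathfrak E)}\big(\Psi_W\big)\leq\rho_{\mathfrak E}(\Psi_W)\leq 1$: indeed $\norm{\Psi_W^n}_{L^\infty(\R)}\leq\tfrac{1}{2\pi}\int_\R|\widehat{\Psi_W}(t)|^n\,\d t\leq (W+1)/\pi$ is bounded in $n$, and $\rho_{L^{1,\nu}(\R)}(\Psi_W)=\|\widehat{\Psi_W}\|_{C_0(\R)}=1$ by the paper's own Theorem \ref{closedness}, so $\lim_n\norm{\Psi_W^n}_{\mathfrak E}^{1/n}\leq1$. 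So your claimed inequality fails badly, and your argument as written only covers discrete $\G$. The repair is exactly the paper's device: instead of pushing the whole power into a $C^*$-norm, peel off a single vector factor, $\norm{\Phi^{n}}_{L^2_\e(\G\,\vert\,\mathscr C)}\leq\norm{\lambda(\Phi)^{n-1}}_{\mathbb B(L^2_\e(\G\,\vert\,\mathscr C))}\norm{\Phi}_{L^2_\e(\G\,\vert\,\mathscr C)}$; the fixed factor $\norm{\Phi}_{L^2_\e(\G\,\vert\,\mathscr C)}^{1/(p+1)}$ washes out in the limit, your contraction recursion (or, more simply, taking $n$-th roots as in the paper) then gives $\rho_{\mathfrak E}(\Phi)\leq\rho_{\mathbb B(L^2_\e(\G\,\vert\,\mathscr C))}\big(\lambda(\Phi)\big)$, and one concludes with \cite[Corollary 1]{DLW97} together with your Hulanicki/Shirali--Ford ending. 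With that substitution self-adjointness is not even needed for the spectral-radius identity: the paper obtains $\rho_{\mathfrak E}(\Phi)=\rho_{\mathbb B_a(L^2_\e(\G\,\vert\,\mathscr C))}\big(\lambda(\Phi)\big)$ for every $\Phi\in\mathfrak E$.
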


\begin{proof}
    Because of Young's inequality, $\Phi*\Psi\in\mathfrak E$ and $\Phi^*\in\mathfrak E$ as soon as $\Phi,\Psi\in\mathfrak E$. It is also clear that $\mathfrak E$ is a Banach space. Moreover, \begin{align*}
        \norm{\Phi*\Psi}_{\mathfrak E}&= \max\{\norm{\Phi*\Psi}_{L^{1,\nu}(\G\,\vert\,\mathscr C)},\norm{\Phi*\Psi}_{L^{\infty}(\G\,\vert\,\mathscr C)}\} \\
        &\leq \norm{\Phi}_{L^{1,\nu}(\G\,\vert\,\mathscr C)}\max\{\norm{\Psi}_{L^{1,\nu}(\G\,\vert\,\mathscr C)},\norm{\Psi}_{L^{\infty}(\G\,\vert\,\mathscr C)}\}\\
        &\leq\norm{\Phi}_{\mathfrak E}\norm{\Psi}_{\mathfrak E}
    \end{align*} so $\mathfrak E$ is a Banach $^*$-subalgebra of $L^{1}(\G\,\vert\,\mathscr C)$. It is worth noting that $ \norm{\Phi}_{L^{2}(\G\,\vert\,\mathscr C)}\leq  \norm{\Phi}_{\mathfrak E}$. For showing symmetry, we will compute the spectral radius. In particular we note that, for $\Phi\in\mathfrak E$, \begin{align*}
        \norm{\Phi^4}_{L^{1,\nu}(\G\,\vert\,\mathscr C)}&\overset{\eqref{diffeq}}{\leq} 2C\norm{\Phi^2}_{L^{1}(\G\,\vert\,\mathscr C)}\norm{\Phi^2}_{L^{1,\nu}(\G\,\vert\,\mathscr C)} \\ 
        &\overset{\eqref{idk}}{\leq}  2CA \norm{\Phi^2}_{L^{\infty}(\G\,\vert\,\mathscr C)}^{1/(p+1)}\norm{\Phi^2}_{L^{1,\nu}(\G\,\vert\,\mathscr C)}^{(2p+1)/(p+1)} \\
        &\leq  2CA \norm{\Phi}_{L^{2}_\e(\G\,\vert\,\mathscr C)}^{1/(p+1)}\norm{\Phi}_{L^{2}(\G\,\vert\,\mathscr C)}^{1/(p+1)}\norm{\Phi^2}_{L^{1,\nu}(\G\,\vert\,\mathscr C)}^{(2p+1)/(p+1)} \\
        &\leq  2CA \norm{\Phi}_{L^{2}_\e(\G\,\vert\,\mathscr C)}^{1/(p+1)}\norm{\Phi}_{\mathfrak E}^{1/(p+1)}\norm{\Phi}_{L^{1,\nu}(\G\,\vert\,\mathscr C)}^{(4p+2)/(p+1)}
    \end{align*} and therefore, taking $D=\max\{2CA,1\}$,  \begin{align*}
        \norm{\Phi^4}_{\mathfrak E}&=\max\{\norm{\Phi^4}_{L^{1,\nu}(\G\,\vert\,\mathscr C)},\norm{\Phi^4}_{L^{\infty}(\G\,\vert\,\mathscr C)}\} \\
        &\leq D\max\{\norm{\Phi}_{L^{2}_\e(\G\,\vert\,\mathscr C)}^{1/(p+1)}\norm{\Phi}_{\mathfrak E}^{1/(p+1)}\norm{\Phi}_{L^{1,\nu}(\G\,\vert\,\mathscr C)}^{(4p+2)/(p+1)}, \norm{\Phi}_{L^2_\e(\G\,\vert\,\mathscr C)}\norm{\Phi^3}_{L^2(\G\,\vert\,\mathscr C)} \} \\
        &\leq D\norm{\Phi}_{L^{2}_\e(\G\,\vert\,\mathscr C)}^{1/(p+1)}\max\{\norm{\Phi}_{\mathfrak E}^{1/(p+1)}\norm{\Phi}_{L^{1,\nu}(\G\,\vert\,\mathscr C)}^{(4p+2)/(p+1)}, \norm{\Phi}_{L^2(\G\,\vert\,\mathscr C)}^{p/(p+1)}\norm{\Phi^3}_{\mathfrak E} \} \\
        &\leq D\norm{\Phi}_{L^{2}_\e(\G\,\vert\,\mathscr C)}^{1/(p+1)}\norm{\Phi}_{\mathfrak E}^{(4p+3)/(p+1)}.
    \end{align*} And now symmetry follows easily: \begin{align*}
        \rho_{\mathfrak E}(\Phi)^4&=\lim_{n\to\infty} \norm{\Phi^{4n}}^{1/n}_{\mathfrak E} \\
        &\leq \lim_{n\to\infty} \big(D\norm{\Phi^n}_{L^{2}_\e(\G\,\vert\,\mathscr C)}^{1/(p+1)}\norm{\Phi^n}_{\mathfrak E}^{(4p+3)/(p+1)}\big)^{1/n} \\
        &\leq \lim_{n\to\infty} \big(D\norm{\lambda(\Phi)^{n-1}}_{\mathbb B(L^{2}_\e(\G\,\vert\,\mathscr C))}^{1/(p+1)}\norm{\Phi}_{L^{2}_\e(\G\,\vert\,\mathscr C)}^{1/(p+1)}\norm{\Phi^n}_{\mathfrak E}^{(4p+3)/(p+1)}\big)^{1/n} \\
        &=\rho_{\mathbb B(L^{2}_\e(\G\,\vert\,\mathscr C))}(\lambda(\Phi))^{1/(p+1)}\rho_{\mathfrak E}(\Phi)^{(4p+3)/(p+1)}
    \end{align*} and thus $\rho_{\mathfrak E}(\Phi)=\rho_{\mathbb B(L^{2}_\e(\G\,\vert\,\mathscr C))}(\lambda(\Phi))$. To conclude, we note that $\lambda(\Phi)\in\mathbb B_a(L^2_\e(\G\,\vert\,\mathscr C))$ and that the inclusion $\mathbb B_a(L^2_\e(\G\,\vert\,\mathscr C))\subset \mathbb B(L^2_\e(\G\,\vert\,\mathscr C))$ is spectral invariant (in particular spectral radius preserving), due to \cite[Corollary 1]{DLW97}. 
\end{proof}

Now we turn our attention to the property of norm-controlled inversion. As mentioned before, the idea is to estimate the $\mathfrak E$-norm of the inverse of an element using both the norms in $\mathfrak E$ and in ${\rm C^*}(\G\,\vert\,\mathscr C)$. We remark that the property of symmetry implies that the operation of taking inverses coincides in both algebras but it does not require or provide any norm estimate for such elements. 

\begin{defn}
    Let $\A\subset\B$ be a continuous inclusion of Banach algebras with the same unit. We say that \emph{$\A$ admits norm-controlled inversion in $\B$} if $\A$ is inverse-closed in $\B$ and there is a function $f:\R^+\times \R^+\to\R^+$ such that $$\norm{a^{-1}}_\A\leq f(\norm{a}_\A,\norm{a^{-1}}_\B).$$ We say that a reduced, symmetric Banach $^*$-algebra \emph{$\A$ admits norm-controlled inversion} if it admits norm-controlled inversion in $\B={\rm C^*}(\A)$. 
\end{defn}

Now we should prove that $\mathfrak E$ admits norm-controlled inversion. We will do so for a very specific case: when $\G$ is discrete and $\mathfrak E$ is unital. We believe that a more general result is possible to obtain, however this is beyond the scope of our subsection, which is dedicated to consequences of our previous results. This result is handled similarly to \cite[Theorem 1.1]{GK13} and its generalization \cite[Proposition 2.2]{SaSh19}.

\begin{prop}
    Let $\theta=\frac{4p+3}{p+1}$ and $D>0$ as in Proposition \ref{gendiff1}. If $\G$ is discrete and $\mathfrak E=\ell^{1,\nu}(\G\,\vert\,\mathscr C)$ is unital, then for every invertible $\Phi\in \mathfrak E$, we have
    \begin{equation}\label{normcontrol}
        \norm{\Phi^{-1}}_{\mathfrak E}\leq \tfrac{\norm{\Phi}_{\mathfrak E}}{\norm{\Phi}^2_{{\rm C^*}(\G\,\vert\,\mathscr C)}}\prod_{k\in\N} \sum_{j=0}^3 \Big(D^{\tfrac{\theta^k-1}{\theta-1}}\big(1-\tfrac{1}{\norm{\Phi^{-1}}^2_{{\rm C^*}(\G\,\vert\,\mathscr C)}\norm{\Phi}^2_{{\rm C^*}(\G\,\vert\,\mathscr C)}}\big)^{4^k-\theta^k}\big(\tfrac{2\norm{\Phi}^2_{\mathfrak E}}{\norm{\Phi}^2_{{\rm C^*}(\G\,\vert\,\mathscr C)}}\big)^{\theta^k}\Big)^j,
    \end{equation}
    with the right hand side being finite. Therefore $\mathfrak E=\ell^{1,\nu}(\G\,\vert\,\mathscr C)$ admits norm-controlled inversion.
\end{prop}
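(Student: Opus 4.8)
The plan is to run the Gröchenig--Klotz iteration scheme \cite{GK13} (in the variant of \cite[Proposition 2.2]{SaSh19}), fueled by the differential-norm inequality \eqref{gendiff} of Proposition \ref{gendiff1}. The first move is to rewrite \eqref{gendiff} in terms of the $C^*$-norm. Since $\tfrac{1}{p+1}=4-\theta$ and, by Lemma \ref{lemma}\emph{(vi)}, $\norm{a}_{L^2_\e(\G\,\vert\,\mathscr C)}\leq\norm{a}_{{\rm C^*}(\G\,\vert\,\mathscr C)}$, inequality \eqref{gendiff} gives, for every $a\in\mathfrak E$,
\[
\norm{a^4}_{\mathfrak E}\leq D\,\norm{a}_{{\rm C^*}(\G\,\vert\,\mathscr C)}^{\,4-\theta}\,\norm{a}_{\mathfrak E}^{\,\theta}.
\]
Next I would reduce to a positive element and set up a Neumann series. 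Given an invertible $\Phi\in\mathfrak E$, put $h=\Phi^{*}*\Phi$ and $g=\delta_\e-\norm{\Phi}_{{\rm C^*}(\G\,\vert\,\mathscr C)}^{-2}\,h$. Then $g=g^{*}\geq 0$ in ${\rm C^*}(\G\,\vert\,\mathscr C)$, its spectrum lies in $[0,r]$ with $r=1-\norm{\Phi^{-1}}_{{\rm C^*}(\G\,\vert\,\mathscr C)}^{-2}\norm{\Phi}_{{\rm C^*}(\G\,\vert\,\mathscr C)}^{-2}\in[0,1)$, so that $\norm{g}_{{\rm C^*}(\G\,\vert\,\mathscr C)}=r$. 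Hence $h^{-1}=\norm{\Phi}_{{\rm C^*}(\G\,\vert\,\mathscr C)}^{-2}\sum_{m\geq0}g^m$ in the $C^*$-algebra, and the final estimate will follow from bounding this series in $\mathfrak E$, combined with $\Phi^{-1}=h^{-1}*\Phi^{*}$ and $\norm{\Phi^{-1}}_{\mathfrak E}\leq\norm{h^{-1}}_{\mathfrak E}\norm{\Phi}_{\mathfrak E}$ (the involution being isometric on $\mathfrak E$).

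The heart of the argument is to iterate the rewritten inequality along the powers $g^{4^k}$, which is exactly what the exponent $4$ in \eqref{gendiff} dictates. Applying it to $a=g^{4^{k-1}}$ and using $\norm{g^{4^{k-1}}}_{{\rm C^*}(\G\,\vert\,\mathscr C)}\leq r^{4^{k-1}}$ produces the recursion $b_k\leq D\,r^{4^{k-1}(4-\theta)}b_{k-1}^{\theta}$ for $b_k=\norm{g^{4^k}}_{\mathfrak E}$. A routine induction, evaluating the geometric-type sums $\sum_{i=0}^{k-1}\theta^i=\tfrac{\theta^k-1}{\theta-1}$ and $\sum_{i=0}^{k-1}\theta^i4^{k-1-i}=\tfrac{4^k-\theta^k}{4-\theta}$, yields the closed form
\[
b_k\leq D^{(\theta^k-1)/(\theta-1)}\,r^{\,4^k-\theta^k}\Big(\tfrac{2\norm{\Phi}_{\mathfrak E}^2}{\norm{\Phi}_{{\rm C^*}(\G\,\vert\,\mathscr C)}^2}\Big)^{\theta^k}=:T_k,
\]
the base case being $b_0=\norm{g}_{\mathfrak E}\leq 1+\norm{\Phi}_{\mathfrak E}^2\norm{\Phi}_{{\rm C^*}(\G\,\vert\,\mathscr C)}^{-2}\leq \tfrac{2\norm{\Phi}_{\mathfrak E}^2}{\norm{\Phi}_{{\rm C^*}(\G\,\vert\,\mathscr C)}^2}$, which uses $\norm{\delta_\e}_{\mathfrak E}=1$ and $\norm{\Phi}_{{\rm C^*}(\G\,\vert\,\mathscr C)}\leq\norm{\Phi}_{\mathfrak E}$.

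To assemble the series I would expand each $m$ in base $4$, $m=\sum_k m_k4^k$ with $m_k\in\{0,1,2,3\}$; submultiplicativity gives $\norm{g^m}_{\mathfrak E}\leq\prod_k b_k^{m_k}\leq\prod_k T_k^{m_k}$, and summing over all $m$ factorizes the series into the Euler product $\sum_{m\geq0}\norm{g^m}_{\mathfrak E}\leq\prod_{k\in\N}\sum_{j=0}^3 T_k^j$. Multiplying through by $\norm{\Phi}_{{\rm C^*}(\G\,\vert\,\mathscr C)}^{-2}$ and then by $\norm{\Phi}_{\mathfrak E}$ reproduces exactly \eqref{normcontrol}. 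Convergence of the product reduces to $\sum_k T_k<\infty$, which holds because $r^{\,4^k-\theta^k}$ decays doubly exponentially (as $\theta<4$ and $r<1$), dominating the single-exponential growth of $D^{(\theta^k-1)/(\theta-1)}$ and of $(2\norm{\Phi}_{\mathfrak E}^2\norm{\Phi}_{{\rm C^*}(\G\,\vert\,\mathscr C)}^{-2})^{\theta^k}$; in particular the Neumann series converges in $\mathfrak E$, placing $h^{-1}$ in $\mathfrak E$.

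I expect the main obstacle to be the exponent bookkeeping in the iteration together with the recognition that the $\Phi^4$ of \eqref{gendiff} forces precisely a base-$4$ digit expansion, which is the mechanism converting the $C^*$-level Neumann series into the finite Euler product on the right-hand side of \eqref{normcontrol}. A minor last point is the literal reading of the definition of norm-controlled inversion: the bound still carries $\norm{\Phi}_{{\rm C^*}(\G\,\vert\,\mathscr C)}$, which I would remove by noting $\norm{\Phi}_{{\rm C^*}(\G\,\vert\,\mathscr C)}\in[\,\norm{\Phi^{-1}}_{{\rm C^*}(\G\,\vert\,\mathscr C)}^{-1},\norm{\Phi}_{\mathfrak E}\,]$ and taking the supremum of the right-hand side over this compact range, which is finite by continuity, thereby exhibiting a genuine control function of $\norm{\Phi}_{\mathfrak E}$ and $\norm{\Phi^{-1}}_{{\rm C^*}(\G\,\vert\,\mathscr C)}$ alone.
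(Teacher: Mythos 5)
Your proposal is correct and follows essentially the same route as the paper's proof: the same rewriting of \eqref{gendiff} via Lemma \ref{lemma}\emph{(vi)}, the same Gr\"ochenig--Klotz reduction to the Neumann series of $\Psi=1-\norm{\Phi}_{{\rm C^*}(\G\,\vert\,\mathscr C)}^{-2}\,\Phi^**\Phi$, the same iteration along the powers $4^k$, and the same $4$-adic expansion yielding the Euler product and its convergence (since $\theta<4$ and $r<1$). Your closing remark—taking a supremum over the possible values of $\norm{\Phi}_{{\rm C^*}(\G\,\vert\,\mathscr C)}$ to exhibit a genuine control function of $\norm{\Phi}_{\mathfrak E}$ and $\norm{\Phi^{-1}}_{{\rm C^*}(\G\,\vert\,\mathscr C)}$ alone—is a careful finishing touch that the paper leaves implicit.
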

\begin{proof}
    Because of Lemma \ref{lemma}(vi) and Proposition \ref{gendiff1}, we have 
    \begin{align*}
        \norm{\Phi^{4^k}}_{\mathfrak E}&\leq D \norm{\Phi^{4^{k-1}}}_{{\rm C^*}(\G\,\vert\,\mathscr C)}^{1/(p+1)}\norm{\Phi^{4^{k-1}}}_{\mathfrak E}^{(4p+3)/(p+1)} \\
        &\leq D \big(\norm{\Phi}_{{\rm C^*}(\G\,\vert\,\mathscr C)}^{4^{k-1}}\big)^{1/(p+1)}\norm{\Phi^{4^{k-1}}}_{\mathfrak E}^{(4p+3)/(p+1)}
    \end{align*}
    for $k\in\N$. And by letting $\beta_n=\norm{\Phi^{n}}_{\mathfrak E}/\norm{\Phi}^n_{{\rm C^*}(\G\,\vert\,\mathscr C)}$, we obtain the relations 
    $$
    \beta_{4^k}\leq D\beta_{4^{k-1}}^{\theta} \quad\text{ and }\quad \beta_{4^k}\leq D^{{\tfrac{\theta^k-1}{\theta-1}}}\beta_{1}^{\theta^k},
    $$
    so 
    $$
    \norm{\Phi^{4^k}}_{\mathfrak E}\leq D^{\tfrac{\theta^k-1}{\theta-1}}\norm{\Phi}^{4^k}_{{\rm C^*}(\G\,\vert\,\mathscr C)}\big(\tfrac{\norm{\Phi}_{\mathfrak E}}{\norm{\Phi}_{{\rm C^*}(\G\,\vert\,\mathscr C)}}\big)^{\theta^k}=:\alpha_k.
    $$ 
    We now consider the $4$-adic expansions of natural numbers $n\in\N$, written as $n=\sum_{k\in\N} \epsilon_k 4^k$, where $\epsilon_k\in\{0,1,2,3\}$ and only finitely many $\epsilon_k$'s are nonzero. Let us denote by $\mathcal F$ the set of all finitely supported sequences $\epsilon=\{\epsilon_k\}_{k\in\N}\in \{0,1,2,3\}^\N$. Then the $4$-adic expansion is a bijection between $\mathcal F$ and $\N$. We now use the previous bound to get
    $$
    \norm{\Phi^{n}}_{\mathfrak E}=\norm{\prod_{k\in\N}\Phi^{\epsilon_k4^k}}_{\mathfrak E}\leq \prod_{k\in\N}\alpha_k^{\epsilon_k}.
    $$ 
    Summing over $n$, we see that
    \begin{equation}\label{bound}
        \sum_{n\in\N} \norm{\Phi^{n}}_{\mathfrak E}\leq \sum_{\epsilon\in\mathcal F} \prod_{k\in\N}\alpha_k^{\epsilon_k}=\prod_{k\in\N} (1+\alpha_k+\alpha_k^2+\alpha_k^3).
    \end{equation} 
    In fact, one has 
    $$
    \prod_{k=1}^N(1+\alpha_k+\alpha_k^2+\alpha_k^3)=\sum_{\substack{1\leq k_1<\ldots< k_m\leq N\\ \epsilon_1,\ldots,\epsilon_m\in \{0,1,2,3\}}} a_{k_1}^{\epsilon_1}\cdots a_{k_m}^{\epsilon_m}
    $$ 
    and taking the limit $N\to \infty$ yields \eqref{bound}. Also note that the right hand side in \eqref{bound} converges if and only if $\sum_{k\in\N}\alpha_k<\infty$ and this is because 
    $$
    \sum_{k\in\N}\alpha_k\leq\sum_{k\in\N} (\alpha_k+\alpha_k^2+\alpha_k^3)\leq \sum_{k\in\N}\alpha_k+\big(\sum_{k\in\N}\alpha_k\big)^2+\big(\sum_{k\in\N}\alpha_k\big)^3<\infty.
    $$ 
    However, the convergence of $\sum_{k\in\N}\alpha_k$ is easily guaranteed if $\norm{\Phi}_{{\rm C^*}(\G\,\vert\,\mathscr C)}<1$ since $\theta<4$. Now, for a general $\Phi\in \mathfrak E$, invertible in ${\rm C^*}(\G\,\vert\,\mathscr C)$, we proceed as in the proof of \cite[Theorem 3.3]{GK13}, to get 
    $$
    \norm{\Phi^{-1}}_{\mathfrak E}\leq \tfrac{\norm{\Phi}_{\mathfrak E}}{\norm{\Phi}^2_{{\rm C^*}(\G\,\vert\,\mathscr C)}}\prod_{k\in\N} \sum_{j=0}^3 \Big(D^{\tfrac{\theta^k-1}{\theta-1}}\norm{\Psi}^{4^k}_{{\rm C^*}(\G\,\vert\,\mathscr C)}\big(\tfrac{\norm{\Psi}_{\mathfrak E}}{\norm{\Psi}_{{\rm C^*}(\G\,\vert\,\mathscr C)}}\big)^{\theta^k}\Big)^j,
    $$  
    where $\Psi=1-\tfrac{1}{\norm{\Phi}^2_{{\rm C^*}(\G\,\vert\,\mathscr C)}}\Phi^**\Phi$. This element satisfies 
    $$
    \norm{\Psi}_{{\rm C^*}(\G\,\vert\,\mathscr C)}= 1-\tfrac{1}{\norm{\Phi^{-1}}^2_{{\rm C^*}(\G\,\vert\,\mathscr C)}\norm{\Phi}^2_{{\rm C^*}(\G\,\vert\,\mathscr C)}}\quad\text{ and }\quad \norm{\Psi}_{\mathfrak E}\leq 2\tfrac{\norm{\Phi}^2_{\mathfrak E}}{\norm{\Phi}^2_{{\rm C^*}(\G\,\vert\,\mathscr C)}}
    $$ 
    from which the claim follows. The same arguments given show convergence of the right hand side in \eqref{normcontrol}.\end{proof}

%-------------------------------------------------------------------------------------------------------
\section{Appendix: Some computations of spectral radii}\label{spectralradius}
%-------------------------------------------------------------------------------------------------------

The idea of this appendix is to use the techniques we developed (as Lemma \ref{lemma}) to compute the spectral radius in many cases of interest. In particular, we obtain two things: symmetry of the $L^1$-algebras $L^1(\G\,\vert\,\mathscr C)$ associated with compact groups and `quasi-symmetry', in the sense of \cite{SaWi20}, for more general algebras (associated to groups of subexponential growth).

The first result deals with groups of subexponential growth. We recall the definition. 
\begin{defn}
    A locally compact group $\G$ is said to have \emph{subexponential growth} if 
    $$
    \lim_{n\to\infty}\mu(K^n)^{1/n}=1,
    $$ 
    for all relatively compact subsets $K\subset \G$.
\end{defn}

\begin{rem}
    It is well-known that groups of subexponential growth are unimodular \cite[Proposition 12.5.8]{Pa94} and amenable. Their amenability also follows from Proposition \ref{subexp}.
\end{rem}

This proposition vastly generalizes the previous best result -valid only for the trivial line bundle- and implies, in the terminology of \cite{SaWi20}, that $C_{\rm c}(\G\,\vert\,\mathscr C)$ is `quasi-symmetric' in $L^1(\G\,\vert\,\mathscr C)$.

\begin{prop}\label{subexp}
    Let $\G$ be a group of subexponential growth. Then for all $\Phi\in L^\infty(\G\,\vert\,\mathscr C)$ and of compact support, \begin{equation*}
        {\rm Spec}_{L^1(\G\,\vert\,\mathscr C)}(\Phi)={\rm Spec}_{{\rm C^*}(\G\,\vert\,\mathscr C)}(\lambda(\Phi)).
    \end{equation*} \end{prop}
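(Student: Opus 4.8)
The plan is to prove the two inclusions of the asserted equality separately; one of them is free. Since $\lambda$ extends to a continuous unital homomorphism $\widetilde{L^1(\G\,\vert\,\mathscr C)}\to\widetilde{{\rm C^*}(\G\,\vert\,\mathscr C)}$, invertibility of $\Phi-z$ in the former forces invertibility of $\lambda(\Phi)-z$ in the latter, so ${\rm Spec}_{{\rm C^*}(\G\,\vert\,\mathscr C)}(\lambda(\Phi))\subseteq{\rm Spec}_{L^1(\G\,\vert\,\mathscr C)}(\Phi)$. The content is the reverse inclusion, and the difficulty is that with only subexponential growth the functional calculus of Section \ref{mainsec} is unavailable, while symmetry of $L^1(\G\,\vert\,\mathscr C)$ is not at our disposal.

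First I would convert the reverse inclusion into a spectral-radius statement for self-adjoint elements. Fix $z$ with $\lambda(\Phi)-z$ invertible in $\widetilde{{\rm C^*}(\G\,\vert\,\mathscr C)}$ and put $c=(\Phi-z)^*(\Phi-z)$. Then $c=s+|z|^2 1$ is self-adjoint, where $s=\Phi^**\Phi-z\Phi^*-\bar z\Phi$ is a bounded cross-section of compact support (a convolution of such sections is again bounded and compactly supported). As $\lambda(c)$ is positive and invertible, ${\rm Spec}(\lambda(c))\subseteq[\epsilon,M]$ with $\epsilon>0$; setting $m=(\epsilon+M)/2$ one has $\rho_{\widetilde{{\rm C^*}(\G\,\vert\,\mathscr C)}}(\lambda(c)-m1)=(M-\epsilon)/2<m$. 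If $c-m1$ has the same spectral radius in $\widetilde{L^1(\G\,\vert\,\mathscr C)}$, then ${\rm Spec}_{\widetilde{L^1(\G\,\vert\,\mathscr C)}}(c)$ lies in a disk about $m$ missing the origin, so $c$ is invertible there; running the same argument with $(\Phi-z)(\Phi-z)^*$ makes $\Phi-z$ both left- and right-invertible, hence invertible, in $\widetilde{L^1(\G\,\vert\,\mathscr C)}$, giving $z\notin{\rm Spec}_{L^1(\G\,\vert\,\mathscr C)}(\Phi)$. Everything therefore reduces to the claim that, for self-adjoint $s\in L^\infty(\G\,\vert\,\mathscr C)$ of compact support and $r\in\R$,
$$\rho_{\widetilde{L^1(\G\,\vert\,\mathscr C)}}(s+r1)\le\rho_{\widetilde{{\rm C^*}(\G\,\vert\,\mathscr C)}}(\lambda(s)+r),$$
the reverse inequality being automatic.

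To prove this I would adapt the estimate behind Proposition \ref{bigO}, now powered by subexponential growth. Write $g=s+r1$, set $\Lambda=\norm{\lambda(s)+r}_{{\rm C^*}(\G\,\vert\,\mathscr C)}$ (which is the right-hand side, since $\lambda(s)+r$ is self-adjoint), and fix a relatively compact symmetric $K\ni\e$ with ${\rm Supp}(s)\subseteq K$, so $K\subseteq K^2\subseteq\cdots$. Expanding $g^n$ binomially, the non-scalar part $G_n$ is supported in $K^n$ and $\norm{g^n}_{\widetilde{L^1(\G\,\vert\,\mathscr C)}}=|r|^n+\norm{G_n}_{L^1(\G\,\vert\,\mathscr C)}$. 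Two uses of Cauchy--Schwarz over the finite-measure set $K^n$ give $\norm{G_n}_{L^1(\G\,\vert\,\mathscr C)}\le\mu(K^n)\norm{G_n}_{L^\infty(\G\,\vert\,\mathscr C)}$. For the sup-norm I would use the recursion $G_n=s*G_{n-1}+r^{n-1}s+rG_{n-1}$ together with the strengthened Young inequality \ref{lemma}(v) (with $p=q=2$) and the bound $\norm{G_{n-1}}_{L^2_\e(\G\,\vert\,\mathscr C)}\le\norm{\lambda(G_{n-1})}_{{\rm C^*}(\G\,\vert\,\mathscr C)}\le 2\Lambda^{n-1}$; here $|r|\le\Lambda$ because $\lambda(s)$ is non-invertible, so $0\in{\rm Spec}(\lambda(s))$. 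This yields a linear recursion $\norm{G_n}_{L^\infty(\G\,\vert\,\mathscr C)}\le C_0\Lambda^{n-1}+\Lambda\norm{G_{n-1}}_{L^\infty(\G\,\vert\,\mathscr C)}$, which unrolls to $\norm{G_n}_{L^\infty(\G\,\vert\,\mathscr C)}=O(n\Lambda^{n-1})$. Taking $n$-th roots,
$$\norm{g^n}_{\widetilde{L^1(\G\,\vert\,\mathscr C)}}^{1/n}\le\big(\Lambda^n+C_1\mu(K^n)\,n\,\Lambda^{n-1}\big)^{1/n},$$
and letting $n\to\infty$: subexponential growth is precisely the hypothesis $\mu(K^n)^{1/n}\to 1$, and $(C_1 n)^{1/n}\to 1$, so the bound tends to $\Lambda$ and $\rho_{\widetilde{L^1(\G\,\vert\,\mathscr C)}}(g)\le\Lambda$. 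The inequality $\norm{\cdot}_{L^2_\e(\G\,\vert\,\mathscr C)}\le\norm{\lambda(\cdot)}_{{\rm C^*}(\G\,\vert\,\mathscr C)}$ is the non-discrete analogue of Lemma \ref{lemma}(vi), proved by the same approximate-identity argument.

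The main obstacle is twofold. Conceptually, a spectral-radius identity alone cannot identify full spectra once symmetry is dropped; the passage to $c=(\Phi-z)^*(\Phi-z)$ and the localization of its self-adjoint spectrum in a disk off the origin is exactly what upgrades radius control to genuine invertibility. Technically, the scalar part $r1$ breaks the compact-support bookkeeping that works so cleanly for $C_{\rm c}(\G\,\vert\,\mathscr C)$, and it must be absorbed through the recursion for $G_n$ while keeping the comparison anchored to $\Lambda$ rather than to the useless larger quantity $\norm{\lambda(s)}_{{\rm C^*}(\G\,\vert\,\mathscr C)}+|r|$.
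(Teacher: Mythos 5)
Your overall architecture is sound, and it is a genuinely different (more self-contained) route than the paper's: where the paper quotes \cite[Theorem 2.3]{SaWi20} to upgrade a spectral-radius identity to equality of spectra, you pass to $c=(\Phi-z)^*(\Phi-z)$, center at $m$, and trap ${\rm Spec}_{\widetilde{L^1(\G\,\vert\,\mathscr C)}}(c)$ in a disk missing the origin; this correctly converts radius control into invertibility with no symmetry assumption, at the price of having to estimate elements $s+r1$ of the unitization rather than compactly supported sections alone. The genuine gap is the inequality your recursion is anchored to, namely $\norm{G_{n-1}}_{L^2_\e(\G\,\vert\,\mathscr C)}\leq\norm{\lambda(G_{n-1})}_{{\rm C^*}(\G\,\vert\,\mathscr C)}$, which you justify as ``the non-discrete analogue of Lemma \ref{lemma}(vi), proved by the same approximate-identity argument.'' No such analogue exists. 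The discrete proof works only because $a_\alpha\delta_\e$ is a vector of $\ell^2_\e$-norm at most $1$; for non-discrete $\G$ the approximate identities $u_V$ of $L^1(\G\,\vert\,\mathscr C)$ have $\norm{u_V}_{L^2_\e(\G\,\vert\,\mathscr C)}\sim\mu(V)^{-1/2}\to\infty$, and the inequality itself is false. Already for the trivial bundle over $\G=\R$, where $\norm{\lambda(f)}_{{\rm C^*}(\R)}=\norm{\widehat{f}}_{C_0(\R)}$, the chirp $f_T(x)=e^{iTx^2}\mathbf{1}_{[0,1]}(x)$ satisfies $\norm{f_T}_{L^2(\R)}=1$ while $\norm{\widehat{f_T}}_{C_0(\R)}=O(T^{-1/2})$ by stationary phase (and $f_T+f_T^*$ gives a self-adjoint, compactly supported counterexample). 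So no inequality of this shape holds, even with a constant depending on the support, and your bound $\norm{G_n}_{L^\infty(\G\,\vert\,\mathscr C)}=O(n\Lambda^{n-1})$ --- hence the whole radius estimate --- is unjustified for non-discrete $\G$. (For discrete $\G$ your argument is fine, since Lemma \ref{lemma}(vi) is then genuinely available.)

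The fix is exactly the device the paper's proof uses and yours bypasses: never measure a high power in the $L^2_\e$-norm on its own, but write it as an operator power applied to a \emph{fixed} vector, as in the paper's chain $\norm{\Phi^{n-1}}_{L^2_\e}=\norm{\lambda(\Phi)^{n-2}\Phi}_{L^2_\e}\leq\norm{\lambda(\Phi)^{n-2}}\,\norm{\Phi}_{L^2_\e}$. In your setting, your recursion reads $G_n=(\lambda(s)+r\,{\rm id})G_{n-1}+r^{n-1}s$, which unrolls to
$$G_n=\sum_{k=0}^{n-1}r^{n-1-k}\big(\lambda(s)+r\,{\rm id}\big)^{k}s,$$
and since $\lambda(s)+r\,{\rm id}$ is a self-adjoint adjointable operator on $L^2_\e(\G\,\vert\,\mathscr C)$ whose operator norm equals $\Lambda$ (subexponential growth implies amenability, so the universal and reduced norms agree), the bound $|r|\leq\Lambda$ gives $\norm{G_n}_{L^2_\e(\G\,\vert\,\mathscr C)}\leq n\Lambda^{n-1}\norm{s}_{L^2_\e(\G\,\vert\,\mathscr C)}$. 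Feeding this into your strengthened-Young step yields $\norm{G_n}_{L^\infty(\G\,\vert\,\mathscr C)}=O(n^2\Lambda^{n-1})$, still a subexponential prefactor, so the conclusion $\rho_{\widetilde{L^1(\G\,\vert\,\mathscr C)}}(s+r1)\leq\Lambda$ survives taking $n$-th roots against $\mu(K^n)^{1/n}\to1$. With that single substitution your proof closes, and it remains a legitimate alternative to the paper's, which never touches the unitization: it proves the radius identity for $\Phi$ itself and invokes \cite[Theorem 2.3]{SaWi20} together with \cite[Corollary 1]{DLW97} for the upgrade to spectra.
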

\begin{proof}
    Due to \cite[Theorem 2.3]{SaWi20}, it is enough to show equality for the corresponding spectral radii. It is clear that $\rho_{{\rm C^*}(\G\,\vert\,\mathscr C)}(\lambda(\Phi))\leq\rho_{L^1(\G\,\vert\,\mathscr C)}(\Phi)$. In order to show the other inequality, we let $\Phi$ be as in the hypothesis of the proposition and see that 
    $$
    \norm{\Phi^n}_{L^\infty(\G\,\vert\,\mathscr C)}\leq  \norm{\Phi}_{L^2(\G\,\vert\,\mathscr C)}\,\norm{\Phi^{n-1}}_{L^2_\e(\G\,\vert\,\mathscr C)}\leq\norm{\Phi}_{L^2(\G\,\vert\,\mathscr C)}\,\norm{\lambda(\Phi)^{n-2}}_{\mathbb B(L^2_\e(\G\,\vert\,\mathscr C))}\,\norm{\Phi}_{L^2_\e(\G\,\vert\,\mathscr C)}. 
    $$
    Now, H\"older's inequality allows us to link the $L^\infty$-norm with the spectral radius as follows: 
    \begin{align*}
        \rho_{L^1(\G\,\vert\,\mathscr C)}(\Phi)&=\lim_{n\to\infty} \norm{\Phi^n}_{L^1(\G\,\vert\,\mathscr C)}^{1/n} \\
        &\leq \lim_{n\to\infty}\norm{\Phi^n}_{L^\infty(\G\,\vert\,\mathscr C)}^{1/n}\mu\big({\rm Supp}(\Phi^n)\big)^{1/n} \\
        &\leq \lim_{n\to\infty}\norm{\Phi}_{L^2(\G\,\vert\,\mathscr C)}^{2/n}\,\norm{\lambda(\Phi)^{n-2}}_{\mathbb B(L^2_\e(\G\,\vert\,\mathscr C))}^{1/n}\mu\big({\rm Supp}(\Phi)^n\big)^{1/n} \\
        &=\lim_{n\to\infty} \norm{\lambda(\Phi)^{n-2}}_{\mathbb B(L^2_\e(\G\,\vert\,\mathscr C))}^{1/n} \\
        &=\rho_{\mathbb B(L^2_\e(\G\,\vert\,\mathscr C))}(\lambda(\Phi)).
    \end{align*} 
    To conclude, we note that $\lambda(\Phi)\in\mathbb B_a(L^2_\e(\G\,\vert\,\mathscr C))$ and that the inclusion $\mathbb B_a(L^2_\e(\G\,\vert\,\mathscr C))\subset \mathbb B(L^2_\e(\G\,\vert\,\mathscr C))$ is spectral invariant, because of \cite[Corollary 1]{DLW97}. 
\end{proof}

Informally, the previous result states that, under subexponential growth conditions, the spectra of a continuous function with compact support is independent of the algebra of reference. Therefore, following this hint -and in the hope of a stronger result-, we turn our attention to continuous functions over compact groups.

\begin{lem}\label{reduction}
    Let $p\geq q$ with $p,q\in[1,\infty]$ and suppose that $\G$ is compact. Then $L^p(\G\,\vert\,\mathscr C)$ is inverse closed in $L^q(\G\,\vert\,\mathscr C)$. Moreover, $L^p(\G\,\vert\,\mathscr C)$ is symmetric if and only if $L^1(\G\,\vert\,\mathscr C)$ is symmetric and this happens if and only if $C(\G\,\vert\,\mathscr C)$ is symmetric.
\end{lem}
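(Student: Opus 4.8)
The plan is to separate the two assertions and route both through the left regular representation $\lambda$ and Proposition \ref{subexp}, which applies here because a compact group has subexponential growth ($\mu(K^n)\le\mu(\G)=1$ for all $n$). Throughout I use that normalized Haar measure gives $\norm{\cdot}_{L^q(\G\,\vert\,\mathscr C)}\le\norm{\cdot}_{L^p(\G\,\vert\,\mathscr C)}$ for $p\ge q$, and that $C(\G\,\vert\,\mathscr C)$ is dense in every $L^r(\G\,\vert\,\mathscr C)$, so the chain $C(\G\,\vert\,\mathscr C)=L^\infty(\G\,\vert\,\mathscr C)\subseteq\cdots\subseteq L^p(\G\,\vert\,\mathscr C)\subseteq L^q(\G\,\vert\,\mathscr C)\subseteq\cdots\subseteq L^1(\G\,\vert\,\mathscr C)$ consists of continuous dense inclusions of Banach $^*$-algebras.

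For the inverse-closedness I would verify the hypotheses of Barnes' lemma (Lemma \ref{barnes}) for $L^p(\G\,\vert\,\mathscr C)\hookrightarrow L^q(\G\,\vert\,\mathscr C)$; continuity and density are the two easy points just noted, so everything comes down to spectral radius preservation. First, for $\Xi\in C(\G\,\vert\,\mathscr C)$ the computation inside the proof of Proposition \ref{subexp} gives $\rho_{L^\infty}(\Xi)\le\rho_{{\rm C^*}(\G\,\vert\,\mathscr C)}(\lambda(\Xi))$, via $\norm{\Xi^n}_{L^\infty(\G\,\vert\,\mathscr C)}\le\norm{\Xi}_{L^2(\G\,\vert\,\mathscr C)}\norm{\lambda(\Xi)^{n-2}}\norm{\Xi}_{L^2_\e(\G\,\vert\,\mathscr C)}$, while the reverse inequality is automatic from the inclusion; together with Proposition \ref{subexp} this yields $\rho_{L^\infty}(\Xi)=\rho_{L^1}(\Xi)=\rho_{{\rm C^*}(\G\,\vert\,\mathscr C)}(\lambda(\Xi))$. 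Since the spectral radius is monotone along continuous inclusions, the sandwich $\rho_{L^1}(\Xi)\le\rho_{L^q}(\Xi)\le\rho_{L^p}(\Xi)\le\rho_{L^\infty}(\Xi)$ collapses to equalities for every $\Xi\in C(\G\,\vert\,\mathscr C)$. To pass to arbitrary $\Phi\in L^p(\G\,\vert\,\mathscr C)$ with $p>1$ I would smooth: by iterated Young some convolution power $\Phi^m$ first reaches $L^2(\G\,\vert\,\mathscr C)$ and then, as $L^2\!*\!L^2\subseteq C$, lands in $C(\G\,\vert\,\mathscr C)$; applying the previous equality to $\Phi^m$ and using $\rho(\Phi^m)=\rho(\Phi)^m$ gives $\rho_{L^p}(\Phi)=\rho_{L^q}(\Phi)$ (the case $p=q=1$ being trivial). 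Barnes' lemma then delivers the inverse-closedness.

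For the symmetry equivalences I would begin by observing that, by the inverse-closedness just proved, any $\mathfrak A\subseteq\mathfrak B$ in the above chain satisfies ${\rm Spec}_{\mathfrak A}(a)={\rm Spec}_{\mathfrak B}(a)$ for $a\in\mathfrak A$. Hence the \emph{descending} implications are immediate: if $L^1(\G\,\vert\,\mathscr C)$ is symmetric, every self-adjoint element of a subalgebra has the same (real) spectrum there, so the subalgebra is symmetric; in particular $L^1$ symmetric $\Rightarrow L^p$ symmetric $\Rightarrow C$ symmetric. Thus the entire statement reduces to the one \emph{ascending} implication, namely that $C(\G\,\vert\,\mathscr C)$ symmetric forces $L^1(\G\,\vert\,\mathscr C)$ symmetric.

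This ascending step is where the difficulty lies, and it is the main obstacle: symmetry (reality of spectra, equivalently positivity of the spectra of $b^*b$) is \emph{not} transported by $L^1$-density, because the spectral radius is only upper semicontinuous, which supplies the inequality in the wrong direction. I would circumvent this by pinning the $L^1$ spectral radius directly, exactly as in the proof of Theorem \ref{closedness} applied with the trivial weight $\nu\equiv1$ (admissible since $\mu(\G)=1$ gives $\nu^{-1}=1\in L^1(\G)$): for self-adjoint $\Phi$, Lemma \ref{exist} furnishes $\Psi\in C(\G\,\vert\,\mathscr C)$ with $\rho_{L^1}(\Phi)\le\limsup_n\norm{\Psi^2*\Phi^n}_{L^1(\G\,\vert\,\mathscr C)}^{1/n}$, and the decisive point is that each $\Psi^2*\Phi^n$ is again continuous (a convolution of a continuous compactly supported section with an $L^1$ section), so that $\norm{\cdot}_{L^1(\G\,\vert\,\mathscr C)}\le\norm{\cdot}_{L^\infty(\G\,\vert\,\mathscr C)}$ and the strengthened Young inequality of Lemma \ref{lemma}(v) bound this limsup by $\rho_{{\rm C^*}(\G\,\vert\,\mathscr C)}(\lambda(\Phi))$. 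This gives $\rho_{L^1}(\Phi)=\rho_{{\rm C^*}(\G\,\vert\,\mathscr C)}(\lambda(\Phi))$ for all self-adjoint $\Phi$; since $\G$ is amenable, ${\rm C^*}(\G\,\vert\,\mathscr C)=\overline{\lambda(L^1(\G\,\vert\,\mathscr C))}$ is the enveloping $C^*$-algebra, so applying this to $\Phi=\Xi^**\Xi$ shows the Pt\'ak function agrees with the $C^*$-seminorm, whence $L^1(\G\,\vert\,\mathscr C)$ is inverse-closed in ${\rm C^*}(\G\,\vert\,\mathscr C)$ and therefore symmetric. This closes every ascending implication and hence all the equivalences. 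I would close with the remark that, since this last computation uses neither symmetry of $C(\G\,\vert\,\mathscr C)$ nor any hypothesis beyond compactness, it in fact establishes the symmetry of all these algebras unconditionally for compact $\G$.
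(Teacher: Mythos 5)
Your proposal has one genuine flaw, and it sits exactly at the point where this lemma is delicate: you identify $C(\G\,\vert\,\mathscr C)=L^\infty(\G\,\vert\,\mathscr C)$ and claim $C(\G\,\vert\,\mathscr C)$ is dense in \emph{every} $L^r(\G\,\vert\,\mathscr C)$. For a compact group $L^\infty(\G\,\vert\,\mathscr C)$ consists of essentially bounded measurable sections, and $C(\G\,\vert\,\mathscr C)$ is a \emph{proper closed} (in particular not dense) $^*$-subalgebra of it; for the trivial line bundle over $\T$ the indicator of an arc already separates the two. The statement of Lemma \ref{reduction} deliberately lists $C(\G\,\vert\,\mathscr C)$ in addition to the case $p=\infty$, so this case cannot be absorbed into your chain of inverse-closed inclusions: your only justification of ``$L^1$ symmetric $\Rightarrow C$ symmetric'' is membership of $C(\G\,\vert\,\mathscr C)$ in that chain, which rests on the false identification. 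The repair is short, and you have two options: (a) as the paper does, note that $C(\G\,\vert\,\mathscr C)$ is a closed $^*$-subalgebra of $L^\infty(\G\,\vert\,\mathscr C)$ and closed $^*$-subalgebras of symmetric Banach $^*$-algebras are symmetric \cite[Theorem 11.4.2]{Pa94}; or (b) observe that the norm of $C(\G\,\vert\,\mathscr C)$ is the restriction of the $L^\infty$-norm and powers of continuous sections remain continuous, so $\rho_{C(\G\,\vert\,\mathscr C)}(\Xi)=\rho_{L^\infty(\G\,\vert\,\mathscr C)}(\Xi)=\rho_{L^1(\G\,\vert\,\mathscr C)}(\Xi)$ by your own sandwich, and Barnes' Lemma \ref{barnes} applies to the dense inclusion $C(\G\,\vert\,\mathscr C)\subset L^1(\G\,\vert\,\mathscr C)$. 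A smaller point of the same kind: your smoothing step uses $L^2*L^2\subseteq C$, which is classical for groups but needs a line of proof for Fell bundles (approximate both factors by $C_{\rm c}$-sections, use the pointwise Cauchy--Schwarz bound $\norm{\Xi*\Theta}_{L^\infty(\G\,\vert\,\mathscr C)}\leq\norm{\Xi}_{L^2(\G\,\vert\,\mathscr C)}\norm{\Theta}_{L^2(\G\,\vert\,\mathscr C)}$, and the fact that uniform limits of continuous sections are continuous).

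With these repairs your argument is correct, and it is genuinely different from the paper's on both halves. For inverse-closedness the paper does not pass through Proposition \ref{subexp} at all: it notes the Young estimate $\norm{\Phi*\Psi}_{L^p(\G\,\vert\,\mathscr C)}\leq\norm{\Phi}_{L^q(\G\,\vert\,\mathscr C)}\norm{\Psi}_{L^p(\G\,\vert\,\mathscr C)}$, which yields $\norm{\Phi^n}_{L^p}\leq\norm{\Phi^{n-1}}_{L^q}\norm{\Phi}_{L^p}$ and hence $\rho_{L^p}=\rho_{L^q}$ on all of $L^p(\G\,\vert\,\mathscr C)$ in two lines, with no sandwich and no smoothing. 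For the symmetry equivalences the paper keeps the statement as a genuine set of equivalences: the ascending implications (``$L^p$ or $C$ symmetric $\Rightarrow L^1$ symmetric'') come from Wichmann's theorem that a Banach $^*$-algebra possessing a dense two-sided symmetric $^*$-ideal is itself symmetric \cite[page 86]{Wi78}, and unconditional symmetry only appears afterwards, in Theorem \ref{corcompact}, by combining the lemma with Proposition \ref{subexp}. Your route instead invokes Theorem \ref{closedness} with the trivial weight $\nu\equiv1$ (admissible since $\nu$ is a polynomial weight and $\mu(\G)=1$ gives $\nu^{-1}\in L^p(\G)$) to conclude that $L^1(\G\,\vert\,\mathscr C)$ is symmetric outright, making all equivalences trivially true. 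This is legitimate and non-circular --- Theorem \ref{closedness} is proved in Section \ref{symmsub} with no reference to the appendix --- and it is a nice observation that the paper does not exploit; the trade-off is that you use the heaviest machinery of the paper (Lemma \ref{exist} and Theorem \ref{closedness}) where the paper's own proof needs only two short classical citations.
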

\begin{proof}
    Young's inequality gives $\norm{\Phi*\Psi}_{L^p(\G\,\vert\,\mathscr C)}\leq \norm{\Phi}_{L^q(\G\,\vert\,\mathscr C)}\norm{\Psi}_{L^p(\G\,\vert\,\mathscr C)}$. Therefore for $n\in\N$ and $\Phi\in L^p(\G\,\vert\,\mathscr C)$ one has $$\norm{\Phi^{n}}_{L^p(\G\,\vert\,\mathscr C)}\leq \norm{\Phi^{n-1}}_{L^q(\G\,\vert\,\mathscr C)}\norm{\Phi}_{L^p(\G\,\vert\,\mathscr C)},$$ so $$\rho_{L^p(\G\,\vert\,\mathscr C)}(\Phi)=\lim_{n\to\infty} \norm{\Phi^{n}}_{L^p(\G\,\vert\,\mathscr C)}^{1/n}\leq \lim_{n\to\infty} \norm{\Phi^{n-1}}_{L^q(\G\,\vert\,\mathscr C)}^{1/n}\norm{\Phi}_{L^p(\G\,\vert\,\mathscr C)}^{1/n}= \rho_{L^q(\G\,\vert\,\mathscr C)}(\Phi),$$ hence $\rho_{L^p(\G\,\vert\,\mathscr C)}(\Phi)=\rho_{L^q(\G\,\vert\,\mathscr C)}(\Phi)$, as the reversed inequality always holds. Thus inverse-closeness follows from Lemma \ref{barnes}. 
    
    For the second statement: $C(\G\,\vert\,\mathscr C)$ is a closed $^*$-subalgebra of $L^\infty(\G\,\vert\,\mathscr C)$, so symmetry of the latter implies symmetry of the former \cite[Theorem 11.4.2]{Pa94}. On the other hand, if $L^p(\G\,\vert\,\mathscr C)$ is assumed symmetric, it becomes a symmetric dense, two-sided $^*$-ideal of $L^1(\G\,\vert\,\mathscr C)$ and the conclusion follows from \cite[page 86]{Wi78}. The same reasoning applies to $C(\G\,\vert\,\mathscr C)$.
\end{proof}

In particular, and with the help of the reduction done in Lemma \ref{reduction}, we obtain the symmetry of all $L^p$-algebras of compact groups.
\begin{thm}\label{corcompact}
    Suppose $\G$ is compact and let $p\in [1,\infty]$. Then, for $\B=L^p(\G\,\vert\,\mathscr C)$ or $\B=C(\G\,\vert\,\mathscr C)$, $\B$ is symmetric and inverse-closed in ${\rm C^*}(\G\,\vert\,\mathscr C)$.
\end{thm}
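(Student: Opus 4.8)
The plan is to reduce the entire statement to the single algebra $L^{1}(\G\,\vert\,\mathscr C)$ by means of Lemma \ref{reduction}, to seed that case with the tools of this appendix, and then to propagate. First I would record the elementary consequences of compactness: since $\mu(\G)=1$, the group has order-zero polynomial (hence subexponential) growth, every cross-section has compact support so that $C(\G\,\vert\,\mathscr C)=C_{\rm c}(\G\,\vert\,\mathscr C)$, and $L^{q}(\G\,\vert\,\mathscr C)\subseteq L^{p}(\G\,\vert\,\mathscr C)$ whenever $p\leq q$. Lemma \ref{reduction} then tells me that symmetry of any of $L^{p}(\G\,\vert\,\mathscr C)$ or $C(\G\,\vert\,\mathscr C)$ is equivalent to symmetry of $L^{1}(\G\,\vert\,\mathscr C)$, and that each $L^{p}(\G\,\vert\,\mathscr C)$ is inverse-closed in $L^{1}(\G\,\vert\,\mathscr C)$. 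So it suffices to (i) prove that $L^{1}(\G\,\vert\,\mathscr C)$ is symmetric and inverse-closed in ${\rm C^*}(\G\,\vert\,\mathscr C)$, and then (ii) carry this over to the remaining algebras.

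For (i) I would observe that the constant $\nu\equiv1$ is a polynomial weight with $\nu^{-1}\equiv1\in L^{p}(\G)$ for every $p$ (because $\mu(\G)<\infty$), and that $L^{1,\nu}(\G\,\vert\,\mathscr C)=L^{1}(\G\,\vert\,\mathscr C)$ isometrically. Theorem \ref{closedness} then applies directly; it gives the symmetry of $L^{1}(\G\,\vert\,\mathscr C)$ and, as its proof records, its inverse-closedness in ${\rm C^*}(\G\,\vert\,\mathscr C)$, equivalently ${\rm Spec}_{L^{1}(\G\,\vert\,\mathscr C)}(\Phi)={\rm Spec}_{{\rm C^*}(\G\,\vert\,\mathscr C)}(\lambda(\Phi))$ for every $\Phi$. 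In the spirit of this appendix, the same spectral coincidence on the dense set of continuous sections can be read off directly from Proposition \ref{subexp}, which applies to every element of $L^{\infty}(\G\,\vert\,\mathscr C)$ here; combined with the inverse-closedness of $L^{\infty}(\G\,\vert\,\mathscr C)$ in $L^{1}(\G\,\vert\,\mathscr C)$ from Lemma \ref{reduction}, this already exhibits $L^{\infty}(\G\,\vert\,\mathscr C)$, and hence $L^{1}(\G\,\vert\,\mathscr C)$, as symmetric. By the equivalences in Lemma \ref{reduction}, every $L^{p}(\G\,\vert\,\mathscr C)$ and $C(\G\,\vert\,\mathscr C)$ is now symmetric.

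It remains to transport inverse-closedness from $L^{1}(\G\,\vert\,\mathscr C)$ to the other algebras. For $\B=L^{p}(\G\,\vert\,\mathscr C)$ this is immediate: composing ${\rm Spec}_{L^{p}(\G\,\vert\,\mathscr C)}(\Phi)={\rm Spec}_{L^{1}(\G\,\vert\,\mathscr C)}(\Phi)$ from Lemma \ref{reduction} with the inverse-closedness of $L^{1}(\G\,\vert\,\mathscr C)$ yields ${\rm Spec}_{L^{p}(\G\,\vert\,\mathscr C)}(\Phi)={\rm Spec}_{{\rm C^*}(\G\,\vert\,\mathscr C)}(\lambda(\Phi))$ for all $\Phi$. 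The only case not covered by Lemma \ref{reduction} is $\B=C(\G\,\vert\,\mathscr C)$; there I would invoke Barnes' lemma (Lemma \ref{barnes}) for the dense, continuous $^*$-embedding $C(\G\,\vert\,\mathscr C)\hookrightarrow{\rm C^*}(\G\,\vert\,\mathscr C)$, so that it suffices to check spectral-radius preservation. The inequality $\rho_{{\rm C^*}(\G\,\vert\,\mathscr C)}(\lambda(\Phi))\leq\rho_{C(\G\,\vert\,\mathscr C)}(\Phi)$ is automatic for a continuous homomorphism, while the reverse follows from the estimate in Proposition \ref{subexp}: every $\Phi\in C(\G\,\vert\,\mathscr C)=C_{\rm c}(\G\,\vert\,\mathscr C)$ lies in $L^{\infty}(\G\,\vert\,\mathscr C)$ with compact support, and bounding $\norm{\Phi^{n}}_{L^{\infty}(\G\,\vert\,\mathscr C)}$ through the strengthened Young inequality of Lemma \ref{lemma}\,(v), together with $\mu(\G)=1$, gives $\rho_{C(\G\,\vert\,\mathscr C)}(\Phi)\leq\rho_{{\rm C^*}(\G\,\vert\,\mathscr C)}(\lambda(\Phi))$. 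The main obstacle is precisely this last spectral-radius estimate: it is the only step requiring genuine analysis rather than formal bookkeeping, and it is exactly where the passage from the $L^{\infty}$-norm to the operator norm on the Hilbert module $L^{2}_{\e}(\G\,\vert\,\mathscr C)$ forces one to use the $L^{2}$-versus-$L^{2}_{\e}$ bridge of Lemma \ref{lemma} that already underlies Proposition \ref{subexp}.
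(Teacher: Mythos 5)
Your proof is correct, and it does not follow the paper's route exactly, so a comparison is worth recording. The paper's own proof is essentially your ``Option B'' and nothing more: it combines Lemma \ref{reduction} (inverse-closedness of $L^\infty(\G\,\vert\,\mathscr C)$ in $L^1(\G\,\vert\,\mathscr C)$) with Proposition \ref{subexp} (applicable to \emph{every} element of $L^\infty(\G\,\vert\,\mathscr C)$, since compactness of $\G$ makes every support compact) to conclude that $L^\infty(\G\,\vert\,\mathscr C)$ has the same spectra as ${\rm C^*}(\G\,\vert\,\mathscr C)$, hence is symmetric, and then invokes ``another application of Lemma \ref{reduction}'' for the remaining algebras. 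Your primary route differs in two ways, both defensible. First, you seed the case of $L^1(\G\,\vert\,\mathscr C)$ with Theorem \ref{closedness} applied to the constant weight $\nu\equiv 1$, which is admissible precisely because $\mu(\G)<\infty$; there is no circularity, since nothing in the proof of Theorem \ref{closedness} uses the compact case. This imports the heavier main-body machinery (Lemmas \ref{diff} and \ref{exist}) where the paper deliberately stays inside the appendix toolkit, but it buys you inverse-closedness of $L^1(\G\,\vert\,\mathscr C)$ in ${\rm C^*}(\G\,\vert\,\mathscr C)$ for \emph{all} elements at once, making the transport to $L^p(\G\,\vert\,\mathscr C)$ a clean composition of two spectral equalities. (One caveat: the displayed equality in Theorem \ref{closedness} is stated for self-adjoint $\Phi$; full inverse-closedness is what the final sentence of its proof asserts, so your reliance on it matches the paper's own reading of that theorem.) Second, and more usefully, you make explicit the one point the paper's one-sentence proof glosses over: Lemma \ref{reduction} gives the symmetry equivalence for $C(\G\,\vert\,\mathscr C)$, but its inverse-closedness clause covers only the inclusions $L^p\subset L^q$, so the inverse-closedness of $C(\G\,\vert\,\mathscr C)$ in ${\rm C^*}(\G\,\vert\,\mathscr C)$ genuinely needs an argument. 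Yours works: the inclusion $C(\G\,\vert\,\mathscr C)\hookrightarrow {\rm C^*}(\G\,\vert\,\mathscr C)$ is continuous (since $\norm{\cdot}_{{\rm C^*}(\G\,\vert\,\mathscr C)}\leq\norm{\cdot}_{L^1(\G\,\vert\,\mathscr C)}\leq\norm{\cdot}_{L^\infty(\G\,\vert\,\mathscr C)}$ when $\mu(\G)=1$) and dense (since $C=C_{\rm c}$), and the estimate $\norm{\Phi^n}_{L^\infty(\G\,\vert\,\mathscr C)}\leq\norm{\Phi}_{L^2(\G\,\vert\,\mathscr C)}\,\norm{\lambda(\Phi)^{n-2}}_{\mathbb B(L^2_\e(\G\,\vert\,\mathscr C))}\,\norm{\Phi}_{L^2_\e(\G\,\vert\,\mathscr C)}$ from the proof of Proposition \ref{subexp}, together with the spectral invariance of $\mathbb B_a(L^2_\e(\G\,\vert\,\mathscr C))\subset\mathbb B(L^2_\e(\G\,\vert\,\mathscr C))$ invoked there, gives spectral-radius preservation, so Barnes' Lemma \ref{barnes} applies. (The alternative the paper presumably intends is that a closed $^*$-subalgebra of a symmetric Banach $^*$-algebra is inverse-closed in it, via the boundary-of-spectrum argument for spectra with empty interior; either way a detail must be supplied, and you supplied a correct one.)
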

\begin{proof}
    Lemma \ref{reduction} and its proof show that for all $\Phi\in L^\infty(\G\,\vert\,\mathscr C)$, we have the equality of spectra $${\rm Spec}_{L^1(\G\,\vert\,\mathscr C)}(\Phi)={\rm Spec}_{L^\infty(\G\,\vert\,\mathscr C)}(\lambda(\Phi)).$$ Then the result follows from Proposition \ref{subexp} and another application of Lemma \ref{reduction}.
\end{proof}

\begin{rem}
    We remark that the best result of this sort previously available in the literature only considered algebras arising from (untwisted) actions \cite[Theorem 1]{LP79}. Their method is completely different from ours, as they embed the algebras into bigger ones and deal with multipliers. Moreover, it seems reasonable to argue that our method is simpler. Their result, however, still works when the algebra of coefficients is not a $C^*$-algebra but a symmetric Banach $^*$-algebra.
\end{rem}

\begin{rem}
        A consequence of Theorem \ref{corcompact} is that $L^1(\G\,\vert\,\mathscr C)$ is symmetric as soon as $\G$ is compact. In the terminology of \cite{FJM}, this means that compact groups are hypersymmetric. We have thus provided the first examples of hypersymmetric groups with non-symmetric discretizations. Consider, for example, the rotation group ${\rm SO}(3)$, which is compact but its discretization is not even symmetric since it contains a free subgroup (see \cite{Je70}).
\end{rem}

%-------------------------------------------------------------------------------------------------------
\section*{Acknowledgments}
%-------------------------------------------------------------------------------------------------------

The author thankfully acknowledges support by the NSF grant DMS-2000105. He is also grateful to Professor Ben Hayes for his very helpful comments and to the anonymous referee, who suggested many improvements.

\printbibliography

\bigskip
\bigskip
ADDRESS

\smallskip
Felipe I. Flores

Department of Mathematics, University of Virginia,

114 Kerchof Hall. 141 Cabell Dr,

Charlottesville, Virginia, United States

E-mail: hmy3tf@virginia.edu

\end{document}